\newtheorem{thm}{Theorem}[section]
\newtheorem{cor}[thm]{Corollary}
\newtheorem{lem}[thm]{Lemma}
\newtheorem{prop}[thm]{Proposition}
\newtheorem*{lem*}{Lemma}
\newtheorem*{thm*}{Theorem}
\theoremstyle{definition}
\newtheorem{defn}[thm]{Definition}
\newtheorem{eg}[thm]{Example}
\newcommand{\cohomology}[1]{\catfont{H}^1}
\newcommand{\cubcatreplacement}{\catfont{R}}
\newcommand{\fundamentalcat}{\Tau_1}
\newcommand{\fundamentalgroupoid}{\Pi_1}
\newcommand{\STARS}{\catfont{Stars}}
\newcommand{\ATOMICS}{\catfont{Atoms}}
\newcommand{\DITOPOLOGICALBOX}{\blacksquare}
\newcommand{\groupoid}[1]{\mathcal{G}}
\newcommand{\dhomotopic}[1]{\leftrightsquigarrow_{#1}}
\newcommand{\topologicaldhomotopic}{\leftrightsquigarrow_{\direalize{\;\mathfrak{d}\;}}}
\newcommand{\intervalobject}[1]{\mathfrak{i}}
\newcommand{\futurehomotopic}[1]{\leadsto_{#1}}
\newcommand{\FIBRANTCUBICALSETS}{\infty\GROUPOIDS}
\newcommand{\INTERVALCAT}{\BOX_1}
\newcommand{\POTOP}{\catfont{Pos}}
\newcommand{\DISLATS}{\catfont{Dis}}
\newcommand{\CUBICALSETS}{\hat\BOX}
\newcommand{\SIMPLICIALSETS}{\hat\DEL}
\newcommand{\pls}{\raisebox{.4\height}{\scalebox{.6}{+}}}
\newcommand{\mins}{\raisebox{.4\height}{\scalebox{.8}{-}}}
\newcommand{\dihomeo}{\varphi} 
\newcommand{\csing}{\sing\,}
\newcommand{\cnerve}{\nerve\,}
\newcommand{\snerve}{\catfont{ner}_\DEL}
\newcommand{\cset}[1]{\ifthenelse{1=#1}{C}{\ifthenelse{2=#1}{A}{B}}}
\newcommand{\REGULARCUBICALSETS}{\CUBICALSETS_{\catfont{REG}}}
\newcommand{\THMPartOfTestFunctors}{Part of Theorem 4.1.26, \cite{cisinskiprefaisceaux}}
\newcommand{\THMStrictCubicalCategories}{Theorem 8.8, \cite{agl2002multiple}}
\newcommand{\THMModularForbiddenMinors}{Theorem, \cite{dedekind1900drei}}
\newcommand{\PROPUnderlyingMonoidalPreorderedSets}{Proposition 5.11, \cite{krishnan2009convenient}}
\newcommand{\ThmDiEmbed}{Theorem 2.5, \cite{fernandes2007classification}}
\newcommand{\PROPTop}{Proposition 5.8, \cite{krishnan2009convenient}}
\newcommand{\PROPInclusions}{Proposition 5.6, \cite{krishnan2015cubical}}
\newcommand{\PropSD}{Proposition 7.4, 7.5, \cite{krishnan2015cubical}}
\newcommand{\THMPospaces}{Theorem 4.7, \cite{krishnan2009convenient}}
\newcommand{\PROPLocCompactStreams}{Theorem 5.4, \cite{krishnan2009convenient}}
\newcommand{\ThmDHomotopy}{Theorem 7.1, \cite{krishnan2015cubical}}
\newcommand{\THMVanKampen}{Theorem 1, \cite{krishnan2015cubical}}
\newcommand{\THMXClosed}{Theorem 5.12, \cite{krishnan2009convenient}}
\newcommand{\THMQCubical}{Theorem 4.18, \cite{jardine2006categorical}}
\newcommand{\THMTriEquivalence}{Theorem, \cite{jardine2002cubical}}
\newcommand{\CORTriEquivalence}{Theorem, \cite{jardine2002cubical}}
\newcommand{\THMRealizeEquivalence}{Theorem, \cite{jardine2002cubical}}
\newcommand{\THMAMS}{Theorem, \cite{jardine2002cubical}}
\newcommand{\THMTransfer}{Theorem 3.10, \cite{riehl2011algebraic}}
\newcommand{\THMPointwise}{Theorem 4.3, \cite{riehl2011algebraic}}
\newcommand{\THMClassicalEquivalence}{Theorem 4.3, \cite{riehl2011algebraic}}
\newcommand{\LEMQTCocontinuous}{Lemma 7.2, \cite{krishnan2015cubical}}
\newcommand{\LEMSdTri}{Lemma 7.2, \cite{krishnan2015cubical}}
\newcommand{\LEMMincubicalMonics}{Lemma 6.2, \cite{krishnan2015cubical}}
\newcommand{\EGMSpaces}{Examples 2.2 and 3.8, \cite{cole2006mixing}}
\newcommand{\SimplicialGraphs}{Lemma 5.12, \cite{krishnan2015cubical}}
\newcommand{\THMBirkhoff}{Birkhoff's Representation Theorem}
\newcommand{\THMCat}{Proposition in \S4.2C, {\cite{gromov1987hyperbolic}}}
\newcommand{\LEMBoxInclusions}{Lemma 6.2, \cite{krishnan2015cubical}}
\newcommand{\COREZCellularModel}{Corollary 4.15, \cite{isaacson2011symmetric}}
\newcommand{\PROPTransferredRegularity}{Proposition 6.12, \cite{isaacson2011symmetric}}
\newcommand{\PROPTestFunctor}{Proposition 1.2.9, \cite{maltsiniotis2005theorie}}
\newcommand{\THMOldQuillenEquivalence}{Theorem 8.4.38, \cite{cisinskiprefaisceaux}}
\newcommand{\THMFroibenius}{Theorem 4.8, \cite{gambino2017frobenius}}
\newcommand{\PROPGrothendieckTestCriterion}{Proposition, p.86 44(d), \cite{grothendieck1983pursuing}}
\newcommand{\THMTestModelStructure}{Theorem 1.4.3, \cite{cisinskiprefaisceaux}}
\newcommand{\THMIntervalicDistributivityCharacterization}{Theorem, \cite{lazaraz2015note}}
\newcommand{\THMCATZEROCharacterization}{Theorem 8, \cite{lazaraz2015note}}
\newcommand{\PROPNaturalApproximations}{Proposition 8.11, \cite{krishnan2015cubical}}
\newcommand{\THMHomotopyIsDihomotopy}{Theorem 8, \cite{goubault2020directed}}
\newcommand{\THMFiniteModularity}{Theorem, \cite{dedekind1900drei}}
\newcommand{\THMFiniteDistributivity}{Theorem, \cite{dedekind1900drei}}
\newcommand{\THMDiamondIsomorphism}{Diamond Isomorphism Theorem}
\newcommand{\THMPrototypicalNonDistributiveLattice}{Forbidden Distributive Minor Theorem}
\newcommand{\THMPrototypicalNonModularLattice}{Forbidden Modular Minor Theorem}
\newcommand{\THMGeodesicCriteria}{Theorem, \cite{goubault2020directed}}
\newcommand{\THMCATZeroPosetCharacterization}{Theorem 2.5, \cite{ardila2012geodesics}}
\newcommand{\THMCATZeroGeodesicCharacterization}{Theorem 5.8, \cite{ardila2012geodesics}}
\newcommand{\PROPDistributiveSemilatticeRepresentations}{Propositions 5.7,5.8, \cite{gonzalez2021finite}}
\newcommand{\THMCATZeroUniqueGeodesics}{Theorem 5.5, \cite{bridson99metric}}
\newcommand{\PROPGeodesicsAreMinimizing}{\S II.1, Proposition 1.4, \cite{bridson99metric}}
\newcommand{\PROPSemilatticeBirkhoffRepresentation}{Propositions 5.7, 5.8, \cite{{gonzalez2021finite}}}
\newcommand{\THMProDiagrams}{Theorem, \S 3, \cite{meyer1980approximation}}
\theoremstyle{plain}
\newtheorem*{cor:non.equivariant.derived.formula}{Theorem {\ref{thm:derived.formula}} [Case $\indexcat{1}=\star$]}
\newtheorem*{prop:non.equivariant.mapping.cubcats}{Proposition {\ref{prop:mapping.cubcats}} [Case $\indexcat{1}=\star$]}
\newtheorem*{prop:non.equivariant.singular.cubcats}{Proposition {\ref{prop:singular.cubcats}} [Case $\indexcat{1}=\star$]}
\newtheorem*{prop:non.equivariant.nerves}{Proposition {\ref{prop:nerves}} [Case $\indexcat{1}=\star$]}
\newtheorem*{prop:singular.cubcats}{Proposition {\ref{prop:singular.cubcats}}}
\newtheorem*{prop:nerves}{Proposition {\ref{prop:nerves}}}
\newtheorem*{prop:fibrant.cubcats}{Proposition {\ref{prop:fibrant.cubcats}}}
\newtheorem*{thm:simple.cubcat.replacement}{Special Case of Theorem {\ref{thm:approx}}}
\newtheorem*{thm:non.equivariant.equivalence}{Theorem {\ref{thm:equivalence}} [Case $\indexcat{1}=\star$]}
\newtheorem*{thm:non.equivariant.formula}{Corollary {\ref{cor:formula}} [Case $\indexcat{1}=\star$]}
\newtheorem*{thm:fibrant.cubcats}{Theorem {\ref{thm:fibrant.cubcats}}}
\newtheorem*{thm:approx}{Theorem {\ref{thm:approx}}}
\newtheorem*{cor:cohomological.calculations}{Corollary \ref{cor:cohomological.calculations}}
\newtheorem*{thm:part.of.test-functors}{\THMPartOfTestFunctors{}}
\newtheorem*{thm:pro-diagrams}{\THMProDiagrams{}}
\newtheorem*{thm:strict.cubical.categories}{\THMStrictCubicalCategories{}}
\newtheorem*{thm:modular.forbidden.minors}{\THMModularForbiddenMinors{}}
\newtheorem*{prop:semilattice.birkhoff.representation}{\PROPSemilatticeBirkhoffRepresentation}
\newtheorem*{prop:geodesics.are.minimizing}{\PROPGeodesicsAreMinimizing}
\newtheorem*{thm:cat0.unique.geodesics}{\THMCATZeroUniqueGeodesics}
\newtheorem*{prop:distributive.semilattice.representations}{\PROPDistributiveSemilatticeRepresentations}
\newtheorem*{thm:partial.cat0.geodesic.characterization}{Part of \THMCATZeroGeodesicCharacterization}
\newtheorem*{thm:partial.cat0.poset.characterization}{Part of \THMCATZeroPosetCharacterization}
\newtheorem*{thm:geodesic.criteria}{\THMGeodesicCriteria}
\newtheorem*{thm:prototypical.non-distributive.lattice}{\THMPrototypicalNonDistributiveLattice}
\newtheorem*{thm:prototypical.non-modular.lattice}{\THMPrototypicalNonModularLattice}
\newtheorem*{thm:diamond.isomorphism}{\THMDiamondIsomorphism}
\newtheorem*{thm:intervalic.distributivity.characterization}{\THMIntervalicDistributivityCharacterization}
\newtheorem*{thm:cat0.characterization}{\THMCATZEROCharacterization}
\newtheorem*{thm:test.model.structure}{\THMTestModelStructure}
\newtheorem*{prop:Grothendieck.test.criterion}{\PROPGrothendieckTestCriterion}
\newtheorem*{prop:hemi-metrics}{Proposition 3.7, \cite{goubault2020directed}}
\newtheorem*{thm:nachbin}{Theorem 5, \cite{nachbin1976topology}}
\newtheorem*{cor:sd}{Corollary {\ref{cor:sd}}}
\newtheorem*{cor:gammas}{Corollary {\ref{cor:gammas}}}
\newtheorem*{thm:birkhoff}{\THMBirkhoff}
\newtheorem*{cor:ez.cellular.model}{\COREZCellularModel}
\newtheorem*{prop:transferred.regularity}{\PROPTransferredRegularity}
\newtheorem*{prop:test.functor}{\PROPTestFunctor}
\newtheorem*{thm:old.quillen.equivalence}{\THMOldQuillenEquivalence}
\newtheorem*{thm:froibenius}{\THMFroibenius}
\newtheorem*{thm:homotopy.is.dihomotopy}{\THMHomotopyIsDihomotopy}
\newtheorem*{prop:q-model-structure}{Proposition \S2.3, \cite{quillen1967homotopical}}
\newtheorem*{prop:normal.cube.paths}{Proposition 3.21, \cite{quillen1967homotopical}}
\newcommand{\THMSimplicialClassicalEquivalence}{Theorem \S2.3, \cite{quillen1967homotopical}}
\newtheorem*{thm:simplicial.classical.equivalence}{\THMSimplicialClassicalEquivalence}
\newcommand{\EGSSets}{Examples 3.6 and 4.9, \cite{gambino2017frobenius}}
\newtheorem*{eg:ssets}{\EGSSets}
\newtheorem*{eg:m-spaces}{\EGMSpaces}
\newtheorem*{lem:mincubical.monics}{\LEMMincubicalMonics}
\newtheorem*{thm:x-closed}{\THMXClosed}
\newtheorem*{thm:pointwise}{\THMPointwise}
\newtheorem*{thm:transfer}{\THMTransfer}
\newtheorem*{thm:small-object-argument}{\THMTransfer}
\newtheorem*{prop:natural.approximations}{\PROPNaturalApproximations}
\newtheorem*{thm:cat}{\THMCat}
\newtheorem*{lem:sd.tri}{\LEMSdTri}
\newtheorem*{lem:qt-cocontinuous}{\LEMQTCocontinuous}
\newtheorem*{prop:underlying.monoidal.preordered.sets}{\PROPUnderlyingMonoidalPreorderedSets}
\newtheorem*{prop:locally.compact.streams}{\PROPLocCompactStreams}
\newtheorem*{thm:pospaces}{\THMPospaces}
\newtheorem*{thm:diembed}{\ThmDiEmbed}
\newtheorem*{thm:van-kampen}{\THMVanKampen}
\newtheorem*{thm:q-cubical}{\THMQCubical}
\newtheorem*{thm:tri.equivalence}{\THMTriEquivalence}
\newtheorem*{cor:tri.equivalence}{\CORTriEquivalence}
\newtheorem*{thm:realize.equivalence}{\THMRealizeEquivalence}
\newtheorem*{thm:ams}{\THMAMS}
\newtheorem*{eg:mixing}{Example 2.2 from \cite{cole2006mixing}}
\newtheorem*{thm:classical-equivalence}{\THMClassicalEquivalence}
\newtheorem*{prop:hp}{Proposition \ref{prop:hp}}
\newtheorem*{cor:types}{Corollary \ref{cor:types}}
\newtheorem*{prop:extensions}{Proposition \ref{prop:extensions}}
\newtheorem*{lem:geodesic.metric}{Lemma 3.70, \cite{goubault2020directed}}
\newtheorem*{lem:simplicial.graphs}{\SimplicialGraphs}
\newtheorem*{cor:algebraic-q-cubical}{Corollary \ref{cor:algebraic-q-cubical}}
\newtheorem*{cor:algebraic-m-spaces}{Corollary \ref{cor:algebraic-m-spaces}}
\newtheorem*{thm:m-streams}{Theorem {\ref{thm:m-streams}}}
\newtheorem*{thm:cubical.dihomotopy}{Theorem {\ref{thm:cubical.dihomotopy}}}
\newtheorem*{cor:cubical.dihomotopy}{Corollary {\ref{cor:cubical.dihomotopy}}}
\newtheorem*{prop:cubical.homotopies}{Proposition {\ref{prop:cubical.homotopies}}}
\newtheorem*{prop:nerve.cubcats}{Proposition {\ref{prop:nerve.cubcats}}}
\newtheorem*{prop:kan.cubcat}{Proposition {\ref{prop:kan.cubcat}}}
\newtheorem*{cor:kan.bicubcat}{Corollary {\ref{cor:kan.bicubcat}}}
\newtheorem*{thm:fibrant}{Theorem {\ref{thm:fibrant}}}
\newtheorem*{prop:semifibrant}{Proposition {\ref{prop:semifibrant}}}
\newtheorem*{thm:equivalence}{Corollary {\ref{thm:equivalence}}}
\newtheorem*{cor:type-theoretic}{Corollary {\ref{cor:type-theoretic}}}
\newtheorem*{thm:q-spaces}{Theorem from \cite[\S II.3]{quillen1967homotopical}}
\newtheorem*{cor:q-equivalences}{Corollary {\ref{cor:q-equivalences}}}
\newtheorem*{cor:m-equivalences}{Corollary {\ref{cor:m-equivalences}}}
\newtheorem*{cor:excision}{Corollary {\ref{cor:excision}}}
\newtheorem*{cor:cubical.diequivalence}{Corollary {\ref{cor:cubical.diequivalence}}}
\newtheorem*{thm:cartesian.closed.streams}{\THMXClosed}
\newtheorem*{thm:whitehead}{Theorem \ref{thm:whitehead}}
\newtheorem*{prop:topological}{\PROPTop}
\newtheorem*{prop:sd}{\PropSD}
\newtheorem*{prop:inclusions}{\PROPInclusions}
\newtheorem*{thm:d-homotopy}{\ThmDHomotopy}
\newtheorem*{thm:finite.modularity}{\THMFiniteModularity}
\newtheorem*{thm:finite.distributivity}{\THMFiniteDistributivity}
\newtheorem*{lem:box-inclusions}{\LEMBoxInclusions{}}
\newtheorem*{prop:distributivity.boolean.generated}{Proposition \ref{prop:distributivity.boolean.generated}}
\title{Cubical Approximation for Directed Topology II}
\author{Sanjeevi Krishnan}
\begin{document}
\begin{abstract}
  The paper establishes an equivalence between directed homotopy categories of (diagrams of) cubical sets and (diagrams of) directed topological spaces.
  This equivalence both lifts and extends an equivalence between classical homotopy categories of cubical sets and topological spaces.  
  Some simple applications include combinatorial descriptions and subsequent calculations of directed homotopy monoids and directed singular $1$-cohomology monoids.
  Another application is a characterization of isomorphisms between small categories up to zig-zags of natural transformations as directed homotopy equivalences between directed classifying spaces.
  Cubical sets throughout the paper are taken to mean presheaves over the minimal symmetric monoidal variant of the cube category. 
  Along the way, the paper characterizes morphisms in this variant as the interval-preserving lattice homomorphisms between finite Boolean lattices.
\end{abstract}

\maketitle
\tableofcontents
\addtocontents{toc}{\protect\setcounter{tocdepth}{1}}

\section{Introduction}\label{sec:introduction}

The qualitative behavior of a complex system often corresponds to features of a \textit{directed topological} state space invariant under \textit{directed homotopy}, continuous deformations respecting the directionality.
For a simple example, the existence of non-constant causal curves is a directed homotopy invariant on a spacetime.
Traditionally, directed homotopy theory has been applied to deduce subtle but critical behavior in simple concurrent computer programs from state spaces modelled as unions of cubes each equipped with their standard product orders.
More recently, classical homotopy colimits of dynamical systems (e.g. \cite{jardine2013homotopy}), when equipped with extra directed structure, have been used to give semantics for a formal logic of hybrid systems \cite{winkowski2007towards}.  

The main challenge in the classification of directed topological spaces up to directed homotopy is that cell-by-cell constructions, ubiquitous in classical homotopy theory, are not possible.    
For example, proofs of cellular approximation, simplicial approximation, and the Whitehead Theorem involve constructing some kind of map on a cell complex one cell at a time; such constructions are possible because the inclusion of a boundary of a cube into a cube satisfies the homotopy extension property.  
In contrast, inclusions of directed topological spaces in nature almost never satisfy directed homotopy extension properties [Figure \ref{fig:hep.failure}].  
In intuitive terms, a seemingly minor local deformation in a directed topological state space $X$ can sometimes drastically affect the global behavior of the system that $X$ represents.
In homotopical parlance, directed topological state spaces almost never decompose into homotopy colimits, with respect to directed homotopy, of simpler directed state spaces. 
This indecomposability is seen to various degrees in other refinements \cite{chorny2012class,isaksen2001model,krishnan2022uniform,raptis2021bounded} of classical homotopy theory.  

\begin{figure}
		\includegraphics[width=3in,height=1.5in]{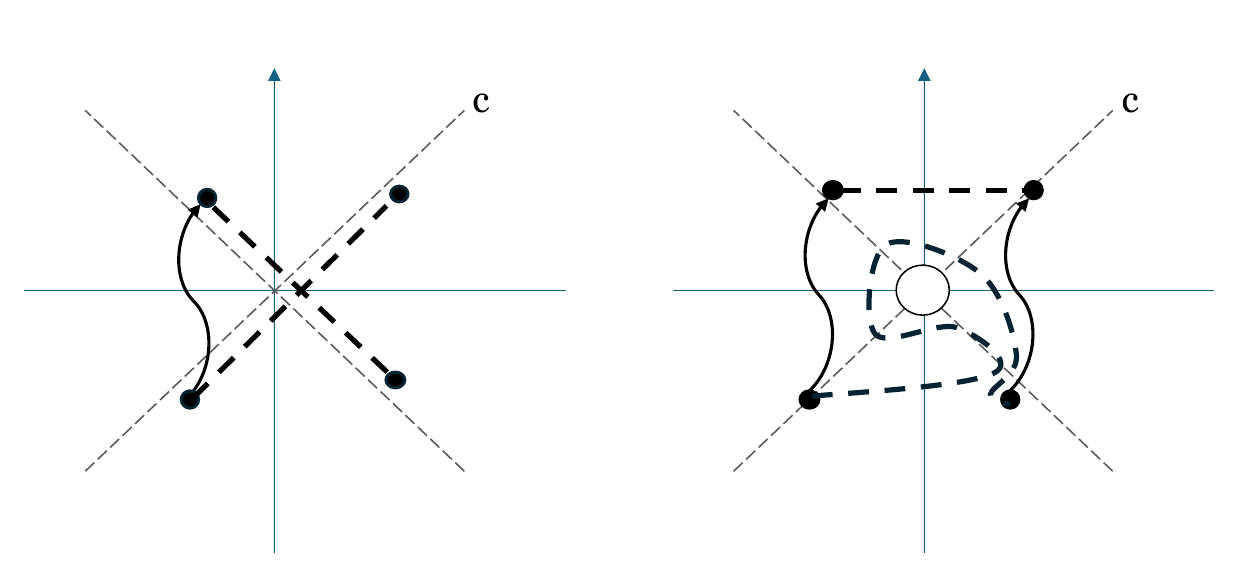}
	  \caption{
	    {\bf Directed Homotopy Inextendability}
	    Depicted above are Minkowski $(1+1)$-spacetimes.
	    Thin dotted diagonal lines represent the speed of light.    
	    Curves represent causal paths, smooth paths whose derivative always lies in the future-facing part of the closed light cone.  
	    In the left picture, a dotted homotopy of the restriction of the causal path to its endpoints extends to a classical homotopy from the causal path, but not to another causal path.  
	    In the right picture, a dotted homotopy of the restriction of the left causal path to its endpoints extends to a classical homotopy from one causal path to another causal path, but not \textit{through} such causal paths; a homotopy between such causal paths must go through a path that travels faster than the speed of light.    
	    A directed homotopy is, in particular, a homotopy \textit{through} directed maps, such as causal paths.  
	    }
	    \label{fig:hep.failure}
\end{figure}

Nonetheless, directed topological spaces in nature can often be encoded by combinatorial data. 
Closed $(1+1)$-spacetimes and the state spaces of concurrent computer programs arise as \textit{directed realizations} $\direalize{C}$ of \textit{cubical sets} $C$, abstract recipes for gluing cubes together.  
Some salient causal and conformal structure of higher dimensional spacetimes $X$ can be captured in their \textit{directed singular cubical sets} $\sing\,X$, directed and cubical analogues of singular complexes.   
What we want to understand is the \textit{directed homotopy category}, the category whose objects are cubical sets but whose morphisms are directed homotopy classes of maps between their directed realizations.
Existing work in that direction are cubical approximation theorems, combinatorial formulas for hom-sets of morphisms $A\ra B$ in the directed homotopy category, when either $A$ is a directed graph \cite[Theorem 4.1]{fajstrup2005dipaths} or $A$ is finite and $B$ satisfies a simplicial-like condition \cite[Corollary 8.2]{krishnan2015cubical}.
And what we want to eventually understand are higher derived variants of that directed homotopy category.     
Existing work in that direction is a prod-simplicial approximation of spaces of directed paths relative endpoints in directed realizations of interest in the applications \cite[Theorem 3.5]{raussen2010simplicial}.

\subsection{Cubical approximation}
The main results give equivalent combinatorial and directed topological descriptions of the directed homotopy category. 
In fact, this equivalence is stated and proven more generally as an \textit{equivariant} equivalence [Theorem \ref{thm:equivalence}], where all directed topological spaces and cubical sets are replaced by diagrams indexed by some small category $\indexcat{1}$; notably, a classical counterpart to this equivariant equivalence does not follow from existing, non-algebraic (cf. \cite{riehl2011algebraic}) Quillen equivalences.  
For simplicity we state the non-equivariant case, that directed realization $\direalize{-}$ from the category $\CUBICALSETS$ of cubical sets to the category $\DITOP$ of directed topological spaces passes to an equivalence $d(\CUBICALSETS)\simeq d(\DITOP)$ of directed homotopy categories.

\begin{thm:non.equivariant.equivalence}
  There exist dotted vertical localizations in the diagram
  \begin{equation*}
	  \begin{tikzcd}
		  \CUBICALSETS\ar{r}[above]{\direalize{\;-\;}}\ar[d,dotted] & \DITOP\ar[d,dotted]\\
		  d(\CUBICALSETS)\ar[dotted]{r}[below]{\simeq} & d(\DITOP)
	  \end{tikzcd}
  \end{equation*}
  by those cubical functions whose directed realizations are directed homotopy equivalences and those directed maps inducing cubical homotopy equivalences between directed singular cubical sets.  
  There exists a dotted horizontal adjoint equivalence making the entire diagram commute.  
\end{thm:non.equivariant.equivalence}

This equivalence has several simple consequences.
One is that an existing kind of directed equivalence between small categories \cite{minian2002cat}, intermediate in generality between Thomason weak equivalences and categorical equivalences, can be characterized as a functor inducing a directed homotopy equivalence between \textit{directed classifying spaces} [Theorem \ref{thm:1-ditypes}].
Another is a combinatorial description and subsequent calculation of \textit{directed homotopy monoids}, directed analogues of homotopy groups, on based directed classifying spaces [Corollaries \ref{prop:homotopy.calculations} and \ref{cor:directed.milnor}].
Yet another is a combinatorial description and subsequent calculation of \textit{singular directed $1$-cohomology monoids} [Proposition \ref{prop:cohomological.calculation} and Examples \ref{eg:torus.calculation} and \ref{eg:klein.calculation}], directed generalizations of singular $1$-cohomology groups beyond the setting of Abelian group coefficients, that give a functorial, causal and conformal invariant on spacetimes (cf. \cite{benini2016optimal,khavkine2016cohomology}).

What makes such calculations practical is a recognition of when a cubical set is something we term a \textit{cubcat}.  
Cubcats are cubical sets admitting extra unary operations on cubes parametrized by monotone maps between topological cubes and compatible composition operations on successive pairs of cubes [Definition \ref{defn:cubcats}].
Cubcats are directed analogues of \textit{fibrant} cubical sets, fibrant objects in a model structure on cubical sets Quillen equivalent to the usual model structure on topological spaces along realization and hence cubical models of classical weak types.  
Singular cubical sets of topological spaces and \textit{cubical nerves} of small groupoids are examples of fibrant cubical sets. 
Cubcats at once generalize small categories, directed analogues of singular complexes, and higher groupoids. 

\begin{prop:non.equivariant.nerves}
  Cubical nerves of small categories are cubcats.
\end{prop:non.equivariant.nerves}

\begin{prop:non.equivariant.singular.cubcats}
  Directed singular cubical sets are cubcats.
\end{prop:non.equivariant.singular.cubcats}

\begin{prop:fibrant.cubcats}
  Fibrant cubical sets are cubcats.
\end{prop:fibrant.cubcats}

Every cubical set can be approximated by a cubcat without changing the directed homotopy type of its directed realization [Theorem \ref{thm:equivalence} and Proposition \ref{prop:singular.cubcats}].  
Cubcats are convenient because of the following combinatorial description of the directed hom-space $\direalize{C}^{\direalize{\;B\;}}$ up to directed homotopy equivalence, a higher derived variant of the hom-set $d(\CUBICALSETS)(B,C)$, when $C$ is a cubcat (c.f. \cite[Theorem 3.5]{raussen2010simplicial}.)  

\begin{cor:non.equivariant.derived.formula}
  For cubcats $C$, natural directed maps of the form
  $$\direalize{C^B}\ra\direalize{C}^{\direalize{\;B\;}}$$
  represent $d(\DITOP)$-isomorphisms.  
\end{cor:non.equivariant.derived.formula}

A simple consequence is a formula for the set $d(\CUBICALSETS)(B,C)$ of directed homotopy classes of directed maps between directed realizations of cubical sets $B$ and $C$, when $C$ is a cubcat.

\begin{thm:non.equivariant.formula}
  For all cubical sets $B$ and cubcats $C$,
  \begin{equation}
    \label{eqn:non.equivariant.formula}
    d(\CUBICALSETS)(B,C)=\pi_0C^B,
  \end{equation}
  the set of cubical homotopy classes of cubical functions $B\ra C$.  
\end{thm:non.equivariant.formula}

Cubical homotopy types of cubcats correspond to directed homotopy types of directed realizations, just as cubical homotopy types of fibrant cubical sets correspond to classical homotopy types of topological realizations.  

\subsection{Intuition}
We can encode the operations of a cubcat by an endofunctor $\cubcatreplacement$.
Concretely, we can take $\cubcatreplacement C$ to be the cubical set of all directed singular cubes in the directed realization $\direalize{C}$ of a cubical set $C$ satisfying a certain local lifting property.   
Intuitively, we can think of each cube in $\cubcatreplacement C$ as a formal application of composition and unary operations to the cubes in $C$.
A cubcat can then be characterized precisely as a cubical set $C$ onto which $\cubcatreplacement\,C$ retracts [Proposition \ref{prop:free.cubcat.monad}].

The main results follow from two parts.
The first shows that inclusions $\cubcatreplacement C\ira\sing\direalize{C}$ admit suitably natural cubical homotopy inverses [Lemma \ref{lem:cubcats.are.sing.algebras}].
The second shows that the comonad of the adjunction $\direalize{-}\dashv\sing$ is directed homotopy idempotent [Lemma \ref{lem:homotopy.idempotent.comonad}].
Throughout the proofs, cell-by-cell constructions have to give way to constructions that are \textit{natural} in each cell and therefore applicable on all cells at once (e.g. Lemmas \ref{lem:hypercube.convexity}, \ref{lem:natural.approximations}, \ref{lem:close.maps}). 

The first part uses techniques developed previously \cite{krishnan2015cubical} and superficially streamlined in this paper [Lemmas \ref{lem:collapse.star}, \ref{lem:star.flower}, and \ref{lem:natural.retractions}].
The key observation is that a natural cubical function $\sd_9C\ra C$ from a $9$-fold edgewise subdivision $\sd_9C$ of a cubical set $C$ locally factors through representables [Lemma \ref{lem:local.lifts}].
Composition of directed singular cubes in $\direalize{C}$ with a directed homotopy equivalence $\direalize{C}\cong\direalize{\sd_9C}\simeq\direalize{C}$ gives the desired cubical homotopy inverse [Lemma \ref{lem:cubcats.are.sing.algebras}].

The second part requires more delicacy.
When $C$ is the cubical analogue of a regular CW complex, the closed cells of $\direalize{C}$ have topological lattice structure that can be used to construct the desired cubical homotopies [Lemma \ref{lem:restricted.homotopy.idempotent.comonad}].  
For general $C$, we want to lift each directed singular cube $\theta$ on $\direalize{C}$ along a directed map $\direalize{C_\theta\ra C}:\direalize{C_\theta}\ra\direalize{C}$, with $C_\theta$ a cubical analogue of a regular CW complex.  
Each directed singular cube $\theta$ on $\direalize{C}$ determines a diagram of cubes that constitutes the support of $\theta$ in $C$.
The strategy is to replace that diagram with one whose colimit satisfies the regularity condition and admits a natural lifting property (cf. \cite{bourke2016algebraic,grandis2006natural,riehl2011algebraic}) that holds not necessarily on the nose but at least up to natural directed homotopy. 

This strategy requires a new technique.  
The desired replacement is reminiscent of cofibrant replacement in a projective diagram model structure satisfying additional naturality constraints (e.g. \cite[Theorem 4.5]{riehl2011algebraic}.)
But the category $\BOX$ of cubes is too small to permit a suitable small object argument \cite{bourke2016algebraic}, implicit in the construction of such cofibrant replacements.
The key idea is to encode all lifts of interest as a single, natural lift [Lemma \ref{lem:star.rlp}] in the \textit{pro-completion} [Section \S\ref{sec:pro-completions}] of $\BOX$.
Coherence results for diagrams in pro-completions [\THMProDiagrams] give the replacement [Lemma \ref{lem:bamfl}].  

\subsection{Setting}
The dependence of the results on the exact setting is summarized here for the interested specialist.  
\textit{Directed topological spaces}, taken to mean \textit{streams} \cite{krishnan2009convenient}, can just as easily be taken to mean \textit{d-spaces} \cite{grandis2003directed}.      
\textit{Directed homotopy} is taken to mean a \textit{d-homotopy} \cite{grandis2003directed} as opposed to an \textit{h-homotopy} (e.g. \cite{krishnan2019hurewicz}), where an \textit{h-homotopy} is a homotopy through directed maps and a \textit{d-homotopy} is an h-homotopy that is also piecewise monotone and anti-monotone in its homotopy coordinate. 
An h-homotopy can always be replaced by a d-homotopy when the domain is compact and the codomain is a directed realization of a cubical set (c.f. \cite[Theorem 8.22]{krishnan2015cubical}).
The category $\BOX$ of cubes is enlarged, from the usual minimal variant used in the predecessor \cite[Theorem 8.22]{krishnan2015cubical} to this paper, to the minimal symmetric monoidal variant.
One convenience is a simple and explicit characterization of the $\BOX$-morphisms [Theorem \ref{thm:box.characterization}].
A subsequent convenience is an order-theoretic construction of cubical edgewise subdivision [Propositions \ref{prop:lattice.subdivision} and \ref{prop:cubical.subdivision}].
However, the most important application is to diagram replacement; its construction relies on the specific fact, unique to our variant $\BOX$ of the cube category, that parallel $\BOX$-epis (parallel projections) are isomorphic (by coordinate permutations) in the arrow category.  
Other technical results along the way [Lemmas \ref{lem:collapse.star}, \ref{lem:star.flower}, \ref{lem:natural.retractions}, \ref{lem:local.lifts}, \ref{lem:hypercube.convexity}, \ref{lem:natural.retractions}, \ref{lem:close.maps}] work for most variants of $\BOX$ in the literature that exclude the reversals and diagonals.  

\begin{figure}
  \begin{equation*}
   \xymatrix{
      \bullet\ar[r] & \bullet\\
      \bullet\ar[r]\ar[u] & \bullet\ar[u]
    }\quad
    \xymatrix{
      & \bullet\\
        \bullet\ar@/^5ex/[ur]\ar@/_5ex/[ur]
    }\quad
   \xymatrix{
      & \bullet\ar@/^5ex/[dl]\\
      \bullet\ar@/^5ex/[ur]
    }\quad
   \xymatrix{
      \bullet\ar[r] & \bullet\ar[d]\\
      \bullet\ar[u] & \bullet\ar[l]
    }
  \end{equation*}
  \caption{
  {\bf Equivalence as different categorical structures}.
  The directed graphs above freely generate equivalent groupoids but freely generate mutually inequivalent categories, some of which are nonetheless directed homotopy equivalent to one another.  
  After passage to free categories, the left two directed graphs are directed homotopy equivalent to one another, the right two directed graphs are directed homotopy equivalent to one another, but the left two and the right two are not directed homotopy equivalent to one another.
  Intuitively, classical equivalences ignore the structure of time in state spaces while categorical equivalences are sensitive to arbitrary subdivisions of time.  
  Directed homotopy sidesteps some of the combinatorial explosion that bedevils geometric models of state spaces sensitive to arbitrary subdivisions in time.
  Section \S\ref{sec:categorical.homotopy} formalizes the different notions of equivalence between small categories.  
    }
  \label{fig:spectrum}
\end{figure}

\subsection{Organization}
Some conventions are fixed in \S\ref{sec:conventions}. 
Point-set theories of directed topological spaces and cubical sets are recalled, further developed, and compared in \S\ref{sec:spaces}.
Homotopy theories, classical, directed, and categorical, are formalized and compared, albeit without some proofs, in \S\ref{sec:homotopy}.
Cubcats are introduced and used to give proofs of the main results in \S\ref{sec:cubcats}. 
The potential role of cubcats in a directed type theory is sketched in \S\ref{sec:conclusion}.  
Some relevant facts about modular lattices, triangulations, and pro-completions are recalled and further developed in Appendices \S\ref{sec:modular.lattices}, \S\ref{sec:triangulations}, and \S\ref{sec:pro-completions}. 
A method of diagram replacement, needed in the proof of cubical approximation, is developed in Appendix \S\ref{sec:diagram.replacement}.

\addtocontents{toc}{\protect\setcounter{tocdepth}{2}}

\section{Conventions}\label{sec:conventions}
This section first fixes some conventions.
Let $k,m,n,n_1,n_2,p,q$ denote natural numbers.
Let $i$ denote an integer.  
Let $\I$ denote the unit interval.
Let $\im\,f$ denote the image of a function $f$.  
Let $\ira$ denote an inclusion of some sort, such as an inclusion of a subset into a set, a subspace into a space, or a subcategory into a category.
Write $\graph{(\leqslant_X)}$ for the \textit{graph} of a preorder $\leqslant_X$ on a set $X$, the subset of $X^2$ consisting of all pairs $(x,y)$ such that $x\leqslant_Xy$.  

\subsubsection{Categories}
Let $\cat{1},\cat{2}$ denote arbitrary categories.
Let $\shape{1},\smallcat{1},\smallcat{2},\indexcat{1}$ denote small categories.
Let $\star$ denote a terminal object in a given category.
For a given monoidal category, let $\otimes$ denote its tensor product.
For each object $o$ in a given closed monoidal category, let $o^{(-)}$ denote the right adjoint to the endofunctor $o\otimes-$.
More generally for each object $o$ in a given closed monoidal category $\modelcat{1}$ and a given category $\cat{1}$ enriched, tensored, and cotensored over $\modelcat{1}$, we more generally write $-\otimes o$ and $o^{(-)}$ for the left and right endofunctors on $\cat{1}$ in an adjunction $-\otimes o\dashv o^{(-)}$, natural in $\modelcat{1}$-objects $o$, defined by tensoring and cotensoring with $o$.  
Notate special categories as follows.
\vspace{.1in}\\
\begin{tabular}{rll}
  $\SETS$ & sets (and functions)\\
  $\TOP$ & (weak Hausdorff k-)spaces (and continuous functions) \\
  $\CATS$ & small categories (and functors) \\
  $\GROUPOIDS$ & small groupoids (and functors) \\
  $\POTOP$ & locally order-convex pospaces with connected intervals & \S\ref{sec:pospaces}\\
  $\DITOP$ & (weak Hausdorff k-)streams & \S\ref{sec:streams}\\
  $\DISLATS$ & finite distributive lattices & \S\ref{sec:cube.configurations}\\
  $\INTERVALCAT$ & domain of abstract interval objects & \S\ref{sec:cubes} \\
  $\BOX$ & cube category  & \S\ref{sec:cubes}\\
  $\REGULARCUBICALSETS$ & cubical analogues of regular CW complexes & \S\ref{sec:cubical.sets}\\
  $\STARS_{k+1}$ & category of closed stars & \S\ref{sec:cubical.sets}
\end{tabular}

\vspace{.1in}

Let $[k]$ denote the set $\{0,1,\ldots,k\}$ equipped with the standard total order, regarded as a small category with objects $0,1,\ldots,k$ and exactly one arrow $m\ra n$ when $m\leqslant n$ and no arrows $m\ra n$ when $n<m$.  
The notation $[k]^n$ will refer to the $n$-fold $\CATS$-product of $[k]$.  
The notation $(-)^{[k]}$ will always refer to the right adjoint in the adjunction
$$-\times_{\CATS}[k]:\CATS\lras\CATS:(-)^{[k]}.$$

\begin{eg}
  \label{eg:arrow.category}
  The arrow category of a small category $\smallcat{1}$ is $\smallcat{1}^{[1]}$.
\end{eg}

Write $\hat{\shape{1}}$ for the category of $\SETS$-valued presheaves on $\shape{1}$, the functor category
$$\hat{\shape{1}}=\SETS^{\OP{\shape{1}}}.$$

Write $\shape{1}[-]$ for the Yoneda embedding $\shape{1}\ira\hat{\shape{1}}$. 
Let $F/G$ denote the comma category for diagrams $F,G$ in the same category.
For a diagram $F$ in $\hat{\shape{1}}$, let $\shape{1}/F=\shape{1}[-]/F$.
Let $\id_o$ denote the identity morphism for an object $o$ in a given category.
A functor $F:\cat{1}\ra\cat{2}$ is \textit{topological} if, for each diagram $D:\smallcat{1}\ra\cat{1}$, every cone $x\ra FD$ in $\cat{2}$ admits an initial lift to a cone in $\cat{1}$ along $F$; topological functors create limits and colimits \cite{borceux1994handbook}.

\subsubsection{Diagrams}
We will sometimes regard diagrams in a category $\cat{1}$ as equivariant versions of $\cat{1}$-objects.  
When we do, we adopt the following terminology.  
We take \textit{$\indexcat{1}$-streams}, \textit{$\indexcat{1}$-cubical sets}, and \textit{$\indexcat{1}$-categories} to mean $\indexcat{1}$-shaped diagrams in the respective categories $\DITOP$, $\CUBICALSETS$, and $\CATS$.  
We take \textit{$\indexcat{1}$-stream maps}, \textit{$\indexcat{1}$-cubical functions}, and \textit{$\indexcat{1}$-functors} to mean natural transformations between $\indexcat{1}$-streams, $\indexcat{1}$-cubical sets, and $\indexcat{1}$-categories.
We define enrichments on the categories of $\indexcat{1}$-streams and $\indexcat{1}$-cubical functions at the ends of sections \S\ref{sec:streams} and \S\ref{sec:cubical.sets}.  

\subsubsection{Preordered Sets}
For each preorder $\leqslant_{X}$ on a set or even more general class $X$, we write $\graph{(\leqslant_X)}$ for the set or more general class of all pairs $(x,y)$ for which $x\leqslant_Xy$.    
Preordered sets $P$, sets $P$ equipped with preorders which we denote by $\leqslant_P$, will be regarded as small categories with object set given by the underlying set of $P$ and with one morphism $x\ra y$ precisely when $x\leqslant_Py$.
In that sense a \textit{poset} is a skeletal preordered set.
A subposet $P$ of a poset $Q$ is \ldots
\begin{enumerate}
  \item \ldots \textit{order-convex in $Q$} if $y\in P$ whenever $x\leqslant_Qy\leqslant_Qz$ and $x,z\in P$
  \item \ldots an \textit{interval in $Q$} if it is order-convex and has both a minimum and maximum
  \item \ldots a \textit{chain in $Q$} if $\leqslant_P$ is total.
\end{enumerate}

In a poset $P$, write $x\vee_Py$ for the unique supremum of $x,y$ if it exists and $x\wedge_Py$ for the unique infimum of $x,y$ if it exists.  
A \textit{lattice} is always taken in the order-theoretic sense to mean a poset having all binary infima and binary suprema.
A lattice is \textit{complete} if it has all arbitrary infima and suprema.  
A lattice is \textit{distributive} if $x\wedge_L(y\vee_Lz)=(x\wedge_Ly)\vee_L(x\wedge_Lz)$ for all $x,y,z\in L$ or equivalently if $x\vee_L(y\wedge_Lz)=(x\vee_Ly)\wedge_L(x\vee_Lz)$ for all $x,y,z\in L$.  

\begin{eg}
  \label{eg:arrow.lattice}
  For a finite distributive lattice $L$, $L^{[k]}$ is a finite distributive lattice with
  $$(\alpha\vee_{L^{[k]}}\beta)(x)=\alpha(x)\vee_L\beta(x)\quad(\alpha\wedge_{L^{[k]}}\beta)(x)=\alpha(x)\wedge_L\beta(x)$$
  for all $x\in[k]$.  
\end{eg}

\begin{eg}
  For all $k$ and $n$, $\multiboxobj{k}{n}$ is a distributive lattice.
\end{eg}

A poset is \textit{Boolean} if it is $\CATS$-isomorphic to a power set, regarded as a poset under inclusion.
Every interval in a Boolean lattice is Boolean. 

\begin{eg}
  The finite Boolean lattices are, up to $\CATS$-isomorphism,
  $$[0],[1],\boxobj{2},\boxobj{3},\ldots$$
\end{eg}

A \textit{monotone function} will be taken to mean a functor $\phi:P\ra Q$ between preordered sets, a function $\phi:P\ra Q$ with $\phi(x)\leqslant_Q\phi(y)$ whenever $x\leqslant_Py$.
Define monotone functions
	\begin{align*}
	  \delta_\pm &:[0]\ra[1] &&  \delta_{\pm}(0)=\half\pm\half\\
	  \sigma&:[1]\ra[0]
	\end{align*}
A monotone function $\phi:L\ra M$ of finite lattices \textit{preserves (Boolean) intervals} if images of (Boolean) intervals in $L$ under $\phi$ are (Boolean) intervals in $M$.
A \textit{lattice homomorphism} is a function $\phi:L\ra M$ between lattices preserving binary suprema and binary infima.  

\subsubsection{Supports}
We employ some common notation for \textit{supports} and \textit{carriers}, like the support of a point in a topological realization or the carrier of a cube in a cubical subdivision. 
Consider a functor $F:\cat{1}\ra\cat{2}$ and $\cat{1}$-object $o$ admitting a complete lattice of subobjects.
Let $\support_F(\zeta,o)$ denote the minimal subobject of $o$ to which $\zeta$ corestricts, for each $\cat{2}$-morphism $\zeta$ to $Fo$. 
After identifying a point $x$ in a topological space $X$ with the map $\star\ra X$ from the singleton space whose image contains $x$, $\support_{|-|}(x,B)$ is the usual support of a point $x$ in the topological realization $|B|$ of a simplicial set $B$, the minimal subpresheaf $A\subset B$ with $x\in|A|$. 

\subsubsection{Constructions}
For reference, we list certain constructions defined throughout.
\vspace{.1in}\\
\begin{tabular}{rll}
  $\sd_{k+1}$ & subdivisions & \S\ref{sec:cube.configurations}, \S\ref{sec:cubical.sets}\\
  $\epsilon$ & natural transformation $\sd_3\ra\id_{\CUBICALSETS}$ & \S\ref{sec:cubical.sets}\\
  $\ex_{k+1}$ & right adjoint to $\sd_{k+1}$ & \S\ref{sec:cubical.sets}\\
  $\nerve$ & cubical nerves & \S\ref{sec:cubical.sets}\\
  $\fundamentalcat$ & fundamental category & \S\ref{sec:cubical.sets}\\
  $\fundamentalgroupoid$ & fundamental groupoid & \S\ref{sec:cubical.sets}\\
  $\Omega^n$ & $n$-fold directed loop space & \S\ref{sec:cubical.sets}\\
  $|\!-\!|$ & topological realizations & \S\ref{sec:space.comparisons}\\
  $\direalize{-}$ & directed realizations & \S\ref{sec:space.comparisons}\\
  $\csing$ & directed cubical singular functor & \S\ref{sec:space.comparisons}\\
	$\catfont{S}$ & monad of the adjunction $\direalize{-}\dashv\sing$ & \S\ref{sec:space.comparisons}\\
  $[-,-]_{\mathfrak{i}}$ & homotopy classes with respect to interval object $\mathfrak{i}$ & \S\ref{sec:abstract.homotopy}\\
  $\mathfrak{d}$ & cannonical interval object in $\CUBICALSETS$ & \S\ref{sec:abstract.homotopy}\\
  $\pi_0$ & path-components & \S\ref{sec:cubical.homotopy}, \S\ref{sec:continuous.homotopy}\\
		$\mathfrak{h}$ & interval object in $\DITOP$ that defines h-homotopy & \S\ref{sec:continuous.directed.homotopy}\\
		$\tau_n$ & $n$th directed homotopy monoid & \S\ref{sec:cubical.homotopy},\S\ref{sec:continuous.directed.homotopy}\\
  $\cohomology{1}$ & cubical $1$-cohomology & \S\ref{sec:cubical.homotopy}\\
  $\cohomology{1}_{\sing}$ & directed singular $1$-cohomology & \S\ref{sec:directed.homotopical.comparisons}\\
  $\cubcatreplacement$ & pointed endofunctor defining cubcats & \S\ref{sec:cubcats}\\
	\end{tabular}

\section{Directed Spaces}\label{sec:spaces}
Directed spaces can be modelled topologically and combinatorially.
This section recalls topological models, presheaf models, and comparison functors between them.  
\textit{Streams} provide topological models of directed spaces.  
\textit{Cubical sets}, presheaves over a cube category $\BOX$, provide combinatorial models of directed spaces. 
Streams can be constructed from cubical sets as \textit{directed realizations}.
Novel material includes a characterization of morphisms in the cube category [Theorem \ref{thm:box.characterization}] and a subsequent order-theoretic construction of cubical subdivision [\S\ref{sec:cube.configurations} and Proposition \ref{prop:cubical.subdivision}].

\subsection{Continuous}\label{subsec:directed.topological.spaces}
Directed spaces are modelled topologically in this paper as \textit{streams} \cite{krishnan2009convenient}.
An alternative topological model for directed spaces, common in the literature and essentially interchangeable with streams as foundations for directed homotopy, are \textit{d-spaces} \cite{grandis2003directed}.
An advantage of a stream-theoretic foundation for directed topology is that it naturally subsumes some of the theory of pospaces, whose point-set theory is well-developed in the literature \cite{lawson1973intrinsic}.   

\subsubsection{Pospaces}\label{sec:pospaces}
A \textit{pospace} is a poset $P$ topologized so that $\graph{(\leqslant_P)}$ is closed in the standard product topology on $P^2$.
A pospace $P$ is \textit{locally order-convex} if the underlying topology of $P$ has an open basis consisting of subsets which are order-convex in the underlying poset of $P$.  

\begin{thm:nachbin}
  Every compact pospace is locally order-convex.  
\end{thm:nachbin}

A \textit{subpospace} of a pospace $Q$ is a pospace $P$ that is at once a subposet and subspace of $Q$.  
A \textit{topological lattice} is a lattice $L$ topologized so that $\vee_L,\wedge_L$ are jointly continuous functions $L^2\ra L$. 
The underlying topological spaces of pospaces are necessarily Hausdorff.  
Conversely, topological lattices with Hausdorff underlying topological spaces are pospaces. 
Write $\vec{\I}$ and $\vec{\R}$ for the respective unit interval $\I$ and real number line $\R$ each equipped with the standard total order.
Write $\vec{\I}^n$ and $\vec{\R}^n$ for the pospaces whose underlying posets are $n$-fold products of underlying posets in the category of posets and monotone functions and whose underlying topological spaces are $n$-fold products in $\TOP$.  

\begin{eg}
  Fix $n$.  
  The pospace $\vec{\R}^n$, $\R^n$ equipped with the partial order defined by
  $$(x_1,x_2,\ldots,x_n)\leqslant_{\vec{\I}^n}(y_1,y_2,\ldots,y_n)\iff y_1-x_1,y_2-x_2,\ldots,y_n-x_n\geqslant 0,$$
  is a locally order-convex Hausdorff topological lattice.
\end{eg}

A \textit{monotone map} of pospaces is a function between pospaces that is at once monotone as a function between underlying posets and continuous as a function between underlying topological spaces.  
Let $\POTOP$ be the concrete category whose objects are the locally order-convex pospaces $P$ such that the intervals in the underlying poset of $P$ are connected in the underlying space of $P$ and whose morphisms are all monotone maps between such pospaces.
Every topological lattice whose underlying topological space is compact Hausdorff and connected is a $\POTOP$-object \cite[Proposition VI-5.15]{gierz2003continuous}.  

\begin{eg}
  The pospaces $\vec{\I}^n$ and $\vec{\R}^n$ are $\POTOP$-objects.  
\end{eg}

  \subsubsection{Streams}\label{sec:streams}
	A \textit{circulation} on a topological space $X$ is a function
	$$\leqslant:U\mapsto\;\leqslant_U$$
	assigning to each open subset $U\subset X$ a preorder $\leqslant_U$ on $U$ such that $\leqslant$ sends the union of a collection $\mathcal{O}$ of open subsets of $X$ to the preorder with smallest graph containing $\graph{(\leqslant_U)}$ for each $U\in\mathcal{O}$ \cite{krishnan2009convenient}.
	A \textit{stream} is a space equipped with a circulation on it \cite{krishnan2009convenient}.
	Intuitively, $x\leqslant_Uy$ in a state stream whenever a system restricted to the subset $U$ of states can evolve from $x$ to $y$.

	\begin{eg}
	  \label{eg:initial.circulations}
	  Every topological space admits an \textit{initial circulation} $\leqslant$ defined by
			\begin{equation*}
				x\leqslant_Uy\iff x=y\in U
			\end{equation*}
	\end{eg}

	A continuous function $f:X\ra Y$ of streams is a \textit{stream map} if $f(x)\leqslant_Uf(y)$ whenever $x\leqslant_{f^{-1}U}y$ for each open subset $U$ of $Y$ \cite{krishnan2009convenient}.
	A \textit{k-space} $X$ is a colimit of compact Hausdorff spaces in the category of topological spaces and continuous functions.
	Similarly, a \textit{k-stream} is a colimit of compact Hausdorff streams in the category of streams and stream maps \cite{krishnan2009convenient}.
	The underlying space of a k-stream is a k-space \cite[Proposition 5.8]{krishnan2009convenient}.
	A topological space $X$ is \textit{weak Hausdorff} if images of compact Hausdorff spaces in $X$ are Hausdorff.
	Call a stream \textit{weak Hausdorff} if its underlying topological space is weak Hausdorff.

	\begin{prop:locally.compact.streams}
	  Locally compact Hausdorff streams are weak Hausdorff k-streams.
	\end{prop:locally.compact.streams}

	Let $\TOP$ denote the complete, cocomplete, and Cartesian closed \cite{mccord1969classifying} category of weak Hausdorff k-spaces and continuous functions between them.
	Let $\DITOP$ denote the category of weak Hausdorff k-streams and stream maps.
	Redefine \textit{topological space} and \textit{stream}, like elsewhere (e.g. \cite{krishnan2009convenient, may1999concise}), to means objects in the respective categories $\TOP$ and $\DITOP$.
	The concrete \textit{forgetful functor} $\DITOP\ra\TOP$ naturally sending a stream to its underlying topological space lifts topological constructions in the following sense.

	\begin{prop:topological}
	  The forgetful functor $\DITOP\ra\TOP$ is topological.
	\end{prop:topological}

	In other words, each class of continuous functions $f_i:X\ra Y_i$ from a topological space $X$ to streams $Y_i$ induces a terminal circulation on $X$ making the $f_i$'s stream maps $X\ra Y_i$.
	Equivalently and dually, each class of continuous functions from streams to a fixed topological space induces a suitably initial circulation on that topological space.
	In this manner, the \textit{quotient} of a stream $Y$ by a subset $X$ can be defined as the quotient space $Y/X$ equipped with the initial circulation making the quotient map $Y\ra Y/X$ a stream map. 
	In particular, the forgetful functor $\DITOP\ra\TOP$ creates limits and colimits.
	A \textit{stream embedding} is a stream map $e:Y\ra Z$ such that a stream map $f:X\ra Z$ corestricts to a stream map $X\ra Y$ whenever $\im\,f\subset\,\im\,e$.
	A \textit{substream} of a stream $Y$ is a stream $X$ such that inclusion defines a stream embedding $X\ra Y$.  

	\begin{eg}
	  An open substream is an open subspace with a restricted circulation.
	\end{eg}

	\begin{thm:x-closed}
	  The category $\DITOP$ is Cartesian closed.
	\end{thm:x-closed}

	The categories $\DITOP,\TOP$ will sometimes be regarded as Cartesian monoidal.
	Explicit constructions of circulations are often cumbersome.
	Instead, circulations can be implicitly constructed from certain global partial orders in the sense of the following result, a special case of a more general observation \cite[Lemmas 4.2, 4.4 and Example 4.5]{krishnan2009convenient}.
	The following theorem allows us to henceforth regard $\POTOP$-objects as streams and monotone maps between them as stream maps.

	\begin{thm:pospaces}
	  There exists a fully faithful and concrete embedding
	  $$\POTOP\ira\DITOP,$$
	  sending each $\POTOP$-object $P$ to a unique stream having the same underlying topological space as $P$ and whose circulation sends the entire space to the given partial order on $P$.
	\end{thm:pospaces}

	The point in defining $\DITOP$ is to be able to quotient $\POTOP$-objects in a way that still corresponds to quotienting the underlying topological spaces.  
	For example, we can define a directed analogue of a sphere as the quotient
	$$\vec{\mathbb{S}}^n=\quotient{\vec{\I}^n}{\partial\I^n}$$
	in $\DITOP$ of the $\POTOP$-object $\vec{\I}^n$ by the topological boundary $\partial\I^n$ of $\I^n$ in $\R^n$.  
        Denote the quotiented point in the stream $\vec{\mathbb{S}}^n$ by $\infty$.  
	In fact, it is possible to characterize $\vec{\mathbb{S}}^n$ up to isomorphism as the terminal compactification of $\vec{\mathbb{R}}^n$ in $\DITOP$ in the following sense.  
	There exists a horizontal stream embedding in the diagram below whose image is $\vec{\mathbb{S}}^n-\{\infty\}$.  
        For each vertical stream embedding of $\vec{\mathbb{R}}^n$ into a compact Hausdorff stream $K$ with dense image, there exists a unique dotted stream map making the entire diagram commute.
        \begin{equation*}
          \begin{tikzcd}
		  \vec{\mathbb{R}}^n\ar[r,hookrightarrow]\ar[d,hookrightarrow] & \vec{\mathbb{S}}^n\\
		  K\ar[ur,dotted]
	  \end{tikzcd}
	\end{equation*}

Call the $\DITOP^{\indexcat{1}}$-objects and $\DITOP^{\indexcat{1}}$-morphisms \textit{$\indexcat{1}$-streams} and \textit{$\indexcat{1}$-stream maps}.  
Regard each hom-set $\DITOP^{\indexcat{1}}(X,Y)$ as a stream such that the function
$$f\mapsto(f_g)_g:\DITOP^{\indexcat{1}}(X,Y)\ra\prod_g Y(g)^{X(g)},$$
where $g$ denotes a $\indexcat{1}$-object, is a stream embedding.  
In this manner we regard $\DITOP^{\indexcat{1}}$ as $\DITOP$-enriched.  
A \textit{based stream} is a pair $(X,x)$ of stream $X$ and $x\in X$.  
A \textit{based stream map} $f:(X,x)\ra(Y,y)$ from a based stream $(X,x)$ to a based stream $(Y,y)$ is a stream map $f:X\ra Y$ sending $x$ to $y$.  
We sometimes regard based streams $(X,x)$ as $[1]$-streams sending $0\ra 1$ to the inclusion $\{x\}\ra X$ and based stream maps as $[1]$-stream maps between such $[1]$-streams.  

\subsection{Cubical}\label{sec:cubical}
Directed cubes can be modelled as finite Boolean lattices, more general complexes of such cubes can be modelled as posets, and even more general formal colimits of such cubes can be modelled as cubical sets.

	\subsubsection{Cubes}\label{sec:cubes}
	There are several variants of the cube category (e.g. \cite{buchholtz2017varieties,grandis2003cubical}).
	In order to define a variant for use in this paper, we adopt the following notation.  
	For a monotone function $\phi:\boxobj{n_1}\ra\boxobj{n_2}$ and $1\leqslant i\leqslant n$, let $\phi_{i;n}$ denote the Cartesian monoidal product
	$$\phi_{i;n}=\boxobj{i-1}\otimes\phi\otimes\boxobj{n-i}:\boxobj{n+n_1-1}\ra\boxobj{n+n_2-1}.$$

	\textit{Codegeneracies} are monotone functions of the form $\sigma_{i;n}:\boxobj{n+1}\ra\boxobj{n}$.  
	\textit{Cofaces} are monotone functions of the form $\delta_{\pm i;n}=(\delta_{\pm})_{i;n}:\boxobj{n-1}\ra\boxobj{n}$.

	\begin{eg}
	  The codegeneracy $\sigma_{i;n}$ is exactly the projection
	  $$\sigma_{i;n}:\boxobj{n}\ra\boxobj{n-1}$$
	  onto all but the $i$th factor.  
	\end{eg}

  Let $\BOX_1$ denote the subcategory of $\CATS$ generated by $\delta_{\pm},\sigma$.  
  The submonoidal category of $\CATS$ generated by $\BOX_1$ is the usual, minimal variant of the cube category in the literature, the subcategory of $\CATS$ generated by all cofaces and codegeneracies.
  Instead let $\BOX$ denote the \textit{symmetric} monoidal subcategory of the Cartesian monoidal category $\CATS$ generated by $\BOX_1$, the subcategory of $\CATS$ whose objects are still the lattices $[0],[1],\boxobj{2},\boxobj{3},\ldots$ but whose morphisms are generated by the cofaces, codegeneracies, and coordinate permutations.
  The following observation allows us to extend certain results on the minimal variant of the cube category to the new variant $\BOX$.  

\begin{lem:box-inclusions}
  For each $n$ and interval $I$ in $\boxobj{n}$, there exist unique $m_I$ and composite 
  $$\boxobj{m_I}\ra\boxobj{n}$$
  of cofaces that has image $I$.
\end{lem:box-inclusions}

We will repeatedly use the convenient fact that $\BOX$ is the free strict symmetric monoidal category generated by the category $\BOX_1$ pointed at $[0]$: every solid horizontal functor to a symmetric monoidal category $\modelcat{1}$ sending $[0]$ to the unit uniquely extends to a strict monoidal functor making the following commute by observations made elsewhere \cite{grandis2003cubical}.
	\begin{equation*}
		\begin{tikzcd}
			\BOX_1\ar[d,hookrightarrow]\ar[r] & \modelcat{1}\\
			\BOX\ar[ur,dotted]
		\end{tikzcd}
	\end{equation*}

	There are some advantages to adding coordinate permutations to $\BOX$.
	One is that the class of all directed realizations of cubical sets (see \S\ref{sec:space.comparisons}) includes, for example, all closed conal manifolds whose cone bundles are fibrewise generating and free \cite[Theorem 1.1]{krishnan2019triangulations}.
	A bigger one is an explicit characterization of $\BOX$-morphisms [Theorem \ref{thm:box.characterization}] to which the rest of this section is devoted.

	\begin{eg}
	  In $\BOX$ the \ldots
	  \begin{enumerate}
	    \item \ldots isomorphisms are the coordinate permutations
	    \item \ldots monos are the cofaces up to coordinate permutation
		  \item \ldots epis are the codegeneracies up to coordinate permutation
	  \end{enumerate}
	\end{eg}

	Let $\tau$ denote the coordinate transposition $\boxobj{2}\ra\boxobj{2}$.
	\textit{Principal coordinate transpositions} are $\BOX$-morphisms of the form $\tau_{i;n}:\boxobj{n+2}\ra\boxobj{n+2}$.

	\begin{lem}
	  \label{lem:box.automorphisms}
	  The following are equivalent for a monotone function of the form
	  $$\phi:\boxobj{m}\ra\boxobj{n}.$$
	  \begin{enumerate}
	    \item\label{item:box.bijection} $\phi$ is bijective
	    \item\label{item:interval.preserving.lattice.isomorphism} $\phi$ is an interval-preserving bijection
	    \item\label{item:automorphism} $\phi$ is a lattice isomorphism
	    \item\label{item:permutation} $\phi$ is a coordinate permutation
	    \item\label{item:cosymmetry} $\phi$ is composite of principal coordinate transpositions
	    \item\label{item:box.automorphism} $\phi$ is a $\BOX$-isomorphism
	  \end{enumerate}
	\end{lem}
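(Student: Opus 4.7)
The plan is to prove the six conditions equivalent by the cycle
$$(\ref{item:permutation})\Rightarrow(\ref{item:cosymmetry})\Rightarrow(\ref{item:box.automorphism})\Rightarrow(\ref{item:automorphism})\Rightarrow(\ref{item:interval.preserving.lattice.isomorphism})\Rightarrow(\ref{item:box.bijection})\Rightarrow(\ref{item:permutation}).$$
Five of these six implications should be brief. For $(\ref{item:permutation})\Rightarrow(\ref{item:cosymmetry})$, I would use that the symmetric group on $n$ letters is generated by adjacent transpositions, each of which realizes a principal coordinate transposition on $\boxobj{n}$. For $(\ref{item:cosymmetry})\Rightarrow(\ref{item:box.automorphism})$, each $\tau_{i;n}$ is its own inverse in $\BOX$ and therefore a $\BOX$-isomorphism, and composites of isomorphisms are isomorphisms. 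For $(\ref{item:box.automorphism})\Rightarrow(\ref{item:automorphism})$ I would use that both $\phi$ and $\phi^{-1}$ are $\BOX$-morphisms and in particular monotone, so $\phi$ is an order-isomorphism of finite lattices, which automatically preserves binary meets and joins. The steps $(\ref{item:automorphism})\Rightarrow(\ref{item:interval.preserving.lattice.isomorphism})$ and $(\ref{item:interval.preserving.lattice.isomorphism})\Rightarrow(\ref{item:box.bijection})$ are immediate, as a lattice isomorphism sends an interval $[a,b]$ to the interval $[\phi(a),\phi(b)]$.

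The substantive work lies in $(\ref{item:box.bijection})\Rightarrow(\ref{item:permutation})$. First, counting elements $2^m=2^n$ gives $m=n$. Next, I would show that $\phi$ is rank-preserving: since $\phi$ is monotone, the minimum and maximum of $\boxobj{m}$ must be sent to the minimum and maximum of $\boxobj{n}$, and for any $x\in\boxobj{m}$ the image under $\phi$ of a maximal chain from $0$ to $x$ is a chain (not necessarily maximal) from $0$ to $\phi(x)$, yielding $\rank\phi(x)\geqslant\rank x$. The symmetric chain argument for maximal chains from $x$ to $1$ gives the reverse inequality, hence equality.

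Rank preservation then forces atoms to map bijectively to atoms. Since in a Boolean lattice each element is the join of the atoms below it and the atoms below $\phi(x)$ are exactly the $\phi$-images of the atoms below $x$ (by monotonicity, injectivity, and the count $\rank x=\rank\phi(x)$), the function $\phi$ is determined by its action on atoms. After identifying the atom sets of $\boxobj{m}$ and $\boxobj{n}$ with $\{1,\ldots,n\}$, this restriction is a permutation $\pi\in S_n$, and the formula $\phi(x)=\bigvee\{\phi(a):a\text{ atom},\;a\leqslant x\}$ exhibits $\phi$ as the coordinate permutation induced by $\pi$. The main obstacle is the rank-preservation step, whose one-sided half would fail for monotone bijections between arbitrary finite posets and which here relies crucially on the Boolean structure of the domain and codomain.
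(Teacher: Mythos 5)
Your proof is correct, and it takes a genuinely different route from the paper's. The paper runs the cycle $(1)\Rightarrow(2)\Rightarrow(3)\Rightarrow(4)\Rightarrow(5)\Rightarrow(6)\Rightarrow(1)$ and concentrates its hard work in $(1)\Rightarrow(2)$: it shows that a monotone bijection $\phi$ preserves extrema, then immediate successors, then maximal chains, and finally intervals by a counting argument (a $k$-cube contains $k!$ maximal chains with fixed endpoints, and this property is shown to characterize intervals among subposets of $\boxobj{n}$). Only then does it pass through $(3)$ to $(4)$ via the atom argument. You instead run $(4)\Rightarrow(5)\Rightarrow(6)\Rightarrow(3)\Rightarrow(2)\Rightarrow(1)\Rightarrow(4)$, which shares the two easy links $(4)\Rightarrow(5)\Rightarrow(6)$ with the paper but replaces $(6)\Rightarrow(1)$, $(1)\Rightarrow(2)$, $(2)\Rightarrow(3)$, $(3)\Rightarrow(4)$ with $(6)\Rightarrow(3)\Rightarrow(2)\Rightarrow(1)\Rightarrow(4)$; this sidesteps the chain-counting argument entirely by putting the substance in $(1)\Rightarrow(4)$, established via rank preservation (compare lengths of chains from the bottom and to the top) and the atom decomposition of Boolean lattices. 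Both proofs ultimately rest on the same combinatorial fact — a monotone bijection of finite Boolean lattices permutes the atoms and is determined by that permutation — but your path reaches it more directly: you never need to show interval-preservation as an intermediate step, getting it for free afterward from $(3)\Rightarrow(2)$. The trade-off is that the paper's route yields the intermediate characterizations $(2)$ and $(3)$ as consequences of $(1)$ with independent content, whereas you derive them "downhill" from the $\BOX$-isomorphism statement; for the purposes of this equivalence that is immaterial.
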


	The proof uses the fact that the symmetric group on $\{1,2,\ldots,n\}$ is generated by all principal transpositions, transpositions of the form $(i\,i+1)$ for $1\leqslant i<n$ \cite[\S 6.2]{coxeter1980generators}.

	\begin{proof}
	  Let ${\bf 0}$ denote the minimum $(0,\ldots,0)$ of an element in $\BOX$.  
	  Let ${\bf e}_i$ denote the element in $\boxobj{n}$ whose coordinates are all $0$ except for the ith coordinate.
	  
	  It suffices to take $m=n$ because all of the statements imply that $\phi$ is a bijection between finite sets and hence $\phi$ has domain and codomain both with the same cardinality.

  Suppose (\ref{item:box.bijection}).
  Then $\phi$ preserves extrema because it is a monotone surjection.  

  Consider $x\in\boxobj{n}$.
  Every immediate successor to $\phi(x)$ in $\boxobj{n}$ is the image under $\phi$ of an immediate successor to $x$ in $\boxobj{n}$; for otherwise there exists some immediate successor to $\phi(x)$ in $\boxobj{n}$ not in the image of $\phi$ by $\phi$ monotone, contradicting $\phi$ surjective.  
  Therefore $\phi$ restricts and corestricts to a bijection from the finite set of all $n$ immediate successors of $x$ in $\boxobj{n}$ to the finite set of all $n$ immediate successors of $\phi(x)$ in $\boxobj{n}$.     
  In particular, $\phi(y)$ is an immediate successor to $\phi(x)$ in $\boxobj{n}$ if and only if $y$ is an immediate successor to $x$ in $\boxobj{n}$.  
  Thus for all $x\leqslant_{\boxobj{n}}y$, $\phi$ sends maximal chains in $\boxobj{n}$ with extrema $x,y$ to maximal chains in $\boxobj{n}$ with extrema $\phi(x),\phi(y)$.  

  Let $I$ be an interval in $\boxobj{n}$, necessarily isomorphic to a lattice of the form $\boxobj{k}$.
  Then $I$ contains exactly $k!$ distinct maximal chains of length $k$.  
  The function $\phi$ preserves chains of length $k$ because it is a monotone injection between posets.
  Hence there exist $k!$ distinct chains of length $k$ in $\phi(I)$ that are maximal as chains in $\boxobj{n}$ having extrema $\phi(\min\,I)=\min\,\phi(I)$ and $\phi(\max\,I)=\max\,\phi(I)$.
  The only kinds of subposets of $\boxobj{n}$ having a minimum, a maximum, and $k!$ distinct chains which are maximal in $\boxobj{n}$ as chains with a given minimum and given maximum are intervals in $\boxobj{n}$ isomorphic to $\boxobj{k}$ as lattices.  
  Therefore $\phi(I)$ is an interval in $\boxobj{n}$.    

	  Thus $\phi$ maps intervals onto intervals.  
	  Hence (\ref{item:interval.preserving.lattice.isomorphism}).
	 
	  Suppose (\ref{item:interval.preserving.lattice.isomorphism}).
	  Finite non-empty suprema of the ${\bf e}_i$s are the maxima of intervals in $\boxobj{n}$ containing ${\bf 0}$.  
	  And $\phi$ maps intervals in $\boxobj{n}$ containing ${\bf 0}$ onto intervals in $\boxobj{n}$ containing ${\bf 0}$.  
	  It therefore follows that $\phi$ preserves finite non-empty suprema of the ${\bf e}_i$s because monotone surjections preserve maxima.
	  Hence $\phi$ preserves all finite non-empty suprema.  
	  Similarly $\phi$ preserves all finite non-empty infima by duality.    
	  It therefore follows that $\phi$ is a bijective lattice homomorphism and hence a lattice isomorphism.  
	  Hence (\ref{item:automorphism}).

	  Suppose (\ref{item:automorphism}).
	  The function $\phi$, a monoid automorphism with respect to $\vee_{\boxobj{m}}$, permutes the unique minimal set of monoid generators ${\bf e}_1,{\bf e}_2,\ldots,{\bf e}_n$.
	  Thus there exists a permutation $\sigma$ of $\{1,2,\ldots,n\}$ such that $\phi({\bf e}_i)={\bf e}_{\sigma(i)}$ for each $i$.
	  Hence $\phi(x_1,\ldots,x_n)=\phi(\vee_{x_i=1}{\bf e}_i)=\vee_{x_i=1}\phi({\bf e}_{i})=\vee_{x_i=1}{\bf e}_{\sigma(i)}=(x_{\sigma(1)},\ldots,x_{\sigma(n)})$.
	  Hence (\ref{item:permutation}).

	  If (\ref{item:permutation}), then $\phi$ is a composite of transpositions of successive coordinates, principal coordinate transpositions \cite[\S 6.2]{coxeter1980generators}.
	  Then  (\ref{item:cosymmetry}).

	  If (\ref{item:cosymmetry}), then $\phi$ is a composite of $\BOX$-isomorphisms and hence a $\BOX$-isomorphism.
	  Hence (\ref{item:box.automorphism}).

	  If (\ref{item:box.automorphism}), then (\ref{item:box.bijection}) because the forgetful functor $\BOX\ra\SETS$, like all functors, preserves isomorphisms.
	\end{proof}

	\begin{lem}
	  \label{lem:surjective.box.morphisms}
	  The following are equivalent for a function of the form
	  $$\phi:\boxobj{m}\ra\boxobj{n}.$$
	  \begin{enumerate}
	    \item\label{item:box.surjection} $\phi$ is a surjective interval-preserving lattice homomorphism
	    \item\label{item:surjection} $\phi$ is a surjective lattice homomorphism
	    \item\label{item:codegeneracies} $\phi$ is a composite of codegeneracies and principal coordinate transpositions
	  \end{enumerate}
	\end{lem}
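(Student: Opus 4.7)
The plan is to dispatch the routine implications first and focus on (\ref{item:surjection}) $\Rightarrow$ (\ref{item:codegeneracies}), which contains all the content. The implication (\ref{item:box.surjection}) $\Rightarrow$ (\ref{item:surjection}) is immediate. For (\ref{item:codegeneracies}) $\Rightarrow$ (\ref{item:box.surjection}), I would observe that each codegeneracy is projection onto all but one coordinate and hence a surjective lattice homomorphism carrying intervals to intervals, that each principal coordinate transposition is a lattice isomorphism by Lemma \ref{lem:box.automorphisms}, and that surjective interval-preserving lattice homomorphisms are closed under composition.

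For (\ref{item:surjection}) $\Rightarrow$ (\ref{item:codegeneracies}), I would induct on $m$. Identifying $\boxobj{m}$ with the Boolean lattice on $[m]=\{1,\ldots,m\}$, the key observation is that the atom images $\phi(\{1\}),\ldots,\phi(\{m\})$ are pairwise disjoint in $\boxobj{n}$ because $\phi$ preserves $\wedge$ and $\{i\}\wedge\{j\}=\emptyset$ for $i\neq j$, and that they jointly cover $[n]$ because $\phi$, being a surjective lattice homomorphism, automatically preserves $\top$. Thus $\phi(\{1\}),\ldots,\phi(\{m\})$ form a partition of the atom set of $\boxobj{n}$, with some parts possibly empty.

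In the base case $m=n$, no part of the partition can be empty, so $\phi$ is bijective and Lemma \ref{lem:box.automorphisms} expresses it as a composite of principal coordinate transpositions. In the inductive step $m>n$, pigeonhole forces some $\phi(\{i\})=\emptyset$, and then the identity $\phi(S)=\phi(S\setminus\{i\})\vee\phi(S\cap\{i\})$ together with $\phi(\{i\})=\emptyset$ yields $\phi(S)=\phi(S\setminus\{i\})$ for all $S\subseteq[m]$, so $\phi$ factors as $\phi=\phi'\circ\sigma_{i;m}$ for a uniquely determined surjective lattice homomorphism $\phi':\boxobj{m-1}\to\boxobj{n}$, to which the inductive hypothesis applies. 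The only delicacy I anticipate is the partition observation itself, which uses both preservation of $\wedge$ (for disjointness) and surjectivity of $\phi$ (for preservation of $\top$); once it is in hand, the induction runs smoothly and no coordinate permutations are required outside of the bijective base case.
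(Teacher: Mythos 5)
Your proof is correct, and the $(2)\Rightarrow(3)$ argument takes a genuinely different route from the paper's. The paper inducts on $m-n$ and, in the inductive step, picks an arbitrary pair $x\neq y$ with $\phi(x)=\phi(y)$ (available by non-injectivity), replaces them via a $\vee$-massage with a pair differing in exactly one coordinate $j$, and then uses a $\wedge$-computation to show $\phi$ is constant on every $\sigma_j$-fiber. Your argument instead works directly with the atom images $\phi(\{1\}),\ldots,\phi(\{m\})$: disjointness comes from preservation of $\wedge$, and covering $[n]$ comes from preservation of joins plus the fact that a surjective monotone map between bounded posets sends $\top$ to $\top$. Pigeonhole then hands you an $i$ with $\phi(\{i\})=\emptyset$, and the factorization through $\sigma_{i;m}$ drops out by writing each $\phi(S)$ as a join of atom images. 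The structural payoff of your approach is that it makes visible \emph{why} a surjective lattice homomorphism between finite Boolean lattices must be a permutation-followed-by-projection: the atoms of the domain are sorted into fibers over the atoms of the codomain, with the surplus atoms killed. The paper's approach is more lattice-generic and avoids invoking the atom decomposition explicitly.

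Two minor points worth tightening. First, in the base case you assert ``no part of the partition can be empty'' without justification; this doesn't follow from pigeonhole alone (one could a priori have an empty part compensated by a two-element part), so it should be derived either from the paper's observation that a surjection between finite sets of equal cardinality is a bijection, or by noting that an empty atom image would force $\phi$ to factor through a codegeneracy and hence have image of size at most $2^{m-1}<2^n$. Second, the identity $\phi(S)=\phi(S\setminus\{i\})\vee\phi(S\cap\{i\})$ tacitly also uses $\phi(\emptyset)=\emptyset$ (for the case $i\notin S$), which again is monotonicity plus surjectivity. Both are one-line fixes and don't affect the soundness of the argument.
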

	\begin{proof}
	  For clarity, let $\wedge=\wedge_L$ and $\vee=\vee_L$ when the lattice $L$ is clear from context.
          Let ${\bf e}^\perp_i$ denotes the element in $\boxobj{m}$ whose only coordinate having value $0$ is its $i$th coordinate.

	  (\ref{item:box.surjection}) implies (\ref{item:surjection}).

	  Suppose (\ref{item:surjection}).
	  Then $m\geqslant n$ by surjectivity.
	  We show (\ref{item:codegeneracies}) by induction on $m-n$. 

	  In the base case $m=n$, $\phi$ is a bijection because it is a surjection between sets of the same cardinality and hence is a composite of principal coordinate transpositions [Lemma \ref{lem:box.automorphisms}].

	  Consider $m-n>0$.
	  Inductively suppose (\ref{item:codegeneracies}) for the case $m-n<d$ and now consider the case $m-n=d>0$.
	  Then $\phi$ is not injective by $m>n$.
	  Thus there exist distinct $x,y\in\boxobj{m}$ such that $\phi(x)=\phi(y)$.
	  There exists $j$ such that $x_j\neq y_j$ by $x\neq y$.
	  Take $0=x_j<y_j=1$ and $x_i=y_i=1$ for $i\neq j$ without loss of generality by reordering $x$ and $y$ if necessary, replacing $x$ with $x\vee {\bf e}^\perp_j$ and $y$ with $y\vee{\bf e}^\perp_j$, and noting that $\phi(x\vee{\bf e}^\perp_j)=\phi(x)\vee\phi({\bf e}^\perp_j)=\phi(y)\vee\phi({\bf e}^\perp_j)=\phi(y\vee {\bf e}^\perp_j)$, 
	  It suffices to show the existence of a dotted function making 
	  \begin{equation*}
	    \begin{tikzcd}
		    \boxobj{m}\ar{rr}[above]{\phi}\ar{dr}[description]{\sigma_j} 
		    & & \boxobj{n}\\
		    & \boxobj{m-1}\ar[ur,dotted]
	    \end{tikzcd}
	  \end{equation*}
	  commute.  
	  For then the dotted function is a surjective lattice homomorphism by $\phi$ a surjective lattice homomorphism and $\sigma_j$ a projection.  
	  To that end, suppose distinct $x',y'\in\boxobj{m}$ satisfy $\sigma_j(x')=\sigma_j(y')$.
	  It suffices to show $\phi(x')=\phi(y')$.
	  We can take $0=x'_j<y'_j=1$ without loss of generality because $x'$ and $y'$ differ in exactly the $j$th coordinate.
	  Then $\phi(x')=\phi(y'\wedge x)=\phi(y')\wedge\phi(x)=\phi(y')\wedge\phi(y)=\phi(y'\wedge y)=\phi(y')$.
	  Hence (\ref{item:codegeneracies}).
	  
	  (\ref{item:codegeneracies}) implies (\ref{item:box.surjection}) because identities, $\sigma$, and $\tau$ are all surjective interval-preserving lattice homomorphisms and the tensor on $\BOX$ is closed under surjective interval-preserving lattice homomorphisms.
	\end{proof}

	\begin{thm}
	  \label{thm:box.characterization}
	  The following are equivalent for a function $\phi$ of the form
	  $$\phi:\boxobj{m}\ra\boxobj{n}.$$
	  \begin{enumerate}
		\item\label{item:box.characterization.bijection} $\phi$ is an interval-preserving lattice homomorphism
	    \item\label{item:box.characterization.homomorphism} $\phi$ is a $\BOX$-morphism
	\end{enumerate}
	\end{thm}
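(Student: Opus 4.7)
My plan is to reduce this characterization to the two preceding lemmas by factoring each map through its image, which the interval-preservation hypothesis forces to itself be a cube.

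For the direction (\ref{item:box.characterization.homomorphism}) $\Rightarrow$ (\ref{item:box.characterization.bijection}), I would observe that it suffices to check that the generators of $\BOX$ are interval-preserving lattice homomorphisms and that this property is closed under composition and under $\otimes$ (i.e.\ under $\CATS$-products with identities). The codegeneracy $\sigma$ is the projection $[1]\to[0]$, trivially a lattice homomorphism whose only possible image is $[0]$; the cofaces $\delta_\pm\colon[0]\to[1]$ are lattice homomorphisms with one-point image. Coordinate permutations are lattice isomorphisms, hence interval-preserving, by Lemma~\ref{lem:box.automorphisms}. Closure under composition and under tensoring with identities is immediate, so every $\BOX$-morphism is an interval-preserving lattice homomorphism.

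For the converse (\ref{item:box.characterization.bijection}) $\Rightarrow$ (\ref{item:box.characterization.homomorphism}), let $\phi\colon\boxobj{m}\to\boxobj{n}$ be an interval-preserving lattice homomorphism. Since $\boxobj{m}$ is itself an interval of $\boxobj{m}$, the image $I=\im\,\phi\subset\boxobj{n}$ is an interval of $\boxobj{n}$ by hypothesis. Every interval in a Boolean lattice is Boolean, so $I$ is lattice-isomorphic to $\boxobj{k}$ for a unique $k$. By \LEMBoxInclusions{}, there exists a composite of cofaces $\iota\colon\boxobj{k}\to\boxobj{n}$ whose image is exactly $I$, and $\iota$ corestricts to a lattice isomorphism $\boxobj{k}\cong I$ (cofaces being injective interval-preserving lattice homomorphisms on Boolean lattices). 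Composing the corestriction of $\phi$ with the inverse of this lattice isomorphism yields a surjective lattice homomorphism $\pi\colon\boxobj{m}\to\boxobj{k}$ satisfying $\phi=\iota\circ\pi$. By Lemma~\ref{lem:surjective.box.morphisms}, $\pi$ is a composite of codegeneracies and principal coordinate transpositions, hence a $\BOX$-morphism; so $\phi$ is the composite of two $\BOX$-morphisms.

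The main obstacle I anticipate is simply the bookkeeping verifying that the corestriction of $\phi$ onto $I$ really is a lattice homomorphism (it is, because suprema and infima in $I$ agree with those computed in $\boxobj{n}$, $I$ being an interval) and that the resulting map into $\boxobj{k}$ inherits surjectivity from $\phi$. No new construction is needed beyond applying the two prior lemmas to the two halves of the epi-mono factorization, so interval-preservation carries the entire weight of the argument by forcing the mono half to be an interval inclusion (as opposed to, e.g., a diagonal embedding, which would be excluded from $\BOX$).
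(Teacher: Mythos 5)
Your proposal is correct and follows essentially the same route as the paper's proof: for (2)$\Rightarrow$(1) you verify the generators $\sigma,\delta_\pm,\tau$ are interval-preserving lattice homomorphisms and observe closure under composition and $\otimes$; for (1)$\Rightarrow$(2) you perform the epi-mono factorization through the image, use interval-preservation to conclude the image is a Boolean interval, invoke \LEMBoxInclusions{} for the mono half, and invoke Lemma~\ref{lem:surjective.box.morphisms} for the epi half. Your added bookkeeping about why the corestriction onto the interval inherits the lattice-homomorphism property is a small expansion the paper leaves implicit, but the decomposition and the two supporting lemmas are exactly the paper's.
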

	\begin{proof}
	  Suppose (\ref{item:box.characterization.bijection}).
	  The function $\phi$ factors into a composite of its corestriction onto its image $I$, regarded as a subposet of $\boxobj{n}$, followed by an inclusion $I\ira\boxobj{n}$.
	  Both functions $\boxobj{m}\ra I$ and $I\ira\boxobj{n}$ are interval-preserving lattice homomorphisms because $\phi$ is an interval-preserving lattice homomorphism.
	  Moreover $I\ira\boxobj{n}$ is isomorphic to a $\BOX$-morphism [\LEMBoxInclusions].
	  Hence to show (\ref{item:box.characterization.homomorphism}), it suffices to take $\phi$ surjective. 
	  In that case $\phi$ factors as a composite of tensor products of identities with $\sigma,\tau$ [Lemma \ref{lem:surjective.box.morphisms}].
	  Hence (\ref{item:box.characterization.homomorphism}).

	  Suppose (\ref{item:box.characterization.homomorphism}).
	  Then $\phi$ is an interval-preserving lattice homomorphism because $\sigma,\delta_{\pm},\tau$ are interval-preserving lattice homomorphisms and $\otimes$ preserves interval-preserving lattice homomorphisms.
	  Hence (\ref{item:box.characterization.bijection}).
	\end{proof}

\subsubsection{Cube configurations}\label{sec:cube.configurations}
Just as posets encode simplicial complexes whose simplices correspond to finite chains, posets can encode cubical complexes whose cubes correspond to finite Boolean intervals.  
Let $\DISLATS$ be the symmetric submonoidal subcategory of the Cartesian monoidal category $\CATS$ whose objects are the finite distributive lattices and whose morphisms are the lattice homomorphisms between such lattices preserving Boolean intervals, lattice homomorphisms $L\ra M$ between finite distributive lattices $L$ and $M$ mapping Boolean intervals in $L$ onto Boolean intervals in $M$.  

\begin{eg}
  The category $\DISLATS$ contains $\BOX$ as a full subcategory [Theorem \ref{thm:box.characterization}].  
\end{eg}

The following technical observations about $\DISLATS$ [Lemma \ref{lem:cubical.pasting.schemes} and Proposition \ref{prop:lattice.subdivision}], which require specialized observations about finite distributive lattices, are proven in \S\ref{sec:modular.lattices}.

\begin{lem}
  \label{lem:cubical.pasting.schemes}
  The following are equivalent for a function
  $$\phi:L\ra M$$
  between finite distributive lattices.
  \begin{enumerate}
    \item $\phi$ is a $\DISLATS$-morphism
    \item each restriction of $\phi$ to a Boolean interval in $L$ corestricts to a surjective lattice homomorphism onto a Boolean interval in $M$. 
  \end{enumerate}
\end{lem}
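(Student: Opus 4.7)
The direction $(1) \Rightarrow (2)$ is essentially formal: a $\DISLATS$-morphism $\phi : L \to M$, being a lattice homomorphism, restricts to a lattice homomorphism on every sublattice, so in particular $\phi|_I$ is a lattice homomorphism for each Boolean interval $I \subseteq L$. The image $\phi(I)$ is a Boolean interval of $M$ by the interval-preserving hypothesis, and corestriction produces the required surjective lattice homomorphism $I \twoheadrightarrow \phi(I)$.

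For $(2) \Rightarrow (1)$, both the preservation of Boolean intervals and the monotonicity of $\phi$ are immediate: the former is part of (2), and the latter follows by applying (2) to the two-element Boolean chains $\{x,y\} \subseteq L$ with $x \leqslant_L y$. The substantive step is that $\phi(x \vee_L y) = \phi(x) \vee_M \phi(y)$ for arbitrary $x, y \in L$, even when $[x \wedge_L y, x \vee_L y]$ itself fails to be Boolean; the dual identity for meets will then follow by symmetry, since condition (2) is self-dual under lattice opposition.

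My plan is to argue this by induction on the cardinality of $[x \wedge_L y, x \vee_L y]$. The base case---when the interval is itself Boolean---is handled directly by (2). For the inductive step, I would invoke Birkhoff's representation $L \cong \mathcal{O}(P)$, under which the atoms of $[x \wedge_L y, x \vee_L y]$ correspond bijectively to the minimal elements of the symmetric difference $X \triangle Y$ of the downsets representing $x$ and $y$, and each such atom $a$ satisfies $a \leqslant_L x$ or $a \leqslant_L y$. Picking an atom $a \leqslant_L x$ and setting $y' = y \vee_L a$, distributivity yields $x \vee_L y' = x \vee_L y$ and $x \wedge_L y' = (x \wedge_L y) \vee_L a > x \wedge_L y$, so $[x \wedge_L y', x \vee_L y']$ is a proper sub-interval; the inductive hypothesis applied there, together with a parallel reduction supplying $\phi(y \vee_L a) = \phi(y) \vee_M \phi(a)$, then closes the step via $\phi(x \vee_L y) = \phi(x) \vee_M \phi(y) \vee_M \phi(a) = \phi(x) \vee_M \phi(y)$, where the final equality uses $\phi(a) \leqslant_M \phi(x)$ from monotonicity.

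The main obstacle is the degenerate subcase $|X \setminus Y| = 1$, in which $y \vee_L a = x \vee_L y$ so that the parallel reduction on $[y \wedge_L a, y \vee_L a] = [x \wedge_L y, y \vee_L a]$ collapses onto the original interval. I would resolve this by switching to an atom $a' \leqslant_L y$---which Birkhoff supplies whenever $x, y$ are incomparable, as minimal elements of $X \triangle Y$ occur on both sides---and setting $x' = x \vee_L a'$. The interval $[x \wedge_L a', x \vee_L a']$ is then automatically Boolean of rank $2$, isomorphic in Birkhoff terms to $\mathcal{O}(\{p, p'\})$ for two incomparable minimal elements $p, p'$ of $X \triangle Y$, so (2) directly supplies $\phi(x \vee_L a') = \phi(x) \vee_M \phi(a')$. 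Meanwhile $[x' \wedge_L y, x' \vee_L y]$ is a proper sub-interval of $[x \wedge_L y, x \vee_L y]$ and $x' \vee_L y = x \vee_L y$, so the inductive hypothesis applied here closes the argument.
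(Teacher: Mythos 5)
Your proposal is correct in essentials and proves the statement, but by a route that differs genuinely from the paper's, so a comparison is warranted.

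The paper proves $(2)\Rightarrow(1)$ by a lexicographic double induction on the \emph{chain lengths} $m$ and $n$ from $x\wedge_L y$ up to $x$ and up to $y$ respectively. It first disposes of the cases $m=1$ or $n=1$ (trivial by monotonicity), then $m=n=2$ (the three elements $x,y,x\wedge_L y$ together with $x\vee_L y$ form a Boolean interval by \THMIntervalicDistributivityCharacterization, so $(2)$ applies directly), and then reduces $n$ and then $m$ by always passing to an immediate successor of $x\wedge_L y$ on the appropriate side; the diamond isomorphism is what keeps the complementary chain length from growing when one side is shrunk. You instead induct once on $|[x\wedge_L y,\, x\vee_L y]|$, replace $y$ by $y\vee_L a$ for an atom $a$ of the interval below $x$, and invoke Birkhoff's representation to see that the atoms correspond to minimal elements of the symmetric difference of downsets and that the needed auxiliary interval becomes Boolean of rank $\leqslant 2$ in your degenerate fallback. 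Each approach has something to recommend it: yours makes the combinatorics of the atoms transparent and the degenerate-case diagnosis is crisp, whereas the paper's stays purely lattice-theoretic (no appeal to Birkhoff) and the chain-length bookkeeping absorbs the comparable case automatically via $m=1$ or $n=1$.

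The one genuine omission in your write-up is the comparable case $x\leqslant_L y$ (or $y\leqslant_L x$): your main reduction needs an atom of the interval sitting below $x$, which fails to exist when $X\setminus Y=\varnothing$, and your fallback is only stated for incomparable $x,y$. This is trivial to repair — when $x\leqslant_L y$ the identity $\phi(x\vee_L y)=\phi(x)\vee_M\phi(y)$ follows at once from monotonicity of $\phi$, which you already extracted from $(2)$ via two-element chains; alternatively your fallback argument with $a'\leqslant_L y$ actually goes through verbatim in this case (the auxiliary Boolean interval just drops to rank $1$). But as written the induction has an uncovered case, so you should either add the comparable case as a preliminary reduction or drop the incomparability hypothesis from the statement of your fallback.
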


\begin{figure}
	\begin{equation*}
	     \adjustbox{scale=.5,center}{\begin{tikzcd}
			     & & \mathlarger{\mathlarger{\bullet}} & &                               & &                         & & \cdot
	     \\
	     & \; & & \; &                                       & &                         & \cdot\ar[ur] & & \cdot\ar[ul]
	     \\
	     \cdot\ar[uurr] & & \; & & \cdot\ar[uull]     & &                         \cdot\ar[ur] & &  \mathlarger{\mathlarger{\bullet}}\ar[ul]\ar[ur] & & \cdot\ar[ul]
	     \\
	     & \; & & \; &                                       & &                          & \cdot\ar[ul]\ar[ur] & & \cdot\ar[ur]\ar[ul]
	     \\
	     & &  \mathlarger{\mathlarger{\bullet}}\ar[uull]\ar[uurr] & &             & &                           & & \cdot\ar[ul]\ar[ur]
	  \end{tikzcd}}
  \end{equation*}
  \caption{
  {\bf Order-theoretic Subdivision}.
  The left square and right squares represent the Hasse diagrams for the respective posets $\boxobj{2}$ and $\multiboxobj{2}{2}$.  
  Ordered pairs of elements in the left poset, or equivalently monotone functions $[1]\ra\boxobj{2}$, are in 1-1 correspondence with elements in $\multiboxobj{2}{2}$.  
  Along this correspondence, for example, the ordered pair of larger points on the left corresponds to the large point on the right.
  }
  \label{fig:order.sd}
\end{figure}

For each $k$, we can make the natural identifications 
$$\left(\boxobj{n}\right)^{[k]}\cong\multiboxobj{k+1}{n}$$
of finite distributive lattices, where the left side represents the finite distributive lattice of all monotone functions $[k]\ra\boxobj{n}$ whose lattice operations are defined element-wise and the right side is an $n$-fold $\CATS$-product of the ordinal $[k+1]$. 
For the case $n=1$, the natural identifications are given by unique isomorphisms that send each monotone function $\phi$ in the left poset to the element $\sum_i\phi(i)$ in the right poset.  
More generally, the natural identifications are given by natural Cartesian monoidal $\CATS$-isomorphisms.
Thus the construction $(-)^{[k]}$ intuitively subdivides an $n$-cube, as encoded by the Boolean lattice $\boxobj{n}$, into $(k+1)^n$ subcubes [Figure \ref{fig:order.sd}].  
The following proposition naturally extends this subdivision construction to an endofunctor $\sd_{k+1}$ on $\DISLATS$.

\begin{prop}
  \label{prop:lattice.subdivision}
  Consider the commutative outer rectangle
  \begin{equation*}
    \begin{tikzcd}
	    \BOX\ar[d,hookrightarrow]\ar[r,hookrightarrow] & \CATS\ar{r}[above]{(-)^{[k]}} & \CATS\\
	    \DISLATS\ar[dotted]{rr}[below]{\sd_{k+1}} & & \DISLATS\ar[u,hookrightarrow]
    \end{tikzcd}
  \end{equation*}
  There exists a unique (monoidal) functor $\sd_{k+1}$ such that the entire diagram commutes and $(\DISLATS\ira\CATS)\sd_{k+1}$ is the left Kan extension of the composite of the top horizontal arrows along the inclusion $\BOX\ira\DISLATS$.  
  For each monotone injection $\phi:[n]\ra[m]$, there exists a unique monoidal natural transformation $\sd_{m+1}\ra\sd_{n+1}$ whose $I$-component is the monotone function $I^{[m]}\ra I^{[n]}$ naturally sending each monotone function $\zeta:[m]\ra I$ to the monotone function $\zeta\phi:[n]\ra I$ for each $\BOX$-object $I$.
\end{prop}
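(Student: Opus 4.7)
The plan is to construct $\sd_{k+1}$ as the pointwise left Kan extension
\[
G\;:=\;\mathrm{Lan}_{\BOX\ira\DISLATS}\bigl((-)^{[k]}\circ(\BOX\ira\CATS)\bigr)\colon\DISLATS\ra\CATS,
\]
which exists because $\CATS$ is cocomplete, and then to verify that $G$ factors through $\DISLATS\ira\CATS$. Full faithfulness of $\BOX\ira\DISLATS$ (Theorem \ref{thm:box.characterization}) makes $G$ restrict to $\boxobj{n}\mapsto\boxobj{n}^{[k]}$ on $\BOX$, so the outer rectangle commutes on the nose; strict monoidality transfers from that of $(-)^{[k]}\colon\CATS\ra\CATS$ together with $\boxobj{n_1+n_2}^{[k]}\cong\boxobj{n_1}^{[k]}\otimes\boxobj{n_2}^{[k]}$, and uniqueness of $\sd_{k+1}$ is the universal property of left Kan extensions.

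The central step is showing each value $G(L)=\colim_{\BOX\downarrow L}\boxobj{n}^{[k]}$ is a finite distributive lattice. I would identify $G(L)$ with the subposet
\[
S(L)\;:=\;\{\zeta\in L^{[k]}\,:\,\im(\zeta)\subseteq B\text{ for some Boolean interval }B\subseteq L\}
\]
of $L^{[k]}$. The canonical map $G(L)\ra L^{[k]}$ induced by the cocone $(g,\zeta)\mapsto g\circ\zeta$ lands in $S(L)$ because Lemma \ref{lem:cubical.pasting.schemes} forces each $\im(g)$ to be a Boolean interval; it is surjective onto $S(L)$ since any $\zeta$ with image in a Boolean interval $B$ lifts along $B\cong\boxobj{|B|}\ira L$; and it is injective because intersections of Boolean intervals in a finite distributive lattice are Boolean intervals (from the Birkhoff-dual down-set picture, the complement $(b_1\cap b_2)\setminus(a_1\cap a_2)$ is an antichain whenever both $b_i\setminus a_i$ are), so any two representatives $(g_j,\zeta_j)$ with common image $\zeta$ can be equalized by factoring through the inclusion $\im g_1\cap\im g_2\ira L$. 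Finally, $S(L)$ is closed under the pointwise $\vee,\wedge$ of $L^{[k]}$: one uses the distributive-lattice identities $[a_1,b_1]\vee[a_2,b_2]=[a_1\vee a_2,b_1\vee b_2]$ and its meet analogue, together with the parallel Birkhoff observation that joins and meets of Boolean intervals remain Boolean. Hence $G(L)=S(L)$ is a distributive sublattice of $L^{[k]}$, defining $\sd_{k+1}(L)\in\DISLATS$, and $\sd_{k+1}(f)$ is a $\DISLATS$-morphism because $f$ preserves Boolean intervals (so $\zeta\mapsto f\zeta$ carries $S(L)$ to $S(M)$ and interval preservation follows again from Lemma \ref{lem:cubical.pasting.schemes}).

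Each monotone injection $\phi\colon[n]\ra[m]$ gives a natural transformation $(-)^{[m]}\ra(-)^{[n]}\colon\CATS\ra\CATS$ by precomposition with $\phi$, restricting on $\BOX$ to a monoidal natural transformation between the two top composites whose components are $\DISLATS$-morphisms; the universal property of the Kan extension then uniquely extends it to a monoidal natural transformation $\sd_{m+1}\ra\sd_{n+1}$, whose $I$-component for $I\in\BOX$ is $\zeta\mapsto\zeta\phi$ by commutativity of the outer rectangle. The hardest part of the proof is the identification $G(L)\cong S(L)$: the naive guess $\sd_{k+1}(L)=L^{[k]}$ already fails at $L=[2]$, where the monotone map $0\ra 2$ in $[2]^{[1]}$ lies in no Boolean interval of $[2]$, so the argument must combine the interval-preservation constraint of Lemma \ref{lem:cubical.pasting.schemes} with the Birkhoff combinatorics of Boolean intervals in finite distributive lattices to pin down the correct sublattice and verify closure under the pointwise lattice operations.
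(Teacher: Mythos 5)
Your overall architecture matches the paper's: both proofs define $\sd_{k+1}$ as the left Kan extension of $(-)^{[k]}\circ(\BOX\ira\CATS)$ along $\BOX\ira\DISLATS$, both identify $\sd_{k+1}L$ with the subposet $S(L)$ of $L^{[k]}$ of monotone functions corestricting into a Boolean interval of $L$, and both then check closure under pointwise $\vee,\wedge$ (your re-derived interval identities are exactly the paper's Lemma \ref{lem:interval.lattices}) and verify morphism-preservation via Lemma \ref{lem:cubical.pasting.schemes}. The paper streamlines the identification $G(L)\cong S(L)$ by observing that $F_kL\ra L^{[k]}$ is an iterated pushout of the fully faithful embeddings $(I\ira L)^{[k]}$, using Lemma \ref{lem:distributive.lattices} (every finite distributive lattice is a $\CATS$-colimit of its Boolean intervals) together with the fact that pushouts preserve fully faithful embeddings in $\CATS$, whereas you hand-roll surjectivity and injectivity; that is fine, but incidentally the parenthetical Birkhoff antichain observation should read $(b_1\cap b_2)\setminus(a_1\cup a_2)$, since the lattice meet $[a_1,b_1]\cap[a_2,b_2]=[a_1\vee a_2,b_1\wedge b_2]$ dualizes to $a_1\cup a_2$ on the down-set side; with $a_1\cap a_2$ the set in question is a union of two antichains, not visibly an antichain.

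The genuine gap is in the last paragraph. You assert that the restriction to $\BOX$ of the precomposition-by-$\phi$ natural transformation $(-)^{[m]}\ra(-)^{[n]}$ has components that are $\DISLATS$-morphisms and then claim the Kan extension's universal property "uniquely extends it." The universal property only hands you a natural transformation in $\CATS$ between the left Kan extensions; checking the components land in $\DISLATS$ must be done separately, and even at representable $J=\boxobj{p}$ it is not automatic. Precomposition $J^\phi:J^{[m]}\ra J^{[n]}$ is plainly a lattice homomorphism, but the Boolean-interval-preservation requires the \emph{injectivity} of $\phi$ in an essential way: take $\phi=\sigma:[1]\ra[0]$ and $J=[1]$; then $J^\sigma:[1]\ra[1]^{[1]}\cong[2]$ is the diagonal $\{0,2\}\subset[2]$, which is not order-convex, so $J^\sigma$ is not a $\DISLATS$-morphism. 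The paper devotes its "last claim" paragraph to exactly this point, reducing via monoidality to $J=[1]$ and then performing a case analysis on the preimage $\phi^{-1}(j)$ of each step of a short Boolean interval in $[1]^{[m]}\cong[m+1]$, where injectivity forces the preimage to be empty or a singleton. Your plan needs this argument (or an equivalent one) before the natural-transformation claim is established.
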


Proofs of Lemma \ref{lem:cubical.pasting.schemes} and Proposition \ref{prop:lattice.subdivision} are given at the end of \S\ref{sec:modular.lattices}.
While the restrictions of $(-)^{[k]}$ and $(\DISLATS\ira\CATS)\sd_{k+1}$ to $\BOX$ coincide, $\sd_{k+1}$ is not the restriction and corestriction of the endofunctor on $(-)^{[k]}$ on $\CATS$.  

\begin{eg}
  Note that as lattices, $\sd_2[2]=[4]\ncong[2]^{[1]}$.
\end{eg}

\subsubsection{Cubical sets}\label{sec:cubical.sets}
Take \textit{cubical sets} and \textit{cubical functions} to mean the respective objects and morphisms of $\CUBICALSETS$.
We can regard $\CUBICALSETS$ as closed symmetric monoidal with tensor $\otimes$ characterized by $\BOX[-]:\BOX\ira\CUBICALSETS$ monoidal, or equivalently $\otimes$ defined by Day convolution of the tensor on $\BOX$ \cite{im1986universal}. 
The $\BOX$-morphisms $\boxobj{m}\otimes\boxobj{n}\ra\boxobj{m}$ and $\boxobj{m}\otimes\boxobj{n}\ra\boxobj{n}$ defined by $\CATS$-projections onto first and second factors, natural in $\BOX$-objects $\boxobj{m}$ and $\boxobj{n}$, induce inclusions of the following form, natural in cubical sets $A$ and $B$:
\begin{equation}
 \label{eqn:submonoidal.product}
  A\otimes B\ira A\times B
\end{equation}

The \textit{dimension} of a cubical set $C$, denoted $\dim\,C$, is defined for $C=\varnothing$ to be $-1$ and defined for $C\neq\varnothing$ to be the infimum over all natural numbers $n$ such that $C$ is a colimit of representables of the form $\BOX\boxobj{i}$ for $0\leqslant i\leqslant n$.  
A cubical set $A$ is \textit{atomic} if it is the quotient in $\hat\BOX$ of a representable.  
We write $\REGULARCUBICALSETS$ for the full subcategory of $\CUBICALSETS$ whose objects are those cubical sets whose atomic subpresheaves are all isomorphic to representables; the $\REGULARCUBICALSETS$-objects can be regarded as cubical analogues of regular CW complexes.  
A cubical set is \textit{finite} if it contains finitely many atomic subpresheaves.
For each atomic cubical set $A$, let $\partial A$ denote the maximal subpresheaf of $A$ having dimension $\dim\,A-1$.  
The \textit{vertices} of $C$ are the elements of $C_0$.
Let $\Star_C(v)$ denote the \textit{closed star} of a vertex $v\in C_{0}$ in $C$, the subpresheaf of $C$ consisting of all images $A\subset C$ of representables in $C$ with $v\in A_{0}$.

\begin{eg}
  \label{eg:kan.condition}
  We have inclusions of finite subpresheaves of the form
  \begin{equation}
    \label{eqn:kan.condition}
    \partial\BOX\boxobj{n}\subset\BOX\boxobj{n},\quad n=0,1,\ldots 
  \end{equation}
  For each $n>0$, $\partial\BOX\boxobj{n}$ intuitively models the boundary of an $n$-cube and $\BOX\boxobj{n}$ models an $n$-cube.  
\end{eg}

The \textit{fundamental groupoid functor} $\Pi_1$ is the cocontinuous left Kan extension in
\begin{equation*}
  \begin{tikzcd}
	  \BOX\ar{d}[left]{\BOX[-]}\ar[r,hookrightarrow] & \GROUPOIDS\\
	  \hat\BOX\ar{ur}[description]{\Pi_1}
  \end{tikzcd}
\end{equation*}
of the top horizontal inclusion along the vertical Yoneda embedding $\BOX[-]$. 
The fundamental groupoid $\Pi_1C$ ignores information about edge-orientations in a cubical set $C$.  
The \textit{fundamental category functor} $\Tau_1$ is the cocontinuous left Kan extension in
\begin{equation*}
  \begin{tikzcd}
	  \BOX\ar{d}[left]{\BOX[-]}\ar[r,hookrightarrow] & \CATS\\
	  \hat\BOX\ar{ur}[description]{\Tau_1}
  \end{tikzcd}
\end{equation*}
of the top horizontal inclusion along the vertical Yoneda embedding $\BOX[-]$.  
The functor $\Tau_1$ is monoidal by inclusion $\BOX\ira\CATS$ monoidal and the fact that tensor products commute with colimits in the closed symmetric monoidal category $\hat\BOX$.  
The functor $\Tau_1$, cocontinuous between locally presentable categories, has a right adjoint 
$$\nerve:\CATS\ra\CUBICALSETS.$$

Fix a small category $\smallcat{1}$.  
Call $\cnerve\smallcat{1}$ the \textit{cubical nerve} of $\smallcat{1}$.  
For each $n$, $(\cnerve\smallcat{1})_n$ is the set of all functors $\boxobj{n}\ra\smallcat{1}$.
For each $\BOX$-morphism $\phi:\boxobj{n}\ra\boxobj{m}$,
$$(\cnerve\smallcat{1})(\phi)=\zeta\mapsto\zeta\circ\phi:(\cnerve\smallcat{1})_m\ra(\cnerve\smallcat{1})_n.$$

For each finite distributive lattice $L$, let $\BOX[L]$ denote the subpresheaf of $\cnerve L$ such that $\BOX[L]_n$ is the set of all lattice homomorphisms $\boxobj{n}\ra L$ preserving Boolean intervals.
For each $\DISLATS$-morphism $\phi:L\ra M$, there exists a unique dotted cubical function, which we denote by $\BOX[\phi]$, making the diagram 
\begin{equation*}
  \begin{tikzcd}
	  \BOX[L]\ar[d,hookrightarrow]\ar[dotted]{r}[above]{\BOX[\phi]} & \BOX[M]\ar[d,hookrightarrow]\\
	  \cnerve L\ar{r}[below]{\cnerve\phi} & \cnerve M
  \end{tikzcd}
\end{equation*}
commute. 
Thus $\BOX[-]$ will not only denote the Yoneda embedding but also its extension
$$\BOX[-]:\DISLATS\ra\CUBICALSETS.$$

The unique cocontinuous extension of the restriction of $\sd_{k+1}$ to $\BOX$, which we also denote as $\sd_{k+1}$, extends the original endofunctor $\sd_{k+1}$ on $\DISLATS$.   

\begin{prop}
  \label{prop:cubical.subdivision}
  There exists a unique dotted cocontinuous monoidal functor making
  \begin{equation}
    \label{eqn:cubical.subdivision}
  \begin{tikzcd}
	  \DISLATS\ar{d}[left]{\BOX[-]}\ar{r}[above]{\sd_{k+1}} & \DISLATS\ar{d}[right]{\BOX[-]}\\
	  \CUBICALSETS\ar[dotted]{r}[below]{\sd_{k+1}} & \CUBICALSETS
  \end{tikzcd}
  \end{equation}
  commute up to natural isomorphism.  
\end{prop}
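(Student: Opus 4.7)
The plan is to realize the desired functor as the unique cocontinuous extension of
$$F \;:\; \BOX \xhookrightarrow{} \DISLATS \xrightarrow{\sd_{k+1}} \DISLATS \xrightarrow{\BOX[-]} \CUBICALSETS$$
along the Yoneda embedding $\BOX[-] \colon \BOX \hookrightarrow \CUBICALSETS$. Since $\CUBICALSETS = \hat\BOX$ is the free cocompletion of $\BOX$, this universal property produces, uniquely up to canonical natural isomorphism, a cocontinuous endofunctor $\sd_{k+1} \colon \CUBICALSETS \to \CUBICALSETS$ whose restriction to representables recovers $F$. Existence and uniqueness of the cocontinuous functor both follow immediately from this; the remaining work is to upgrade it to a monoidal functor and to verify that (\ref{eqn:cubical.subdivision}) commutes up to natural isomorphism on all of $\DISLATS$, not merely on $\BOX$.

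For monoidality, I would note that $F$ is symmetric monoidal: $\sd_{k+1}|_\BOX$ is monoidal by Proposition \ref{prop:lattice.subdivision}, the inclusion $\BOX \hookrightarrow \DISLATS$ is a submonoidal inclusion, and the Yoneda embedding $\BOX \hookrightarrow \hat\BOX$ is monoidal because $\otimes$ on $\CUBICALSETS$ is defined as the Day convolution of the tensor on $\BOX$. Day convolution theory then provides a canonical cocontinuous monoidal extension of $F$, and by the uniqueness clause this monoidal extension coincides with $\sd_{k+1}$ as a plain functor, giving $\sd_{k+1}$ a compatible monoidal structure (again unique up to monoidal natural iso).

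For the commutativity of the square on all of $\DISLATS$, I would construct, for each $L \in \DISLATS$, a natural isomorphism $\sd_{k+1}\BOX[L] \cong \BOX[\sd_{k+1}L]$. By density of representables and the tautological identification $\BOX/\BOX[L] \cong \BOX/L$ (an $n$-cube of $\BOX[L]$ is by construction a $\DISLATS$-morphism $\boxobj{n} \to L$), one has $\BOX[L] \cong \colim_{(\boxobj{n},\alpha) \in \BOX/L} \BOX[\boxobj{n}]$; applying the cocontinuous $\sd_{k+1}$ and using its defining property on representables gives
$$\sd_{k+1}\BOX[L] \;\cong\; \colim_{(\boxobj{n},\alpha) \in \BOX/L} \BOX[\sd_{k+1}\boxobj{n}].$$
The morphisms $\BOX[\sd_{k+1}\alpha] \colon \BOX[\sd_{k+1}\boxobj{n}] \to \BOX[\sd_{k+1}L]$ form a cocone, yielding a comparison cubical function out of this colimit into $\BOX[\sd_{k+1}L]$, natural in $L$.

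The main obstacle will be showing this comparison is an isomorphism. By Lemma \ref{lem:cubical.pasting.schemes}, checking bijectivity at each dimension $m$ amounts to showing that every interval-preserving lattice homomorphism $\boxobj{m} \to \sd_{k+1}L$ factors, essentially uniquely modulo the identifications in $\BOX/L$, through some $\sd_{k+1}\alpha \colon \sd_{k+1}\boxobj{n} \to \sd_{k+1}L$. The input here is the left Kan extension presentation from Proposition \ref{prop:lattice.subdivision}: that $(\DISLATS\hookrightarrow\CATS)\sd_{k+1}L$ is the $\CATS$-colimit of $\sd_{k+1}\boxobj{n}$ over $\BOX/L$. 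Because $\boxobj{m}$ is a finite category, one can extract such a factorization from the colimit presentation; interval-preservation then transfers along the cocone because, by Lemma \ref{lem:cubical.pasting.schemes}, it is a condition that is local on Boolean subintervals of $\boxobj{m}$ and is preserved by the structure maps $\sd_{k+1}\alpha$ (themselves $\DISLATS$-morphisms). This order-theoretic match-up between the $\CATS$-colimit description of $\sd_{k+1}L$ and the interval-preserving definition of $\BOX[-]$ is the technical crux of the argument.
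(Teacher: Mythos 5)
Your opening moves — realizing $\sd_{k+1}$ on $\CUBICALSETS$ as the cocontinuous extension of $\BOX[\sd_{k+1}(\BOX\ira\DISLATS)(-)]$ along the Yoneda embedding, getting uniqueness from the free cocompletion universal property, and obtaining the monoidal structure from Day convolution — agree with the paper, which does exactly this. Your reduction of the commutativity claim to a natural isomorphism $\sd_{k+1}\BOX[L]\cong\BOX[\sd_{k+1}L]$, using the tautological identification $\BOX/\BOX[L]\cong\BOX/L$ (which is where Lemma \ref{lem:cubical.pasting.schemes}, via full faithfulness of $\BOX[-]:\DISLATS\ra\CUBICALSETS$, enters), is also the right reduction. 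So the scaffolding is sound.

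The gap is in the mechanism you propose for showing the comparison map is an isomorphism. You argue that because $\boxobj{m}$ is a finite category, a $\DISLATS$-morphism $\boxobj{m}\ra\sd_{k+1}L$ can be extracted from the $\CATS$-colimit presentation of $\sd_{k+1}L$ over $\BOX/L$, i.e.\ factored through some stage $\sd_{k+1}\alpha$. That inference is not valid: $\CATS$-colimits are not filtered, so finiteness (or even compactness) of the source does not yield factorization through a colimit stage. A degenerate example: the chain $[2]$ is the pushout of two copies of $[1]$ glued at an endpoint, yet the identity $[2]\ra[2]$ does not factor through either copy. What actually makes the factorization work is not a generic colimit argument but a specific structural fact about edgewise subdivision established as part of Proposition \ref{prop:lattice.subdivision}: \emph{every Boolean interval in $\sd_{k+1}L$ lies inside $\sd_{k+1}I$ for some Boolean interval $I$ in $L$}. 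Since the image of a $\DISLATS$-morphism $\boxobj{m}\ra\sd_{k+1}L$ is a Boolean interval (Lemma \ref{lem:cubical.pasting.schemes} again), this gives the factorization through some $\sd_{k+1}(I\ira L)$ — and hence, choosing a $\BOX$-isomorphism $I\cong\boxobj{n}$, through $\sd_{k+1}\alpha$ for $\alpha:\boxobj{n}\ra L$. The paper organizes this as a chain of isomorphisms of colimits (over Boolean intervals in $L$, then over $\BOX/L$, then over $\BOX/\BOX[L]$), which packages both the surjectivity you sketched and the essential uniqueness you left implicit. You would need to import that Boolean-interval localization explicitly; the bare left Kan extension presentation cited from Proposition \ref{prop:lattice.subdivision} is too weak to carry the argument.
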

\begin{proof}
  Take the dotted functor in the diagram
  \begin{equation*}
    \begin{tikzcd}
	    \BOX\ar{d}[left]{\BOX[-]}\ar[r,hookrightarrow] & \DISLATS\ar{r}[above]{\sd_{k+1}} & \DISLATS\ar{r}[above]{\BOX[-]} & \CUBICALSETS\\
	    \CUBICALSETS\ar[dotted]{urrr}[description]{\sd_{k+1}}
    \end{tikzcd}
  \end{equation*}
  to be the left Kan extension of the composite of the horizontal arrows along the vertical arrow.
  The dotted functor is the unique cocontinuous functor making the outer triangle commute and hence also monoidal by the horizontal arrows monoidal and $\otimes$ cocontinuous.  
  In particular, uniqueness follows.
  And there exist $\CUBICALSETS$-isomorphisms
  \begin{align*}
	  \BOX[\sd_{k+1}L] 
	  &\cong \colim_{I\subset L}\BOX[\sd_{k+1}I]\\
	  &\cong \colim_{\boxobj{n}\ra L}\BOX[\sd_{k+1}\boxobj{n}]\\
	  &\cong \colim_{\BOX\boxobj{n}\ra\BOX[L]}\BOX[\sd_{k+1}\boxobj{n}]\\
	  &\cong \sd_{k+1}\BOX[L]
  \end{align*}
  natural in $\DISLATS$-objects $L$, where the first colimit is taken over all Boolean intervals $I$ in $L$ and all inclusions between them and the second colimit is taken over all $(\BOX/L)$-objects $\boxobj{n}\ra L$.  
  The first isomorphism follows because every Boolean interval in $\sd_{k+1}L$ is a Boolean interval in $\sd_{k+1}I$ for some Boolean interval $I$ in $L$ [Proposition \ref{prop:lattice.subdivision}].  
  The second isomorphism follows because every $\DISLATS$-morphism  $\boxobj{n}\ra L$ factors as a composite of its surjective corestriction onto its image followed by the inclusion of a Boolean interval into $L$. 
  The third isomorphism follows from $\BOX[-]:\DISLATS\ra\CUBICALSETS$ fully faithful [Lemma \ref{lem:cubical.pasting.schemes}].
  The fourth isomorphism follows by construction of $\sd_{k+1}:\CUBICALSETS\ra\CUBICALSETS$.  
  Thus (\ref{eqn:cubical.subdivision}) commutes up to natural isomorphism.  
\end{proof}

Intuitively, $\sd_{k+1}C$ is the cubical set obtained by taking $(k+1)$-fold edgewise subdivisions of the cubes in $C$.  

\begin{eg}
  There exists a natural isomorphism $\sd_1\cong\id_{\CUBICALSETS}:\CUBICALSETS\cong\CUBICALSETS$.  
\end{eg}

The endofunctor $\sd_{k+1}$, cocontinuous on a locally presentable category, has a right adjoint 
$$\ex_{k+1}:\CUBICALSETS\ra\CUBICALSETS.$$

Regard $\sd_3$ as copointed by the unique monoidal natural transformation $\epsilon$ such that 
$$\epsilon_{\BOX\boxobj{n}}=\BOX\left[(-)^{0\mapsto 1:[0]\ra[2]}\right]:\BOX\multiboxobj{3}{n}\ra\BOX\boxobj{n}.$$

Define a cubical analogue of an $n$-fold loop space as follows.  
Recall that for cubical sets $C$, $C^{(-)}$ denote the right adjoint to the endofunctor $-\otimes C$ on $\CUBICALSETS$.  
Concretely, $C^B$ denotes the cubical set naturally sending each $\OP{\BOX}$-object $\boxobj{n}$ to the set of all cubical functions $B\otimes\BOX\boxobj{n}\ra C$, for all cubical sets $B$ and $C$.  
Define $\Omega^n(C,v)$ by the following Cartesian square natural in a cubical set $C$ equipped with vertex $v\in C_0$, where $\langle v\rangle$ denotes the minimal subpresheaf of $C$ containing $v$ as its unique vertex:
\begin{equation}
  \label{eqn:n-fold.loops}
	\begin{tikzcd}
		\Omega^n(C,v)\ar[rr]\ar[d]\ar[drr,phantom,very near start,"\lrcorner"] & & C^{\BOX\boxobj{n}}\ar{d}[right]{C^{\partial\BOX\boxobj{n}\ira\BOX\boxobj{n}}}\\
		\langle v\rangle^{\partial\BOX\boxobj{n}}\ar{rr}[below]{(\langle v\rangle\ira C)^{\partial\BOX\boxobj{n}}} & & C^{\partial\BOX\boxobj{n}}
	\end{tikzcd}
\end{equation}

A crucial technical tool in classical proofs of simplicial approximation is the factorizability of double barycentric subdivisions through polyhedral complexes \cite[\S 12]{curtis1971simplicial}.
There exists a directed, cubical analogue.
The following three lemmas adapt observations made in a predecessor to this paper \cite[Lemmas 6.11, 6.12, 6.13]{krishnan2015cubical} from the traditional setting of cubical sets to the cubical sets considered in this paper and from sixteen-fold subdivision $\sd_{16}=\sd^4_2$ to nine-fold subdivision $\sd_9=\sd_3^2$; justifications are given after all three lemmas are stated.
After identifying a vertex $v$ in a cubical set $C$ with the cubical function $\star\ra C$ whose image has the unique vertex $v$, $\support_{\sd_3}(v,C)$ denotes the minimal subpresheaf $B$ of $C$ for which $\sd_3B$ has vertex $v$.

 \begin{lem}
  \label{lem:collapse.star}
  For all cubical sets $C$ and $v\in\sd_3C_{0}$, 
  $$\epsilon_C(\Star_{\sd_3C})(v)\subset\support_{\sd_3}(v,C).$$
\end{lem}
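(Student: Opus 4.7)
The plan is to reduce to the representable case $C=\BOX\boxobj{n}$ via cocontinuity of $\sd_3$ and naturality of $\epsilon$, then verify a coordinatewise combinatorial inequality about Boolean intervals in $\sd_3\boxobj{n}\cong\multiboxobj{3}{n}$. For the setup, by Proposition \ref{prop:cubical.subdivision} and Theorem \ref{thm:box.characterization}, a $k$-cube of $\sd_3\BOX\boxobj{n}=\BOX[\boxobj{n}^{[2]}]$ is a $\DISLATS$-morphism $\phi\colon\boxobj{k}\to\boxobj{n}^{[2]}$, a vertex corresponds to a monotone chain $\zeta\colon[2]\to\boxobj{n}$, and $\epsilon_{\BOX\boxobj{n}}$ acts on vertices by middle evaluation $\zeta\mapsto\zeta(1)$; by \LEMBoxInclusions{} combined with monotonicity of $\zeta$, $\support_{\sd_3}(\zeta,\BOX\boxobj{n})=\BOX[[\zeta(0),\zeta(2)]]$ since $[\zeta(0),\zeta(2)]$ is the minimal interval in $\boxobj{n}$ containing $\{\zeta(0),\zeta(1),\zeta(2)\}$.

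For the reduction to representables, cocontinuity of $\sd_3$ factors any cube $\tau\colon\BOX\boxobj{k}\to\sd_3 C$ in the closed star of $v$ as $\tau=\sd_3\sigma\circ\tilde\tau$ for some $\sigma\colon\BOX\boxobj{m}\to C$ and $\tilde\tau\colon\BOX\boxobj{k}\to\sd_3\BOX\boxobj{m}$, with $\tilde v:=\tilde\tau(w_0)$ lifting $v$; naturality of $\epsilon$ then gives $\epsilon_C\circ\tau=\sigma\circ\epsilon_{\BOX\boxobj{m}}\circ\tilde\tau$. A short argument using interval-preservation of $\BOX$-morphisms shows that $\sigma(\BOX[[\tilde v(0),\tilde v(2)]])$ is independent of the chosen representative (along each generating zigzag in the category of elements $\BOX/C$ realizing $\sd_3 C=\colim_{\BOX/C}\sd_3\BOX\boxobj{-}$, interval-preservation yields $\sigma(\BOX[\phi([\tilde v(0),\tilde v(2)])])=\sigma_1(\BOX[[\tilde v_1(0),\tilde v_1(2)]])$) and, by minimality applied to any representative factoring through a candidate subpresheaf, equals $\support_{\sd_3}(v,C)$. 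Combining with the representable case $\im(\epsilon_{\BOX\boxobj{m}}\circ\tilde\tau)\subset\BOX[[\tilde v(0),\tilde v(2)]]$ yields $\im(\epsilon_C\circ\tau)\subset\support_{\sd_3}(v,C)$.

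The representable case then reduces to the following claim: for any Boolean interval $[a,b]$ in $\boxobj{n}^{[2]}$ and any $\zeta,\eta\in[a,b]$, the componentwise inequalities $\zeta(0)\leqslant\eta(1)\leqslant\zeta(2)$ hold in $\boxobj{n}$. Granted this, since $\phi$ preserves Boolean intervals and $\boxobj{k}$ is itself Boolean, $\im\phi$ is a Boolean interval $[a,b]$ containing $v=\zeta$; applying the inequality with $\eta=\phi(w)$ gives $(\epsilon\circ\phi)(w)=\phi(w)(1)\in[\zeta(0),\zeta(2)]$, so $\epsilon\circ\phi$ factors through $\BOX[[\zeta(0),\zeta(2)]]$. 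The main obstacle is verifying this combinatorial claim, to be handled coordinatewise via the isomorphism $\boxobj{n}^{[2]}\cong([1]^{[2]})^n\cong[3]^n$: Boolean intervals in the $4$-chain $[3]$ are either singletons or adjacent pairs, so in each coordinate $s$ the chains $\zeta_s,\eta_s\in[1]^{[2]}\cong[3]$ either coincide throughout or agree in two entries and differ by a single step in exactly one of the three entries; a direct case analysis over which entry varies, using monotonicity of $\zeta_s$ and $\eta_s$, verifies the inequality in each case.
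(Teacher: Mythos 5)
Your proof is correct and follows essentially the same strategy as the paper's: reduce to the representable case $C=\BOX\boxobj{n}$ by naturality/cocontinuity, then reduce further to a single coordinate (equivalently, the paper's reduction to $C=\BOX[1]$ by monoidality), and finish by a short case analysis. The paper's proof compresses these three steps into the phrase ``take $C$ representable by naturality, then $C=\BOX[1]$ by monoidality, then inspect''; you spell out the details, including the characterization $\support_{\sd_3}(\zeta,\BOX\boxobj{n})=\BOX[[\zeta(0),\zeta(2)]]$, the zig-zag invariance of $\sigma(\BOX[[\tilde v(0),\tilde v(2)]])$ via interval-preservation, and the coordinatewise check in $[3]$, which the paper leaves implicit.
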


\begin{lem}
  \label{lem:star.flower}
  Fix cubical set $C$ and non-empty subpresheaf
  $$\varnothing\neq A\subset\sd_3C$$
  of an atomic subpresheaf of $\sd_3C$.  
  There exist:
  \begin{enumerate}
    \item unique minimal subpresheaf $B\subset C$ with $A\cap\sd_3B\neq\varnothing$
    \item\label{item:star.flower.retraction} retraction $\pi_{(C,A)}:A\ra A\cap\sd_3B$, unique up to isomorphism
  \end{enumerate}
  Moreover, $A\cap\sd_3B$ is isomorphic to a representable if $A$ is atomic and $\epsilon_C(A\ira\sd_3C)=\epsilon_C((A\cap\sd_3B)\ira\sd_3C)\pi_{(C,A)}$.
\end{lem}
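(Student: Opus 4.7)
The plan is to reduce to the case of a representable $C$ and then translate the problem into combinatorics of Boolean intervals. Since $A$ lies in an atomic subpresheaf of $\sd_3 C$---the image of some cube $\theta\colon\BOX[\boxobj{k}]\ra\sd_3 C$---and $\sd_3$ is cocontinuous with $\sd_3\BOX[\boxobj{n}]=\BOX[\multiboxobj{3}{n}]$ by Proposition \ref{prop:cubical.subdivision}, the cube $\theta$ factors through $\sd_3E$ for a unique minimal non-degenerate sub-cube $E\subset C$. After replacing $C$ by $E$ I may assume $C=\BOX[\boxobj{n}]$ and hence $\sd_3C=\BOX[\multiboxobj{3}{n}]$. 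In this setting, sub-cubes $B\subset C$ correspond to Boolean sub-intervals $J\subset\boxobj{n}$, and $\sd_3B$ sits inside $\sd_3C$ as the subpresheaf of cubes whose vertex coordinates $v_i$ take the extremal value $3a_i$ whenever $J$ fixes coordinate $i$ at $a_i\in\{0,1\}$; the atomic subpresheaf containing $A$ is in turn the image of a Boolean interval $I=\prod_i[a_i,b_i]\subset\multiboxobj{3}{n}$ each of whose factors $[a_i,b_i]$ is a sub-interval of $[3]$ of size one or two.

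A coordinatewise analysis then shows $A\cap\sd_3B$ is non-empty exactly when $J$ fixes only those coordinates $i$ for which $[a_i,b_i]\cap\{0,3\}\neq\varnothing$, with the fixed extremum $3a_i^J$ then forced to lie in $[a_i,b_i]$; minimizing $J$ amounts to fixing every such coordinate, yielding the unique minimum $B$, and the corresponding intersection $A\cap\sd_3B$ is the Boolean sub-interval of $I$ obtained by collapsing those coordinates to their extrema, hence representable. The retraction $\pi_{(C,A)}$ is obtained by applying $\BOX[-]$ to the coordinatewise surjective lattice homomorphism $I\ra I\cap\sd_3J$ that sends the $i$-th coordinate to $3a_i^J$ on coordinates fixed by $J$ and is the identity elsewhere; this homomorphism is interval-preserving and hence a $\BOX$-morphism by Theorem \ref{thm:box.characterization}, and its uniqueness up to isomorphism reflects that any automorphism of the representable target is a coordinate permutation by Lemma \ref{lem:box.automorphisms}.

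The remaining step, which I expect to be the main obstacle, is verifying the factorization $\epsilon_C(A\ira\sd_3C)=\epsilon_C((A\cap\sd_3B)\ira\sd_3C)\pi_{(C,A)}$. This reduces to a vertex-level check using the defining formula $\epsilon_{\BOX\boxobj{n}}=\BOX\left[(-)^{0\mapsto 1:[0]\ra[2]}\right]$: under the identification $\multiboxobj{3}{n}\cong\boxobj{n}^{[2]}$ of Proposition \ref{prop:lattice.subdivision}, $\epsilon$ sends a vertex $f\colon[2]\ra\boxobj{n}$ of $\sd_3\boxobj{n}$ to $f(1)\in\boxobj{n}$, so $\epsilon(v)_i=0$ for $v_i\in\{0,1\}$ and $\epsilon(v)_i=1$ for $v_i\in\{2,3\}$. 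On coordinates $i$ fixed by $J$ the factor $[a_i,b_i]$ lies entirely in $\{0,1\}$ or entirely in $\{2,3\}$ by the characterization of the minimum, so the projection preserves $\epsilon$-values; on unfixed coordinates the projection is the identity. Hence both composites agree on vertices, and naturality of $\epsilon$ extends the agreement to all of $A$.
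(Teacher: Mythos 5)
Your high-level plan — reduce to the representable case $C=\BOX\boxobj{n}$ and then do an explicit coordinatewise analysis in $\multiboxobj{3}{n}$ — is the same plan the paper takes, and your explicit description of $\epsilon_C$ on vertices and the corresponding coordinatewise projection $I\ra I\cap\sd_3 J$ is a correct fleshing-out of the paper's laconic ``follows from inspection.'' The representability of $A\cap\sd_3B$ and the factorization check are fine.

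There is, however, a real gap in the combinatorics: your criterion for $A\cap\sd_3B\neq\varnothing$ is stated entirely in terms of the interval $I$ that carries the \emph{containing atomic} $A'$, not in terms of $A$ itself. You write ``$A\cap\sd_3B$ is non-empty exactly when $J$ fixes only those coordinates $i$ with $[a_i,b_i]\cap\{0,3\}\neq\varnothing$,'' but this characterizes $A'\cap\sd_3 B\neq\varnothing$; for a proper subpresheaf $A\subsetneq A'$ the two can disagree, and your formula then produces a $B$ with $A\cap\sd_3 B=\varnothing$. Concretely, take $C=\BOX\boxobj{2}$, $A'\subset\BOX\multiboxobj{3}{2}$ the atomic square on the interval $[0,1]\times[2,3]$, and $A$ the union of its bottom edge $[(0,2),(1,2)]$ and right edge $[(1,2),(1,3)]$; this $A$ is a non-empty subpresheaf of the atomic $A'$. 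Your recipe fixes both coordinates of $J$ and returns $B=\{(0,1)\}$, yet $A\cap\sd_3\{(0,1)\}=A\cap\{(0,3)\}=\varnothing$ because $(0,3)\notin A$. In fact this $A$ admits two incomparable minimal $B$'s (the top edge via $(1,3)$ and the left edge via $(0,2)$), which shows the naive ``minimize $J$ coordinatewise'' argument cannot establish the claimed uniqueness for non-atomic $A$; the paper's own ``reduce to $\BOX[1]$ by monoidality'' argument has the same blind spot, because a general $A$ is not a tensor of one-dimensional pieces. So either the analysis must be carried out in terms of the vertices of $A$ itself (not those of $A'$), or one must make explicit the additional structure on $A$ (coming from Lemma \ref{lem:collapse.star} and the $\STARS_9$ setting) that rules out shapes like this $A$. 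As written, your proof does not handle this.

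A smaller point: when you ``replace $C$ by $E$'' to assume $C=\BOX\boxobj{n}$, note that the minimal subpresheaf of $C$ through which the covering cube factors is atomic but not necessarily isomorphic to a representable (quotients of representables are atomic). The paper's ``by naturality'' reduction is meant to compare $C$ against a representable covering $\BOX\boxobj{m}\ra C$ and use the naturality of the construction, rather than literally replace $C$ by a sub-cube. This is a minor issue compared with the one above, but the sentence as written asserts more than the hypotheses give.
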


\begin{lem}
  \label{lem:natural.retractions}
  Consider the left of the solid commutative diagrams
  \begin{equation*}
    \begin{tikzcd}
	  A'\ar{r}[above]{\alpha}\ar[d,hook] & A''\ar[d,hook]\\
	  \sd_3C'\ar{r}[below]{\sd_3\gamma} & \sd_3C''
    \end{tikzcd}\quad
    \begin{tikzcd}
		A'\ar{r}[above]{\alpha}\ar{d}[left]{\pi'} & A''\ar{d}[right]{\pi''}\\
      A'\cap\sd_3B'\ar[r,dotted] & A''\cap\sd_3B''
    \end{tikzcd}
  \end{equation*}
  where $A',A''$ are non-empty subpresheaves of atomic subpresheaves of the respective cubical sets $C',C''$.
  Suppose $B',B''$ are minimal respective subpresheaves of $C',C''$ such that $A'\cap\sd_3B'\neq\varnothing$ and $A''\cap\sd_3B''\neq\varnothing$.
  Let $\pi',\pi''$ be retractions of inclusions in the right diagram.  
  There exists a unique dotted cubical function making the right square commute.
\end{lem}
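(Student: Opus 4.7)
The plan is to dispose of uniqueness via the surjectivity of $\pi'$, then construct the dotted map as a restriction–corestriction of $\alpha$. For uniqueness: the retraction $\pi'$ is a split epi in $\CUBICALSETS$, so any two dotted cubical functions $d_1, d_2 : A' \cap \sd_3 B' \to A'' \cap \sd_3 B''$ satisfying $d_i \pi' = \pi'' \alpha$ must agree after composing with the epi $\pi'$, whence $d_1 = d_2$.

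For existence, I would first establish $\gamma(B') = B''$, so that $\alpha$ corestricts to a cubical function $\bar\alpha : A' \cap \sd_3 B' \to A'' \cap \sd_3 B''$. The containment $B'' \subset \gamma(B')$ follows by writing $\alpha = (\sd_3 \gamma)|_{A'}$ and observing
$$\varnothing \neq \alpha(A' \cap \sd_3 B') \subset A'' \cap \sd_3 \gamma(B'),$$
then applying the minimality of $B''$. For the reverse inclusion, I would argue that if $\gamma(B')$ properly contained $B''$, then the subpresheaf $B' \cap \gamma^{-1}(B'') \subsetneq B'$ would still meet $A'$ via $\sd_3$ — traceable back to the non-emptiness of $A'' \cap \sd_3 B''$ and the fact that its representable core sits inside the image of $A' \cap \sd_3 B'$ under $\alpha$ — contradicting the minimality of $B'$.

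Finally, to show $\bar\alpha \pi' = \pi'' \alpha$ as cubical functions $A' \to A'' \cap \sd_3 B''$: both sides agree on the subpresheaf $A' \cap \sd_3 B'$, where $\pi'$ and $\pi''$ restrict to identities and both expressions reduce to $\alpha|_{A' \cap \sd_3 B'}$. The essential uniqueness of the retraction from Lemma~\ref{lem:star.flower}(2) then propagates this equality to all of $A'$, using the rigidity of the representable target $A'' \cap \sd_3 B''$.

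The main obstacle is the last step. The rigidity clause of Lemma~\ref{lem:star.flower}(2) states uniqueness only up to isomorphism, so a rigorous justification will likely require a cell-level comparison leveraging the explicit description of $\epsilon$ and the characterization of $\BOX$-morphisms as interval-preserving lattice homomorphisms from Theorem~\ref{thm:box.characterization}, checking directly that the canonical projection onto a representable core intertwines with any map $\sd_3 \gamma$ between carriers.
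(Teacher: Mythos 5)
Your uniqueness argument is fine: $\pi'$ is a retraction, hence epi, so a dotted map making the square commute is determined. The existence argument, however, rests on a false premise. The equality $\gamma(B')=B''$ fails in general, as does the subsidiary claim that $A''\cap\sd_3 B''$ lies in $\alpha(A'\cap\sd_3 B')$; consequently $\alpha$ need not corestrict to a map $A'\cap\sd_3 B'\to A''\cap\sd_3 B''$, and your $\bar\alpha$ need not exist. For a counterexample take $C'=C''=\BOX\boxobj{1}$ and $\gamma=\id_{C'}$, so $\sd_3\gamma$ is the identity on $\sd_3\BOX\boxobj{1}$, whose vertices are $0,1,2,3\in[3]$; let $A'$ be the atomic subpresheaf at the vertex $2$ and $A''$ the atomic edge subpresheaf with vertices $2$ and $3$, with $\alpha$ the inclusion. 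Since $2$ is not a vertex of $\sd_3\partial\BOX\boxobj{1}$, minimality gives $B'=\BOX\boxobj{1}$, whereas $3$ is the vertex of $\sd_3\im\,\BOX[\delta_+]$, so $B''=\im\,\BOX[\delta_+]$. Thus $\gamma(B')=\BOX\boxobj{1}\supsetneq B''$; moreover $A'\cap\sd_3 B'$ is the vertex $2$ while $A''\cap\sd_3 B''$ is the vertex $3$, so $\alpha$ does not map the former into the latter. Your proposed contradiction from minimality of $B'$ also does not materialize: $B'\cap\gamma^{-1}(B'')=\im\,\BOX[\delta_+]$ is indeed a proper subpresheaf of $B'$, but $A'\cap\sd_3\im\,\BOX[\delta_+]=\varnothing$.

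Since $\pi'$ is a split epi with section $\iota':A'\cap\sd_3 B'\ira A'$, the only candidate for the dotted map is $d=\pi''\alpha\iota'$, and the real content of the lemma is that $d\pi'=\pi''\alpha$, i.e.\ that $\pi''\alpha$ coequalizes $\id_{A'}$ and the idempotent $\iota'\pi'$. That is exactly the obstacle your final paragraph flags, and it cannot be resolved by comparing the two legs on $A'\cap\sd_3 B'$ and appealing to the rigidity of Lemma~\ref{lem:star.flower}(2), because the two legs of the square are not both restrictions of a single corestriction of $\alpha$. The paper's own argument sidesteps the issue: by naturality of all the constructions in sight and the fact that they are monoidal, it reduces to the case $C'=C''=\BOX\boxobj{1}$ with $\gamma$ a generating $\BOX_1$-morphism, and then verifies the finitely many resulting configurations of $(A',A'')$ by inspection.
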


The claim that $A\cap\sd_3B$ is representable in Lemma \ref{lem:star.flower} follows from the fact that $B$ and hence also $A\cap\sd_3B$ are atomic and $A\cap\sd_3\partial B=\varnothing$ by minimality.  
To show the other claims, it suffices to take the case where $C$ is representable by naturality and hence the even more special case where $C=\BOX[1]$ because all the functors and natural transformations in sight are monoidal.
In that case, these other claims follow from inspection.
These claims collectively imply that the natural cubical function $\epsilon^2_C:\sd_9C\ra C$ naturally but locally factors through representables in the following sense.
Let $\STARS_{k+1}$ denote the full subcategory of $(\CUBICALSETS/\sd_{k+1})$ whose objects, regarded as cubical functions $S\ra\sd_{k+1}C$, are inclusions of non-empty subpresheaves $S$ of closed stars of vertices in cubical sets of the form $\sd_{k+1}C$.  
Denote a $\STARS_9$-object $S\ira\sd_9C$ as a pair $(C,S)$.  

\begin{lem}
  \label{lem:local.lifts}
   There exist natural number $n_{(C,S)}$ and dotted cubical functions in
  \begin{equation*}
    \begin{tikzcd}
	        S\ar[r,dotted]\ar[d,hookrightarrow] & \BOX\boxobj{n_{(C,S)}}\ar[d,dotted]\\
		\sd_9C\ar{r}[below]{\epsilon^2_C} & C,
	\end{tikzcd}
  \end{equation*}
   natural in $\STARS_9$-objects $(C,S)$, making the diagram commute.
\end{lem}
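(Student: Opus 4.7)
The strategy is to factor $\epsilon^2_C = \epsilon_C \circ \epsilon_{\sd_3 C}$ through $\sd_3 C$ and invoke Lemmas \ref{lem:collapse.star}, \ref{lem:star.flower}, and \ref{lem:natural.retractions} in sequence. Given $(C, S) \in \STARS_9$, set $T_{(C,S)} = \epsilon_{\sd_3 C}(S) \subset \sd_3 C$. For any $v \in (\sd_9 C)_0$ with $S \subset \Star_{\sd_9 C}(v)$, Lemma \ref{lem:collapse.star} applied to $\sd_3 C$ yields $T_{(C,S)} \subset \support_{\sd_3}(v, \sd_3 C)$, and the latter is an atomic subpresheaf of $\sd_3 C$, namely the unique minimal cube of $\sd_3 C$ whose further subdivision contains $v$. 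In particular, $T_{(C,S)}$ is a non-empty subpresheaf of an atomic subpresheaf of $\sd_3 C$, a property independent of the auxiliary choice of $v$ since $T_{(C,S)}$ itself depends only on $S$.

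Hence Lemma \ref{lem:star.flower} applies with the role of $A$ played by $T_{(C,S)}$ and the role of $C$ played by $C$: it produces a unique minimal $B_{(C,S)} \subset C$ with $T_{(C,S)} \cap \sd_3 B_{(C,S)} \neq \varnothing$, an isomorphism $T_{(C,S)} \cap \sd_3 B_{(C,S)} \cong \BOX\boxobj{n_{(C,S)}}$ to a representable (which defines $n_{(C,S)}$), and a retraction $\pi_{(C, T_{(C,S)})}: T_{(C,S)} \to T_{(C,S)} \cap \sd_3 B_{(C,S)}$ satisfying $\epsilon_C(T_{(C,S)} \hookrightarrow \sd_3 C) = \epsilon_C((T_{(C,S)} \cap \sd_3 B_{(C,S)}) \hookrightarrow \sd_3 C) \circ \pi_{(C, T_{(C,S)})}$. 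Define the horizontal dotted map $S \to \BOX\boxobj{n_{(C,S)}}$ as $\pi_{(C, T_{(C,S)})} \circ \epsilon_{\sd_3 C}|_S$ (corestricted to $T_{(C,S)}$), and the vertical dotted map $\BOX\boxobj{n_{(C,S)}} \to C$ as $\epsilon_C$ restricted to $T_{(C,S)} \cap \sd_3 B_{(C,S)} \subset \sd_3 C$. Commutativity of the outer square reduces, by the identity from Lemma \ref{lem:star.flower}, to the tautology $\epsilon_C \circ \epsilon_{\sd_3 C}|_S = \epsilon^2_C|_S$.

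For naturality, a morphism $(C, S) \to (C', S')$ in $\STARS_9$ amounts to a pair $(p: S \to S', q: C \to C')$ compatible with the inclusions into $\sd_9 C$ and $\sd_9 C'$ via $\sd_9 q$. Naturality of $\epsilon$ then gives $\sd_3 q(T_{(C,S)}) \subset T_{(C',S')}$, so the solid hypotheses of Lemma \ref{lem:natural.retractions} are satisfied with $(A', A'', B', B'') = (T_{(C,S)}, T_{(C',S')}, B_{(C,S)}, B_{(C',S')})$. The unique dotted cubical function it produces, $\BOX\boxobj{n_{(C,S)}} \to \BOX\boxobj{n_{(C',S')}}$, is precisely the naturality map for our representables; combining it with the naturality of $\epsilon$ and $\epsilon_{\sd_3(-)}$ yields naturality of both the horizontal and vertical dotted maps. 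The main technical subtlety is verifying that supports of vertices in an iterated subdivision remain atomic after only one subdivision, which is exactly what allows Lemma \ref{lem:star.flower} to be invoked at $T_{(C,S)}$; once that is in hand, the rest is a careful diagram chase gluing Lemmas \ref{lem:collapse.star}, \ref{lem:star.flower}, and \ref{lem:natural.retractions} together with the naturality of $\epsilon$.
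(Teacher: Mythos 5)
Your proof is correct and follows essentially the same strategy as the paper's: factor $\epsilon^2_C$ as $\epsilon_C\circ\epsilon_{\sd_3C}$, use Lemma \ref{lem:collapse.star} to show $\epsilon_{\sd_3C}(S)$ lands in an atomic subpresheaf of $\sd_3C$, apply Lemma \ref{lem:star.flower} to obtain the representable $T_{(C,S)}\cap\sd_3B_{(C,S)}$ and the retraction witnessing commutativity, and invoke Lemma \ref{lem:natural.retractions} for naturality. Aside from notation (your $T_{(C,S)}$, $B_{(C,S)}$ are the paper's $A_{(C,S)}$, $C_S$), the only differences are expository; the paper spells out the naturality of the right-hand square via a two-panel diagram chase (using that the retractions $\pi_{(C,A)}$ are epi to propagate commutativity down), whereas you summarize it, but the substance is identical.
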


The proof mimics a proof of an analogous result in a predecessor to this paper \cite[Lemma 8.16]{krishnan2015cubical}.  
That result is stated at the level of streams instead of cubical sets and for $\sd_4=\sd_2^2$ instead of $\sd_3$.  
We therefore include the following proof for completeness.  
\begin{proof}
  The cubical set $\epsilon_{\sd_3C}(S)$ is a subpresheaf of a minimal atomic subpresheaf $A(C,S)$ of $C$ [Lemma \ref{lem:collapse.star}].  
  Thus there exists a unique minimal atomic subpresheaf $C_{S}\subset C$ with $A_{(C,S)}\cap\sd_3C_{S}\neq\varnothing$ [Lemma \ref{lem:star.flower}].
  And $B_{(C,S)}=A_{(C,S)}\cap\sd_3C_{S}$ is independent of our choice of $A(C,S)$ by minimality.
  The inclusion $B_{(C,S)}\ira A_{(C,S)}$ of $B_{(C,S)}=A_{(C,S)}\cap\sd_3C_{S}$ admits a retraction $\pi_{(C,S)}$ making the right square in
  \begin{equation}
	  \label{eqn:bamfl.factorization}
      \begin{tikzcd}
	      S\ar[r,dotted]\ar[d,hookrightarrow] & A_{(C,S)}\ar[dotted]{r}[above]{\pi_{(C,S)}}\ar[d,hookrightarrow] & B_{(C,S)}\ar{d}[right]{\epsilon_C(B_{(C,S)}\ira\sd_3C)}\\
	      \sd_9C\ar{r}[below]{\epsilon_{\sd_3C}} & \sd_3C\ar{r}[below]{\epsilon_C} & C
	\end{tikzcd}
  \end{equation}
  commute [Lemmas \ref{lem:star.flower} and \ref{lem:natural.retractions}].  
  There exists a dotted cubical function, unique by the middle arrow monic, making the left square commute by definition of $A_{(C,S)}$.  
  The cubical set $B_{(C,S)}$ is isomorphic to a representable [Lemma \ref{lem:star.flower}]. 
  The left square above is natural in $(C,S)$.
  It therefore suffices to show that the right square above is natural in $(C,S)$.
  To that end, consider the $\STARS_9$-morphism defined by the left of the diagrams
  \begin{equation*}
    \begin{tikzcd}
	        S'\ar[dd,hookrightarrow]\ar{r}[above]{\alpha} & S''\ar[dd,hookrightarrow] &
	          A_{(C',S')}
		  \ar[d,hookrightarrow]
		  \ar{rrr}[above]{\support_{\sd_3}(\alpha,\beta)}
		  \ar{dr}[description]{\pi_{(C',A')}}
		& 
		& 
		& A_{(C'',S'')}
		  \ar[d,hookrightarrow]
		  \ar{dl}[description]{\pi_{(C'',A'')}}
		\\
		& &
		  \sd_3C'
		  \ar{d}[left]{\epsilon_{C'}}
		& B_{(C',S')}
		  \ar[r,dotted]
		  \ar{dl}[description]{\epsilon_{(C',S')}}
		& B_{(C'',S'')}
		  \ar{dr}[description]{\epsilon_{(C'',S'')}}
		& \sd_3C''
		  \ar{d}[right]{\epsilon_{C''}}
		\\
		\sd_9C'\ar{r}[below]{\sd_9\beta} & \sd_9C'' &
		  C'\ar{rrr}[below]{\beta}
		& 
		& 
		& C''
	\end{tikzcd}
  \end{equation*}

  Let $\epsilon_{(C,S)}$ denote $\epsilon_C(B_{(C,S)}\ira\sd_3C)$.
  Consider the right diagram.  
  There exists a unique dotted cubical function making the upper trapezoid commute [Lemma \ref{lem:natural.retractions}].  
  The triangles commute by (\ref{eqn:bamfl.factorization}) commutative.  
  The lower trapezoid commutes because the outer rectangle commutes and the cubical functions of the form $\pi_{(C,A)}$ are epi.
  The desired naturality of the right square in (\ref{eqn:bamfl.factorization}) follows.
\end{proof}

Call the $\CUBICALSETS^{\indexcat{1}}$-objects and $\CUBICALSETS^{\indexcat{1}}$-morphisms \textit{$\indexcat{1}$-cubical sets} and \textit{$\indexcat{1}$-cubical functions}.  
For each $\indexcat{1}$-cubical set $A$ and cubical set $B$, we write $A\otimes B$ for the $\indexcat{1}$-cubical set naturally defined by the rule $(A\otimes B)(g)=(A(g))\otimes B$. 
Write $(\CUBICALSETS^{\indexcat{1}})_0$ for the category $\CUBICALSETS^{\indexcat{1}}$.
Redefine $\CUBICALSETS^{\indexcat{1}}$ to be the $\CUBICALSETS$-enriched category with enrichment naturally defined by
$$\CUBICALSETS^{\indexcat{1}}(A,B)=(\CUBICALSETS^{\indexcat{1}})_0(A\otimes\BOX[-],B):\OP{\BOX}\ra\SETS.$$

A \textit{based cubical set} is a pair $(C,v)$ of cubical set $C$ and $v\in C_0$.
For a cubical set $C$ with a unique vertex, we write $\star$ for that unique vertex so that $(C,\star)$ denotes a based cubical set.  

\begin{eg}
  Each monoid $M$ determines the based cubical set $(\nerve\,M,\star)$.  
\end{eg}

A \textit{based cubical function} $\psi:(A,a)\ra(B,b)$ from a based cubical set $(A,a)$ to a based cubical set $(B,b)$ is a cubical function $\psi:A\ra B$ such that $\psi_0(a)=b$.  
We sometimes regard based cubical functions $(C,v)$ as $[1]$-cubical sets sending $0\ra 1$ to the cubical function $\star\ra C$ whose image contains $v$ and based cubical functions as $[1]$-cubical functions between such $[1]$-cubical sets.  
\subsection{Comparisons}\label{sec:space.comparisons}
Consider the left and middle of the solid diagrams
\begin{equation*}
  \begin{tikzcd}
	  \BOX_1\ar[r]\ar[d,hookrightarrow]\ar[dr,phantom,near start,"I"] & \TOP\\
	  \BOX\ar[ur,dotted]\ar{r}[below]{\BOX[-]} & \CUBICALSETS\ar[dotted]{u}[right]{|-|}
  \end{tikzcd}
  \quad
  \begin{tikzcd}
	  \BOX_1\ar[r]\ar[d,hookrightarrow]\ar[dr,phantom,near start,"II"]  & \DITOP\\
	  \BOX\ar[ur,dotted]\ar{r}[below]{\BOX[-]} & \CUBICALSETS\ar[dotted]{u}[right]{\direalize{\;-\;}}
  \end{tikzcd}
  \quad
    \begin{tikzcd}
	    \CUBICALSETS\ar{r}[above]{\direalize{\;-\;}}\ar[d,equals] & \DITOP\ar[d]\\
	    \CUBICALSETS\ar{r}[below]{|-|} & \TOP
    \end{tikzcd}
\end{equation*}
in which the top horizontal functors send the $\BOX$-morphism $\delta_{\pm}$ to the respective maps $\star\ra\I$ and $\star\ra\vec{\I}$ both having image $\{\half\pm\half\}$.  
Regard $\DITOP$ as Cartesian monoidal.  
There exist unique dotted diagonal monoidal functors making I and II above commute by $\BOX$ the free symmetric monoidal category on $\BOX_1$ with unit $[0]$.  
We can define unique cocontinuous \textit{topological realization} and \textit{directed realization} left Kan extensions $|-|$ and $\direalize{-}$ of the diagonal functors along the Yoneda embedding, making the left and middle diagrams commute.   
The right diagram, in which the right vertical arrow is the forgetful functor, commutes because that forgetful functor is both monoidal and cocontinuous.

\begin{eg}
  \label{eg:hypercube.quadrangulations}
  We can make the identifications
  $$|\BOX\boxobj{n}|=\I^n\quad \direalize{\BOX\boxobj{n}}=\vec{\I}^n$$
  along the continuous function that naturally sends each vertex $(x_1,\ldots,x_n)\in\boxobj{n}\subset|\BOX\boxobj{n}|$ to $(x_1,\ldots,x_n)\in\I^n$.  
\end{eg}

Directed realization preserves embeddings; the proof, a straightforward adaptation of a proof under the usual definition of cubical sets \cite[Theorem 6.19]{krishnan2015cubical}, is ommitted.

\begin{prop}
  \label{prop:realization.preserves.embeddings}
  For each monic cubical function $\iota$, $\direalize{\iota}$ is a stream embedding.
\end{prop}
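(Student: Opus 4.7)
The plan is to reduce to a sequence of cell-attachment pushouts and then observe that the directed realization of each such attachment is a pushout of stream embeddings in $\DITOP$. First I would observe that, up to isomorphism, every monic cubical function $\iota\colon A\ira B$ may be presented as the inclusion of a subpresheaf, so that $B$ is the union of $A$ together with the images of the non-degenerate representables of $B$ not in $A$. Since $\direalize{\;-\;}$ is cocontinuous and weak Hausdorff k-streams are closed under the colimits that arise, it suffices (after a transfinite filtration by skeleta) to treat a single cell attachment: a pushout square
\begin{equation*}
  \begin{tikzcd}
    \partial\BOX\boxobj{n}\ar[d,hookrightarrow]\ar[r] & A\ar[d,hookrightarrow]\\
    \BOX\boxobj{n}\ar[r] & A'
  \end{tikzcd}
\end{equation*}
in $\CUBICALSETS$, in which $A'$ is obtained from $A\subset B$ by attaching one non-degenerate $n$-cube of $B$ along its boundary. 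Applying $\direalize{\;-\;}$ yields a pushout square in $\DITOP$ whose left vertical arrow is the inclusion $\direalize{\partial\BOX\boxobj{n}}\ira\vec{\I}^n$, and I would verify by direct inspection that this boundary inclusion is a stream embedding onto the pospace-theoretic topological boundary of $\vec{\I}^n$ inside $\vec{\I}^n$.

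Next I would show that, in the category of weak Hausdorff k-streams, the pushout of a stream embedding along an arbitrary stream map is again a stream embedding. The underlying-space statement is standard for weak Hausdorff k-spaces attached along closed embeddings, so injectivity and the subspace topology are automatic once we check that the cell-attachment map $\direalize{\partial\BOX\boxobj{n}}\ira\vec{\I}^n$ is a closed embedding; this follows from $\direalize{\BOX\boxobj{n}}=\vec{\I}^n$ being compact Hausdorff and $\direalize{\partial\BOX\boxobj{n}}$ being its closed topological boundary. The circulation comparison then reduces, by the universal property of quotient circulations and the fact that the forgetful functor $\DITOP\ra\TOP$ is topological, to checking that the restricted circulation on the image of $\direalize{A'}\ira\direalize{B}$ agrees with the circulation induced from the pushout; this can be verified pointwise by unravelling the definition of a circulation on a pushout in $\DITOP$.

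I would then lift the single-cell result to the general inclusion by ordering the non-degenerate cubes of $B\setminus A$ by increasing dimension, obtaining $A=A_{-1}\subset A_0\subset A_1\subset\cdots$ whose colimit is $B$. Since directed realization preserves such colimits, and since compositions and filtered colimits (along closed inclusions of weak Hausdorff k-streams) of stream embeddings are stream embeddings, the conclusion follows. Finally, the possible subtlety introduced by the symmetric variant $\BOX$ used here, as opposed to the minimal cube category of \cite{krishnan2015cubical}, is that coordinate permutations could in principle identify distinct non-degenerate cubes; I would handle this by invoking the characterization of $\BOX$-epis as codegeneracies composed with coordinate permutations (Lemma \ref{lem:surjective.box.morphisms}), which guarantees that the non-degenerate cubes of a cubical set still form a well-defined filtration by skeleta.

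The main obstacle I expect is the circulation-theoretic part: one must show that the circulation on $\direalize{A}$ really is the one inherited by restriction from $\direalize{B}$, not merely that the underlying map is a topological embedding. This requires carefully exploiting the colimit description of circulations on realizations together with the topological character of $\DITOP\ra\TOP$ (Proposition \PROPTop), and it is where any genuine adaptation from \cite[Theorem 6.19]{krishnan2015cubical} to the present symmetric-monoidal setting will live.
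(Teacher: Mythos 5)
The paper itself omits this proof, referring the reader to \cite[Theorem 6.19]{krishnan2015cubical}, so there is no argument in the text to compare you against directly. Your outline is the standard shape one would expect — present $\iota$ as a subpresheaf inclusion, filter by skeleta, reduce to a single cell attachment, check that pushouts and transfinite compositions preserve stream embeddings — and you correctly flag the circulation comparison as the locus of real work. But the proposal as written leaves precisely that work undone, and at one point it borders on circular.

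The more serious gap is your treatment of the base case. You propose to verify ``by direct inspection'' that $\direalize{\partial\BOX\boxobj{n}}\ira\vec{\I}^n$ is a stream embedding, but $\direalize{\partial\BOX\boxobj{n}}$ is not handed to you as a substream of $\vec{\I}^n$: it is a colimit of $2n$ copies of $\vec{\I}^{n-1}$ glued along lower-dimensional faces, so the claim that its colimit circulation agrees with the restricted circulation from $\vec{\I}^n$ is itself an instance of the very proposition being proved, just for $A=\partial\BOX\boxobj{n}$ and $B=\BOX\boxobj{n}$. Trying to settle it by the same cell-attachment machinery sends you in a circle; it needs a genuinely independent argument, e.g.\ an explicit computation of the colimit circulation using the cosheaf condition (the union axiom in the definition of circulation) and Nachbin-type local order-convexity, or an appeal to the representation of directed realizations of finite regular cubical sets as compact pospaces. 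Relatedly, your claim that ``the pushout of a stream embedding along an arbitrary stream map is again a stream embedding'' is exactly what the circulation-level analysis must establish, and the phrase ``verified pointwise by unravelling the definition'' hides the key mechanism — that the cosheaf/union condition forces the quotient circulation, when restricted along a closed inclusion, to be generated by local preorders that one can match with the intrinsic ones. (A toy case illustrating why this works: $\direalize{\partial\BOX[1]}$ is a discrete two-point stream, and the union axiom forces the preorder on the two-point open to be discrete, which is exactly what the subspace of $\vec{\I}$ gives; without that axiom the claim would be false.) You should also cite the EZ cellular model for the symmetric cube category (\COREZCellularModel) rather than only Lemma \ref{lem:surjective.box.morphisms} to justify the skeletal filtration, and justify that the transfinite compositions you take remain in $\DITOP$ (i.e.\ preserve weak Hausdorffness and k-ness) rather than asserting it. In short: the skeleton of the argument is right and probably matches the predecessor's, but both the base case and the colimit-preservation step presently carry the entire burden of the proof and neither is supplied.
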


\begin{eg}
  There exists a stream embedding of the form
  $$\direalize{A\otimes B\ira A\times B}:\left(\direalize{A}\times\direalize{B}\right)\ira\direalize{(A\times B)},$$
  natural in cubical sets $A$ and $B$, where $A\otimes B\ira A\times B$ denotes the natural inclusion (\ref{eqn:submonoidal.product}) from the tensor product $A\otimes B$ in $\CUBICALSETS$ into the Cartesian product $A\times B$ in $\CUBICALSETS$.  
\end{eg}

For each cubical set $C$, write $\dihomeo_{C;k+1}$ for the component 
$$\dihomeo_{C;k+1}:\direalize{\sd_{k+1}C}\cong\direalize{C}$$
of the natural isomorphism defined by the following proposition.

\begin{prop}
  \label{prop:directed.realizations}
  The following diagram
  \begin{equation*}
  \begin{tikzcd}
	  \CUBICALSETS\ar[dotted]{r}[above]{\direalize{\;-\;}}\ar{d}[left]{\sd_{k+1}}	& \DITOP\ar[d,equals]\\ 
	  \CUBICALSETS\ar[dotted]{r}[below]{\direalize{\;-\;}} & \DITOP
  \end{tikzcd}
  \end{equation*}
  commutes up to a natural isomorphism whose $\BOX\boxobj{n}$-component $\direalize{\sd_{k+1}\BOX\boxobj{n}}\cong\direalize{\BOX\boxobj{n}}$ is linear on each cell and sends each geometric vertex $v\in\multiboxobj{k+1}{n}$ in $|\BOX\multiboxobj{k+1}{n}|$ to $\nicefrac{v}{k+1}\in\I^n$.
\end{prop}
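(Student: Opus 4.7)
\medskip

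\textbf{Proof plan.} Both functors in question are cocontinuous. Indeed $\direalize{-}$ is cocontinuous as a left Kan extension along $\BOX[-]$, and $\sd_{k+1}:\CUBICALSETS\ra\CUBICALSETS$ is cocontinuous by Proposition \ref{prop:cubical.subdivision} (and in any case has right adjoint $\ex_{k+1}$); hence $\direalize{\sd_{k+1}(-)}$ is cocontinuous as a composite of cocontinuous functors. Because $\CUBICALSETS$ is the free cocompletion of $\BOX$, any natural transformation between cocontinuous functors out of $\CUBICALSETS$ is uniquely determined by, and freely extends from, its components on representables. It therefore suffices to construct a natural isomorphism
\begin{equation*}
  \dihomeo_{\BOX[-]; k+1}:\direalize{\sd_{k+1}\BOX[-]}\Rightarrow\direalize{\BOX[-]}:\BOX\ra\DITOP
\end{equation*}
and extend by Kan extension.

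For each $n$, define $\dihomeo_{\BOX\boxobj{n}; k+1}$ by the prescription in the statement: on the vertex $v\in\multiboxobj{k+1}{n}\subset|\BOX[\multiboxobj{k+1}{n}]|=|\sd_{k+1}\BOX\boxobj{n}|$ set the value to be $v/(k+1)\in\I^n$, and extend by affine interpolation on each representable subcube $\BOX\boxobj{i}\subset\BOX[\multiboxobj{k+1}{n}]$ indexed by a Boolean interval of $\multiboxobj{k+1}{n}$. Under the identification of Example \ref{eg:hypercube.quadrangulations}, this underlying continuous map is the standard piecewise-linear identification of the subdivided cube with $\I^n$, hence a homeomorphism. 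Each Boolean interval of $\multiboxobj{k+1}{n}$ realizes to a subpospace of $\vec{\I}^n$ (the cube $\prod_i[a_i/(k+1),b_i/(k+1)]$ with its coordinatewise order), and the restriction of $\dihomeo_{\BOX\boxobj{n}; k+1}$ there is the evident affine isomorphism of pospaces, hence of streams. Since $\direalize{\sd_{k+1}\BOX\boxobj{n}}$ is the colimit in $\DITOP$ of these sub-pospaces glued along common faces, and since an analogous description of $\vec{\I}^n$ holds by the cubical decomposition of $[0,1]^n$ into boxes of side $1/(k+1)$, the forgetful functor to $\TOP$ being topological and creating colimits (Proposition \PROPTop) implies that $\dihomeo_{\BOX\boxobj{n}; k+1}$ is a stream isomorphism.

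For naturality in $\BOX$-morphisms $\phi:\boxobj{m}\ra\boxobj{n}$, observe that $\BOX$ is generated by cofaces $\delta_{\pm}$, codegeneracies $\sigma$, and principal coordinate transpositions $\tau_{i;n}$ (Lemma \ref{lem:box.automorphisms} together with Lemma \ref{lem:surjective.box.morphisms}). For each generator, $\direalize{\BOX[\phi]}$ is linear (a face inclusion at $0$ or $1$, a projection onto coordinates, or a coordinate permutation), and the description in Proposition \ref{prop:lattice.subdivision} makes $\sd_{k+1}\phi:\multiboxobj{k+1}{m}\ra\multiboxobj{k+1}{n}$ act on vertices by the same coordinate operation scaled by the grid width. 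Scaling by $1/(k+1)$ commutes with face inclusions at $0$ or $k+1$, with coordinate projections, and with coordinate permutations, so the naturality square
\begin{equation*}
	\begin{tikzcd}
		\direalize{\sd_{k+1}\BOX\boxobj{m}}\ar{r}[above]{\dihomeo_{\BOX\boxobj{m}; k+1}}\ar{d}[left]{\direalize{\sd_{k+1}\BOX[\phi]}} & \vec{\I}^m\ar{d}[right]{\direalize{\BOX[\phi]}}\\
		\direalize{\sd_{k+1}\BOX\boxobj{n}}\ar{r}[below]{\dihomeo_{\BOX\boxobj{n}; k+1}} & \vec{\I}^n
	\end{tikzcd}
\end{equation*}
commutes on vertices and hence, by the linearity of all four maps on each subcube, commutes on all of $\direalize{\sd_{k+1}\BOX\boxobj{m}}$. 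The main obstacle is a bookkeeping one, namely verifying that the colimit stream structure on the subdivided representable agrees with the pospace structure on $\vec{\I}^n$; once that is settled, the remainder is a computation on generators and a cocontinuous extension.
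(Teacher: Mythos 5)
The paper states this proposition without proof, so there is no canonical argument to compare against. Your overall strategy is sound: reduce to representables by cocontinuity (both $\direalize{-}$ and $\sd_{k+1}$ are left adjoints, so $\direalize{\sd_{k+1}(-)}$ is cocontinuous and any natural transformation to the cocontinuous $\direalize{-}$ is determined on $\BOX[-]$), construct the affine map on each representable, and check naturality on generators. The naturality check on $\delta_\pm$, $\sigma$, $\tau$ is also fine.

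The weak point is precisely the step you label a "bookkeeping" obstacle, and I don't think it is bookkeeping. You claim that Proposition \PROPTop{} (topologicality of the forgetful functor $\DITOP\ra\TOP$, hence creation of colimits) "implies that $\dihomeo_{\BOX\boxobj{n};k+1}$ is a stream isomorphism." Creation of colimits only tells you that the underlying topological space of $\direalize{\sd_{k+1}\BOX\boxobj{n}}$ is the colimit of the underlying spaces of the subcubes, i.e., that the underlying map is a homeomorphism; it says nothing about how the colimit circulation compares with the pospace circulation on $\vec{\I}^n$. What you are implicitly asserting — that $\vec{\I}^n$ is itself the colimit in $\DITOP$ of the subdivided subcubes, "an analogous description of $\vec{\I}^n$" — is exactly the content of the proposition, and you do not establish it. Concretely, the colimit carries the smallest circulation making the inclusions stream maps, while $\vec{\I}^n$ carries the circulation from the $\POTOP$ embedding; the inequality one way is formal (the inclusions of closed subcubes into $\vec{\I}^n$ are stream maps, so the colimit circulation embeds into that of $\vec{\I}^n$), but the reverse containment has genuine content. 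One viable route: on the full open set, chain any pair $x\leqslant_{\vec{\I}^n}y$ through finitely many subcube-contained steps and invoke transitivity of the preorder; on smaller open sets $U$, use that $\vec{\I}^n$ is a $\POTOP$-object (locally order-convex, connected intervals) so $\leqslant_U$ is witnessed by monotone paths in $U$, and these paths can again be chopped into subcube pieces. Without that argument (or a citation to the relevant result in \cite{krishnan2009convenient} or \cite{krishnan2015cubical}), the proof of stream-isomorphism on representables is incomplete, and that is the crux of the whole proposition — the rest (homeomorphism of underlying spaces, naturality on generators, cocontinuous extension) is indeed routine.
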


The following lemma is the main method of obtaining information about edge orientations on a cubical set from the circulation on a directed realization. 
Regarding a point $x$ in a stream $X$ as a stream map $\star\ra X$ whose image contains $x$, $\support_{|\BOX[-]|}(x,L)$ is the minimal Boolean interval $I$ in a finite distributive lattice $L$ such that $x\in|\BOX[I]|$.  

\begin{lem}
  \label{lem:orientations}
  Fix a $\DISLATS$-object $L$.  
  Consider $x\leqslant_{\direalize{\;\BOX[L]\;}}y$.  
  Then
  $$\min\support_{|\BOX[-]|:\DISLATS\ra\TOP}(x,L)\leqslant_L\min\support_{|\BOX[-]|:\DISLATS\ra\TOP}(y,L).$$
\end{lem}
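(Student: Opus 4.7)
Denote $\mu(z)=\min\support_{|\BOX[-]|}(z,L)$ for brevity. The plan is first to reduce to a single-cube calculation, then to verify the inequality directly inside one cube.

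For the reduction, observe that $\BOX[L]$ is the colimit in $\CUBICALSETS$ of the subpresheaves $\BOX[J]$ as $J$ ranges over the (Boolean) intervals of $L$, because every $\DISLATS$-morphism $\boxobj{n}\to L$ factors through its image and that image is a Boolean interval by Lemma \ref{lem:cubical.pasting.schemes}. Directed realization is cocontinuous and each inclusion $\direalize{\BOX[J]}\ira\direalize{\BOX[L]}$ is a stream embedding by Proposition \ref{prop:realization.preserves.embeddings}, so the circulation on $\direalize{\BOX[L]}$ is the terminal one making every such embedding a stream map. Applying the smallest-preorder description of circulations on unions of opens yields that $x\leqslant_{\direalize{\BOX[L]}}y$ holds iff there exist a finite chain $x=x_0,x_1,\ldots,x_k=y$ and Boolean intervals $J_0,\ldots,J_{k-1}$ of $L$ with $x_i,x_{i+1}\in\direalize{\BOX[J_i]}$ and $x_i\leqslant_{\direalize{\BOX[J_i]}}x_{i+1}$ for each $i$. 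Transitivity of $\leqslant_L$ reduces the lemma to the single-cube case.

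For the cube case, identify $J\cong\boxobj{n}$ so that $\direalize{\BOX[J]}\cong\vec{\I}^n$ by Example \ref{eg:hypercube.quadrangulations}. Writing $x=(x_1,\ldots,x_n)$ and $y=(y_1,\ldots,y_n)$ with $x_i\leqslant y_i$ coordinatewise, the minimal Boolean subinterval of $J$ whose realization contains $x$ has minimum $(a_1,\ldots,a_n)\in\boxobj{n}$ with $a_i=1$ precisely when $x_i=1$; this minimum coincides with $\mu(x)$ because the minimum support $I_x$ must be contained in $J$ by minimality, forcing $I_x$ to also be the minimal Boolean subinterval of $J$ containing $x$. Since $x_i=1$ forces $y_i=1$, we obtain $\mu(x)\leqslant_{\boxobj{n}}\mu(y)$ coordinatewise, which transfers to $\mu(x)\leqslant_L\mu(y)$ via the order-preserving inclusion $J\ira L$.

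The main obstacle is the reduction step: the cubes $\direalize{\BOX[J]}$ are closed rather than open subsets of $\direalize{\BOX[L]}$, so the smallest-preorder description of the circulation on a union of opens does not immediately apply. This can be handled by thickening each cube to an open neighborhood (e.g., an open star in a suitable subdivision) on which the local order is still controlled by the cube order, or more directly by exploiting that the forgetful functor $\DITOP\ra\TOP$ is topological so that the circulation on the colimit $\direalize{\BOX[L]}$ is determined by those on the cubical pieces. The coordinate-level calculation inside a single cube is routine.
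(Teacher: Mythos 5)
Your approach matches the paper's: compute $\min\support_{|\BOX[-]|}(x,\boxobj{n})=(\lfloor x_1\rfloor,\ldots,\lfloor x_n\rfloor)$ inside a single cube, observe this is monotone because $x_i\leqslant y_i$ forces $\lfloor x_i\rfloor\leqslant\lfloor y_i\rfloor$, and reduce the general $L$ to this case. The paper compresses the reduction into the one-liner ``the general case follows from transitivity of preorders,'' and you supply the argument it elides: decompose $x\leqslant_{\direalize{\;\BOX[L]\;}}y$ into a chain whose successive links lie in a common directed cube, note that $\min\support_{|\BOX[-]|}(z,L)=\min\support_{|\BOX[-]|}(z,J)$ whenever $z\in|\BOX[J]|$ (by minimality of the support), and chain the single-cube inequalities via transitivity of $\leqslant_L$. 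The closed-versus-open subtlety you flag at the end is a genuine point the paper's phrasing glosses over; either of your proposed fixes (thickening to open stars whose local order is still governed by the incident cubes, or invoking that $\DITOP\ra\TOP$ is topological to characterize the colimit circulation as generated by the legs) will close it. One small terminological slip: for a cocone of stream maps into a colimit the relevant lift is the initial circulation, not a terminal one, which is anyway consistent with the ``smallest preorder'' phrasing you use in the same sentence.
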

\begin{proof}
  For brevity, let $F$ be the functor $|\BOX[-]|:\DISLATS\ra\TOP$.
  In the case $L=\boxobj{n}$, 
  \begin{equation*}
	  \min\support_{F}(x,\boxobj{n})
	  =(\lfloor x_1\rfloor,\ldots,\lfloor x_n\rfloor)\\
	  \leqslant_{\boxobj{n}} (\lfloor y_1\rfloor,\ldots,\lfloor y_n\rfloor)\\
	  =\min\support_{F}(y,L).
  \end{equation*}
  The general case follows from transitivity of preorders.
\end{proof}

\begin{lem}
  \label{lem:geometric.first.vertex.map}
  For each $n$, the following commutes in $\SETS$.
  \begin{equation*}
    \begin{tikzcd}
	    (\boxobj{n})^{[1]}=\sd_2\boxobj{n}\ar[d,hookrightarrow]\ar{r}[above]{(-)^{\delta_{\mins}}} & \boxobj{n}\\
	    \direalize{\BOX[\sd_2\boxobj{n}]}\ar{r}[below]{\dihomeo_{\BOX\boxobj{n};2}} & \direalize{\BOX\boxobj{n}}\ar{u}[right]{x\mapsto\min\,\support_{|\BOX[-]|:\BOX\ra\TOP}(x,\boxobj{n})}
    \end{tikzcd}
  \end{equation*}
\end{lem}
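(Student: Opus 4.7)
The plan is a direct unwinding of definitions: chase a vertex $v \in (\boxobj{n})^{[1]}$ around both sides of the square and verify that the two images in $\boxobj{n}$ agree.

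First I would fix a vertex $v$ in $\sd_2\boxobj{n}$, which under the identification $\sd_2\boxobj{n}_0 = (\boxobj{n})^{[1]}_0$ corresponds to a monotone function $v \colon [1] \to \boxobj{n}$, i.e. a pair $(v_-, v_+) \in \boxobj{n} \times \boxobj{n}$ with $v_- \leqslant_{\boxobj{n}} v_+$. Here $v_- = v(0) = v \circ \delta_{\mins}$, so the top-right composite sends $v$ to $v_-$.

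Next I would compute the bottom-right composite. Under the natural identification of Proposition \ref{prop:lattice.subdivision} (with $k=1$) that sends a monotone $[1] \to \boxobj{n}$ to $v_- + v_+ \in \multiboxobj{2}{n}$, Proposition \ref{prop:directed.realizations} tells us that $\dihomeo_{\BOX\boxobj{n};2}$ maps the geometric vertex $v$ in $\direalize{\BOX[\sd_2\boxobj{n}]} = |\BOX[\sd_2\boxobj{n}]|$ to the point $\tfrac{v_- + v_+}{2} \in \I^n = \direalize{\BOX\boxobj{n}}$ (the underlying topological space of directed realizations agrees with topological realizations by commutativity of the right diagram in \S\ref{sec:space.comparisons}). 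Coordinate-wise, the $i$th entry is $\tfrac{v_-(i) + v_+(i)}{2}$, which lies in $\{0, \tfrac{1}{2}, 1\}$ since $v_\pm(i) \in \{0,1\}$.

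Then I would compute $\min\,\support_{|\BOX[-]|}$ of this point in $\boxobj{n}$. The minimum Boolean subinterval $I \subset \boxobj{n}$ with $x \in |\BOX[I]|$ is determined coordinate-wise by $I_i = [\lfloor x_i\rfloor,\lceil x_i \rceil] \subset [1]$, so its minimum is $(\lfloor x_1\rfloor, \ldots, \lfloor x_n\rfloor)$. A coordinate-wise check gives $\lfloor \tfrac{v_-(i)+v_+(i)}{2}\rfloor = v_-(i)$ in each of the three possible cases $(v_-(i), v_+(i)) \in \{(0,0),(0,1),(1,1)\}$, using that $v_- \leqslant_{\boxobj{n}} v_+$ excludes $(1,0)$. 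Hence the bottom-right composite also returns $v_-$, and the square commutes.

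The only mild subtlety is bookkeeping the two distinct identifications $\sd_2\boxobj{n} \cong (\boxobj{n})^{[1]}$ (monotone maps from $[1]$) and $\sd_2\boxobj{n} \cong \multiboxobj{2}{n}$ (coordinate-wise sums), since both appear in the statement — the former on the left edge, the latter implicitly through $\dihomeo_{\BOX\boxobj{n};2}$ via Proposition \ref{prop:directed.realizations}. Once these are aligned, no further work is required.
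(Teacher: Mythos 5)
Your proposal is correct and is essentially the same argument as the paper's: identify $\sd_2\boxobj{n}$ with $\multiboxobj{2}{n}$ so a vertex has coordinates in $\{0,1,2\}$, use Proposition \ref{prop:directed.realizations} to compute the geometric vertex as $(x_1/2,\ldots,x_n/2)$, take floors coordinate-wise to obtain $\min\,\support$, and observe that this agrees with $(-)^{\delta_{\mins}}$. The only difference is that you spell out the casework and the two identifications explicitly, whereas the paper compresses the computation to a three-line chain of equalities.
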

\begin{proof}
  For brevity, let $F$ be the functor $|\BOX[-]|:\BOX\ra\TOP$.
  For each $x\in\sd_2\boxobj{n}=(\boxobj{n})^{[1]}$,
  \begin{align*}
    \min\,\support_{F}(\dihomeo_{\BOX\boxobj{n};2}(x),\boxobj{n})
    &= \min\,\support_{F}((\nicefrac{x_1}{2},\nicefrac{x_2}{2},\ldots,\nicefrac{x_n}{2}),\boxobj{n})\\
    &= (\lfloor\nicefrac{x_1}{2}\rfloor,\lfloor\nicefrac{x_2}{2}\rfloor,\ldots,\lfloor\nicefrac{x_n}{2}\rfloor)\\
    &= [1]^{\delta_{\mins}}(x).
  \end{align*}
\end{proof}

Let $\csing$ denote the right adjoint to $\direalize{-}:\CUBICALSETS\ra\DITOP$, naturally defined by  
$$(\csing X)_n=\DITOP(\direalize{\BOX\boxobj{n}},X).$$
Call $\csing X$ the \textit{directed singular cubical set} of the stream $X$. 
For brevity, we henceforth write $\catfont{S}$ for the monad $\sing\,\direalize{-}$ of the adjunction $\direalize{-}\dashv\sing$.  

\section{Homotopy}\label{sec:homotopy}
This section formalizes and compares different homotopy theories.  
Section \S\ref{sec:abstract.homotopy} fixes some definitions of homotopy in terms of an abstract \textit{interval object}.  
Sections \S\ref{sec:categorical.homotopy}, \S\ref{sec:cubical.homotopy}, and \S\ref{sec:continuous.homotopy} explore specific instances of abstract homotopy, whether classical, directed, or categorical and whether algebraic, cubical, or continuous.
Section \S\ref{sec:homotopical.comparisons} compares the different homotopy theories.  
In particular, section \S\ref{sec:directed.homotopical.comparisons} gives some of the main results, albeit without proof, and some subsequent calculations.
Observations about the classical homotopy theory of cubical sets are essentially formal but included for completeness, given that our operating definition of cubical sets is not standard.  
Observations about classical homotopy theories of small categories and topological spaces, well-known, are included for comparison with their directed counterparts.  

	\subsection{Abstract}\label{sec:abstract.homotopy}
	The simplest way to discuss the variety of homotopy theories of interest is in terms of abstract interval objects.  
	The purpose of this section is to fix notation and terminology for standard concepts at this level of abstraction.  
	Fix a closed monoidal category $\modelcat{1}$ with terminal unit.    
	Fix an \textit{interval object} $\intervalobject{1}$ in $\modelcat{1}$, which we take in this paper to mean a functor $\INTERVALCAT\ra\modelcat{1}$ sending $[0]$ to a terminal object.

	\begin{eg}
	  \label{eg:classical.continuous.interval.object}
	  The interval object in $\TOP$ naturally sending $\delta_{\pm}$ to the functions
	  $$\{\half\pm\half\}\ira\I$$
	  is the prototypical example of an interval object.  
	  Much of homotopy theory on $\TOP$ generalizes to a category equipped with an interval object.  
	\end{eg}

	We fix some general terminology for standard concepts, like relative homotopy and homotopy equivalences, in terms of the interval object $\intervalobject{1}$.
	Fix an $\modelcat{1}$-enriched category $\cat{1}$ tensored and cotensored over $\modelcat{1}$.  
	For convenience, we will identify an $\cat{1}$-object $o$ with $\mathfrak{i}([0])\otimes o$ and $\mathfrak{o}^{\mathfrak{i}([0])}$ along natural $\cat{1}$-isomorphisms.  
	For a pair of parallel $\cat{1}$-morphisms $\zeta_1,\zeta_2:o_1\ra o_2$, \textit{left and right $\intervalobject{1}$-homotopies} from $\zeta_1$ to $\zeta_2$ are choices of dotted $\cat{1}$-morphisms respectively making I,II commute in
	\begin{equation*}
	  \begin{tikzcd}
		  o_1\amalg o_1
		    \ar{d}[left]{o_1\otimes\mathfrak{i}(\delta_-)\amalg o_1\otimes\mathfrak{i}(\delta_+)}\ar{r}[above]{\zeta_1\amalg\zeta_2} 
		    \ar[dr,phantom,"I",very near start]
		  & o_2
			\\
			o_1\otimes\mathfrak{i}([1])\ar[ur,dotted] & {}
	  \end{tikzcd}
	  \quad
	  \begin{tikzcd}
		  o\ar{rr}[above]{o_2^{\intervalobject{1}([1]\ra[0])}\zeta_i\iota}\ar{d}[left]{\iota} 
		  \ar[dr,phantom,"III",near start]
		  & {} & o_2^{\mathfrak{i}([1])}\ar{d}[right]{o_2^{\mathfrak{i}(\delta_-)\amalg\mathfrak{i}(\delta_+)}}
		  \\  	
		  o_1\ar[urr,dotted]\ar{rr}[below]{\zeta_1\times\zeta_2} & {} & 
		  o_2^2
		  \ar[ul,phantom,"II",near start]
	  \end{tikzcd}
	\end{equation*}

	Natural bijections $\cat{1}(o_1\otimes\mathfrak{i}([1]),o_2)\cong\cat{1}(o_1,o_2^{\mathfrak{i}([1])})$ defined by $\cat{1}$ closed monoidal give a 1-1 correspondence between left $\intervalobject{1}$-homotopies and right $\intervalobject{1}$-homotopies.
	Write $\zeta_1\futurehomotopic{\intervalobject{1}}\zeta_2$ to denote a (left or right) $\intervalobject{1}$-homotopy from $\zeta_1$ to $\zeta_2$ or the existence of such an $\intervalobject{1}$-homotopy.
	Say that the dotted right $\intervalobject{1}$-homotopy on the right side is \textit{relative} a morphism $\iota:o\ra o_1$ if additionally III commutes for $i=1$ or equivalently for $i=2$. 
	We will repeatedly use the formal fact that there exists an $\mathfrak{i}$-homotopy (relative a $\cat{1}$-morphism $\iota:o\ra o_1$) between a pair of parallel $\cat{1}$-morphisms $\zeta_1,\zeta_2:o_1\ra o_2$ (whose precomposites with $\iota$ coincide) natural in $\zeta_1,\zeta_2$ and a choice of dotted lift making IV (and V) commute in following diagram commute, where $i=1$ or equivalently $i=2$:
        \begin{equation*}
          \begin{tikzcd}
		  o\ar{d}[left]{\iota}\ar{r}[above]{\zeta_i\iota} & o_2\ar[dr,phantom,near start,"V"]\ar[d,equals]\ar{r}[above]{o_2^{\mathfrak{i}([1]\ra[0])}} & o_2^{\mathfrak{i}([1])}\ar{d}[right]{o_2^{\mathfrak{i}(\delta_-)\amalg\mathfrak{i}(\delta_+)}}\\
		  o_1\ar{r}[below]{\zeta_1\times\zeta_2} & o_2\ar[ur,dotted]\ar[r,equals] & o_2\ar[ul,phantom,near start,"IV"]
	  \end{tikzcd}
	\end{equation*}

	An $\cat{1}$-morphism $\alpha:o_1\ra o_2$ is an \textit{$\intervalobject{1}$-equivalence} if there exists an $\cat{1}$-morphism $\beta:o_2\ra o_1$ with $\id_a\futurehomotopic{\mathfrak{i}}\beta\alpha$ and $\id_b\futurehomotopic{\mathfrak{i}}\alpha\beta$.
	Define the interval object $\mathfrak{i}_n$, informally the n-fold zig-zag of $\mathfrak{i}$, by $\intervalobject{1}_{0}=\mathfrak{i}$ and the following commutative diagrams among which the first is co-Cartesian:
	\begin{equation*}
	  \begin{tikzcd}
		  \mathfrak{i}([1])\ar{r}[above]{\mathfrak{i}(\delta_{\mins})_*\iota_n(\delta_{\mins})} & \mathfrak{i}_{n+1}([1])\\  	
		\mathfrak{i}([0])\ar{u}[left]{\mathfrak{i}(\delta_-)}
	    \ar[ur,phantom,"\urcorner",very near start]
		\ar{r}[below]{\mathfrak{i}_n(\delta_-)} & \mathfrak{i}_{n}([1])\ar{u}[description]{(\iota_n(\delta_-))_*\iota(\delta_-)} 
	 \end{tikzcd}
	 \quad
	 \begin{tikzcd}
		\mathfrak{i}_{n+1}([0])\ar{r}[above]{\mathfrak{i}_{n+1}(\delta_-)} & \mathfrak{i}_{n+1}([1])\\  	
		\mathfrak{i}([0])\ar[u,equals]\ar{r}[below]{\mathfrak{i}(\delta_+)} & \mathfrak{i}([1])\ar{u}[description]{\mathfrak{i}(\delta_{\mins})_*\iota_n(\delta_{\mins})} 
	 \end{tikzcd}
	 \quad
	 \begin{tikzcd}
		\mathfrak{i}_{n+1}([0])\ar{r}[above]{\mathfrak{i}_{n+1}(\delta_+)} & \mathfrak{i}_{n+1}([1])\\  	
		\mathfrak{i}([0])\ar[u,equals]\ar{r}[below]{\mathfrak{i}(\delta_+)} & \mathfrak{i}_n([1])\ar{u}[description]{(\iota_n(\delta_-))_*\iota(\delta_-)}  
	 \end{tikzcd}
	\end{equation*}

	An \textit{$\mathfrak{i}_*$-homotopy} is a $\mathfrak{i}_n$-homotopy for some $n$.
	Write $\zeta_1\dhomotopic{\mathfrak{i}}\zeta_2$ to denote an $\intervalobject{1}_*$-homotopy or the existence of such an $\intervalobject{1}_*$-homotopy from $\zeta_1$ to $\zeta_2$.
	In other words, $\dhomotopic{\mathfrak{i}}$ is the congruence on $\cat{1}$ generated by the relation $\futurehomotopic{\mathfrak{i}}$ on morphisms. 
	An \textit{$\mathfrak{i}_*$-equivalence} is an $\mathfrak{i}_n$-equivalence for some $n$, or equivalently  a $\cat{1}$-morphism representing an isomorphism in the quotient category $\cat{1}/{\dhomotopic{\mathfrak{i}}}$. 

	\begin{lem}
	  \label{lem:localization}
	  Consider the following data:
	  \begin{enumerate}
		  \item closed monoidal category $\modelcat{1}$ with terminal unit
		  \item interval object $\intervalobject{1}$ in $\modelcat{1}$ sending $\delta_{\mins}$ and $\delta_{\pls}$ to $\intervalobject{1}_*$-equivalences  
		  \item category $\cat{1}$ enriched, tensored, and cotensored over $\modelcat{1}$
	  \end{enumerate}
	  The quotient functor $\cat{1}\ra\cat{1}/\dhomotopic{\mathfrak{i}}$ is localization by  the $\mathfrak{i}_*$-equivalences.
	\end{lem}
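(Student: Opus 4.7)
The plan is to verify the two defining properties of a localization: first that the quotient functor $q\colon\cat{1}\ra\cat{1}/\dhomotopic{\intervalobject{1}}$ sends every $\intervalobject{1}_*$-equivalence to an isomorphism, and second that $q$ is universal among such functors. The first is immediate from the stated equivalent characterization of $\intervalobject{1}_*$-equivalences as precisely the morphisms representing isomorphisms in $\cat{1}/\dhomotopic{\intervalobject{1}}$. For universality, given any functor $F\colon\cat{1}\ra\cat{2}$ inverting all $\intervalobject{1}_*$-equivalences, uniqueness of the factorization is automatic from $q$ being the identity on objects and surjective on morphisms, so the task reduces to showing that $F$ identifies $\intervalobject{1}_*$-homotopic morphisms.

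Since $\dhomotopic{\intervalobject{1}}$ is by definition the congruence generated by $\futurehomotopic{\intervalobject{1}_n}$ for $n\geqslant 0$, and each $\intervalobject{1}_n([1])$ is a finite pushout of copies of $\intervalobject{1}([1])$ glued along $\intervalobject{1}(\delta_\pm)$, every $\intervalobject{1}_n$-homotopy unwinds into a finite chain of $\intervalobject{1}$-homotopies. Thus it suffices to show that $F(\zeta_1)=F(\zeta_2)$ whenever there exists a left $\intervalobject{1}$-homotopy $H\colon o_1\otimes\intervalobject{1}([1])\ra o_2$ with $\zeta_i=H\circ(o_1\otimes\intervalobject{1}(\delta_\pm))$. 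Since both $o_1\otimes\intervalobject{1}(\delta_-)$ and $o_1\otimes\intervalobject{1}(\delta_+)$ are sections of the common projection $o_1\otimes\intervalobject{1}([1]\ra[0])$, it suffices to show that $F$ sends this projection to an isomorphism, for then both sections are equal to its inverse and hence coincide.

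The main obstacle is therefore establishing that $o_1\otimes\intervalobject{1}([1]\ra[0])$ is itself an $\intervalobject{1}_*$-equivalence in $\cat{1}$. I would do this in two moves. First, in $\modelcat{1}$: the hypothesis gives that $[\intervalobject{1}(\delta_-)]$ is invertible in $\modelcat{1}/\dhomotopic{\intervalobject{1}}$, and the identity $\intervalobject{1}([1]\ra[0])\circ\intervalobject{1}(\delta_-)=\id_{\intervalobject{1}([0])}$ forces $[\intervalobject{1}([1]\ra[0])]$ to be its two-sided inverse (since any one-sided inverse of an isomorphism is the inverse), so $\intervalobject{1}([1]\ra[0])$ is an $\intervalobject{1}_*$-equivalence in $\modelcat{1}$. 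Second, transfer to $\cat{1}$: since $\cat{1}$ is tensored over $\modelcat{1}$, any $\intervalobject{1}$-homotopy $K\colon x\otimes\intervalobject{1}([1])\ra y$ in $\modelcat{1}$ produces, through the associator isomorphism $(o_1\otimes x)\otimes\intervalobject{1}([1])\cong o_1\otimes(x\otimes\intervalobject{1}([1]))$, the $\intervalobject{1}$-homotopy $o_1\otimes K\colon(o_1\otimes x)\otimes\intervalobject{1}([1])\ra o_1\otimes y$ in $\cat{1}$, so $o_1\otimes(-)\colon\modelcat{1}\ra\cat{1}$ preserves $\futurehomotopic{\intervalobject{1}}$, hence the generated congruence, and hence $\intervalobject{1}_*$-equivalences. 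Applying this to $\intervalobject{1}([1]\ra[0])$ finishes the proof.

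The only delicate point is the transfer step: one has to check the coherence of the enrichment isomorphisms carefully enough that homotopies in $\modelcat{1}$ really do push forward to honest homotopies in $\cat{1}$ along $o_1\otimes(-)$, but this is guaranteed by the standing hypothesis that $\cat{1}$ is $\modelcat{1}$-tensored.
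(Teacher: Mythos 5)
Your proof is correct and takes essentially the same approach as the paper: both observe that $o_1\otimes\mathfrak{i}(\delta_{\mins})$ and $o_1\otimes\mathfrak{i}(\delta_{\pls})$ are sections of a common retraction which is an $\mathfrak{i}_*$-equivalence (transferred from $\modelcat{1}$ by tensoring), so any equivalence-inverting $F$ must identify them. The only organizational difference is that you unwind $\mathfrak{i}_n$-homotopies to chains of single $\mathfrak{i}$-steps up front, whereas the paper works directly with $\mathfrak{i}_n(\delta_{\pm})$ and establishes the corresponding inverse pair by induction on $n$; your version also makes the tensor-transfer and uniqueness-of-factorization steps explicit, which the paper leaves implicit.
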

	\begin{proof}
	  Fix a functor $F:\cat{1}\ra\cat{2}$ mapping the $\mathfrak{i}_*$-equivalences to isomorphisms.  
 	  Consider a pair of $\dhomotopic{\mathfrak{i}}$-equivalent $\cat{1}$-morphisms $\alpha,\beta:o_1\ra o_2$.  
	  Then there exists $n\gg 0$ and $\eta:\alpha\futurehomotopic{\mathfrak{i}_n}\beta$.
	  Both $\delta_{\mins}$ and $\delta_{\pls}$ admit $\sigma$ as a retraction.  
	  Hence both $\intervalobject{1}_n(\delta_{\mins})$ and $\intervalobject{1}_n(\delta_{\pls})$ represent inverses to $\intervalobject{1}_n(\sigma)$ in $\modelcat{1}/\dhomotopic{\intervalobject{1}}$ for the case $n=0$ and hence for the general case by induction.
	  Hence both $o_1\otimes\intervalobject{1}_n(\delta_{\mins})$ and $o_1\otimes\intervalobject{1}_n(\delta_{\pls})$ both represent inverses to the same $\cat{1}$-morphism, $o_1\otimes\intervalobject{1}_n(\sigma)$, in $\cat{1}/\dhomotopic{\intervalobject{1}}$.
	  Hence in the diagram
		\begin{equation*}
			\begin{tikzcd}
				  Fo_1
				      \ar{dr}[description]{F(o_1\otimes\mathfrak{i}_n(\delta_-))}\ar{rr}[above]{F\alpha}\ar[dd,equals] 
				& 
				& Fo_2
				    \ar[dd,equals]
				\\
				& F(o_1\otimes\mathfrak{i}_n[1])
				\ar{ur}[description]{F\eta}\ar{ur}[description]{F\eta}
				\ar{dr}[description]{F\eta}
				\\
				  Fo_1
				  \ar{rr}[below]{F\beta}\ar{ur}[description]{F(o_1\otimes\mathfrak{i}_n(\delta_+))}
				& 
				& Fo_2,
			\end{tikzcd}
		\end{equation*}
		the left triangle commutes; the top and bottom triangle commute by our choice of $\eta$; the right triangle trivially commutes; the outer square commutes; and hence $F\alpha=F\beta$.  
		Thus $F$ factors through the quotient functor $\cat{1}\ra\cat{1}/\dhomotopic{\mathfrak{i}}$.  
	\end{proof}

	Let $[o_1,o_2]_{\intervalobject{1}}=\quotient{\cat{1}(o_1,o_2)}{\dhomotopic{\intervalobject{1}_*}}$, the $\SETS$-coequalizer
	$$\xymatrix{\mathscr{X}\left(o_1,o_2^{\mathfrak{i}([1])}\right)\ar@<.7ex>[rrr]^{\mathscr{X}(o_1,o_2^{\mathfrak{i}(\delta_+)})}\ar@<-.7ex>[rrr]_{\mathscr{X}(o_1,o_2^{\mathfrak{i}(\delta_-)})} & & & \mathscr{X}(o_1,o_2)\ar[r] & {[o_1,o_2]_{\intervalobject{1}}}}.$$

	A natural transformation $\mathfrak{i}'\ra\mathfrak{i}''$ of interval objects implies that
	$$\graph{(\futurehomotopic{\mathfrak{i}''})}\subset\graph{(\futurehomotopic{\mathfrak{i}'})}.$$

	\begin{eg}
	 We have the following chain
	$$\graph{(\futurehomotopic{\mathfrak{i}_0})}\subset\graph{(\futurehomotopic{\mathfrak{i}_1})}\subset\graph{(\futurehomotopic{\mathfrak{i}_2})}\subset\cdots\graph{(\dhomotopic{\mathfrak{i}})}$$
	for each interval object $\mathfrak{i}$ in a cocomplete closed monoidal category.
	\end{eg}

	Define the interval object $\mathfrak{d}$ as the composite
	$$\mathfrak{d}=\BOX[-](\INTERVALCAT\ira\BOX):\INTERVALCAT\ra\CUBICALSETS.$$

	\begin{eg}
	  \label{eg:classical.homotopy}
	  The interval object defining classical homotopy [Example \ref{eg:classical.continuous.interval.object}] is
	  $$|\mathfrak{d}|:\INTERVALCAT\ra\TOP.$$
	  The different homotopies in the classical setting coincide: $|\mathfrak{d}|\cong|\mathfrak{d}|_1\cong|\mathfrak{d}|_2\cdots$ and
	  $$\futurehomotopic{|\mathfrak{d}|}=\futurehomotopic{|\mathfrak{d}|_1}=\futurehomotopic{|\mathfrak{d}|_2}=\cdots=\dhomotopic{|\mathfrak{d}|}.$$
	\end{eg}

	We recall and compare homotopy theories based on the interval objects $\mathfrak{d}$, $\direalize{\mathfrak{d}}$, $|\mathfrak{d}|$ [Example \ref{eg:classical.continuous.interval.object}], $\mathfrak{h}=(\TOP\ira\DITOP)|\mathfrak{d}|$, $\Tau_1\mathfrak{d}:\INTERVALCAT\ira\CATS$, and $\Pi_1\mathfrak{d}$.  

\subsection{Algebraic}\label{sec:categorical.homotopy}
We recall three homotopy theories on the category $\CATS$ of small categories and functors between them, in order of increasing refinement.
All three of these homotopy theories coincide on the full subcategory $\GROUPOIDS$ of small groupoids. 

\subsubsection{Classical}
The class of \textit{Thomason weak equivalences} is the smallest retract-closed class $\mathscr{W}$ of $\CATS$-morphisms having the 2-out-of-3 property and containing all terminal functors such that a functor $\alpha:\smallcat{1}\ra\smallcat{2}$ lies in $\mathscr{W}$ whenever the induced functor $\beta\alpha/o\ra\beta/o$ lies in $\mathscr{W}$ for each functor $\beta:\smallcat{2}\ra\smallcat{3}$ and $\smallcat{3}$-object $o$ \cite[Theorem 2.2.11]{cisinski2004localisateur}.
The localization of $\CATS$ by the Thomason weak equivalences exists \cite{thomason1980cat} and will be referred to as the \textit{classical homotopy category of $\CATS$}.

\begin{eg}
  A sufficient and intrinsic condition for a $\CATS$-morphism
  $$\zeta:\smallcat{1}\ra\smallcat{2}$$ 
  to be a Thomason weak equivalence is if $o/\zeta$ has a terminal object for each $\smallcat{1}$-object $o$ by Quillen's Theorem A.    
\end{eg}

It is difficult to give a complete characterization of the Thomason weak equivalences that is at once explicit and intrinsic, at least without reference to the simplex category $\DEL$ (c.f. \cite{hoyo2008subdivision}.) 
Thomason weak equivalences can be defined more generally for $n$-fold functors between $n$-fold categories (c.f. Theorem \ref{thm:1-ditypes}).  

\subsubsection{Directed}
Much early work in directed homotopy theory went into generalizing categorical equivalences between groupoids to notions of equivalences between small categories that preserve computational behavior of interest (e.g. \cite{fajstrup2004components,goubault2007components,goubault2010future} and [Example \ref{eg:future.equivalences}]).
We examine one of the weakest such generalizations.  
Let $\Tau_1\mathfrak{d}_n$ denote the interval object
$$\Tau_1\mathfrak{d}_n=\Tau_1(\mathfrak{d}_n)=(\Tau_1\mathfrak{d})_n:\INTERVALCAT\ra\CATS.$$

In particular, $\Tau_1\mathfrak{d}$ is the cannonical interval object
$$\Tau_1\mathfrak{d}:\INTERVALCAT\ira\CATS.$$

The homotopy theory in which weak equivalences are the $(\Tau_1\mathfrak{d})_*$-equivalences \cite{minian2002cat}, as well a slightly weaker homotopy theory \cite{hoff1974categories} in which homotopy is defined by a single path object in terms of $\mathfrak{d}_1,\mathfrak{d}_2,\ldots$, have been studied previously. 
The $(\Tau_1\mathfrak{d})_*$-equivalences, while not the weak equivalences of a model structure, are the weak equivalences of a \textit{$\Lambda$-cofibration category} \cite{hoff1974categories} structure on $\CATS$.  
While each $(\Tau_1\mathfrak{d})_*$-equivalence is a Thomason weak equivalence, not each Thomason weak equivalence is a $(\Tau_1\mathfrak{d})_*$-equivalence.

\begin{eg}
  For parallel $\CATS$-morphisms $\alpha,\beta$, a $\Tau_1\mathfrak{d}$-homotopy
  $$\alpha\leadsto\beta$$
  is exactly a natural transformation $\alpha\ra\beta$.  
  In particular, a (left or right) adjoint in $\CATS$ is a $\Tau_1\mathfrak{d}_1$-equivalence.
\end{eg}

\begin{eg}
  \label{eg:future.equivalences}
  Consider a functor of small categories
  $$F:\smallcat{1}\ra\smallcat{2}.$$
  The functor $F$ is sometimes referred to as a \textit{future equivalence} \cite{grandis2005shape} if $F$ is a $\Tau_1\mathfrak{d}$-equivalence and a \textit{past equivalence} \cite{grandis2005shape} if $\OP{F}$ is a future equivalence.
  Future equivalences and past equivalences preserve certain properties of interest in state space analyses, such as terminal objects and initial objects respectively \cite{goubault2010future}.
\end{eg}

Based $\Tau_1\mathfrak{d}_*$-homotopy is trivial on monoid homomorphisms.

\begin{lem}
  \label{lem:based.d-homotopic.monoid.homomorphisms}
  Consider a pair of parallel monoid homomorphisms 
  $$\alpha,\beta:M\ra N.$$
  Then $\alpha=\beta$ if $\alpha$ and $\beta$ are $\Tau_1\mathfrak{d}_*$-homotopic relative $\star\ra M$. 
\end{lem}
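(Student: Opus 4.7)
The plan is to unpack what a based $\Tau_1\mathfrak{d}_*$-homotopy between monoid homomorphisms amounts to concretely, and then observe that the naturality condition, combined with the relative-to-basepoint constraint, forces the two homomorphisms to agree on the nose.

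First I would reduce to the single-step case. A $\Tau_1\mathfrak{d}_*$-homotopy between $\alpha$ and $\beta$ is a $\Tau_1\mathfrak{d}_n$-homotopy for some $n$, and by construction $\Tau_1\mathfrak{d}_n$ is built as an iterated pushout of copies of $\Tau_1\mathfrak{d}$. Correspondingly, a $\Tau_1\mathfrak{d}_n$-homotopy relative $\iota\colon\star\ra M$ is the same data as a finite zig-zag of $\Tau_1\mathfrak{d}$-homotopies $\alpha=\gamma_0,\gamma_1,\ldots,\gamma_n=\beta$ between functors $M\ra N$ such that each successive $\Tau_1\mathfrak{d}$-homotopy $\gamma_i\leadsto\gamma_{i+1}$ (or $\gamma_{i+1}\leadsto\gamma_i$) restricts to the identity homotopy on $\iota$. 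It therefore suffices to prove the lemma when $\alpha\leadsto\beta$ by a single $\Tau_1\mathfrak{d}$-homotopy relative $\iota$, because then induction on $n$ gives $\alpha=\gamma_0=\gamma_1=\cdots=\gamma_n=\beta$.

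Next I would invoke the description of $\Tau_1\mathfrak{d}$-homotopies recalled just before the lemma: a $\Tau_1\mathfrak{d}$-homotopy between parallel functors is exactly a natural transformation. So such a homotopy from $\alpha$ to $\beta$ is a natural transformation $\eta\colon\alpha\ra\beta$. Regard the monoids $M$ and $N$ as one-object categories with unique objects $\star_M$ and $\star_N$. Then $\eta$ has a single component $\eta_{\star_M}\colon\star_N\ra\star_N$, i.e.\ an element of $N$, and naturality reads
\begin{equation*}
\beta(m)\cdot\eta_{\star_M}=\eta_{\star_M}\cdot\alpha(m)\qquad(m\in M).
\end{equation*}

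Finally I would exploit the relative-to-$\iota$ hypothesis. The basepoint inclusion $\iota\colon\star\ra M$, viewed as a functor from the terminal category, picks out $\star_M$; the restriction of $\eta$ along $\iota$ is the single component $\eta_{\star_M}$, and the demand that this restriction be a constant homotopy forces $\eta_{\star_M}$ to equal the identity morphism on $\star_N$, namely the identity element $e_N\in N$. Substituting $\eta_{\star_M}=e_N$ into the naturality identity gives $\beta(m)=\alpha(m)$ for all $m\in M$, hence $\alpha=\beta$. There is no real obstacle here; the only point requiring care is the unwinding of the relative condition through the pushout definition of $\mathfrak{d}_n$, to be sure that each intermediate natural transformation in the zig-zag is itself relative $\iota$ in the same sense.
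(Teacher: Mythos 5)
Your proof is correct and hits the same core fact as the paper's: relativity pins the homotopy's component at the unique object of $M$ to the identity of $N$, and naturality then collapses the two homomorphisms. The paper packages this in a single line, staying with the $n$-step right homotopy $\eta\colon M\ra N^{\Tau_1\mathfrak{d}_n[1]}$ and observing that, since $M$ has at most one object, relativity forces $\eta$ to factor through the constant-homotopy functor $N^{\Tau_1\mathfrak{d}_n[1]\ra\star}\colon N\ra N^{\Tau_1\mathfrak{d}_n[1]}$, whence $\alpha=N^{\Tau_1\mathfrak{d}_n(\delta_{\mins})}\eta=N^{\Tau_1\mathfrak{d}_n(\delta_{\pls})}\eta=\beta$. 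You instead unpack $\Tau_1\mathfrak{d}_n[1]$ into its zig-zag of edges, reduce to a single natural transformation, and read off $\eta_{\star_M}=e_N$ from the basepoint constraint. Both are valid. The only place your write-up leans on an unverified point is the claim that relativity of the whole $\mathfrak{d}_n$-homotopy yields relativity of each segment; this is true (the constant $\mathfrak{d}_n$-homotopy restricts to the constant $\mathfrak{d}$-homotopy along every edge of the zig-zag, so the edge-wise components at $\star_M$ are all forced to be $e_N$), and you flag it at the end, but it would be worth one sentence to actually say why — it is exactly the point the paper's factorization argument handles implicitly via connectedness of $\Tau_1\mathfrak{d}_n[1]$ together with $N$ having a single object.
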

\begin{proof}
	A right $\Tau_1\mathfrak{d}_n$-homotopy $\eta:\alpha\dhomotopic{\Tau_1\mathfrak{d}}\beta$ relative $\star\ra M$ factors through $N^{\Tau_1\mathfrak{d}_n[1]\ra\star}$, by relativity and the fact that there exists at most one object in $M$ and thus $\alpha=N^{\delta_{\mins}}\eta=N^{\delta_{\pls}}\eta=\beta$.  
\end{proof}

Unbased $\Tau_1\mathfrak{d}_*$-homotopy on monoid homomorphisms generalizes the conjugacy relation.

\begin{lem}
  \label{lem:conjugacy.classes}
  For a monoid $\tau$, there exists a natural bijection
  $$[\N,\tau]_{\Tau_1\mathfrak{d}}\cong\left(\quotient{\tau}{\equiv}\right)$$
  of sets, where $\equiv$ is the smallest equivalence relation on the underlying set of $\tau$ such that $x\equiv y$ if there exists an element $z$ in $\tau$ such that $xz=zy$.
\end{lem}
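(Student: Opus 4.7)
The plan is to unwind both sides of the claimed bijection and match them. First, since the monoid $\N$ is free on one generator, evaluation at $1$ gives a bijection $\CATS(\N,\tau) \cong \tau$, natural in the monoid $\tau$, identifying a monoid homomorphism $\alpha:\N\ra\tau$ with the element $\alpha(1)\in\tau$.

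Next I would unpack $\Tau_1\mathfrak{d}$-homotopies. By construction, $\Tau_1\mathfrak{d}([1]) = \Tau_1\BOX[1] = [1]$, the walking arrow. So a (left) $\Tau_1\mathfrak{d}$-homotopy $\alpha\futurehomotopic{\Tau_1\mathfrak{d}}\beta$ between parallel monoid homomorphisms $\alpha,\beta:\N\ra\tau$ is precisely a natural transformation $\eta:\alpha\Rightarrow\beta$, viewed as a functor $\N\times[1]\ra\tau$ with the correct boundary. Since $\N$ is one-object, such $\eta$ is a single element $z = \eta_\star\in\tau$, and the naturality square at the generator $1\in\N$ reads $\beta(1)\cdot z = z\cdot\alpha(1)$. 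Writing $x=\alpha(1)$ and $y=\beta(1)$, this translates $\futurehomotopic{\Tau_1\mathfrak{d}}$ into the binary relation $R$ on $\tau$ defined by
$$y\,R\,x \iff \exists z\in\tau: yz = zx.$$

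Now $\dhomotopic{\Tau_1\mathfrak{d}}$ is, by the paragraph following the definition of $\mathfrak{i}_n$, the congruence on $\CATS$ generated by $\futurehomotopic{\Tau_1\mathfrak{d}}$. Under the identification $\CATS(\N,\tau)\cong\tau$, this becomes the equivalence relation on $\tau$ generated by $R$. The generator of $\equiv$ in the statement, $x\sim y\iff\exists z: xz=zy$, is exactly the converse of $R$; and the equivalence relation generated by any binary relation coincides with that generated by its converse (both pass through the symmetric closure). Hence the generated equivalence relation is precisely $\equiv$, giving
$$[\N,\tau]_{\Tau_1\mathfrak{d}} \;=\; \CATS(\N,\tau)\big/\!\dhomotopic{\Tau_1\mathfrak{d}} \;\cong\; \tau/\equiv,$$
naturally in $\tau$, since both $\CATS(\N,-)$ and $\equiv$ are visibly natural with respect to monoid homomorphisms.

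The only subtlety, and the step most likely to cause confusion, is the directionality: a natural transformation $\alpha\Rightarrow\beta$ yields the relation $yz=zx$ rather than $xz=zy$, so one must observe that the symmetrization step inherent in forming the generated equivalence relation washes out this discrepancy. Everything else is a routine unpacking of definitions.
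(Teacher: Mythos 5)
Your proof is correct and follows essentially the same route as the paper's: identify $\CATS(\N,\tau)\cong\tau$ by evaluation at $1$, unpack a $\Tau_1\mathfrak{d}$-homotopy as a functor $\N\times[1]\ra\tau$ determined by a single intertwining element $z$ satisfying the relation $\beta(1)z=z\alpha(1)$, and observe that after passing to the generated equivalence relation this recovers $\equiv$. The one place where you diverge is the orientation of the intertwiner relation — the paper records it as $\alpha(1)x=x\beta(1)$, the converse of yours (almost certainly a composition-order convention) — and you correctly flag and resolve this point, noting that the generated equivalence relation is insensitive to replacing the generating relation by its converse; the paper leaves this implicit.
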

\begin{proof}
  Consider a pair of monoid homomorphisms
  $$\alpha,\beta:\N\ra\tau.$$

  Every $\Tau_1\mathfrak{d}$-homotopy $\alpha\futurehomotopic{\Tau_1\mathfrak{d}}\beta$, a functor
  $$\eta:\N\times[1]\ra\tau$$
  such that $\eta(-,0)=\alpha$ and $\eta(-,1)=\beta$, is uniquely determined by the element $x=\eta(1_{\star},0\ra 1)$ in $\tau$.  
  Conversely, every $x\in\tau$ such that $\alpha(1)x=x\beta(1)$ uniquely determines a $\Tau_1\mathfrak{d}$-homotopy $\alpha\futurehomotopic{\Tau_1\mathfrak{d}}\beta$ sending $(1_{\star},0\ra 1)$ to $x$.  
  It therefore follows that the bijection $\CATS(\N,\tau)\cong\tau$ sending each homomorphism $\zeta$ to $\zeta(1)$ passes to the desired natural isomorphism.  
\end{proof}

A monoid $M$ is \textit{cancellative} if $x_1yx_2=x_1zx_2$ for some elements $x_1,x_2,y,z\in M$ implies $y=z$.  
For a commutative monoid $\tau$, the set $[\N,\tau]_{\Tau_1\mathfrak{d}}$ of conjugacy classes of elements in $\tau$ naturally forms a commutative monoid with multiplication inherited from $\tau$.  
The commutative monoid $[\N,\tau]_{\Tau_1\mathfrak{d}}$ is the \textit{maximal cancellative quotient} of $\tau$, the image of $\tau$ under the left adjoint to the full inclusion from the category of cancellative commutative monoids into the category of commutative monoids and monoid homomorphisms.  

\subsubsection{Categorical}
There exist natural isomorphisms
$$\Pi_1\mathfrak{d}\cong\Pi_1(\mathfrak{d}_n)=(\Pi_1\mathfrak{d})_n,\quad n=0,1,2,\ldots$$

A categorical equivalence between small categories is exactly a $\Pi_1\mathfrak{d}$-equivalence.
Every categorical equivalence is a $\Tau_1\mathfrak{d}$-equivalence because localization defines a natural transformation $\Tau_1\mathfrak{d}\ra\Pi_1\mathfrak{d}$.

\subsection{Cubical}\label{sec:cubical.homotopy}
Define $\pi_0C$ as the $\SETS$-coequalizer
$$\xymatrix{C_1\ar@<.7ex>[rrr]^{C(\delta_-)}\ar@<-.7ex>[rrr]_{C(\delta_+)} & & & C_0\ar[r] & \pi_0C}$$
natural in cubical sets $C$.    

\begin{eg}
  We can make the natural identification $[A,B]_{\mathfrak{d}}=\pi_0\CUBICALSETS^{\indexcat{1}}(A,B)$.
\end{eg}

For each based cubical set $(C,v)$, let $\tau_n(C,v)$ be the set
$$\tau_n(C,v)=\pi_0\Omega^n(C,v).$$

\begin{eg}
  The set $\tau_n(C,v)$ is the set of cubical functions 
  $$\BOX\boxobj{n}\ra C$$
  sending $\partial\BOX\boxobj{n}$ to the minimal subpresheaf of $C$ having the vertex $v$, up to the equivalence relation identifying two such cubical functions $\alpha,\beta$ whenever $\alpha\dhomotopic{\mathfrak{d}}\beta$ relative $\partial\BOX\boxobj{n}\ira\BOX\boxobj{n}$.
\end{eg}

A \textit{cubical homotopy equivalence} is a $\mathfrak{d}_*$-equivalence.
Cubical homotopy equivalences generalize to \textit{classical weak equivalences} of cubical sets, which we take to mean cubical functions $\psi$ inducing homotopy equivalences $|\psi|$ between topological realizations.
Classical weak equivalences are part of a model structure, which we call the \textit{classical model structure} on $\CUBICALSETS$, defined by the following proposition.

\begin{prop}
  \label{prop:cubical.model.structure}
  There exists a model structure on $\CUBICALSETS$ in which \ldots
  \begin{enumerate}
    \item \ldots the weak equivalences are the classical weak equivalences
    \item \ldots the cofibrations are the monos
  \end{enumerate}
\end{prop}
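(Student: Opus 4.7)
The plan is to obtain this model structure via Cisinski's theory of model structures on presheaf toposes over test categories. First I would verify that the symmetric monoidal cube category $\BOX$ used here is a test category. This is essentially Buchholtz and Morehouse's classification \CORMostCubeCategoriesAreTest{}, which confirms that most variants of the cube category, including the minimal symmetric monoidal variant (adjoining coordinate permutations but no connections or reversals to $\BOX_1$), are test categories. The canonical interval object $\mathfrak{d}$ supplies the required separating interval structure, and the monoidal structure on $\BOX$ provides the homotopical coherence needed.

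With test-ness established, I would apply \THMTestModelStructure{} to obtain a cofibrantly generated model structure on $\CUBICALSETS$ whose cofibrations are exactly the monomorphisms and whose weak equivalences are Cisinski's intrinsic weak equivalences defined via the canonical functorial cylinder. The cofibration clause of the proposition is then immediate, and only the identification of weak equivalences remains.

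The remaining step is to identify these intrinsic weak equivalences with classical weak equivalences, i.e.\ cubical functions $\psi$ for which $|\psi|$ is a homotopy equivalence in $\TOP$. On a test category the intrinsic weak equivalences coincide with the morphisms sent to weak homotopy equivalences under the canonical nerve into $\SIMPLICIALSETS$, and hence with the morphisms sent to weak homotopy equivalences under any compatible realization into $\TOP$; since topological realizations of cubical sets are CW complexes, Whitehead's theorem then promotes weak homotopy equivalences to homotopy equivalences, matching the paper's definition.

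The main obstacle I expect is this last comparison. While Jardine \cite{jardine2006categorical} treated the minimal variant of the cube category directly, extending the identification to the symmetric monoidal variant requires checking that coordinate permutations are compatible with a chosen triangulation and do not disrupt the comparison between the cubical realization $|-|\colon\CUBICALSETS\to\TOP$ and the composite of a triangulation $\CUBICALSETS\to\SIMPLICIALSETS$ with simplicial realization. I would address this using the order-theoretic characterization of $\BOX$-morphisms (Theorem \ref{thm:box.characterization}) together with the subdivision functors of Section \ref{sec:cube.configurations} to produce a natural weak equivalence between the two realizations, reducing coordinate permutations to symmetries of affine cubes that act by homeomorphisms on each geometric cell.
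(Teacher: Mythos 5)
Your approach is genuinely different from the paper's. The paper does not invoke test category theory at all: it constructs the model structure by \emph{left transfer} along the triangulation adjunction $\tri\dashv\qua$ using the criterion of Hess--K\k{e}dziorek--Riehl--Shipley \cite[Theorem 2.2.1]{hess2017necessary}. The acyclicity hypothesis is verified by factoring each $\id_C\amalg\id_C$ through $C\otimes\BOX[1]$ so that triangulation sends the first map to a mono and the second to a simplicial weak equivalence; the identification of weak equivalences and cofibrations then follows because $|-|\colon\CUBICALSETS\to\TOP$ factors through $\tri$ and the classical model structure on $\hat\DEL$ is itself left transferred from the q-model structure on $\TOP$. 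This keeps the entire argument inside the triangulation adjunction, which the paper has already set up (Lemma \ref{lem:qt.cocontinuous}).

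Your Cisinski route trades that self-containment for external machinery plus an extra comparison that is not automatic. Cisinski's test model structure defines weak equivalences through the nerve $i_{\BOX}^*N_{\DEL}$, a \emph{right} adjoint sending $C$ to the nerve of the category of elements $\BOX/C$; the paper's classical weak equivalences are instead detected by the cocontinuous left Kan extension $\tri$ (equivalently by $|-|$). Your claim that the test weak equivalences coincide with ``morphisms sent to weak homotopy equivalences under any compatible realization'' is precisely the thing that needs to be proved, and your final paragraph acknowledges this by sketching a comparison argument using Theorem \ref{thm:box.characterization} and the subdivision functors. That comparison is the real content of your proof and remains to be carried out; as written, the proposal states the conclusion of the comparison step before supplying it. It is also worth noting that the acknowledgements credit Sattler with pointing out a problem with the use of the test model structure in earlier drafts of this very paper, which is circumstantial evidence that the Cisinski route here is more delicate than your outline suggests. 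Your high-level structure is plausible and could likely be made rigorous via a test-functor argument in the style of \PROPTestFunctor, but the left-transfer proof the paper actually gives avoids the comparison entirely and is the more direct of the two.
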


A proof of the proposition is given at the end of \S\ref{sec:triangulations}.  
A \textit{fibrant} cubical set will simply refer to a fibrant object in this classical model structure on cubical sets.  
A classical weak equivalence between fibrant cubical sets is precisely a cubical homotopy equivalence between fibrant cubical sets. 
The fundamental groupoid $\Pi_1$ is a classical homotopy invariant in the sense of the following proposition, whose proof is given at the end of \S\ref{subsubsec:classical.comparisons}.

\begin{prop}
  \label{prop:equivalent.fundamental.groupoids}
  For each classical weak equivalence $\psi:A\ra B$ of cubical sets, 
  $$\Pi_1\psi:\Pi_1A\ra\Pi_1B$$ 
  is a categorical equivalence.
\end{prop}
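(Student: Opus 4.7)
The plan is to exhibit a natural isomorphism $\Pi_1 \cong \pi_1^{\mathrm{top}} \circ |-|$ of functors $\CUBICALSETS \to \GROUPOIDS$, where $\pi_1^{\mathrm{top}}\colon \TOP \to \GROUPOIDS$ denotes the classical topological fundamental groupoid, and then invoke classical homotopy invariance. Granted this natural isomorphism, the conclusion is immediate: for a classical weak equivalence $\psi\colon A \to B$, the induced continuous map $|\psi|$ is a homotopy equivalence by definition, so $\pi_1^{\mathrm{top}}|\psi|$ is a categorical equivalence by the standard fact that continuous homotopies induce natural isomorphisms between the resulting groupoid morphisms, whence $\Pi_1 \psi$ is likewise a categorical equivalence.

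To construct the natural isomorphism, I would compare the two functors on representables and extend by cocontinuity. The source $\Pi_1$ is cocontinuous by construction as a left Kan extension. The composite $\pi_1^{\mathrm{top}} \circ |-|$ is cocontinuous on cubical realizations because $|-|$ is a left adjoint and $\pi_1^{\mathrm{top}}$ preserves the cellular pushouts $|\partial \BOX\boxobj{n}| \hookrightarrow \I^n$ appearing in the skeletal filtration of a cubical realization, by the van Kampen theorem for groupoids applied cell-by-cell. On each representable, the value $\Pi_1 \BOX\boxobj{n}$ is the groupoid reflection of the poset $\boxobj{n}$, a contractible groupoid because $\boxobj{n}$ has both a minimum and a maximum; dually, $\pi_1^{\mathrm{top}}|\BOX\boxobj{n}| = \pi_1^{\mathrm{top}} \I^n$ is the indiscrete groupoid on $\I^n$, also contractible. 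A coherent choice of equivalence between the two on $\BOX$ then extends uniquely, up to natural isomorphism, to a natural equivalence on all of $\CUBICALSETS$ by the universal property of the Kan extension defining $\Pi_1$.

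The main obstacle will be verifying cocontinuity of $\pi_1^{\mathrm{top}} \circ |-|$ in sufficient generality, since the classical van Kampen theorem for groupoids typically presupposes open covers or CW attachments with some connectivity hypothesis. A clean workaround exploits that both functors depend only on the $2$-skeleton: attachments of cells of dimension at most two reduce to the classical van Kampen argument for loops, while attachments of cells of dimension three or greater affect neither $\Pi_1$ of a cubical set nor $\pi_1^{\mathrm{top}}$ of a realization, since $\partial \I^n$ is simply connected for $n \geq 3$. Reducing to $2$-skeleta thereby sidesteps the most delicate technical point, and the rest of the argument is formal.
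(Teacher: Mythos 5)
Your core idea—compare $\Pi_1$ to the topological fundamental groupoid of the realization and then pass to the classical setting—is exactly the idea in the paper's proof, but the way you want to carry it out has a genuine gap. You claim a \emph{natural isomorphism} $\Pi_1 \cong \pi_1^{\mathrm{top}} \circ |-|$ of functors $\CUBICALSETS \to \GROUPOIDS$. That cannot hold: $\Pi_1 A$ has object set $A_0$, the discrete set of vertices, while $\pi_1^{\mathrm{top}}|A|$ has every point of $|A|$ as an object, so the two groupoids have different (typically, respectively countable and uncountable) object sets. The most one can hope for is a natural transformation $\Pi_1 A \to \pi_1^{\mathrm{top}}|A|$ whose components are categorical equivalences, i.e., a natural \emph{weak} equivalence of groupoid-valued functors, not a natural isomorphism. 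This is also what you would actually need, so the overclaim is not fatal to the strategy, but it means the construction you propose cannot succeed as stated.

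The cocontinuity step is where the proposal actually breaks. $\Pi_1$ is cocontinuous by construction, but $\pi_1^{\mathrm{top}} \circ |-|$ is not: $\pi_1^{\mathrm{top}}\colon\TOP\to\GROUPOIDS$ is not a left adjoint and does not preserve general colimits, and the colimit $\colim_{\BOX\boxobj{n}\to A}(\cdot)$ over the category of elements defining $\Pi_1 A$ is not a cellular attachment to which a groupoid van Kampen theorem directly applies. The workaround you sketch (2-skeleta, van Kampen cell by cell) would at best reproduce an equivalence up to a substantial amount of bookkeeping, and even then the van Kampen statement gives a \emph{pushout in groupoids} that agrees with $\pi_1^{\mathrm{top}}$ of the pushout space only up to equivalence once you chase the object sets through the filtration; it does not give a strict natural isomorphism extendable by the Kan extension universal property. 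The paper sidesteps all of this: it simply exhibits the comparison natural transformation $\Pi_1 A \hookrightarrow \Pi_1|A|$ (induced by realization sending a vertex of $A$ to the corresponding point of $|A|$), observes that each component is a categorical equivalence by cellular approximation (essential surjectivity because every point of $|A|$ is in the path component of a vertex; full faithfulness by cellular approximation for paths and for homotopies of paths), and then reads off the result from the commutative square and the fact that $|\psi|$ is a homotopy equivalence by Proposition \ref{prop:quillen.equivalence}. That route avoids asking $\pi_1^{\mathrm{top}}|-|$ to preserve colimits at all.
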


As a consequence, cubical nerves of small groupoids are fibrant (c.f. Proposition \ref{prop:nerves}.)

\begin{cor}
  \label{cor:fibrant.groupoid.nerves}
  For each small groupoid $\groupoid{1}$, $\nerve\,\groupoid{1}$ is fibrant.
\end{cor}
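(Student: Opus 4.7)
The plan is to deduce fibrancy via a Quillen adjunction. Composing $\Tau_1 \dashv \nerve$ with the groupoidification/inclusion adjunction between $\GROUPOIDS$ and $\CATS$ yields an adjunction $\Pi_1 \dashv \nerve|_{\GROUPOIDS}$ between $\CUBICALSETS$ and $\GROUPOIDS$. I would equip $\GROUPOIDS$ with its canonical model structure, in which weak equivalences are categorical equivalences, cofibrations are functors injective on objects, and every object is both fibrant and cofibrant. The goal then reduces to verifying the Quillen condition, after which the conclusion is automatic: a right Quillen functor preserves fibrant objects, so $\nerve\,\groupoid{1}$ is fibrant for every small groupoid $\groupoid{1}$.

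To check that $\Pi_1$ is left Quillen against the classical model structure of Proposition \ref{prop:cubical.model.structure}, I would verify that $\Pi_1$ sends cofibrations to cofibrations and trivial cofibrations to trivial cofibrations. The first follows because a monic cubical function is in particular injective on $0$-cubes and the objects of $\Pi_1 C$ are precisely the vertices $C_0$. The second combines this with Proposition \ref{prop:equivalent.fundamental.groupoids}, which sends classical weak equivalences to categorical equivalences; a trivial cofibration therefore maps to a functor that is at once injective on objects and a categorical equivalence, which is exactly a trivial cofibration of groupoids.

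The main step to invoke carefully is the existence and the stated properties of the canonical model structure on $\GROUPOIDS$, which is classical and would simply be cited. A self-contained alternative would be to unwind the lifting property directly: given a trivial cofibration $i: A \ira B$ and a cubical function $f: A \to \nerve\,\groupoid{1}$, transpose to $\hat f: \Pi_1 A \to \groupoid{1}$; since $\Pi_1 i$ is a categorical equivalence injective on objects, pick for each object of $\Pi_1 B$ off the image of $\Pi_1 i$ a preimage-up-to-isomorphism via essential surjectivity while leaving objects in the image fixed, and propagate via full faithfulness to morphisms, obtaining the required extension $\Pi_1 B \to \groupoid{1}$ and hence, via the adjunction, the desired lift $B \to \nerve\,\groupoid{1}$.
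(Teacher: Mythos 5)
Your proposal is correct and follows essentially the same route as the paper: the paper's proof is exactly your ``self-contained alternative,'' transposing along the adjunction $\Pi_1 \dashv \nerve|_{\GROUPOIDS}$ and using that $\Pi_1$ of an acyclic cofibration is, by Proposition~\ref{prop:equivalent.fundamental.groupoids} together with injectivity on objects, a categorical equivalence against which any functor to $\groupoid{1}$ extends on the nose. Your primary framing via the folk model structure on $\GROUPOIDS$ and the observation that $\Pi_1$ is left Quillen is just a cleaner conceptual packaging of the same two input facts (monos go to injective-on-objects functors; classical weak equivalences go to equivalences), so the underlying argument is identical.
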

\begin{proof}
  Consider the solid functors in the left of the diagrams
  \begin{equation*}
	  \begin{tikzcd}
		  A\ar[d,hookrightarrow]\ar{r}[above]{\psi} & \nerve\,\groupoid{1}\\
		  B\ar[dotted]{ur}[description]{\phi^*}
	  \end{tikzcd}
	  \quad
	  \begin{tikzcd}
		  \Pi_1A\ar{d}[description]{\cong}\ar{r}[above]{\Pi_1\psi} & \groupoid{1}\\
		  \Pi_1B\ar[dotted]{ur}[description]{\phi}
	  \end{tikzcd}
  \end{equation*}
  Suppose $A\ira B$ is an acyclic cofibration in the classical model structure.  
  There exists a dotted functor $\phi$ making the entire right diagram commute by $\Pi_1(A\ira B)$ an equivalence of small categories [Proposition \ref{prop:equivalent.fundamental.groupoids}] that is injective on objects.  
  Thus the left diagram, in which $\phi^*$ denotes the adjoint of $\phi$, commutes.
\end{proof}

Extend classical $1$-cohomology to \textit{directed $1$-cohomology}
$$\cohomology{1}(C;\tau)=[C,\cnerve\tau]_{\mathfrak{d}}=\pi_0(\cnerve\tau)^C=[\Tau_1C,\tau]_{\Tau_1\mathfrak{d}},$$
a commutative monoid natural in commutative monoids $\tau$ and cubical sets $C$.

\begin{eg}
  For an Abelian group $\pi$, the Abelian group
  $$\cohomology{1}(C;\pi)=[C,\cnerve\pi]_{\mathfrak{d}}=\pi_0(\cnerve\pi)^C$$
  defines the first cubical cohomology of the cubical set $C$.  
  Classical cubical $1$-cohomology sends classical weak equivalences to isomorphisms by $\cnerve\pi$ fibrant [Corollary \ref{cor:fibrant.groupoid.nerves}]. 
\end{eg}

Group-completion $\tau\ra\tau[\tau^{-1}]$ induces a monoid homomorphism
$$\cohomology{1}(C;\tau\ra\tau[\tau]^{-1}):\cohomology{1}(C;\tau)\ra\cohomology{1}(C;\tau[\tau]^{-1})$$
from directed cohomology to classical cohomology, natural in commutative monoid coefficients $\tau$. 

\subsection{Continuous}\label{sec:continuous.homotopy}
We recall homotopy for the continuous setting.
Let $\pi_0$ denote the functor
$$\pi_0:\TOP\ra\SETS$$
naturally sending each topological space $X$ to its set of path-components.

\subsubsection{Classical}\label{sec:continuous.classical.homotopy}
We have the natural identification
$$[X,Y]_{|\mathfrak{d}|}=\pi_0Y^X.$$

A continuous function $f:X\ra Y$ is a classical weak equivalence if 
$$\pi_0f^{|C|}:\pi_0X^{|C|}\cong\pi_0Y^{|C|}.$$
for all cubical sets $C$.
The classical weak equivalences and maps having the right lifting property against all maps of the form $|\BOX[\delta_{+}\otimes\id_{\boxobj{n}}]|:\I^n\ra\I^{n+1}$ define the weak equivalences and fibrations of the \textit{q-model structure} on $\TOP$. 

\subsubsection{Directed}\label{sec:continuous.directed.homotopy}
We can make, by cocontinuity of $\direalize{-}$, the identifications
$$\direalize{\mathfrak{d}_n}=\direalize{\mathfrak{d}}_n,\quad n=0,1,\ldots.$$
A $\direalize{\mathfrak{d}}_*$-homotopy is sometimes referred to in the literature as a \textit{d-homotopy} (e.g. \cite{grandis2003directed}.)
Intuitively, a d-homotopy is a homotopy through stream maps that is additionally piecewise monotone and anti-monotone in its homotopy coordinate. 
Write $\tau_n(X,x)$ for all $\direalize{\mathfrak{d}}_*$-homotopy classes relative $\{\infty\}\ra\vec{\mathbb{S}}^n$ of based stream maps
$$(\vec{\mathbb{S}^n},\infty)\ra(X,x).$$

Fix $n\geqslant 1$.
The based directed sphere $(\vec{\mathbb{S}^n},\infty)$ admits a co-H multiplication in $\DITOP$ that passes to the standard co-H multiplication on the based $n$-sphere.  
In this manner, $\tau_n(X,x)$ naturally has the structure of a monoid just as the $n$th homotopy group $\pi_n(X,x)$ of the underlying space of $X$ based at $x$ naturally has the structure of a group.  
This monoid has been introduced elsewhere as the \textit{$n$th homotopy monoid} of a based directed topological space \cite{grandis2003directed}.  
For $n\geqslant 2$, $\tau_n(X,x)$ is commutative by an Eckmann-Hilton argument and moreover can be shown to be an Abelian group, where group inversion is given by a transposition of two suspension coordinates.    
Forgetting directionality induces a natural monoid homomorphism 
$$\tau_n(X,x)\ra\pi_n(X,x)$$

	\begin{lem}
	  \label{lem:hypercube.convexity}
	  There exists a $\direalize{\mathfrak{d}_1}$-homotopy between both projections of the form
	  $$\direalize{\BOX\boxobj{n}}^2\ra\direalize{\BOX\boxobj{n}}$$
	  natural in $\BOX$-objects $\boxobj{n}$.  
	\end{lem}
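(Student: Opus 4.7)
The plan is to exploit the topological lattice structure on $\direalize{\BOX\boxobj{n}} = \vec{\I}^n$: the coordinatewise meet $\wedge$ is a stream map and sits below both projections, so I will realize the required zig-zag as two straight-line interpolations from the meet up to each projection, glued at their common meet-valued endpoint.

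First, I make explicit the shape of $\direalize{\mathfrak{d}_1([1])}$. The cubical set $\mathfrak{d}_1([1])$ is the pushout of two copies of $\mathfrak{d}(\delta_-) : \star \to \BOX\boxobj{1}$, so cocontinuity of $\direalize{-}$ gives $\direalize{\mathfrak{d}_1([1])}$ as the pushout in $\DITOP$ of two copies of $\star \to \vec{\I}$: a ``V''-shape consisting of two copies of $\vec{\I}$ identified at $0$, with the two remaining $1$-endpoints being the images of $\direalize{\mathfrak{d}_1(\delta_-)}$ and $\direalize{\mathfrak{d}_1(\delta_+)}$. Since $\DITOP$ is Cartesian closed (\THMXClosed), the product with $\direalize{\BOX\boxobj{n}} \times \direalize{\BOX\boxobj{n}}$ preserves this pushout.

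Next I define the two halves of the homotopy. For $i \in \{1, 2\}$, set
\begin{equation*}
  H_i(x, y, t) = (1 - t)(x \wedge y) + t\, \pi_i(x, y) \in \vec{\I}^n,
\end{equation*}
regarded as a continuous function $\vec{\I}^n \times \vec{\I}^n \times \vec{\I} \to \vec{\I}^n$. Since $x \wedge y \leqslant \pi_i(x, y)$ coordinatewise, $H_i$ is monotone in $t$ for fixed $(x, y)$; it is monotone in $(x, y)$ for fixed $t$ because $\wedge$, projections, and non-negative affine combinations are monotone on $\vec{\I}^n$. Joint monotonicity on the product pospace follows, so $H_i$ is a stream map. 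Because $H_1(x, y, 0) = x \wedge y = H_2(x, y, 0)$ and $H_i(x, y, 1) = \pi_i(x, y)$, the pushout description above assembles $H_1$ and $H_2$ into a single stream map $H : (\direalize{\BOX\boxobj{n}})^2 \times \direalize{\mathfrak{d}_1([1])} \to \direalize{\BOX\boxobj{n}}$ realizing a $\direalize{\mathfrak{d}_1}$-homotopy from $\pi_1$ to $\pi_2$.

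Finally, I verify naturality in $\BOX$-objects $\boxobj{n}$. By Theorem \ref{thm:box.characterization} every $\BOX$-morphism is an interval-preserving lattice homomorphism, and the generating morphisms $\delta_\pm, \sigma, \tau$ each act on directed realizations by maps preserving $\wedge$ coordinatewise: cofaces insert the extrema $0$ or $1$ of $\vec{\I}$ (neutral for meet in that coordinate), codegeneracies are coordinate projections, and coordinate permutations act coordinatewise. Thus $\direalize{\BOX[\phi]}$ preserves $\wedge$ and commutes with the straight-line formula defining $H_i$, giving naturality of $H_1$, $H_2$ and hence of the glued $H$. I expect the main obstacle to be the bookkeeping around the pushout defining $\mathfrak{d}_1([1])$ and the subsequent gluing check; note that a simpler $\direalize{\mathfrak{d}}$-homotopy cannot exist because $\pi_1$ and $\pi_2$ take incomparable values in general, which is precisely why the zig-zag through the meet is required.
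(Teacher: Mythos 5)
Your proposal is correct and follows essentially the same approach as the paper: interpolate linearly from the coordinatewise meet $\pi_1 \wedge \pi_2$ up to each projection, obtaining two $\direalize{\mathfrak{d}}$-homotopies that concatenate (glued at the meet) to the required $\direalize{\mathfrak{d}_1}$-homotopy, with naturality following from the fact that $\direalize{\BOX[-]}$ sends each $\BOX$-morphism to a linear lattice homomorphism of hypercubes. Your proof spells out the pushout structure of $\direalize{\mathfrak{d}_1([1])}$ and the monotonicity checks more explicitly than the paper does, but the underlying argument is the same.
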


	For a pair of parallel stream maps $f,g:X\ra\vec{\I}^n$, linear interpolation
	$$h:X\times\I\ra\vec{\I}^n,\quad h(x,t)=(1-t)f+g$$
	defines a classical homotopy from $f$ to $g$ through stream maps but does not generally define a $\direalize{\mathfrak{d}_*}$-homotopy.  
	The proof requires not only natural convex structure but also natural topological semilattice structure on directed hypercubes.  

	\begin{proof}
	  Let $\pi_{1;n}$ and $\pi_{2;n}$ denote the projections
	  $$\direalize{\BOX\boxobj{n}}^2\ra\direalize{\BOX\boxobj{n}}$$
	  onto first and second factors, respectively.  
	  Linear interpolation defines $\direalize{\mathfrak{d}}$-homotopies 
	  $$\pi_{1;n}\wedge_{\direalize{\;\BOX\boxobj{n}\;}}\pi_{2;n}\futurehomotopic{\direalize{\;\mathfrak{d}\;}}\pi_{1;n},\pi_{2;n}$$ 
	  natural in $\BOX$-objects $\boxobj{n}$ because $\direalize{\BOX[-]}:\BOX\ra\DITOP$ sends each $\BOX$-morphism to a linear map of hypercubes that defines a lattice homomorphism between compact Hausdorff connected topological lattices in $\POTOP$.  
	  Concatenating these $\direalize{\mathfrak{d}}$-homotopies yields the desired $\direalize{\mathfrak{d}_1}$-homotopy.
	\end{proof}

	A simple consequence is that the cubical function
	$$\epsilon_C:\sd_3C\ra C$$
	defines a natural cubical approximation to $\dihomeo_{C;3}:\direalize{\sd_3C}\cong\direalize{C}$.

	\begin{lem}
	  \label{lem:natural.approximations}
	  There exists a $\direalize{\mathfrak{d}_1}$-homotopy
	  $$\direalize{\epsilon_C}\dhomotopic{\direalize{\;\mathfrak{d}_1\;}}\dihomeo_{C;3}:\direalize{\sd_3C}\ra\direalize{C}$$
	  natural in cubical sets $C$.
	\end{lem}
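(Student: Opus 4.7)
The plan is to reduce to Lemma \ref{lem:hypercube.convexity} by constructing the homotopy first on representables and then extending by cocontinuity to arbitrary cubical sets.

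For each $\BOX$-object $\boxobj{n}$, I form the pair stream map
\[
\langle\dihomeo_{\BOX\boxobj{n};3},\,\direalize{\epsilon_{\BOX\boxobj{n}}}\rangle\colon\direalize{\sd_3\BOX\boxobj{n}}\ra\direalize{\BOX\boxobj{n}}^2,
\]
both of whose components are stream maps into the single directed hypercube $\direalize{\BOX\boxobj{n}}=\vec{\I}^n$. Precomposing the $\direalize{\mathfrak{d}_1}$-homotopy of Lemma \ref{lem:hypercube.convexity} between the two projections $\direalize{\BOX\boxobj{n}}^2\ra\direalize{\BOX\boxobj{n}}$ with this pair map yields a $\direalize{\mathfrak{d}_1}$-homotopy
\[
\direalize{\epsilon_{\BOX\boxobj{n}}}\dhomotopic{\direalize{\;\mathfrak{d}_1\;}}\dihomeo_{\BOX\boxobj{n};3}
\]
that is natural in $\BOX$-objects $\boxobj{n}$, the pair map itself being natural in $\boxobj{n}$ by naturality of $\dihomeo$ and $\epsilon$ in cubical sets, and the projection homotopy being natural by Lemma \ref{lem:hypercube.convexity}.

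Next I extend by cocontinuity. A natural $\direalize{\mathfrak{d}_1}$-homotopy between $\direalize{\epsilon_C}$ and $\dihomeo_{C;3}$, phrased as a left homotopy, is a natural transformation
\[
\direalize{\sd_3(-)}\times\direalize{\mathfrak{d}_1[1]}\Longrightarrow\direalize{-}\colon\CUBICALSETS\ra\DITOP
\]
with prescribed restrictions at the two endpoints of $\direalize{\mathfrak{d}_1[1]}$. Both the source and the target functors are cocontinuous in the cubical variable, since $\sd_3$, $\direalize{-}$, and $-\times\direalize{\mathfrak{d}_1[1]}$ each preserve colimits (the last by Cartesian closedness of $\DITOP$). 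Hence natural transformations between them are determined by their restrictions to representables, and the natural family constructed on $\BOX$ in the first step uniquely extends to a natural transformation on all cubical sets $C$, with the prescribed boundary conditions propagating automatically.

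The main obstacle is that the convexity underpinning Lemma \ref{lem:hypercube.convexity} is available only on the hypercubes $\direalize{\BOX\boxobj{n}}$ and not on general $\direalize{C}$: the values of $\direalize{\epsilon_C}$ and $\dihomeo_{C;3}$ at a given point of $\direalize{\sd_3C}$ need not be comparable in the circulation of $\direalize{C}$, so no direct linear interpolation works in $\direalize{C}$. The trick is to perform the interpolation at the level of the representable targets, where the topological lattice structure of $\vec{\I}^n$ supplies a common bound, and then let cocontinuity assemble the resulting homotopies into the desired natural family on $\CUBICALSETS$.
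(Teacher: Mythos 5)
Your proposal is correct and matches the paper's own argument, which simply observes that Lemma \ref{lem:hypercube.convexity} gives the homotopy naturally on representables and that naturality/cocontinuity then propagates it to all cubical sets. You spell out the mechanism more explicitly (forming the pair map into $\direalize{\BOX\boxobj{n}}^2$, precomposing the projection homotopy, and identifying the source and target of the desired homotopy as cocontinuous functors on $\CUBICALSETS$), but the underlying strategy is the same as the paper's.
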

	\begin{proof}
	  There exists the desired $\direalize{\mathfrak{d}_1}$--homotopy natural in representable cubical sets $C$ [Lemma \ref{lem:hypercube.convexity}] and hence natural in general cubical sets $C$ by naturality of $\direalize{\epsilon_C}$ and $\dihomeo_{C;3}$.
	\end{proof}

	Nearby stream maps to directed realizations are $\direalize{\mathfrak{d}_{*}}$-homotopic.

	\begin{lem}
	  \label{lem:close.maps}
	  There exists a $\direalize{\mathfrak{d}_{*}}$-homotopy between stream maps 
	  $$f,g:X_{(f,g)}\ra\direalize{\sd_9C_{(f,g)}},$$
	  natural in objects $f\times g$ in the full subcategory of $(\STREAMS/\!\direalize{\sd_9-}^2)$ consisting of those objects $f\times g:X_{(f,g)}\ra\direalize{\sd_9C_{(f,g)}}^2$ for which $X_{(f,g)}$ is covered by open substreams each of which has images under $f$ and $g$ that lie in the open star of the same vertex.
	\end{lem}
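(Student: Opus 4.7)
The plan is to reduce first to a construction in $\direalize{C_{(f,g)}}$ via Lemma~\ref{lem:natural.approximations}, then use the local lifts of Lemma~\ref{lem:local.lifts} together with the meet structure on directed cubes from Lemma~\ref{lem:hypercube.convexity} to produce the homotopy locally, and finally patch along a canonical cover indexed by the vertices of $\sd_9 C_{(f,g)}$.

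First I would apply Lemma~\ref{lem:natural.approximations} twice to exhibit a natural $\direalize{\mathfrak{d}_*}$-homotopy between $\direalize{\epsilon^2_C}=\direalize{\epsilon_C}\circ\direalize{\epsilon_{\sd_3 C}}$ and the homeomorphism $\dihomeo_{C;3}\circ\dihomeo_{\sd_3 C;3}\colon\direalize{\sd_9 C}\cong\direalize{C}$. Pre/post-concatenating with this natural homotopy and its inverse under the homeomorphism $\dihomeo_{C;3}\circ\dihomeo_{\sd_3 C;3}$, it suffices to produce a natural $\direalize{\mathfrak{d}_*}$-homotopy between the post-composed maps $\direalize{\epsilon^2_C}\circ f$ and $\direalize{\epsilon^2_C}\circ g$ as stream maps $X_{(f,g)}\to\direalize{C_{(f,g)}}$.

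Second, I would define a canonical open cover of $X_{(f,g)}$ by setting $U_v=f^{-1}\direalize{\openstar_{\sd_9 C_{(f,g)}}(v)}\cap g^{-1}\direalize{\openstar_{\sd_9 C_{(f,g)}}(v)}$ for each vertex $v$ of $\sd_9 C_{(f,g)}$; the star-cover hypothesis implies $\{U_v\}_v$ covers $X_{(f,g)}$. On each $U_v$, Lemma~\ref{lem:local.lifts} applied to the closed star $S=\Star_{\sd_9 C_{(f,g)}}(v)$ provides a natural factorization of $\direalize{\epsilon^2_C}|_{\direalize{S}}$ through the directed cube $\vec{\I}^{n_{(C,S)}}=\direalize{\BOX\boxobj{n_{(C,S)}}}$, into which both $\direalize{\epsilon^2_C}\circ f|_{U_v}$ and $\direalize{\epsilon^2_C}\circ g|_{U_v}$ lift. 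Taking the meet of the two lifted stream maps in this cube and applying Lemma~\ref{lem:hypercube.convexity} produces a natural zig-zag $\direalize{\mathfrak{d}_1}$-homotopy between $\direalize{\epsilon^2_C}\circ f|_{U_v}$ and $\direalize{\epsilon^2_C}\circ g|_{U_v}$ factoring through the meet.

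Third, I would patch these local zig-zag homotopies into a single global natural d-homotopy on $X_{(f,g)}$. The hard part will be coherence on overlaps $U_{v_1}\cap U_{v_2}$, where two different cube lifts and two different meet operations appear. The key tool is the naturality of the local lifts in $(C,S)\in\STARS_9$ established in Lemma~\ref{lem:local.lifts}: inclusions $\Star(v_i)\ira\Star(v_1)\cup\Star(v_2)$ (or the further collapse to any common lower-dimensional cube) are $\STARS_9$-morphisms, so the two local lifts factor compatibly through a common cube after passing to $\direalize{C}$, and the meet operation in Lemma~\ref{lem:hypercube.convexity} transports naturally along $\BOX$-morphisms. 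This forces the two local zig-zag homotopies to agree on overlaps; the resulting glued d-homotopy in $\direalize{C_{(f,g)}}$ is natural in $f\times g$ by construction, and conjugation by the homotopy of the first step transfers it back to $\direalize{\sd_9 C_{(f,g)}}$.
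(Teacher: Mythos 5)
Your reduction (Lemma~\ref{lem:natural.approximations}) and your local construction (Lemma~\ref{lem:local.lifts} followed by Lemma~\ref{lem:hypercube.convexity}) are exactly the paper's. The gap is in the patching step you flag as hard, and the specific move you propose there does not work. You assert that the inclusions $\Star(v_i)\ira\Star(v_1)\cup\Star(v_2)$ are $\STARS_9$-morphisms, but $\Star(v_1)\cup\Star(v_2)$ is almost never a $\STARS_9$-object: by definition $\STARS_9$ consists of inclusions of non-empty subpresheaves of \emph{closed stars} in cubical sets of the form $\sd_9 C$, and a union of two distinct closed stars is generally not contained in a single closed star. So the naturality of Lemma~\ref{lem:local.lifts} gives you nothing over these inclusions. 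What you actually have available are $\STARS_9$-morphisms in the opposite direction, from a common subpresheaf $S\subset\Star(v_1)\cap\Star(v_2)$ into each $\Star(v_i)$; to glue, you would then need to trace the induced $\BOX$-morphisms through both Lemma~\ref{lem:local.lifts} and the naturality in $\BOX$-objects in Lemma~\ref{lem:hypercube.convexity}, and verify agreement on all higher-order overlaps of your vertex-indexed cover as well, which your sketch does not do.

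The paper avoids this bookkeeping by indexing the local construction differently. Rather than over vertices, it indexes over the category $\mathscr{O}_{(f,g)}$ of \emph{all} substreams $U\subset X_{(f,g)}$ whose images under both $f$ and $g$ lie in the open star of a common vertex, with all inclusions between such substreams as morphisms. Because the forgetful functor $\DITOP\ra\TOP$ is topological and the open such $U$ cover $X_{(f,g)}$ by hypothesis, $X_{(f,g)}$ is the colimit of the inclusion diagram $\mathscr{O}_{(f,g)}\ira\DITOP$. Each $f_U,g_U$ corestricts to the directed realization of a single closed star in $\sd_9 C_{(f,g)}$, so Lemmas~\ref{lem:local.lifts} and~\ref{lem:hypercube.convexity} produce the $\direalize{\mathfrak{d}_1}$-homotopy $\direalize{\epsilon^2_C}f_U\topologicaldhomotopic{}\direalize{\epsilon^2_C}g_U$ naturally in $U\in\mathscr{O}_{(f,g)}$; passing to the colimit glues these automatically with no further overlap argument, and the whole construction is simultaneously natural in $f\times g$ because $\mathscr{O}_{(f,g)}$ itself is. If you want to keep your vertex-indexed cover, you effectively have to enlarge it to contain all its overlaps as objects and verify naturality over that larger diagram, which is the paper's move in disguise.
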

	\begin{proof}
	  For a stream map $e:X\ra\direalize{\sd_9C}$ and substream $U\subset X$, let
	  $$e_U=e(U\ira X):U\ira\direalize{\sd_9C}.$$

	  Let $\cat{1}$ denote the category defined by the proposition. 
	  Let $f\times g:X_{(f,g)}\ra\direalize{\sd_3^2C_{(f,g)}}^2$ denote a $\cat{1}$-object.
	  Let $\mathscr{O}_{(f,g)}$ be the category whose objects are all substreams of $X_{(f,g)}$ whose images under $f$ and $g$ lie in the open star of the same vertex and whose morphisms are all inclusions between such substreams. 
	  Consider a commutative square of the form
	  \begin{equation*}
	    \begin{tikzcd}
		    X_{(f_1,g_1)}\ar[rrrr]\ar{d}[left]{f_1\times g_1} & & & & X_{(f_2,g_2)}\ar{d}[right]{f_2\times g_2}
		    \\
		    \direalize{\sd_9C_{(f_1,g_1)}}^2\ar{rrrr}[below]{\direalize{\;\sd_9C_{(f_1,g_1)}\ra\sd_9C_{(f_2,g_2)}\;}^2} & & & &
	    \direalize{\sd_9C_{(f_2,g_2)}}^2,
	    \end{tikzcd}
	  \end{equation*}
	  in which the vertical arrows are $\cat{1}$-objects.  
	  The image of each $\mathscr{O}_{(f_1,g_1)}$-object under the top horizontal stream map is a $\mathscr{O}_{(f_2,g_2)}$-object because the bottom horizontal stream map, the directed realization of a cubical function, maps open stars of vertices into open stars of vertices. 
	  It is in this sense that the subcategory $\mathscr{O}_{(f,g)}$ of $\DITOP$ is natural in $\cat{1}$-objects $f\times g$. 

	  Let $U$ denote an $\mathscr{O}_{(f,g)}$-object. 
	  Thus $f_U,g_U$ both corestrict to the directed realization of the same closed star in $\sd_9C_{(f,g)}$ [Proposition \ref{prop:realization.preserves.embeddings}].  
	  Thus there exist cubical function $\theta_U$ and dotted stream maps of the following form, natural in $\mathscr{O}_{(f,g)}$-objects $U$, making the diagram 
	  \begin{equation*}
	    \begin{tikzcd}
		    U\ar[rrr,dotted]\ar[d,hookrightarrow] & & & \direalize{\BOX\boxobj{n_{U}}}^2\ar{d}[right]{\direalize{\;\theta_U\;}^{\otimes 2}}\\
		    X_{(f,g)}\ar{r}[below]{f\times g} & \direalize{\sd_9C_{(f,g)}}^2\ar{rr}[below]{\direalize{\;\epsilon^2_{C_{(f,g)}}\;}^{\otimes 2}} & & \direalize{C_{(f,g)}}^2
		\end{tikzcd}
	  \end{equation*}
	  commute [Lemma \ref{lem:local.lifts}]. 
	  It therefore follows that there exists a $\direalize{\mathfrak{d}_1}$-homotopy $f_U\topologicaldhomotopic{} g_U$, natural in $U$ [Lemma \ref{lem:hypercube.convexity}]. 
	  
	  Thus there exists a 
	  $\direalize{\mathfrak{d}_1}$-homotopy 
	  $$\direalize{\epsilon^2_{C_{(f,g)}}}f\topologicaldhomotopic{}\direalize{\epsilon^2_{C_{(f,g)}}}g$$
	  natural in $\cat{1}$-objects $(f\times g)$ by $\mathscr{O}_{(f,g)}$ natural in $(f\times g)$.
	  There exist $\direalize{\mathfrak{d}}_*$-homotopies 
	  $$\dihomeo^{-2}_{C_{(f,g)};3}\direalize{\epsilon^2_{C_{(f,g)}}}f\topologicaldhomotopic{}f,\quad\dihomeo^{-2}_{C_{(f,g)};3}\direalize{\epsilon^2_{C_{(f,g)}}}g\topologicaldhomotopic{}g$$
	  natural in $\cat{1}$-object $(f\times g)$ [Lemma \ref{lem:natural.approximations}].  
	  Concatenating these homotopies yields the desired $\direalize{\mathfrak{d}}_*$-homotopy.
	\end{proof}

Define the interval object $\mathfrak{h}$ as the composite
\begin{equation*}
  \mathfrak{h}=|\BOX[-]|(\BOX_1\ira\BOX):\BOX_1\ra\DITOP.
\end{equation*}
where topological spaces are regarded as streams equipped with initial circulations [Example \ref{eg:initial.circulations}].   
In other words, an $\mathfrak{h}$-homotopy is exactly a classical homotopy \textit{through} stream maps.  
An $\mathfrak{h}$-homotopy is sometimes referred to in the literature as an \textit{h-homotopy} (e.g. \cite{krishnan2019hurewicz}) or a \textit{dihomotopy} \cite{fajstrup2006algebraic}. 
On one hand, $\mathfrak{h}$-homotopy is an equivalence relation because of natural isomorphisms $\mathfrak{h}_0\cong\mathfrak{h}_1\cong\mathfrak{h}_2\cdots$.  
On the other hand, $\direalize{\mathfrak{d}}$-homotopy is not an equivalence relation on the hom-set $\DITOP(X,Y)$ unless every directed path in $Y$, a path in $Y$ defining a stream map $\vec{\I}\ra Y$, is reversible.  
Every left $\direalize{\mathfrak{d}_n}$-homotopy determines a left $\mathfrak{h}$-homotopy by the existence of a natural transformation $\mathfrak{h}\ra\direalize{\mathfrak{d}}_n$.
It turns out that $f\dhomotopic{\mathfrak{h}}g$ if and only if $f\topologicaldhomotopic{}g$ for a pair of stream maps $f,g$ from a compact stream to a directed realization of a cubical set by a straightforward adaptation of a proof under the usual definition of the cube category \cite{krishnan2015cubical}; however we do not examine $\mathfrak{h}$-homotopy theory in this paper.  

\subsection{Comparisons}\label{sec:homotopical.comparisons}
The different homotopy theories can be compared.
The classical homotopy theories of small categories, cubical sets, simplicial sets, and topological spaces are all equivalent to one another.  
The directed homotopy theories of cubical sets and streams are equivalent to one another, with the directed homotopy theory of small categories acting as a special case.  

\subsubsection{Classical}\label{subsubsec:classical.comparisons}
We can compare different classical homotopy theories.  
Proofs of the following two observations, which use intermediate comparisons to the classical homotopy theory of simplicial sets, are given at the end of Appendix \S\ref{sec:triangulations}.

\begin{prop}
  \label{prop:quillen.equivalence}
  Topological realization defines the left map of a Quillen equivalence
  $$|-|:\CUBICALSETS\lras\TOP$$
  between $\CUBICALSETS$ equipped with the classical model structure and $\TOP$ equipped with its q-model structure.
\end{prop}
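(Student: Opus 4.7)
The plan is to reduce this Quillen equivalence to the classical Quillen equivalence $|-|_\DEL\dashv\sing_{\DEL}:\SIMPLICIALSETS\lras\TOP$ between simplicial sets and topological spaces by factoring $|-|:\CUBICALSETS\ra\TOP$ through a triangulation functor. Define $\tri:\CUBICALSETS\ra\SIMPLICIALSETS$ as the cocontinuous left Kan extension along the Yoneda embedding $\BOX[-]$ of the composite $\BOX\ira\CATS\xra{\snerve}\SIMPLICIALSETS$, so that $\tri\BOX\boxobj{n}$ is the nerve of the Boolean lattice $\boxobj{n}$, the standard simplicial triangulation of an $n$-cube. Since $|\snerve\boxobj{n}|_\DEL\cong\I^n$ naturally in $\BOX$-objects $\boxobj{n}$ and both realizations are cocontinuous, there is a natural isomorphism $|\tri\,C|_\DEL\cong|C|$ for each cubical set $C$. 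Writing $\qua$ for the right adjoint of $\tri$, the adjunction $|-|\dashv\sing$ thus factors as the composite of $\tri\dashv\qua$ with the Quillen adjunction $|-|_\DEL\dashv\sing_{\DEL}$.

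To see that $|-|\dashv\sing$ is a Quillen pair, note that both classical model structures have the monos as cofibrations. The functor $\tri$ sends each generating cofibration $\partial\BOX\boxobj{n}\ira\BOX\boxobj{n}$ to a mono of simplicial sets, and being cocontinuous therefore preserves all monos; composing with $|-|_\DEL$ shows $|-|$ does too. Acyclic cofibrations are preserved because the classical weak equivalences of cubical sets are, by definition, those cubical functions whose topological realizations are weak homotopy equivalences. To upgrade to a Quillen equivalence, it suffices by two-out-of-three (against the Quillen equivalence $|-|_\DEL\dashv\sing_{\DEL}$) to show that $\tri\dashv\qua$ is itself a Quillen equivalence. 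By left-properness of the cubical structure and cellular induction on the cubes of $C$, this reduces to verifying that the unit $\unit_{\BOX\boxobj{n}}:\BOX\boxobj{n}\ra\qua\tri\BOX\boxobj{n}$ is a weak equivalence for each $n$; but $|\unit_{\BOX\boxobj{n}}|$ is naturally homeomorphic to the identity $\I^n\cong\I^n$ under the identifications above, which suffices.

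The main obstacle will be the verification that $\tri$ carries pushouts of monos between representables to monos of simplicial sets, needed to propagate the generating case to arbitrary monos. This amounts to showing that the triangulated cubes $\snerve\boxobj{n}$ can be glued along their faces in $\SIMPLICIALSETS$ along coordinate permutations, cofaces, and codegeneracies without simplex collisions. Since our variant $\BOX$ differs from the minimal cube category of \cite{jardine2002cubical} only by adjoining coordinate permutations, and these act on $\snerve\boxobj{n}$ by simplicial automorphisms of the canonical complex structure on the triangulated $n$-cube, the arguments of \cite{jardine2002cubical} adapt with only superficial modifications.
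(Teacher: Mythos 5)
Your top-level plan matches the paper exactly: factor $|-|:\CUBICALSETS\ra\TOP$ through triangulation $\tri:\CUBICALSETS\ra\SIMPLICIALSETS$, invoke the classical Quillen equivalence $|-|_\DEL\dashv\sing_\DEL$, and reduce to showing $\tri\dashv\qua$ is a Quillen equivalence. The paper organizes this as a separate proposition (Proposition \ref{prop:tri.equivalence}) rather than inline, but that is cosmetic.

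Where you diverge from the paper, there is a genuine error. You claim that $|\eta_{\BOX\boxobj{n}}|$ is ``naturally homeomorphic to the identity $\I^n\cong\I^n$,'' i.e.\ that the unit $\eta_{\BOX\boxobj{n}}:\BOX\boxobj{n}\ra\qua\tri\,\BOX\boxobj{n}$ is essentially an isomorphism. It is not. Since $\snerve$ is fully faithful, $(\qua\tri\,\BOX\boxobj{n})_m=\SIMPLICIALSETS(\snerve\boxobj{m},\snerve\boxobj{n})=\CATS(\boxobj{m},\boxobj{n})$, so $\qua\tri\,\BOX\boxobj{n}$ is the cubical nerve $\cnerve\boxobj{n}$, whose cubes are \emph{all} monotone functions $\boxobj{m}\ra\boxobj{n}$, not just the $\BOX$-morphisms (for instance the diagonal $\boxobj{1}\ra\boxobj{2}$ or the meet $\boxobj{2}\ra\boxobj{1}$). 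In particular $\tri$ is not fully faithful, $\eta_{\BOX\boxobj{n}}$ is a proper inclusion, and its realization is not a homeomorphism. The paper instead establishes that $\tri(\eta_C)$ is a natural simplicial homotopy equivalence for representable $C$ by constructing a natural deformation retraction using the lattice-meet operation $\wedge_{\boxobj{n}}:(\boxobj{n})^2\ra\boxobj{n}$ (which is $\Tau_1\mathfrak{d}$-homotopic to both projections).

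There is also a load-bearing ingredient missing from your ``cellular induction plus left-properness'' step: to propagate the representable case to arbitrary cubical sets you need $\qua\tri$ to commute with colimits, so that the natural homotopy on representables glues. But $\qua$ is a right adjoint and hence not cocontinuous; the fact that the \emph{composite} $\qua\tri$ is cocontinuous is a nontrivial combinatorial fact about the support of a simplex in a triangulated cubical set, isolated in the paper as Lemma \ref{lem:qt.cocontinuous}. Without it, the colimit step of your cellular induction does not go through. Finally, your concern about $\tri$ preserving monos is warranted but is already presupposed by the construction of the classical model structure on $\CUBICALSETS$ (Proposition \ref{prop:cubical.model.structure}, left-transferred along $\tri$), so it belongs to the existence of the model structure rather than to this proposition.
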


A proof is given in \S\ref{sec:triangulations}.  
We write $h(\CUBICALSETS)$ and $h(\TOP)$ for the respective localizations of $\CUBICALSETS$ and $\TOP$ by their classical weak equivalences.  
The above proposition then implies that topological realization passes to an adjoint equivalence
\begin{equation*}
  h(\CUBICALSETS)\simeq h(\TOP).
\end{equation*}

We write $h(\CATS)$ for the localization of $\CATS$ by its Thomason weak equivalences.  

\begin{cor}
  \label{cor:cubical.coherent.nerve}
  The functor $\cnerve$ induces a categorical equivalence
  $$h(\CATS)\simeq h(\CUBICALSETS).$$
\end{cor}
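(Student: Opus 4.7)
The strategy is to reduce to known classical equivalences. Proposition \ref{prop:quillen.equivalence} supplies an equivalence $|-|:h(\CUBICALSETS)\simeq h(\TOP)$, and Thomason's classical theorem \cite{thomason1980cat} identifies the simplicial nerve realization $|\snerve(-)|$ as an equivalence $h(\CATS)\simeq h(\TOP)$. It therefore suffices to exhibit a natural classical weak equivalence between $|\cnerve\smallcat{1}|$ and $|\snerve\smallcat{1}|$ for each small category $\smallcat{1}$, so that postcomposition of $h(\cnerve)$ with $|-|$ is naturally isomorphic to Thomason's equivalence.

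First I would produce such a comparison. A direct route is through the triangulation $\tri:\CUBICALSETS\to\SIMPLICIALSETS$: one checks that $\tri\circ\cnerve$ is naturally classically weakly equivalent to $\snerve$, and that triangulation passes to an equivalence of classical homotopy categories. An alternative route is via test-category theory: the variant $\BOX$ used here is a test category in the sense of Grothendieck by the results of \cite{buchholtz2017varieties} together with \cite{cisinskiprefaisceaux}, in which case the cubical nerve from $\CATS$ to presheaves on $\BOX$ automatically realizes to the classifying space.

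Second, granted this comparison, a functor $F:\smallcat{1}\to\smallcat{2}$ is a Thomason weak equivalence if and only if $|\snerve F|$, equivalently $|\cnerve F|$, is a weak homotopy equivalence, which is by definition the condition that $\cnerve F$ is a classical weak equivalence of cubical sets. Hence $\cnerve$ descends to a functor $h(\cnerve):h(\CATS)\to h(\CUBICALSETS)$. Its composite with the equivalence $|-|:h(\CUBICALSETS)\simeq h(\TOP)$ is naturally isomorphic to $|\snerve(-)|$, which is an equivalence by Thomason; since $|-|$ is also an equivalence, $h(\cnerve)$ must be an equivalence as well.

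The main obstacle is the first step, namely the comparison between $|\cnerve\smallcat{1}|$ and $|\snerve\smallcat{1}|$. Because the variant $\BOX$ in this paper is the minimal symmetric monoidal cube category, which is not always the variant most directly treated in the test-category literature, one must either verify the test-category property for this specific $\BOX$ using the subdivision apparatus of Appendix \S\ref{sec:triangulations}, or construct an explicit zigzag via cubical subdivision interpolating between the simplicial and cubical nerves.
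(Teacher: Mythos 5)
Your overall framework is correct and close to the paper's: reduce to Thomason's equivalence for the simplicial nerve via triangulation. But the step you flag as ``one checks that $\tri\circ\cnerve$ is naturally classically weakly equivalent to $\snerve$'' is the entire content of the proof, and you do not supply it; indeed you characterize it as ``the main obstacle'' and offer only a test-category detour or an unspecified zigzag as possible resolutions. That is a genuine gap.

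The paper closes it with a purely formal observation: since $\qua$ is the right adjoint to $\tri$ and $\tri\,\BOX\boxobj{n}=\snerve\boxobj{n}$, one has $(\qua S)_n=\SIMPLICIALSETS(\snerve\boxobj{n},S)$, and because $\snerve$ is fully faithful this gives the identity $\cnerve=\qua\circ\snerve$ on the nose (no zigzag needed). Applying $\tri$ yields $\tri\circ\cnerve=\tri\circ\qua\circ\snerve$, and the counit $\tri\circ\qua\to\id_{\SIMPLICIALSETS}$ is a component-wise classical weak equivalence by Proposition \ref{prop:tri.equivalence}. This gives the natural weak equivalence $\tri\circ\cnerve\simeq\snerve$ immediately, and then both $\tri$ (Proposition \ref{prop:tri.equivalence}) and $\snerve$ (Thomason) induce equivalences on homotopy categories, so $\cnerve$ does too. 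Notice that your worry about whether the minimal symmetric monoidal $\BOX$ satisfies the test-category axioms is a red herring here: the argument never invokes test-category theory, only the Quillen equivalence $\tri\dashv\qua$ already established in the paper, so the particular flavor of cube category is immaterial once Proposition \ref{prop:tri.equivalence} is in hand. Your detour through $h(\TOP)$ instead of staying at $h(\SIMPLICIALSETS)$ is harmless but adds a layer; the paper factors through simplicial sets directly.
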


\begin{proof}[proof of Proposition \ref{prop:equivalent.fundamental.groupoids}]
   Consider a cubical function $\psi:A\ra B$. 
   The diagram
	\begin{equation*}
    \begin{tikzcd}
		\Pi_1A\ar{r}[above]{\Pi_1\psi}\ar[d,hookrightarrow] & \Pi_1B\ar[d,hookrightarrow]\\
		\Pi_1|A|\ar{r}[below]{\Pi_1|\psi|}\ar{r}[below]{\Pi_1|\psi|} & \Pi_1|B|
    \end{tikzcd}
  \end{equation*}
   in which $\Pi_1$ in the bottom row denotes the fundamental groupoid of a topological space and the vertical arrows are inclusions of fundamental groupoids induced by topological realization, commutes.
   The vertical arrows are categorical equivalences by an application of cellular approximation.
   Thus if $\psi$ is a classical weak equivalence, $|\psi|$ is a classical homotopy equivalence [Proposition \ref{prop:quillen.equivalence}], hence $\Pi_1|\psi|:\Pi_1|A|\ra\Pi_1|B|$ is a categorical equivalence, and hence $\Pi_1\psi:\Pi_1A\ra\Pi_1B$ is a categorical equivalence.
\end{proof}

\subsubsection{Directed}\label{sec:directed.homotopical.comparisons}
We refine and extend the previous equivalences between classical homotopy categories.  
Proofs of our comparisons between \textit{directed homotopy categories}  [Theorems \ref{thm:equivalence} and \ref{thm:1-ditypes}] require a generalization of fibrant cubical sets to suitable directed analogues in \S\ref{sec:cubcats}.  
However, we can formally state these comparisons and some simple applications without proof in this section.  
In fact, our comparisons can be stated at the more general level of $\indexcat{1}$-equivariance.  
Recall that a class $\mathscr{W}$ of morphisms in a category $\cat{1}$ for which the localization  $\cat{1}[\mathscr{W}^{-1}]$ exists is \textit{saturated} if it coincides with the isomorphisms in the localization $\cat{1}[\mathscr{W}^{-1}]$ of $\cat{1}$ by $\mathscr{W}$.  
Define the \textit{directed homotopy categories} $d(\CUBICALSETS)$ and $d(\DITOP)$ of cubical sets and directed topological spaces by the following theorem, whose proof is given at the end of \S\ref{sec:approximation}.

\begin{thm}
  \label{thm:equivalence}
  There exist dotted vertical localizations in the diagram
  \begin{equation*}
	  \begin{tikzcd}
		  \CUBICALSETS^{\indexcat{1}}\ar{r}[above]{\direalize{\;-\;}}\ar[d,dotted] & \DITOP^{\indexcat{1}}\ar[d,dotted]\\
		  d\left(\CUBICALSETS^{\indexcat{1}}\right)\ar[dotted]{r}[below]{\simeq} & d\left(\DITOP^{\indexcat{1}}\right)
	  \end{tikzcd}
  \end{equation*}
  by the following respective saturated classes of morphisms: those $\indexcat{1}$-cubical functions $\psi$ for which $\direalize{\psi}^{\indexcat{1}}$ is a $\direalize{\mathfrak{d}}$-equivalence; and those $\indexcat{1}$-stream maps $f$ for which $\csing^{\indexcat{1}}\,f$ is a $\mathfrak{d}$-equivalence.
  There exists a dotted horizontal adjoint equivalence making the entire diagram commute up to natural isomorphism.
\end{thm}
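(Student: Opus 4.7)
The plan is to realize this equivalence as the localization of the adjunction $\direalize{-} \dashv \sing$ applied levelwise to $\indexcat{1}$-diagrams, then verify that the derived unit and counit are isomorphisms. First I would observe that, by the very definitions of the two classes, the functor $\direalize{-}^{\indexcat{1}}$ sends the cubical class into $\direalize{\mathfrak{d}}$-equivalences (which lie in the stream class), and $\sing^{\indexcat{1}}$ sends the stream class into $\mathfrak{d}$-equivalences (which lie in the cubical class). Together with the triangle identities, this forces the adjunction to descend to a well-defined adjunction between the two quotient categories; Lemma~\ref{lem:localization} applied to the target homotopy relation ($\dhomotopic{\direalize{\mathfrak{d}}}$ on the stream side, $\dhomotopic{\mathfrak{d}}$ on the cubical side) supplies the localizations, and saturation of both classes then follows by bootstrapping from the equivalence once it is in place (a class pulled back from isomorphisms along a functor that inverts to an equivalence is automatically saturated).

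The heart of the argument is to show that the unit $\unit_C : C \ra \catfont{S}C$ lies in the cubical class and that the counit $\counit_X : \direalize{\sing X} \ra X$ lies in the stream class, for every $\indexcat{1}$-cubical set $C$ and every $\indexcat{1}$-stream $X$. For the unit, I would factor $\unit_C$ through the cubcat approximation as $C \ra \cubcatreplacement C \ira \catfont{S}C$, where the first map comes from the pointed endofunctor $\cubcatreplacement$ and the second is the natural inclusion singled out by Lemma~\ref{lem:cubcats.are.sing.algebras}. The approximation preserves the directed homotopy type of $\direalize{-}$ by Theorem~\ref{thm:approx}, while the inclusion $\cubcatreplacement C \ira \catfont{S}C$ admits a natural cubical homotopy inverse by Lemma~\ref{lem:cubcats.are.sing.algebras}; applying $\direalize{-}^{\indexcat{1}}$ to this factorization shows $\direalize{\unit_C}^{\indexcat{1}}$ is a $\direalize{\mathfrak{d}_*}$-equivalence, as required. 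For the counit, I would invoke Lemma~\ref{lem:homotopy.idempotent.comonad} on the directed homotopy idempotence of the comonad $\direalize{\sing{-}}$: applied to the $\indexcat{1}$-stream $X$ this translates, via the triangle identities and the preservation properties above, into $\csing^{\indexcat{1}}\counit_X$ being a $\mathfrak{d}_*$-equivalence.

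The main obstacle, and where the novelty of the paper concentrates, is securing those two underlying lemmas. The cubcat replacement lemma should follow by assembling the local factorization of $\epsilon^2_C : \sd_9 C \ra C$ through representables (Lemma~\ref{lem:local.lifts}), the natural cubical approximation to the $\sd_3$-subdivision homeomorphism (Lemma~\ref{lem:natural.approximations}), and the principle that sufficiently close stream maps into a directed realization are $\direalize{\mathfrak{d}_*}$-homotopic (Lemma~\ref{lem:close.maps}); crucially, none of these employ a cell-by-cell construction, which would be fatal given the failure of the directed homotopy extension property. The homotopy idempotence of the comonad is more delicate: one must lift each directed singular cube along a regular-CW-type approximation, and since $\BOX$ is too small to support a naive small object argument, the natural lifting has to be encoded in the pro-completion of $\BOX$ and then transported back via the diagram replacement technique of Appendix~\ref{sec:diagram.replacement}. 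Once both lemmas are in hand, assembling the adjoint equivalence is formal, and the commutativity up to natural isomorphism of the outer square is automatic from the construction of the localization functors.
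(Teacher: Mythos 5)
Your overall architecture — localize both sides, descend the adjunction, verify the unit and counit become isomorphisms — matches the paper's. The difficulty is with your treatment of the unit.

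You factor $\eta_C$ as $C\ra\cubcatreplacement C\ira\catfont{S}C$ and claim that the first arrow ``preserves the directed homotopy type of $\direalize{-}$ by Theorem~\ref{thm:approx}.'' There is no Theorem~\ref{thm:approx} in the paper (the label is never assigned), and more importantly the statement you attribute to it is precisely equivalent to what you are trying to prove. Lemma~\ref{lem:cubcats.are.homotopy.sing.algebras} (which is the lemma you want here, not Lemma~\ref{lem:cubcats.are.sing.algebras}) does give that $\cubcatreplacement C\ira\catfont{S}C$ is a natural $\mathfrak{d}_*$-equivalence, so after applying $\direalize{-}$ the first factor $\direalize{C}\ra\direalize{\cubcatreplacement C}$ is a $\direalize{\mathfrak{d}}_*$-equivalence \emph{if and only if} the composite $\direalize{\eta_C}$ is; the factorization gives you nothing new. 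Note also that $C\ra\cubcatreplacement C$ is not a $\mathfrak{d}_*$-equivalence in general — that only holds when $C$ is already a cubcat — so you cannot dodge the issue on the cubical side either. The correct ingredient is Lemma~\ref{lem:homotopy.idempotent.comonad} directly: $\id_{\direalize{\catfont{S}C}}\dhomotopic{\direalize{\mathfrak{d}}}\direalize{\eta_C}\epsilon_{\direalize{C}}$, which together with the zig-zag identity $\epsilon_{\direalize{C}}\direalize{\eta_C}=\id_{\direalize{C}}$ shows $\direalize{\eta_C}$ is a $\direalize{\mathfrak{d}}_*$-equivalence with explicit homotopy inverse $\epsilon_{\direalize{C}}$. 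You do invoke this lemma for the counit, so the fix is to notice that it also handles the unit, and that its adjoint form (Lemma~\ref{lem:homotopy.idempotent.monad}) gives the counit statement $\sing\,\epsilon_X$ without the translation hand-waving.

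A secondary imprecision: the theorem's localizing classes are not the $\direalize{\mathfrak{d}}_*$-equivalences of streams or the $\mathfrak{d}_*$-equivalences of cubical sets, but rather the preimages of these under $\sing$ and $\direalize{-}$ respectively, so Lemma~\ref{lem:localization} applied to the target homotopy relations does not produce the stated localizations on the nose. One needs Lemma~\ref{lem:topological.we} and Lemma~\ref{lem:cubical.we} (or the paper's idempotent-(co)monad formulation) to identify the classes, and the saturation comes from that identification rather than by bootstrapping from the as-yet-unestablished equivalence. Your third paragraph correctly diagnoses where the real technical work lies; the gap is localized to the unit step in paragraph two.
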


Define the directed homotopy category $d(\CATS^{\indexcat{1}})$ of $\indexcat{1}$-categories as a small part of the larger directed homotopy category $d(\CUBICALSETS^{\indexcat{1}})\simeq d(\DITOP^{\indexcat{1}})$ by the following theorem,  whose proof is given at the end of \S\ref{sec:approximation}. 

\begin{thm}
  \label{thm:1-ditypes}
  Consider the solid functors in the diagram
  \begin{equation*}
	  \begin{tikzcd}
		  \CATS^{\indexcat{1}}\ar{r}[above]{\cnerve}\ar[d] & \CUBICALSETS^{\indexcat{1}}\ar[d]\\
		  d\left(\CATS^{\indexcat{1}}\right)\ar[r,hookrightarrow,dotted] & d\left(\CUBICALSETS^{\indexcat{1}}\right)	 
	  \end{tikzcd}
  \end{equation*}
  where the left vertical arrow is the quotient functor by $\dhomotopic{\Tau_1\mathfrak{d}}$ and the right vertical arrow is localization by the $\mathfrak{d}$-equivalences.  
  The left vertical arrow is localization by the $\Tau_1\mathfrak{d}$-equivalences.
  There exists a dotted fully faithful embedding making the entire diagram commute up to natural isomorphism.  
\end{thm}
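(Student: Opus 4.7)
The plan is to apply Lemma \ref{lem:localization} directly for the first assertion, and then to exploit the adjunction $\Tau_1 \dashv \cnerve$, the monoidality of $\Tau_1$, and the full faithfulness of $\cnerve$ for the second.

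For the first assertion, I would take $\modelcat{1}=\CATS$ (closed Cartesian monoidal with terminal unit $[0]$) and interval object $\Tau_1\mathfrak{d}:\INTERVALCAT\to\CATS$, regarding $\CATS^{\indexcat{1}}$ as enriched, tensored, and cotensored over $\CATS$ via the levelwise Cartesian structure. The sole hypothesis of Lemma \ref{lem:localization} to verify is that $\Tau_1\mathfrak{d}$ sends $\delta_\pm$ to $\Tau_1\mathfrak{d}_*$-equivalences in $\CATS$; this holds because $\sigma:[1]\to[0]$ retracts each $\delta_\pm$ and each composite $\delta_\pm\sigma:[1]\to[1]$ admits a natural transformation to or from $\id_{[1]}$---a $\Tau_1\mathfrak{d}$-homotopy. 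The conclusion of the lemma then identifies the quotient functor with localization by the $\Tau_1\mathfrak{d}_*$-equivalences.

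For the second assertion, I would establish three preliminary facts. First, the counit $\Tau_1\cnerve\to\id_{\CATS}$ is a natural isomorphism, so $\cnerve$ and consequently $\cnerve^{\indexcat{1}}$ are fully faithful on hom-sets; the check is a direct computation using that a functor $\boxobj{n}\to\smallcat{1}$ is generated by its values on $1$-cubes subject to the commutativity relations supplied by $2$-cubes, so $\Tau_1\cnerve\smallcat{1}\cong\smallcat{1}$. Second, because $\Tau_1$ is monoidal and $\Tau_1\mathfrak{d}=\Tau_1\circ\mathfrak{d}$, the functor $\Tau_1$ sends $\mathfrak{d}_*$-homotopies to $\Tau_1\mathfrak{d}_*$-homotopies. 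Third, via the unit $\mathfrak{d}\to\cnerve\Tau_1\mathfrak{d}$ of the adjunction together with the lax monoidal structure on $\cnerve$, each $\Tau_1\mathfrak{d}$-homotopy $F\otimes\Tau_1\mathfrak{d}([1])\to G$ in $\CATS^{\indexcat{1}}$ transforms into a $\mathfrak{d}$-homotopy
\begin{equation*}
\cnerve F\otimes\mathfrak{d}([1])\to\cnerve F\otimes\cnerve\Tau_1\mathfrak{d}([1])\to\cnerve(F\otimes\Tau_1\mathfrak{d}([1]))\to\cnerve G.
\end{equation*}
Together these yield $F\dhomotopic{\Tau_1\mathfrak{d}}G$ iff $\cnerve F\dhomotopic{\mathfrak{d}}\cnerve G$ for parallel $\indexcat{1}$-functors $F,G$; combined with full faithfulness of $\cnerve^{\indexcat{1}}$ (which ensures every cubical function $\cnerve X\to\cnerve Y$ is of the form $\cnerve H$), this simultaneously establishes the dotted fully faithful embedding and the claimed commutativity up to natural isomorphism.

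The hard part, I expect, will be verifying that $\Tau_1\cnerve\to\id$ is a natural isomorphism in the cubical setting. Although the simplicial analogue is classical, the cubical version demands a careful analysis of how the Boolean lattices $\boxobj{n}$ generate composable chains in $\Tau_1\cnerve\smallcat{1}$; the additional coordinate permutations and codegeneracies in our enlarged $\BOX$ should not obstruct this, since all relations in $\Tau_1\BOX\boxobj{n}$ are generated by $2$-dimensional faces, from which the required identification descends.
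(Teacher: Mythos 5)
Your first assertion matches the paper's approach exactly: apply Lemma \ref{lem:localization}, verifying that $\Tau_1\mathfrak{d}(\delta_\pm)$ are $\Tau_1\mathfrak{d}_*$-equivalences (the paper notes they are adjunctions in $\CATS$, which is the same observation you make about natural transformations to and from $\id_{[1]}$).

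For the second assertion, there is a genuine gap. Your argument establishes that $\cnerve^{\indexcat{1}}$ passes to a fully faithful embedding of $d(\CATS^{\indexcat{1}})$ into the quotient category $\CUBICALSETS^{\indexcat{1}}/{\dhomotopic{\mathfrak{d}}}$, whose hom-sets are $\pi_0\CUBICALSETS^{\indexcat{1}}(B,C)$. But that quotient category is not $d(\CUBICALSETS^{\indexcat{1}})$. In the paper, $d(\CUBICALSETS^{\indexcat{1}})$ is, per Theorem \ref{thm:equivalence}, the localization of $\CUBICALSETS^{\indexcat{1}}$ by the $\indexcat{1}$-cubical functions whose directed realizations are $\direalize{\mathfrak{d}}$-equivalences. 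This is a strictly larger class of weak equivalences than the $\mathfrak{d}_*$-equivalences; the unit $\eta_C:C\to\catfont{S}C$, for instance, becomes an isomorphism in $d(\CUBICALSETS^{\indexcat{1}})$ yet is not a cubical homotopy equivalence for general $C$. So your ``iff'' lands one localization short of the theorem's target: you would still need to show that the canonical functor from the quotient category to $d(\CUBICALSETS^{\indexcat{1}})$ restricts to a bijection on hom-sets of the form $\pi_0\CUBICALSETS^{\indexcat{1}}(\cnerve\smallcat{1},\cnerve\smallcat{2})$.

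That missing step is exactly where the cubcat machinery enters. The paper invokes Proposition \ref{prop:nerves} to conclude that each $\cnerve\,\smallcat{1}$ is a $\indexcat{1}$-cubcat, and Corollary \ref{cor:formula} to conclude $d(\CUBICALSETS^{\indexcat{1}})(B,C)=\pi_0\CUBICALSETS^{\indexcat{1}}(B,C)$ whenever $C$ is a $\indexcat{1}$-cubcat; together these collapse the two localizations on the image of $\cnerve^{\indexcat{1}}$. Your computation reducing $\pi_0\CUBICALSETS^{\indexcat{1}}(\cnerve\smallcat{1},\cnerve\smallcat{2})$ to $[\smallcat{1},\smallcat{2}]_{\Tau_1\mathfrak{d}}$, via $\Tau_1\dashv\cnerve$, monoidality of $\Tau_1$, and $\Tau_1\cnerve\cong\id$, is the right essential calculation, but without cubical approximation it does not reach $d(\CUBICALSETS^{\indexcat{1}})$. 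The verification $\Tau_1\cnerve\cong\id$ that you flag as the hard part is in fact routine (and implicitly taken for granted in the paper); the genuinely hard ingredient you are missing is the approximation theorem encapsulated in Corollary \ref{cor:formula}.
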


\begin{eg}
  \label{eg:ditypes.are.not.1.ditypes}
  For each $n>1$ and small category $\smallcat{1}$, every stream map 
  $$\direalize{{\BOX\boxobj{n}}/{\partial\BOX\boxobj{n}}}\ra\direalize{\cnerve\smallcat{1}}$$
  is $\topologicaldhomotopic{}$-homotopic to a constant stream map by an application of Theorems \ref{thm:equivalence} and \ref{thm:1-ditypes} and $\Tau_1(\BOX\boxobj{n}/{\partial\BOX\boxobj{n}})=\star$.   
  It therefore follows that higher directed spheres $\direalize{{\BOX\boxobj{n}}/{\partial\BOX\boxobj{n}}}$ do not have the h-homotopy, much less d-homotopy type, type of directed realizations of cubical nerves of small categories.
  Intuitively, the cubical model $\BOX\boxobj{n}/\partial\BOX\boxobj{n}$ of a directed sphere presents a cubcat freely generated by a single $n$-cell between a single vertex.  
  Thus directed homotopy types encode higher category-like structures, albeit up to directed homotopy, more general than small $1$-categories (c.f. \cite{dubut2016directed}).
\end{eg}

One consequence is the following calculation.

\begin{prop}
  \label{prop:homotopy.calculations}
  For each monoid $M$, $\tau_n\left(\direalize{\cnerve M},\direalize{\star}\right)=\begin{cases}M & n=1\\\star & n\neq 1\end{cases}$.
\end{prop}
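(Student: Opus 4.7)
The plan is to translate the stream-theoretic $\tau_n$ first to a cubical hom-set via Theorem \ref{thm:equivalence}, then to a categorical one via Theorem \ref{thm:1-ditypes}, and finally evaluate it as a trivial hom-set of monoids. Throughout I work in the $[1]$-equivariant setting so that basepoints are handled automatically.

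First, I would identify $\direalize{\BOX\boxobj{n}/\partial\BOX\boxobj{n}}\cong\vec{\mathbb{S}}^n$, sending the image of $\partial\BOX\boxobj{n}$ to $\infty$, and regard based pairs as $[1]$-equivariant objects. The $\indexcat{1}=[1]$ case of Theorem \ref{thm:equivalence} then gives a natural bijection
\[
\tau_n(\direalize{\cnerve M},\direalize{\star})\;\cong\;d\bigl(\CUBICALSETS^{[1]}\bigr)\bigl((\BOX\boxobj{n}/\partial\BOX\boxobj{n},\star),\,(\cnerve M,\star)\bigr).
\]
Since $\cnerve M$ is a cubcat (Proposition \ref{prop:nerves}), the $\indexcat{1}=[1]$ case of Corollary \ref{cor:formula} rewrites the right side as the set of based cubical homotopy classes. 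The adjunction $\Tau_1\dashv\cnerve$ biject such based cubical functions with based functors $\Tau_1(\BOX\boxobj{n}/\partial\BOX\boxobj{n})\to M$, and Theorem \ref{thm:1-ditypes} carries based $\mathfrak{d}_*$-homotopy classes on the cubical side to based $\Tau_1\mathfrak{d}_*$-homotopy classes on the categorical side.

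The key combinatorial step is to compute $\Tau_1(\BOX\boxobj{n}/\partial\BOX\boxobj{n})$. By cocontinuity of $\Tau_1$ this is the pushout in $\CATS$ of $\Tau_1(\partial\BOX\boxobj{n})\to\Tau_1(\BOX\boxobj{n})=\boxobj{n}$ against $\Tau_1(\partial\BOX\boxobj{n})\to\star$. For $n=1$ the boundary is two discrete points, and collapsing the two endpoints of $[1]$ yields the free monoid $\N$ on one generator. For $n\geqslant 2$ the boundary $\partial\BOX\boxobj{n}$ already contains every $1$-cube of $\BOX\boxobj{n}$, so every generating edge of $\boxobj{n}$, and hence every morphism, becomes an identity in the pushout, giving the trivial monoid $\star$. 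Lemma \ref{lem:based.d-homotopic.monoid.homomorphisms} then collapses the based $\Tau_1\mathfrak{d}_*$-homotopy relation on parallel monoid homomorphisms to equality, so the set of based $\Tau_1\mathfrak{d}_*$-homotopy classes of based monoid homomorphisms $\N\to M$ reduces to the set of monoid homomorphisms $\N\to M$, namely $M$, while that of $\star\to M$ is the singleton.

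The main obstacle will be the $[1]$-equivariant bookkeeping in the first step: checking that Theorem \ref{thm:equivalence} and the cubcat hom-set formula specialize cleanly to based stream maps modulo relative $\direalize{\mathfrak{d}}_*$-homotopy, and that the $\Tau_1\dashv\cnerve$ adjunction descends to the based directed homotopy categories compatibly with Theorem \ref{thm:1-ditypes}. Once these compatibilities are in place, everything reduces to the clean pushout computation above.
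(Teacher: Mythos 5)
Your proposal follows essentially the same route as the paper's proof: translate $\tau_n$ through the equivariant comparison theorems (Theorems \ref{thm:equivalence} and \ref{thm:1-ditypes}) and the cubcat property of nerves, collapse based $\Tau_1\mathfrak{d}_*$-homotopy to equality via Lemma \ref{lem:based.d-homotopic.monoid.homomorphisms}, and compute $\Tau_1(\BOX\boxobj{n}/\partial\BOX\boxobj{n})$ as $\N$ for $n=1$ and $\star$ otherwise. Your explicit $\CATS$-pushout calculation of that fundamental category merely fills in a detail the paper dispatches with ``Then note that''; the structure of the argument is the same.
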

\begin{proof}
  Let $\mathbb{S}(n)=\BOX\boxobj{n}/\partial\BOX\boxobj{n}$.  
  We can make natural identifications
  \begin{equation*}
  \tau_n(\direalize{\cnerve M},\direalize{\star})= \tau_n(\catfont{S}(\cnerve M),\direalize{\star})= [\Tau_1\mathbb{S}(n),M]_{\Tau_1\mathfrak{d}}=\CATS(\Tau_1\mathbb{S}(n),M).
  \end{equation*}
  the second by the comparison theorems [Theorems \ref{thm:equivalence} and \ref{thm:1-ditypes}] and $\Tau_1\dashv\nerve$, the third by the fact that based $\Tau_1\mathfrak{d}_*$-homotopy between monoid homomorphisms is equality [Lemma \ref{lem:based.d-homotopic.monoid.homomorphisms}].  
  Then note that $\Tau_1\mathbb{S}(n)$ is the terminal monoid $\star$ if $n\neq 1$ and $\N$ if $n=1$.  
  The result then follows.  
\end{proof}

Write $\cohomology{1}_{\sing}(X;\tau)$ for the commutative monoid
$$\cohomology{1}_{\sing}(X;\tau)=\cohomology{1}(\sing\,X;\tau)=[\Tau_1\sing\,X,\tau]_{\Tau_1\mathfrak{d}}$$  
natural in streams $X$ and commutative monoids $\tau$.  
Another consequence is the equivalence between cubical directed $1$-cohomology and singular directed $1$-cohomology.

\begin{prop}
  \label{prop:cohomological.equivalence}
  There exists an isomorphism 
  $$\cohomology{1}(\eta_C:C\ra\catfont{S}C;\tau):H^1_{\csing}(\direalize{C};\tau)\cong H^1(C;\tau)$$
  of commutative monoids, natural in cubical sets $C$ and commutative monoids $\tau$, where $\eta$ is the unit of $\catfont{S}$.  
\end{prop}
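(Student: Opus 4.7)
The plan is to reduce the claim to the formal fact that $\eta$ is the unit of an adjoint equivalence of directed homotopy categories. Regarding the commutative monoid $\tau$ as a one-object small category, I would first invoke Proposition~\ref{prop:nerves} to recognize $\cnerve\tau$ as a cubcat. Corollary~\ref{cor:formula}, applied to this cubcat with $B=C$ and with $B=\catfont{S}C$ in turn, would then give natural identifications
\[
 \cohomology{1}(C;\tau)=\pi_0(\cnerve\tau)^C=d(\CUBICALSETS)(C,\cnerve\tau)
\]
and similarly $\cohomology{1}_{\sing}(\direalize{C};\tau)=\cohomology{1}(\catfont{S}C;\tau)=d(\CUBICALSETS)(\catfont{S}C,\cnerve\tau)$. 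Under these identifications, the map $\cohomology{1}(\eta_C;\tau)$ becomes precomposition with $\eta_C$.

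Next I would appeal to Theorem~\ref{thm:equivalence}: the adjunction $\direalize{-}\dashv\sing$ descends to an adjoint equivalence $d(\CUBICALSETS)\simeq d(\DITOP)$, and so the component $\eta_C$ of its unit is an isomorphism in $d(\CUBICALSETS)$, hence lies in the saturated class by which the vertical localization is formed. Precomposition with such a $d(\CUBICALSETS)$-isomorphism is then a bijection on $d(\CUBICALSETS)(-,\cnerve\tau)$, which produces the desired bijection on underlying sets.

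To upgrade the bijection to an isomorphism of commutative monoids, I would observe that the monoid structure on $\cohomology{1}(-;\tau)=[-,\cnerve\tau]_\mathfrak{d}$ is induced functorially by the commutative monoid object structure on $\cnerve\tau$ coming from the multiplication on $\tau$, and that precomposition with $\eta_C$ commutes with this structure automatically. Naturality in $C$ and in $\tau$ then follows at once from naturality of $\eta$ and of the identifications supplied by Corollary~\ref{cor:formula}. The only nontrivial step is confirming the applicability of Corollary~\ref{cor:formula}, which rests entirely on recognizing $\cnerve\tau$ as a cubcat via Proposition~\ref{prop:nerves}; once past this, the argument is purely formal.
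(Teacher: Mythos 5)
Your argument is correct, and it is essentially the more direct way to say what the paper's proof says in a slightly more encoded form. The paper routes through $d(\CATS)$: it first uses $\Tau_1\dashv\nerve$ to rewrite $\cohomology{1}(C;\tau)=[C,\nerve\tau]_{\mathfrak{d}}$ as $\catfont{D}(\Tau_1C,\tau)$ with $\catfont{D}=d(\CATS)$, invoking Theorem~\ref{thm:1-ditypes} to identify $\dhomotopic{\mathfrak{d}}$-classes of cubical functions $C\ra\nerve\tau$ with $\catfont{D}$-morphisms, and then argues that $\Tau_1\eta_C$ is a $\catfont{D}$-isomorphism by Theorems~\ref{thm:equivalence} and~\ref{thm:1-ditypes}. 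You bypass $\Tau_1$ entirely, staying inside $d(\CUBICALSETS)$: Proposition~\ref{prop:nerves} makes $\cnerve\tau$ a cubcat, Corollary~\ref{cor:formula} then gives $\pi_0(\cnerve\tau)^B=d(\CUBICALSETS)(B,\cnerve\tau)$, and Theorem~\ref{thm:equivalence} makes $\eta_C$ invertible in $d(\CUBICALSETS)$. These two routes rest on exactly the same underlying facts (Theorem~\ref{thm:1-ditypes} itself is proved from Proposition~\ref{prop:nerves} and Corollary~\ref{cor:formula}), so yours is a flattening of the paper's chain of dependencies rather than a genuinely new idea; the payoff is that you never leave $d(\CUBICALSETS)$, which is a mild simplification. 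One thing to be careful about: Corollary~\ref{cor:formula} as stated in the body gives $\pi_0\CUBICALSETS(-,\cnerve\tau)\cong[\direalize{-},\direalize{\cnerve\tau}]_{\direalize{\mathfrak{d}}}$, and the rewriting of the right-hand side as $d(\CUBICALSETS)(-,\cnerve\tau)$ relies on how the directed homotopy category is constructed in the proof of Theorem~\ref{thm:equivalence} (via the idempotent monad $\catfont{S}$); it is correct and indeed how the non-equivariant version of the corollary is stated in the introduction, but it is worth saying explicitly rather than eliding. Your remark about the monoid structure being induced by the commutative-monoid-object structure on $\cnerve\tau$ and preserved by precomposition is exactly right and is what the paper leaves implicit.
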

\begin{proof}
  Let $\catfont{D}=d(\CATS)$.  
  We can make the natural identifications
  $$\cohomology{1}(C;\tau)=[C,\nerve\,\tau]_{\mathfrak{d}}=\catfont{D}(\Tau_1C,\tau),$$
	the second by $\Tau_1\dashv\nerve$ and the third by the fully faithful embedding $\catfont{D}\ira d(\CUBICALSETS)$ [Theorem \ref{thm:1-ditypes}].  
  
  In the commutative diagram
  \begin{equation*}
	  \begin{tikzcd}
		  {\cohomology{1}_{\sing}(\direalize{C};\tau)}\ar{rr}[above]{\cohomology{1}(\eta_C;\tau)}
		  \ar{d}[left]{\cong} & & {\cohomology{1}(C;\tau)}\ar{d}[right]{\cong}\\
		  \catfont{D}(\Tau_1\catfont{S}C,\tau)
		  \ar{rr}[below]{\catfont{D}(\Tau_1\eta_C,\tau)} 
		  & & {\catfont{D}(\Tau_1C,\tau)}
	  \end{tikzcd}
  \end{equation*}
  in which the vertical arrows are induced by $\direalize{-}$, the vertical arrows are bijections [Theorem \ref{thm:1-ditypes}], the bottom horizontal arrow is a bijection by $\Tau_1\eta_{C}$ a $\catfont{D}$-isomorphism [Theorems \ref{thm:equivalence} and \ref{thm:1-ditypes}], and hence the top horizontal arrow is also a bijection.  
\end{proof}

\begin{prop}
  \label{prop:cohomological.calculation}
  Consider the following data.
  \begin{enumerate}
    \item cancellative commutative monoid $\tau$
    \item cubical set $C$ having a unique vertex
  \end{enumerate}
  Then $\cohomology{1}_{\sing}(\direalize{C};\tau)=\CATS(\Tau_1C,\tau)$.
\end{prop}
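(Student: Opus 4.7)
The plan is to unfold both sides and exhibit their coincidence as a consequence of the cancellative and commutative hypotheses on $\tau$. By Proposition \ref{prop:cohomological.equivalence} together with the definition of $\cohomology{1}$ given in section \S\ref{sec:cubical.homotopy},
\begin{equation*}
  \cohomology{1}_{\sing}(\direalize{C};\tau)\;\cong\;\cohomology{1}(C;\tau)\;=\;[\Tau_1C,\tau]_{\Tau_1\mathfrak{d}}.
\end{equation*}
Since $C$ has a unique vertex, the fundamental category $\Tau_1C$ is a category with a single object, i.e.\ a monoid. Thus it suffices to show that for every monoid $M$ (taking $M=\Tau_1C$) and every cancellative commutative monoid $\tau$, the quotient map
\begin{equation*}
  \CATS(M,\tau)\;\twoheadrightarrow\;[M,\tau]_{\Tau_1\mathfrak{d}}
\end{equation*}
is a bijection, or equivalently that $\dhomotopic{\Tau_1\mathfrak{d}}$ restricts to equality on $\CATS(M,\tau)$.

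First I would treat a single $\Tau_1\mathfrak{d}$-homotopy between parallel monoid homomorphisms $\alpha,\beta\colon M\ra\tau$. As in the proof of Lemma \ref{lem:conjugacy.classes}, such a homotopy is a functor $\eta\colon M\times[1]\ra\tau$ restricting to $\alpha$ and $\beta$ on the endpoints, and it is determined by the single element $h=\eta(1_\star,0\ra 1)\in\tau$ subject to the naturality identity $\alpha(m)\cdot h=h\cdot\beta(m)$ for every $m\in M$. Commutativity of $\tau$ converts this to $\alpha(m)\cdot h=\beta(m)\cdot h$, and cancellativity of $\tau$ then forces $\alpha(m)=\beta(m)$ for all $m$. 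Hence $\alpha\futurehomotopic{\Tau_1\mathfrak{d}}\beta$ implies $\alpha=\beta$.

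Next I would extend this to the full congruence $\dhomotopic{\Tau_1\mathfrak{d}}$ generated by $\Tau_1\mathfrak{d}_n$-homotopies for arbitrary $n$. A $\Tau_1\mathfrak{d}_n$-homotopy is built inductively from $n$ successive $\Tau_1\mathfrak{d}$-homotopies along the pushout description of $\mathfrak{d}_{n+1}$ from section \S\ref{sec:abstract.homotopy}; each intermediate stage is again a functor out of $M$, hence a monoid homomorphism, so the single-step argument applies to each leg of the zigzag. Chaining the resulting equalities gives $\alpha=\beta$. Therefore $[\Tau_1C,\tau]_{\Tau_1\mathfrak{d}}=\CATS(\Tau_1C,\tau)$, completing the proof.

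The main obstacle is only the care needed in the inductive step for zigzags: one must verify that each intermediate map in a $\Tau_1\mathfrak{d}_n$-homotopy still lands in $\CATS$-morphisms out of the one-object category $M$ (so that commutativity and cancellativity of $\tau$ can be invoked at every stage), but this is immediate from the pushout construction of $\mathfrak{d}_n$, since at each stage the underlying functor's restrictions to $M\otimes\mathfrak{d}([0])$ are honest monoid homomorphisms.
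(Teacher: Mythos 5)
Your proof is correct, but you take a genuinely different and arguably cleaner route than the paper. Both arguments begin by reducing to $[\Tau_1C,\tau]_{\Tau_1\mathfrak{d}}$ (you go through Proposition \ref{prop:cohomological.equivalence}, the paper cites Theorems \ref{thm:equivalence} and \ref{thm:1-ditypes} directly; this is a cosmetic difference since the proposition is itself a corollary of those theorems). From there the paper \emph{curries}: it identifies $[\Tau_1C,\tau]_{\Tau_1\mathfrak{d}}$ with $[\N,\tau^{\Tau_1C}]_{\Tau_1\mathfrak{d}}$, invokes Lemma \ref{lem:conjugacy.classes} to express the latter as the quotient of $\tau^{\Tau_1C}$ by the conjugacy relation $\equiv$, and then uses cancellativity and commutativity of $\tau^{\Tau_1C}$ (inherited pointwise from $\tau$) to show $\equiv$ is equality. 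You bypass the currying and the auxiliary object $\tau^{\Tau_1C}$ entirely: you directly generalize the core calculation inside Lemma \ref{lem:conjugacy.classes} from $\N$ to an arbitrary monoid $M=\Tau_1C$, observing that a $\Tau_1\mathfrak{d}$-homotopy between monoid homomorphisms $\alpha,\beta\colon M\to\tau$ is a natural transformation, encoded by a single $h\in\tau$ with $\alpha(m)h=h\beta(m)$, and that commutativity plus cancellativity of $\tau$ forces $\alpha=\beta$. Your handling of zigzags is fine and can even be shortened: since $\futurehomotopic{\Tau_1\mathfrak{d}}$ is already equality on $\CATS(M,\tau)$, the relation $\dhomotopic{\Tau_1\mathfrak{d}}$ that it generates is also equality there, without needing to track intermediate stages of the $n$-fold zigzag. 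Your approach has the advantage of not relying on the unstated (and slightly delicate) identification $[\Tau_1C,\tau]_{\Tau_1\mathfrak{d}}=[\N,\tau^{\Tau_1C}]_{\Tau_1\mathfrak{d}}$, trading it for a slightly longer but entirely elementary direct computation. One small notational slip: you write $M\otimes\mathfrak{d}([0])$ where $M\times\Tau_1\mathfrak{d}([0])$ is intended; the intent is clear but $\mathfrak{d}$ lives in $\CUBICALSETS$, not $\CATS$.
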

\begin{proof}
  There exist natural identifications
  \begin{align*}
    \cohomology{1}_{\sing}(\direalize{C};\tau)=[\Tau_1C,\tau]_{\Tau_1\mathfrak{d}}=[\N,\tau^{\Tau_1C}]_{\Tau_1\mathfrak{d}}=\CATS(\N,\tau^{\Tau_1C})=\CATS(\Tau_1C,\tau),
  \end{align*}
  the first by the comparison theorems [Theorems \ref{thm:equivalence} and \ref{thm:1-ditypes}], the second and fourth by $C$ having a unique vertex and hence $\Tau_1C$ a monoid, and the third by $\tau$ and hence $\tau^{\Tau_1C}$ a cancellative monoid [Lemma \ref{lem:conjugacy.classes}].
\end{proof}

Some easy calculations follow for the unique two closed and connected $(1+1)$-spacetimes that arise as directed realizations of (finite) cubical sets.  

\begin{figure}
		\includegraphics[width=4in,height=1.5in]{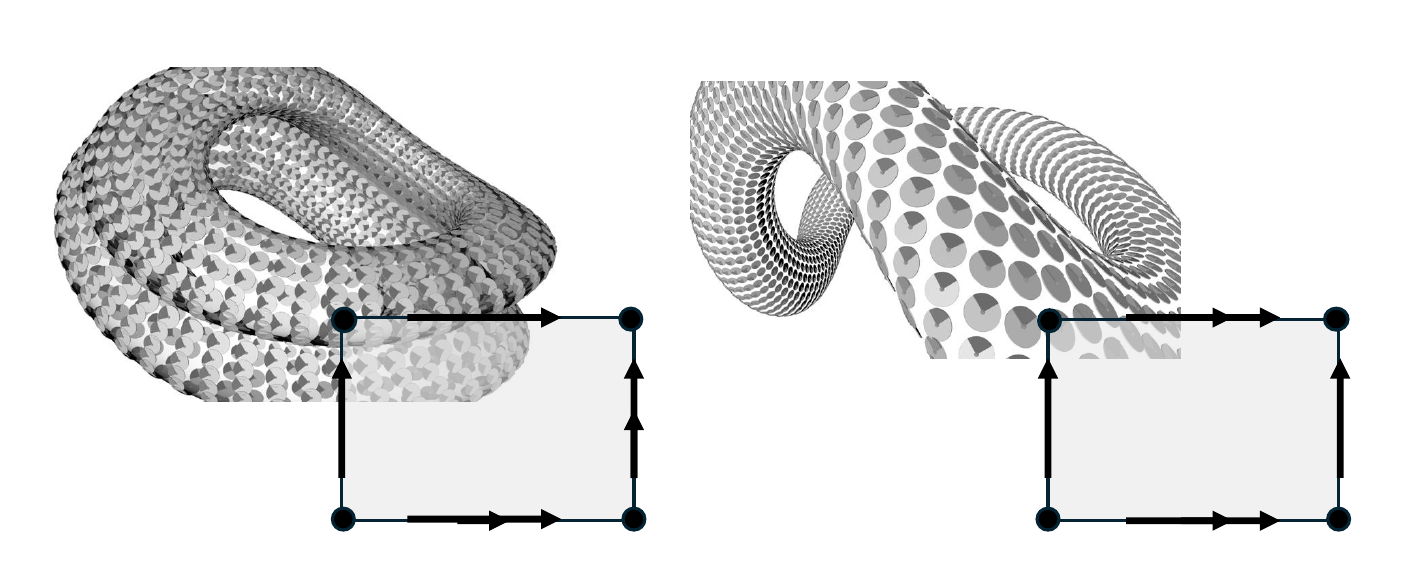}
	  \caption{
	    {\bf Conal manifolds}
	    \textit{Conal manifolds}, smooth manifolds whose tangent spaces are all equipped with convex cones, naturally encode state spaces of processes under some causal constraints.
	    The convex cones define partial orders on an open basis of charts that uniquely extend to circulations on the entire manifold.
	    The time-oriented Klein bottle (left) and time-oriented torus (right) depicted above are examples of conal manifolds that arise as directed realizations of cubical sets. 
	    Over cancellative commutative monoid coefficients $\tau$, their directed $1$-cohomologies are $\tau\times_{2\tau}\tau$ (left) and $\tau^2$ (right) by a simple application of cubical approximation [Examples \ref{eg:klein.calculation} and \ref{eg:torus.calculation}].  
	    }
	  \label{fig:compact.timelike.surfaces}
\end{figure}

\begin{eg}
  \label{eg:torus.calculation}
  Fix a cancellative commutative monoid $\tau$.
  Then 
  $$\cohomology{1}_{\sing}(T;\tau)=\tau^2,$$
  where $T$ is the unique underlying stream of a time-oriented Lorentzian torus [Figure \ref{fig:compact.timelike.surfaces}], by identifying $T$ with the directed realization of $(\BOX[1]/\partial\BOX[1])^{\otimes 2}$, calculating the fundamental category of that cubical set to be $\N^2$, and applying Proposition \ref{prop:cohomological.calculation}.  
\end{eg}  

\begin{eg}
  \label{eg:klein.calculation}
  Fix a cancellative commutative monoid $\tau$. 
  Then
  $$\cohomology{1}_{\sing}(K;\tau)=\tau\times_{2\tau}\tau,$$
  where $K$ is the unique underlying stream of a time-oriented Lorentzian Klein bottle [Figure \ref{fig:compact.timelike.surfaces}], by identifying $K$ with the directed realization of the quotient $C$ of $\BOX\boxobj{2}$ by the smallest equivalence relation identifying $\delta_{\pm 1;2}$ with $\delta_{\mp 2;2}$, calculating $\Tau_1C$ to be the monoid $\N*_{2\N}\N$, and applying Proposition \ref{prop:cohomological.calculation}. 
\end{eg}  

We summarize many of the above observations above in the following commutative diagram.
Write $h(\FIBRANTCUBICALSETS)$ for the full subcategory of $h(\CUBICALSETS)$ consisting of its fibrant objects, the category of fibrant cubical sets and cubical homotopy classes of cubical functions between them.  
Write $h(\GROUPOIDS)$ for the full subcategory of $h(\CATS)$ consisting of all small groupoids, the category of small groupoids and natural isomorphism classes of functors between them.
There exists a commutative diagram
  \begin{equation*}
    \begin{tikzcd}
		d(\CATS)\ar[rr,hookrightarrow]\ar[dd,twoheadrightarrow] & & d(\CUBICALSETS)\ar{rr}[description]{\simeq}\ar[dd,twoheadrightarrow] & & d(\DITOP)\ar[dd,twoheadrightarrow]\\
		& h(\GROUPOIDS)\ar[ul,hookrightarrow]\ar[dr,hookrightarrow]\ar[ur,hookrightarrow]\ar[rr,hookrightarrow]\ar[dl,hookrightarrow]\ar[ul,hookrightarrow] & & h(\FIBRANTCUBICALSETS)\ar{dl}[description]{\simeq}\ar[ul,hookrightarrow]\ar[ur,hookrightarrow]\ar{dr}[description]{\simeq}\\
		h(\CATS)\ar{rr}[description]{\simeq} & & h(\CUBICALSETS)\ar{rr}[description]{\simeq} & & h(\TOP)
    \end{tikzcd}
  \end{equation*}
in which the vertical arrows are induced from forgetful functors, the leftmost horizontal arrows in each row are induced from the cubical nerve, the rightmost horizontal arrows in the top and bottom rows are induced from realization functors, and the diagonal arrows pointing towards the left are induced from inclusions.  
Functors denoted as $\hookrightarrow$ are fully faithful. 
Functors denoted as $\twoheadrightarrow$ are essentially surjective.  
Functors are categorical equivalences if and only if they are labelled with $\simeq$.
The commutativity of the outer rectangle formalizes how our directed homotopy theory refines classical homotopy theory.  
The commutativity of the upper triangles formalizes how our directed homotopy theory extends classical homotopy theory.  

\section{Cubcats}\label{sec:cubcats}
We develop a theory of \textit{cubcats}.  
We first give a definition in terms of compatible unary and composition operations [Definition \ref{defn:cubcats}] and later give an equivalent characterization as an algebra structure over a directed analogue $\cubcatreplacement{}$ of fibrant replacement [Proposition \ref{prop:free.cubcat.monad}].  
We give some basic examples [Propositions \ref{prop:nerves}, \ref{prop:singular.cubcats}, and \ref{prop:fibrant.cubcats}].  
We then give a cubical approximation theorem for directed homotopy [Theorem \ref{thm:derived.formula}], in which cubcats play a role analogous to the role that fibrant cubical sets play in a cubical approximation theorem for classical homotopy.  
Afterwards, we list a number of consequences [Corollaries \ref{cor:formula} and \ref{cor:directed.milnor}] and proofs of comparison results from the previous section.  

\subsection{Definition}
Take $(-)^{\sharp}$ to be the dotted left Kan extension in
  \begin{equation*}
    \begin{tikzcd}
	    \BOX\ar{d}[left]{\BOX[-]}\ar{rr}[above]{\catfont{S}\BOX[-]} & & \CUBICALSETS\\
	    \CUBICALSETS\ar[dotted]{urr}[description]{(-)^{\sharp}}
    \end{tikzcd}
  \end{equation*}
of the top horizontal arrow along the left vertical arrow. 
The monad structure on the monad $\catfont{S}$ of the adjunction $\direalize{-}\dashv\sing$ induce a monad structure on $(-)^{\sharp}$.
The Yoneda embedding $\BOX[-]:\BOX\ra\CUBICALSETS$ induces a map of monads 
$$(-)^\sharp\ira\catfont{S},$$
component-wise monic and thus henceforth regarded as a component-wise inclusion.

\begin{eg}
  We can make the natural identification $(\BOX\boxobj{n})^{\sharp}=\catfont{S}\BOX\boxobj{n}$.  
\end{eg}



For convenience, take $\dihomeo^\sharp_{C;k+1}$ to be the natural cubical function
$$\dihomeo^\sharp_{C;k+1}:C^{\sharp}\ra\ex_{k+1}C^\sharp$$
defined for the case $C=\BOX\boxobj{n}$ by the commutative square
\begin{equation*}
	\begin{tikzcd}
		(\BOX\boxobj{n})^\sharp\ar[d,equals]\ar{rrrr}[above]{\dihomeo^\sharp_{\BOX\boxobj{n};k+1}} & & & & \ex_{k+1}((\BOX\boxobj{n})^\sharp)\ar[d,equals]
  \\
		\catfont{S}\BOX\boxobj{n}\ar{r}[below]{\catfont{S}\eta_{\BOX\boxobj{n}}} & \ex_{k+1}\catfont{S}(\sd_{k+1}\BOX\boxobj{n})\ar{rrr}[below]{\ex_{k+1}\sing\,\dihomeo_{\BOX\boxobj{n};k+1}} & & & \ex_{k+1}\catfont{S}(\BOX\boxobj{n})
	\end{tikzcd}
\end{equation*}

\begin{defn}
  \label{defn:cubcats}
  A \textit{$\indexcat{1}$-cubcat} is a $\indexcat{1}$-cubical set $C$ if in the diagrams
       \begin{equation}
	       \label{eqn:cubcats}
                    \begin{tikzcd}
			    C\ar{d}[left]{\eta_C}\ar{r}[above]{\id_C}\ar[dr,phantom,very near start, "I"] & C\\
			    C^\sharp\ar[dotted]{ur}[description]{\rho} & \;
		    \end{tikzcd}\quad
\begin{tikzcd} 
	C^\sharp\ar{rrrr}[above]{\dihomeo^\sharp_{C;2}} 
	\ar{d}[left]{\rho}
	& & \;\ar[d,phantom,near start,"II"] & & \ex_2C^\sharp\ar{d}[description]{\ex_2\rho}\\
	C & & \; & & \ex_2C,\ar[dotted]{llll}[below]{\mu}
\end{tikzcd}
		  \end{equation}
		  in which $\eta$ denotes the unit of $(-)^{\sharp}$, 
	there exist dotted $\indexcat{1}$-cubical functions $\rho$ and $\mu$ making I and II commute.
\end{defn}

A \textit{cubcat} will simply refer to a $\indexcat{1}$-cubcat for the case $\indexcat{1}=\star$.  
The retraction $\rho:C^\sharp\ra C$ in I in effect extends the face, degeneracy, and transposition operations on a cubical set to a larger collection of unary operations parametrized by monotone maps between directed topological cubes in the following sense.
Let $\zeta$ denote a stream map
$$\zeta:\direalize{\BOX\boxobj{n}}\ra\direalize{\BOX\boxobj{m}}.$$  
Each such $\zeta$ defines a natural transformation $\zeta_*:(C^\sharp)_m\ra(C^\sharp)_n$ defined for $C$ representable as precomposition $(\catfont{S}\BOX[-])_m\ra(\catfont{S}\BOX[-])_n$ with $\zeta$ and hence for general cubical sets $C$ by cocontinuity of $(-)^\sharp$.  
For $\rho$ making I commute in (\ref{eqn:cubcats}), commutative diagrams
\begin{equation*}
  \begin{tikzcd}
	  C_m\ar[rr,dotted]\ar[d,hookrightarrow] & & C_n\\
	  (C^\sharp)_m\ar{rr}[below]{\zeta_*} & & (C^\sharp)_n,\ar{u}[right]{\mu_n}
  \end{tikzcd}
\end{equation*}
natural in $\zeta$, define dotted functions that, in the case $\zeta$ is of the form $\direalize{\BOX[\phi]}$ for a $\BOX$-morphism $\phi$, coincides with the unary operation $C(\phi):C_m\ra C_n$.  
A cubical function $\ex_2C\ra C$ in II composes a composable configuration of cubes, shaped like a subdivided cube, into a single cube.  
The commutativity of II asserts that the composite of a subdivision of cubes yields the original cube.  
It is possible if more tedious to reformulate composition in the definition as a family of category-like compositions
\begin{equation}
  \label{eqn:compositions}
  C_n\times_{C(\delta_{+i;n}),C(\delta_{-i;n})}C_n\ra C_n,\quad 1\leqslant i\leqslant n
\end{equation}
compatible with the face, degeneracy, and symmetry operations on a cubical set $C$ and additionally satisfying interchange laws.  

\begin{eg}
  Take a \textit{framed cubical set} to mean a presheaf
  $$\OP{(\BOX_1^*)}\ra\SETS$$
	over the minimal variant $\BOX_1^*$ of the cube category, the subcategory of $\SETS$ generated by $\delta_{\mins},\delta_{\pls}:[0]\ra[1]$ and $\sigma:[1]\ra[0]$.  
  A globular $\omega$-category is equivalent to the data of a framed cubical set $C$ together with an extension like the dotted functor in diagram I of (\ref{eqn:cubcats}), extra composition operations on $C$ like (\ref{eqn:compositions}), and a compatibility condition like the commutativity of the right diagram in \ref{eqn:cubcats}, except that $\DITOPOLOGICALBOX$ is replaced with the minimal variant of the cube category containing \textit{coconnections} of one kind and the compositions are additionally required to be unital and associative in an appropriate sense \cite{agl2002multiple}.   
  Unlike strict globular $\omega$-categories, the composition operations witnessing the property of being a cubcat are not part of the structure of a cubcat.
\end{eg}

We can characterize cubcats in terms of an endofunctor defined as follows.  

\begin{defn}
  Define $\cubcatreplacement\,C$ to be the $\CUBICALSETS$-colimit
	$$\cubcatreplacement\,C=\colim\left(C\ira C^\sharp\xra{\dihomeo^\sharp_{C;2}}\ex_2C^\sharp\xra{\ex_2\dihomeo^\sharp_{C;2}}\ex_4C^\sharp\xra{\ex_4\dihomeo^\sharp_{C;2}}\cdots\right)$$
  natural in cubical sets $C$.  
\end{defn}

Let $C$ denote a cubical set.  
Consider the following commutative diagram
\begin{equation}
    \label{eqn:free.cubcat.monad}
    \begin{tikzcd}
	    C^\sharp\ar[d,hookrightarrow]\ar{rr}[above]{\dihomeo^\sharp_{C;2}\ar[d,hookrightarrow]} & & \ex_2C^\sharp\ar[dll]\ar{rr}[above]{\ex_2\dihomeo^\sharp_{C;2}} & & \ex_4C^\sharp\ar{rr}[above]{\ex_4\dihomeo^\sharp_{C;2}}\ar[dllll] & & \cdots\\
	    \catfont{S}C,
    \end{tikzcd}
\end{equation}
natural in $C$, in which each diagonal arrow $\ex_{2^i}C^\sharp\ra\catfont{S}C$ is the composite of the mono $\ex_{2^i}(C^\sharp\ira\catfont{S}C):\ex_{2^i}C^\sharp\ra\ex_{2^i}\catfont{S}C$ with the mono $\ex_{2^i}\catfont{S}C\ra\catfont{S}C$ sending each $n$-cube $\theta:\direalize{\sd_{2^i}\boxobj{n}}\ra\direalize{C}$ in $\ex_{2^i}\catfont{S}C$ to the $n$-cube $\theta\dihomeo_{\BOX\boxobj{n};2^i}:\direalize{\BOX\boxobj{n}}\ra\direalize{C}$ in $\catfont{S}C$.  
The above component-wise monic cocone from the top row to the cubical set $\catfont{S}C$ induces a mono $\cubcatreplacement{}C\ra\catfont{S}C$ which we henceforth regard as an inclusion of subpresheaves.  
Concretely, $(\cubcatreplacement\,C)_n$ is the set of stream maps $f:\direalize{\BOX\boxobj{n}}\ra\direalize{C}$ such that for $k\gg 1$ and each $(\BOX/\sd_{2}^k\BOX\boxobj{n})$-object $\gamma$, there exists a $(\BOX/C)$-object $\theta(\gamma,f)$ such that $f\dihomeo_{C;2^k}\direalize{\gamma}$ lifts along $\direalize{\theta(f,\gamma)}$.  

\begin{prop}
  \label{prop:free.cubcat.monad}
  The following are equivalent for a $\indexcat{1}$-cubical set $C$:
	\begin{enumerate}
          \item  $C$ is a $\indexcat{1}$-cubcat 
	  \item the corestriction $C\ra\cubcatreplacement\,C$ of the cubical function $C\ra\catfont{S}C$ defined by the unit of $\catfont{S}$ admits a retraction
	\end{enumerate}
\end{prop}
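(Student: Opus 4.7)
The plan is to realize both conditions as equivalent repackagings of the sequential colimit structure defining $\cubcatreplacement C$. The telescope
\begin{equation*}
C\ra C^\sharp\xra{\dihomeo^\sharp_{C;2}}\ex_2C^\sharp\xra{\ex_2\dihomeo^\sharp_{C;2}}\ex_4C^\sharp\ra\cdots
\end{equation*}
admits a retraction to $C$ exactly when one can compatibly fold each $\ex_{2^k}C^\sharp$ back to $C$; the data $(\rho,\mu)$ of a cubcat should be exactly the minimal structure that bootstraps this fold recursively. Throughout, let $\iota_k:\ex_{2^k}C^\sharp\ra\cubcatreplacement C$ (with $\iota_0:C^\sharp\ra\cubcatreplacement C$) denote the cocone legs and $j:C\ra\cubcatreplacement C$ denote the natural inclusion, so that $j=\iota_0\circ\eta_C$ and $\iota_{k+1}\circ\ex_{2^k}\dihomeo^\sharp_{C;2}=\iota_k$.

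For the forward direction, I would suppose $C$ is a cubcat witnessed by $\rho$ and $\mu$ as in (\ref{eqn:cubcats}) and inductively define $r_k:\ex_{2^k}C^\sharp\ra C$ by $r_0:=\rho$ and $r_{k+1}:=\mu\circ\ex_2 r_k$. The compatibility $r_{k+1}\circ\ex_{2^k}\dihomeo^\sharp_{C;2}=r_k$ holds for $k=0$ by diagram II and for $k\geqslant 1$ by rewriting $\ex_{2^k}\dihomeo^\sharp_{C;2}=\ex_2\ex_{2^{k-1}}\dihomeo^\sharp_{C;2}$ and applying $\ex_2$ to the inductive hypothesis. The universal property of the sequential colimit then assembles the $r_k$ into a unique $r:\cubcatreplacement C\ra C$, and $r\circ j=r_0\circ\eta_C=\rho\circ\eta_C=\id_C$ from diagram I finishes the claim.

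For the backward direction, given a retraction $r:\cubcatreplacement C\ra C$ of $j$, I would set $\rho:=r\circ\iota_0$, so that $\rho\circ\eta_C=r\circ j=\id_C$ establishes diagram I. For diagram II, note that $\ex_2$ preserves split epis, so $\ex_2\eta_C$ is a section of $\ex_2\rho$; this motivates the definition $\mu:=r\circ\iota_1\circ\ex_2\eta_C$. The verification of diagram II reduces to comparing
\begin{equation*}
\mu\circ\ex_2\rho\circ\dihomeo^\sharp_{C;2}=r\circ\iota_1\circ\ex_2(\eta_C\circ\rho)\circ\dihomeo^\sharp_{C;2}
\quad\text{and}\quad
\rho=r\circ\iota_1\circ\dihomeo^\sharp_{C;2},
\end{equation*}
so it suffices to check that precomposition with the idempotent $\ex_2(\eta_C\circ\rho)$ is absorbed by $r\circ\iota_1\circ(-)\circ\dihomeo^\sharp_{C;2}$, invoking naturality of $\dihomeo^\sharp_{-;2}$ applied to $\rho$ to slide the idempotent past the structural map and then collapsing via the telescope identity $\iota_1\circ\dihomeo^\sharp_{C;2}=\iota_0$.

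The main obstacle is this last absorption step, since $\eta_C\circ\rho$ is only an idempotent on $C^\sharp$ and not the identity; the verification ultimately rests on the fact that $r\circ\iota_1$ factors through the telescope in a way that remembers only the $\rho$-class of the input, a consequence of how the cocone legs commute with $\dihomeo^\sharp_{-;2}$. The equivariant generalization to $\indexcat{1}$-cubcats is routine because both conditions are componentwise and all the constructions used are pointwise in the indexing category $\indexcat{1}$.
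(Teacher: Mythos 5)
Your forward direction and the constructions in the backward direction ($\rho=r\circ\iota_0$, $\mu=r\circ\iota_1\circ\ex_2\eta^\sharp_C$, plus the verification of diagram I) track the paper's own proof faithfully. The gap you flag in diagram II is real, but your proposed resolution does not close it. The naturality square of $\dihomeo^\sharp_{-;2}\colon(-)^\sharp\to\ex_2(-)^\sharp$ at the cubical function $\rho\colon C^\sharp\to C$ reads $\dihomeo^\sharp_{C;2}\circ\rho^\sharp=\ex_2\rho^\sharp\circ\dihomeo^\sharp_{C^\sharp;2}$, a relation with source $(C^\sharp)^\sharp$; since $\eta^\sharp_C\circ\rho\colon C^\sharp\to C^\sharp$ is not of the form $f^\sharp$ for any $f\colon C\to C$, ``naturality of $\dihomeo^\sharp_{-;2}$ applied to $\rho$'' does not yield the slide $\ex_2(\eta^\sharp_C\circ\rho)\circ\dihomeo^\sharp_{C;2}=\dihomeo^\sharp_{C;2}\circ(\eta^\sharp_C\circ\rho)$ that you need. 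And the alternate gloss that $r\circ\iota_1$ ``remembers only the $\rho$-class of the input'' asserts the conclusion rather than proving it.

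To be fair, the paper's own proof is equally terse at this step (it asserts that (\ref{eqn:cubcats}) commutes ``because'' (\ref{eqn:free.cubcat.monad.multiplication}) commutes), so your write-up tracks the paper's outline and in effect surfaces a step the paper elides. Closing it cleanly appears to require an additional compatibility between $\dihomeo^\sharp_{-;2}$ and the unit $\eta^\sharp$ of $(-)^\sharp$, e.g.\ the identity $\dihomeo^\sharp_{C^\sharp;2}\circ\eta^\sharp_{C^\sharp}=\ex_2\eta^\sharp_{C^\sharp}\circ\dihomeo^\sharp_{C;2}$. Given that identity, the naturality of $\eta^\sharp$ at $\rho$ (which gives $\eta^\sharp_C\circ\rho=\rho^\sharp\circ\eta^\sharp_{C^\sharp}$) together with the naturality of $\dihomeo^\sharp_{-;2}$ at $\rho$ produces exactly the slide you invoked, after which the telescope identity and diagram I finish the argument; but that identity is not automatic and would need to be verified directly from the Kan-extension definition of $\dihomeo^\sharp_{-;2}$.
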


In particular, we henceforth regard $\cubcatreplacement\,C$ as a subpresheaf of $\catfont{S}C$.
The monad multiplication on $\catfont{S}$ does not restrict and corestrict to a monad multiplication on $\cubcatreplacement{}$.  
But the unit of $\catfont{S}$ defines natural cubical functions $C\ra C^\sharp$ for the case $C$ representable and hence uniquely corestricts to a natural transformation $\id_{\CUBICALSETS}\ra\cubcatreplacement{}$.

\begin{proof}
  Let $\eta^{\catfont{S}}$ and $\mu^{\catfont{S}}$ be the unit and multiplication of $\catfont{S}$.
  Let $\eta^\sharp$ and $\mu^\sharp$ be the unit and multiplication of $(-)^\sharp$.  
  Let $\epsilon^{(j)}$ be the counit of $\sd_2^j\dashv\ex_{2}^j$. 
  Let $\ex=\ex_2$.   
  Let $C$ denote a cubical set.  
  
  Let $\iota$ denote the component-wise inclusion $\cubcatreplacement\ra\catfont{S}$.
  Let $\eta_C$ denote the corestriction $C\ra\cubcatreplacement{}C$ described in the proposition.  

  Consider the case $\indexcat{1}=\star$.
  Consider the solid diagram
  \begin{equation}
	  \label{eqn:free.cubcat.monad.multiplication}
    \begin{tikzcd}
	    C\ar{r}[above]{\eta^\sharp_C} 
	    & C^\sharp\ar[dl,dotted]\ar{rr}[above]{\dihomeo^{\sharp}_{C;2}}
	    & & \ex_2C^\sharp\ar[dlll,dotted]\ar{rr}[above]{\ex_2\dihomeo^\sharp_{C;2}} 
	    & & \ex_4C^\sharp\ar{rr}[above]{\ex_4\dihomeo^\sharp_{C;2}}\ar[dlllll,dotted] 
	    & & \cdots\\
    C\ar{u}[description]{\id_C}
    \end{tikzcd}
  \end{equation}

  Suppose (1). 
  Then there exist cubical functions $\rho$ and $\mu$ making (\ref{eqn:cubcats}) commute.  
  We can define a commutative diagram (\ref{eqn:free.cubcat.monad.multiplication}), natural in $\rho$ and $\mu$, so that the $i$th unlabelled arrow counting from the left is $\mu^{i}_{C}(\ex^i\rho):\ex^iC^\sharp\ra C$.
  The cocone (\ref{eqn:free.cubcat.monad.multiplication}) induces a retraction to $\eta_C$.
  Thus (2).   

  Suppose (2).  
  Then there exists a retraction $\mu$ to $\eta_C$.
  Then there exist unique dotted cubical functions, natural in $\mu$, making (\ref{eqn:free.cubcat.monad.multiplication}) commute.
  Let $\rho$ be the first dotted vertical cubical function from the left in (\ref{eqn:free.cubcat.monad.multiplication}).  
  Let $\mu$ be the composite of $\ex\,\eta^\sharp_C$ with the second dotted vertical cubical function in (\ref{eqn:free.cubcat.monad.multiplication}).  
  Then (\ref{eqn:cubcats}) commutes because (\ref{eqn:free.cubcat.monad.multiplication}) commutes.
  Thus (1).

  The naturality of the constructions implies the general case.
\end{proof}

The class of unary operations required on a cubcat can likely be reduced in the sense that the construction $(-)^\sharp$ is probably larger than necessary for the main results.  
What is not clear is whether a smaller alternative to $(-)^{\sharp}$ is as simple to define.
A simpler, smaller alternative to $(-)^{\sharp}$ would be useful for formalizing cubical directed homotopy types on a computer (c.f. \cite{awodey2009homotopy,licata2011directed,riehl2017type}).  

\subsection{Examples}
Cubcats at once generalize 1-categories [Proposition \ref{prop:nerves}], directed singular cubical sets [Proposition \ref{prop:singular.cubcats}], and higher groupoids [Proposition \ref{prop:fibrant.cubcats}].

\begin{prop}
  \label{prop:nerves}
  For each $\indexcat{1}$-category $\smallcat{1}$, $\nerve\,\smallcat{1}$ is a $\indexcat{1}$-cubcat. 
\end{prop}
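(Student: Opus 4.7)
The plan is to exhibit the cubical functions $\rho$ and $\mu$ of (\ref{eqn:cubcats}) for $C=\cnerve\,\smallcat{1}$; the $\indexcat{1}$-equivariant case follows automatically since the construction will be natural in $\smallcat{1}$.

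First I would construct $\rho$ by extending the first-vertex observation of Lemma \ref{lem:geometric.first.vertex.map} to a natural transformation $\pi:\catfont{S}\BOX[-]\Rightarrow\cnerve[-]$ of functors $\BOX\to\CUBICALSETS$. Given a stream map $f:\vec{\I}^k\to\vec{\I}^n$, the assignment $v\mapsto\min\,\support_{|\BOX[-]|}(f(v),\boxobj{n})$ defines a monotone function $\boxobj{k}\to\boxobj{n}$ by Lemma \ref{lem:orientations}, hence a $k$-cube of $\cnerve\,\boxobj{n}$; a generator-by-generator check on cofaces, codegeneracies, and coordinate transpositions (each of which commutes with component-wise flooring) shows this assignment is natural in $\BOX$-morphisms on $\boxobj{n}$. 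Post-composing each $\pi_{\BOX\boxobj{n}}$ with $\cnerve F$ for every $(\BOX/\cnerve\,\smallcat{1})$-object $F:\boxobj{n}\to\smallcat{1}$, and assembling the resulting cocone via the universal property of the left Kan extension defining $(-)^\sharp$, produces a cubical function $\rho:(\cnerve\,\smallcat{1})^\sharp\to\cnerve\,\smallcat{1}$. The retraction identity $\rho\,\eta_{\cnerve\,\smallcat{1}}=\id$ is immediate: the unit of $(-)^\sharp$ sends the identity $k$-cube in $\BOX\boxobj{k}$ to the identity stream map on $\vec{\I}^k$, whose first-vertex transform is itself the identity.

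Next I would define $\mu$ from categorical composition in $\smallcat{1}$. Using $\sd_2\dashv\ex_2$, $\Tau_1\dashv\cnerve$, and Proposition \ref{prop:cubical.subdivision} to identify an $n$-cube of $\ex_2\cnerve\,\smallcat{1}$ with a functor $\multiboxobj{2}{n}\to\smallcat{1}$ (via the natural comparison $\Tau_1\BOX[\multiboxobj{2}{n}]\to\multiboxobj{2}{n}$), I take $\mu_n$ to be precomposition with the outer-corner functor $\iota_n:\boxobj{n}\to\multiboxobj{2}{n}$, $v\mapsto 2v$. The $\iota_n$ are natural in $\BOX$-morphisms (multiplication by $2$ commutes with cofaces, codegeneracies, and coordinate permutations), so $\mu:\ex_2\cnerve\,\smallcat{1}\to\cnerve\,\smallcat{1}$ is a well-defined cubical function; geometrically it composes the $2^n$ subcubes of a subdivision by the composition law of $\smallcat{1}$.

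The main obstacle will be verifying diagram II of (\ref{eqn:cubcats}). By cocontinuity of $(-)^\sharp$ the verification reduces to each representable summand, where it becomes a geometric identity: for any stream map $f:\vec{\I}^k\to\vec{\I}^n$, the first-vertex transform of $f$ must coincide with the result of first applying $\dihomeo^\sharp_{\BOX\boxobj{n};2}$, then the first-vertex-based $\ex_2\rho$, and finally $\mu$. The essential input is a higher-cube analogue of Lemma \ref{lem:geometric.first.vertex.map}: under the homeomorphism $\dihomeo_{\BOX\boxobj{n};2}:\vec{\I}^n\cong\direalize{\sd_2\BOX\boxobj{n}}$, the half-integer lattice of $\vec{\I}^n$ maps to the vertices of $\multiboxobj{2}{n}$ in such a way that first-vertex extraction intertwines with $\iota_n$. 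Once this alignment is in place, diagram II commutes by direct bookkeeping; the difficulty lies in carefully unpacking the three-step definition of $\dihomeo^\sharp_{\cnerve\,\smallcat{1};2}$ so that the first-vertex map commutes through each of the unit of $\catfont{S}$, the subdivision homeomorphism, and $\ex_2$.
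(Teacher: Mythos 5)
Your constructions of $\rho$ and $\mu$ match the paper's. The first-vertex map $v\mapsto\min\,\support_{|\BOX[-]|}(f(v),\boxobj{n})$ is exactly the paper's $\rho_{\boxobj{n}}$ (described there via the adjoint functor $\Tau_1\catfont{S}(\nerve\boxobj{n})\ra\boxobj{n}$ and Lemma \ref{lem:orientations}), and your outer-corner functor $\iota_n:v\mapsto 2v$ is precisely $(\boxobj{n})^{[1]\ra[0]}$ under the identification $(\boxobj{n})^{[1]}\cong\multiboxobj{2}{n}$, so your $\mu_n$ coincides with the paper's $(\mu_{\smallcat{1}})_n=\CATS((\boxobj{n})^{[1]\ra[0]},\smallcat{1})$. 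Reducing to $\indexcat{1}=\star$ and then to representable cubes via cocontinuity of $(-)^\sharp$ is also how the paper proceeds.

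Where you diverge is the verification of square II in (\ref{eqn:cubcats}), which you leave as a ``geometric identity'' to be established by alignment of half-integer lattices and careful unpacking of $\dihomeo^\sharp_{C;2}$. That bookkeeping can be made to work, but the paper avoids it with a much cheaper categorical observation: after reducing to $\smallcat{1}$ a $\BOX$-object, both maximal composites $(\nerve\boxobj{n})^\sharp\ra\nerve\boxobj{n}$ are adjoint to functors $\Tau_1(\nerve\boxobj{n})^\sharp\ra\boxobj{n}$ \emph{into a poset}, and such functors are uniquely determined by their values on objects; checking equality on objects is then immediate (and the $\smallcat{1}=[0]$ case is a diagram of terminal objects). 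The general case then follows by $(-)^\sharp$ cocontinuous. Your route is sound in principle but trades this one-line posetal rigidity argument for the harder explicit computation you flag as ``the main obstacle''; I'd encourage you to notice that both sides factor through a poset-valued functor, which short-circuits the comparison entirely.
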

\begin{proof}
  Let $\eta$ denote the unit of $(-)^{\sharp}$.  
  Let $\smallcat{1}$ denote a small category.

  It suffices to take $\indexcat{1}=\star$ and construct dotted cubical functions, natural in $\smallcat{1}$, making (\ref{eqn:cubcats}) commute for $C=\nerve\,\smallcat{1}$.  
	There exists a unique functor $\rho_{\boxobj{n}}:\catfont{S}(\nerve\boxobj{n})\ra\nerve\boxobj{n}$, natural in $\BOX$-objects $\boxobj{n}$, whose adjoint $\Tau_1\catfont{S}(\nerve\boxobj{n})\ra\boxobj{n}$ sends each object $x$ to $\min\,\support_{|\nerve-|:\BOX\ra\TOP}(x,\boxobj{n})$ by an application of Lemma \ref{lem:orientations}.  
  These functors define the components of a unique natural transformation
	$$\rho:\catfont{S}(\nerve(\BOX\ira\CATS))\ra\nerve(\BOX\ira\CATS):\BOX\ra\CUBICALSETS.$$
  
	The functor $\rho_{[0]}:\catfont{S}(\nerve[0])\ra\nerve[0]$ is an isomorphism $\star\cong\star$ of terminal cubical sets. 
	Therefore $\rho$ defines a retraction to the natural transformation $\eta_{\nerve(\BOX\ira\CATS)}:\nerve\ra\catfont{S}\,\nerve$ by naturality because each component of $\rho$, a functor to a poset, is uniquely determined by its values on objects.
  Thus the $\rho_{\boxobj{n}}$'s induce a cubical function $\rho_{\smallcat{1}}$ making I in (\ref{eqn:cubcats}) commute when $C=\nerve\,\smallcat{1}$ and $\rho=\rho_{\smallcat{1}}$.

  Define a cubical function $\mu_{\smallcat{1}}:\ex_2\nerve\smallcat{1}\ra\nerve\smallcat{1}$, natural in $\smallcat{1}$, by the rule
  $$(\mu_{\smallcat{1}})_n=\CATS((\boxobj{n})^{[1]\ra[0]},\smallcat{1}):\CATS(\multiboxobj{2}{n},\smallcat{1})\ra\CATS(\boxobj{n},\smallcat{1})$$
  natural in $\BOX$-objects $\boxobj{n}$.  
  Consider rectangle II in (\ref{eqn:cubcats}) for the case $C=\nerve\,\smallcat{1}$, $\rho=\rho_{\smallcat{1}}$, and $\mu=\mu_{\smallcat{1}}$.
  In the case $\smallcat{1}=[0]$, the diagram is a diagram of terminal cubical sets and therefore commutes.  
  In the case $\smallcat{1}$ is a $\BOX$-object, the diagram commutes by naturality because both maximal composites $(\nerve\boxobj{n})^\sharp\ra\nerve\,\boxobj{n}$ of arrows in the diagram have as their adjoints functors $\Tau_1(\nerve\boxobj{n})^\sharp\ra\boxobj{n}$ to posets and are hence determined by their values on objects.  
  It therefore follows that the diagram commutes for general small categories $\smallcat{1}$ by naturality.
\end{proof}

Just as singular cubical sets of topological spaces are fibrant, directed singular cubical sets of streams are cubcats.  

\begin{prop}
  \label{prop:singular.cubcats}
  For each $\indexcat{1}$-stream $X$, $\sing\,X$ is a $\indexcat{1}$-cubcat.  
\end{prop}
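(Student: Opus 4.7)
The plan is to apply Proposition \ref{prop:free.cubcat.monad}, which reduces being a $\indexcat{1}$-cubcat to the existence of a retraction $\cubcatreplacement\,C\ra C$ of the natural corestriction $C\ra\cubcatreplacement\,C$. Since $\cubcatreplacement\,C$ sits inside $\catfont{S}C$ and the corestriction factors through the inclusion $\cubcatreplacement\,C\ira\catfont{S}C$, it suffices to produce a retraction $r:\catfont{S}\,C\ra C$ of the monad unit $\eta_C:C\ra\catfont{S}\,C$ and then restrict $r$ along $\cubcatreplacement\,C\ira\catfont{S}\,C$.

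For $C=\sing\,X$ such a retraction is supplied formally by the adjunction $\direalize{-}\dashv\sing$. Namely, let $\varepsilon$ denote the counit of $\direalize{-}\dashv\sing$; then $\sing\,\varepsilon_X:\catfont{S}\sing\,X=\sing\direalize{\sing\,X}\ra\sing\,X$ is the Eilenberg--Moore algebra structure map on $\sing\,X$, and the triangle identity gives $(\sing\,\varepsilon_X)\circ\eta_{\sing X}=\id_{\sing X}$. Thus $\sing\,\varepsilon_X$ is the required retraction, and its restriction to $\cubcatreplacement\sing\,X$ is a retraction of the corestriction $\sing\,X\ra\cubcatreplacement\sing\,X$.

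For the equivariant case, the argument is levelwise: because $\direalize{-}\dashv\sing$ extends to an adjunction between $\indexcat{1}$-diagram categories, the components $\sing\,\varepsilon_{X(g)}$, $g$ a $\indexcat{1}$-object, assemble by naturality of $\varepsilon$ into a $\indexcat{1}$-cubical function $\catfont{S}\,\sing\,X\ra\sing\,X$ retracting $\eta_{\sing X}$, after which the same restriction argument applies.

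There is no hard step here; the content is entirely formal, resting on the triangle identity for $\direalize{-}\dashv\sing$ and the fact that the inclusion $\cubcatreplacement\ira\catfont{S}$ intertwines the two candidate units out of $C$. The only thing to be careful about is the verification that $\eta_{\sing X}$ does factor through $\cubcatreplacement\sing\,X$ (so that restricting a retraction along the inclusion yields a retraction of the corestriction), but this is exactly the content of the observation (made in the paragraph preceding Proposition \ref{prop:free.cubcat.monad}) that the natural map $\id_{\CUBICALSETS}\ra\catfont{S}$ factors uniquely through $\cubcatreplacement$.
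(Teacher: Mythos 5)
Your proof is correct and follows the same approach as the paper's: use Proposition \ref{prop:free.cubcat.monad}, take $\sing\,\varepsilon_X$ as the retraction of $\eta_{\sing X}$ supplied by the zig-zag (triangle) identity for $\direalize{-}\dashv\sing$, and restrict along the inclusion $\cubcatreplacement\,\sing X\hookrightarrow\catfont{S}\sing X$ to obtain a retraction of the corestriction. The only quibble is a pointer: the observation that $\eta$ uniquely corestricts to a natural transformation $\id_{\CUBICALSETS}\ra\cubcatreplacement{}$ is spelled out in the paragraph just \emph{after} Proposition \ref{prop:free.cubcat.monad}, not before it, though the underlying reason (the mono $\cubcatreplacement C\ra\catfont{S}C$ built from (\ref{eqn:free.cubcat.monad})) is set up before.
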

\begin{proof}
  Let $\eta$ denote the unit of $\direalize{-}\dashv\sing$.  
  The $\indexcat{1}$-cubical function $\eta_{\sing\,X}$ admits a retraction by the zig-zag identities, hence its corestriction to $\cubcatreplacement\,\sing\,X$ admits a retraction, and hence $\sing\,X$ is a $\indexcat{1}$-cubcat [Proposition \ref{prop:free.cubcat.monad}].
\end{proof}

Fibrant cubical sets themselves are cubcats.

\begin{prop}
  \label{prop:fibrant.cubcats}
  Every fibrant cubical set is a cubcat.
\end{prop}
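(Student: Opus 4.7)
By Proposition \ref{prop:free.cubcat.monad}, it suffices to exhibit a retraction of the natural inclusion $\eta_C\colon C\ira\cubcatreplacement\,C$. Since $C$ is fibrant in the classical model structure (Proposition \ref{prop:cubical.model.structure}), the plan is to identify $\eta_C$ as an acyclic cofibration and lift $\id_C$ along it:
\begin{equation*}
	\begin{tikzcd}
		C\ar[d,hookrightarrow,"\eta_C"']\ar[r,"\id_C"] & C\\
		\cubcatreplacement\,C\ar[ur,dotted]
	\end{tikzcd}
\end{equation*}
The retraction obtained is the required algebra structure.

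The cofibration part is immediate: $\cubcatreplacement\,C$ is by construction the filtered colimit of subpresheaves of $\catfont{S}C$ starting at $C$, so $\eta_C$ is a monomorphism. The substantive step is checking that $\eta_C$ is a classical weak equivalence. For this I would factor $\eta_C$ through the sequence
\begin{equation*}
	C\xrightarrow{\;\eta^{\sharp}_C\;}C^{\sharp}\xrightarrow{\;\dihomeo^\sharp_{C;2}\;}\ex_2 C^{\sharp}\xrightarrow{\;\ex_2\dihomeo^\sharp_{C;2}\;}\ex_4C^{\sharp}\ra\cdots,
\end{equation*}
whose colimit is $\cubcatreplacement\,C$, and verify that each successive map is a classical weak equivalence. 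Since classical weak equivalences are preserved under filtered colimits of monos (they are created by topological realization, and realizations of cofibrations are closed inclusions that preserve homotopy colimits), this will complete the argument.

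For the first map $C\ra C^{\sharp}$: since $C^{\sharp}$ is the left Kan extension of $\catfont{S}\BOX[-]$ along the Yoneda embedding, we have $C^{\sharp}=\colim_{\BOX\boxobj{n}\ra C}\catfont{S}\BOX\boxobj{n}$, and the natural map $C\ra C^{\sharp}$ is induced by the natural transformation $\BOX\boxobj{n}\ra\catfont{S}\BOX\boxobj{n}$. Each of these components is a classical weak equivalence because the underlying space of $\direalize{\BOX\boxobj{n}}$ is the contractible cube $\I^n$, so the composite $\BOX\boxobj{n}\ra\catfont{S}\BOX\boxobj{n}\ra\sing_{\TOP}\I^n$ and the second factor are classical equivalences by Proposition \ref{prop:quillen.equivalence}. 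A standard Reedy cofibrant replacement argument on the category of elements $\BOX/C$, using that $\BOX$-epis are projections and the skeletal filtration of $C$ by dimension, then upgrades this to a weak equivalence on the colimit. For the maps $\ex_{2^i}\dihomeo^\sharp_{C;2}$: by the definition of $\dihomeo^\sharp_{C;2}$ and naturality, it suffices to treat the representable case, where it is built from the unit $\catfont{S}\eta$ and $\ex_2\sing(\dihomeo_{C;2})$ applied to the stream isomorphism $\dihomeo_{C;2}$; both are weak equivalences, the former by the same contractibility argument and the latter because $\sd_{k+1}\dashv\ex_{k+1}$ is a Quillen equivalence (subdivision preserves weak equivalences between cofibrant objects, which is automatic here).

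The main obstacle I anticipate is the Reedy/homotopy colimit argument for $C\ra C^{\sharp}$: one has to argue that although $C^{\sharp}$ is only an ordinary colimit, its construction sends levelwise weak equivalences of diagrams over $\BOX/C$ to weak equivalences, using that the diagram $\BOX\boxobj{n}\mapsto\catfont{S}\BOX\boxobj{n}$ is Reedy cofibrant on the direct category of non-degenerate cubes of $C$. Once this point-set input is granted, everything else is formal: the composite $\eta_C$ is an acyclic monomorphism, the fibrancy of $C$ produces the retraction, and Proposition \ref{prop:free.cubcat.monad} packages this retraction into the cubcat structure.
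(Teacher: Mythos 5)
Your overall lifting strategy is the right one and matches the paper's structure: exhibit an acyclic cofibration out of $C$, use fibrancy of $C$ to produce a retraction, then invoke Proposition \ref{prop:free.cubcat.monad}. But you are working with the corestriction $\eta_C\colon C\to\cubcatreplacement\,C$, whereas the paper lifts along the unit $\eta_C\colon C\to\catfont{S}C$ itself and only afterwards restricts the resulting retraction $\catfont{S}C\to C$ to $\cubcatreplacement\,C\subset\catfont{S}C$. The reason this matters is that the paper gets ``$\eta_C$ is a classical weak equivalence'' almost for free: by Lemma \ref{lem:homotopy.idempotent.comonad}, $\direalize{\eta_C}$ is a $\direalize{\mathfrak{d}}_*$-equivalence, hence $|\eta_C|$ is a classical homotopy equivalence, so $\eta_C$ is an acyclic mono. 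All the hard work lives inside Lemma \ref{lem:homotopy.idempotent.comonad}, which is proved via the pro-completion and diagram-replacement machinery of the appendices, not via a direct cell-by-cell analysis of $C^\sharp$.

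Your route instead tries to establish directly that $C\to C^\sharp$ and each $\ex_{2^i}\dihomeo^\sharp_{C;2}$ is a classical weak equivalence, and the justification you give for transferring the representable case to general $C$ — ``a standard Reedy cofibrant replacement argument on the category of elements $\BOX/C$'' and ``by naturality it suffices to treat the representable case'' — is exactly where the argument fails to close. Cocontinuity of $(-)^\sharp$ and $\ex$ does not by itself preserve levelwise weak equivalences, and $\BOX/C$ is not a Reedy category in the required sense. Moreover, because $\BOX$ here is the \emph{symmetric} variant, non-degenerate $n$-cubes can have nontrivial stabilizers under the $\Sigma_n$-action, so the naive skeletal attachment $\sk_{n-1}C\leftarrow\coprod\partial\BOX\boxobj{n}\to\coprod\BOX\boxobj{n}$ does not describe the skeleta of an arbitrary cubical set (compare \COREZCellularModel); the pushout-induction you gesture at would need a genuinely different cellular model. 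You flag this obstacle yourself, but you do not resolve it, and as written there is no argument that it can be resolved without essentially reproving something of the strength of Lemma \ref{lem:homotopy.idempotent.comonad}. So the proposal has the correct endgame (lifting and Proposition \ref{prop:free.cubcat.monad}) but a genuine gap in the weak-equivalence step: you should either take $\eta_C\colon C\to\catfont{S}C$ as your acyclic cofibration and cite Lemma \ref{lem:homotopy.idempotent.comonad}, or supply a complete replacement for the homotopy-colimit argument that handles the symmetric skeletal filtration.
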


A proof, which requires cubical approximation, is deferred until the end of \S\ref{sec:approximation}.  

\subsection{Approximation}\label{sec:approximation}
We first compare $\cubcatreplacement$ with $\catfont{S}$.  

\begin{lem}
  \label{lem:cubcats.are.sing.algebras}
  There exists a dotted cubical function in the diagram
  \begin{equation*}
    \begin{tikzcd}
	    \catfont{S}(\sd_9C)\ar[r,dotted] & \cubcatreplacement\,C\\
	    \cubcatreplacement\,\sd_9C\ar[u,hookrightarrow]\ar{ur}[description]{\cubcatreplacement\,\epsilon^2_{C}},
    \end{tikzcd}
  \end{equation*}
  natural in cubical sets $C$, making the entire diagram commute.
\end{lem}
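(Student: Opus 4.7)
The plan is to define the dotted arrow as the corestriction to $\cubcatreplacement C\hookrightarrow\catfont{S}C$ of the cubical function $\catfont{S}(\epsilon^2_C)\colon\catfont{S}(\sd_9C)\to\catfont{S}C$ obtained by postcomposition with $\direalize{\epsilon^2_C}$. Granted that this corestriction exists, both naturality in $C$ and commutativity of the displayed triangle are formal: $\cubcatreplacement\epsilon^2_C$ is by definition the restriction of $\catfont{S}(\epsilon^2_C)$ along the natural subpresheaf-inclusion $\cubcatreplacement\hookrightarrow\catfont{S}$, and naturality in $C$ follows from naturality of $\epsilon$ and of the $\catfont{S}$-functor.

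The real content is the verification that $\catfont{S}(\epsilon^2_C)$ carries $\catfont{S}(\sd_9C)$ into the subpresheaf $\cubcatreplacement C$. I would fix an $n$-cube $\theta\in\catfont{S}(\sd_9C)_n$, regarded as a stream map $\theta\colon\direalize{\BOX\boxobj{n}}\to\direalize{\sd_9C}$, set $f=\direalize{\epsilon^2_C}\circ\theta$, and show that $f$ satisfies the local lifting criterion defining membership in $(\cubcatreplacement C)_n$: namely, for some $k\gg 1$ and every $(\BOX/\sd_2^k\BOX\boxobj{n})$-object $\gamma\colon\BOX\boxobj{m}\to\sd_2^k\BOX\boxobj{n}$, the stream map $f\circ\dihomeo_{\BOX\boxobj{n};2^k}\circ\direalize{\gamma}$ factors through the directed realization of a $\BOX$-object sitting over $C$.

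For this I would run a Lebesgue-number argument. The open stars of the vertices of $\sd_9C$ form an open cover of $\direalize{\sd_9C}$, and pulling back through the stream map $\theta\circ\dihomeo_{\BOX\boxobj{n};2^k}$ gives an open cover of the compact metrisable space $\direalize{\BOX\boxobj{n}}\cong\I^n$. Choosing $k$ large enough that every face of $\sd_2^k\BOX\boxobj{n}$ has topological diameter below the corresponding Lebesgue number, each composite $\theta\circ\dihomeo_{\BOX\boxobj{n};2^k}\circ\direalize{\gamma}$ lands inside $\direalize{\Star_{\sd_9C}(v_\gamma)}$ for some vertex $v_\gamma$. Taking the minimal subpresheaf $S_\gamma\subset\Star_{\sd_9C}(v_\gamma)$ through which this stream map corestricts (which exists because directed realization preserves embeddings, Proposition \ref{prop:realization.preserves.embeddings}) produces a $\STARS_9$-object $(C,S_\gamma)$. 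Lemma \ref{lem:local.lifts} then yields a natural factorization $S_\gamma\to\BOX\boxobj{n_{(C,S_\gamma)}}$ and a $(\BOX/C)$-object $\BOX\boxobj{n_{(C,S_\gamma)}}\to C$ whose composite realises $\epsilon^2_C|_{S_\gamma}$. Postcomposing with $\direalize{\epsilon^2_C}$ gives the required factorization of $f\circ\dihomeo_{\BOX\boxobj{n};2^k}\circ\direalize{\gamma}$ through $\direalize{\BOX\boxobj{n_{(C,S_\gamma)}}}$.

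The main obstacle is the Lebesgue-number step, which mixes a topological compactness argument with the combinatorial lifting supplied by Lemma \ref{lem:local.lifts}; one must be careful that the value of $k$ is allowed to depend on $\theta$, but this is fine because the criterion defining $(\cubcatreplacement C)_n$ only demands the existence of some such $k$. Everything else reduces either to naturality of $\epsilon^2$ and of the constructions in Lemma \ref{lem:local.lifts}, or to the functoriality of $\catfont{S}$ and the definition of $\cubcatreplacement$ as a subfunctor of $\catfont{S}$.
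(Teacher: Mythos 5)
Your proposal is correct and follows essentially the same route as the paper: corestrict $\catfont{S}\epsilon^2_C$ into $\cubcatreplacement C$ by verifying the local-lifting criterion, using compactness / a Lebesgue number to find a $k$ for which $\sd_2^k$-cells land in closed stars of $\sd_9 C$, and then invoking Lemma~\ref{lem:local.lifts}. The only difference is one of detail: you make explicit the corestriction to the minimal subpresheaf $S_\gamma$ giving a $\STARS_9$-object, which the paper leaves implicit when it cites Lemma~\ref{lem:local.lifts}.
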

\begin{proof}
  Let $C$ denote a cubical set.  
  Consider the outer naturality square of
  \begin{equation}
    \label{eqn:cubcats.are.sing.algebras}
    \begin{tikzcd}
	    \catfont{S}(\sd_9C)\ar{rr}[above]{\catfont{S}\epsilon^2}\ar[drr,dotted] & & \catfont{S}C\\
	    \cubcatreplacement\,\sd_9C\ar[u,hookrightarrow]\ar{rr}[below]{\cubcatreplacement\,\epsilon^2_{C}} & & \cubcatreplacement\,C\ar[u,hookrightarrow].
    \end{tikzcd}
  \end{equation}
  
	Consider the data of a cubical function $\BOX\boxobj{n}\ra\catfont{S}(\sd_9C)$, equivalently given by its adjoint stream map $f:\direalize{\BOX\boxobj{n}}\ra\direalize{\sd_9C}$.    
  For $k\gg 0$, $f\dihomeo_{\BOX\boxobj{n};k+1}$ maps each closed cell into the open star of a vertex in $|\sd_9C|$ by $|\BOX\boxobj{n}|$ compact and hence $f$ uniformly continuous with respect to the $\ell_\infty$-metric on $|\BOX\boxobj{n}|=\I^n$.  
  Thus for each cubical function $\theta:\BOX\boxobj{m}\ra\sd^k\BOX\boxobj{n}$, $\direalize{\epsilon^2_C}f\dihomeo_{\BOX\boxobj{n};k+1}\direalize{\theta}$ lifts along a stream map of the form $\direalize{\BOX\boxobj{n(\theta,f)}\ra C}$ for some natural number $n(\theta,f)$ [Lemma \ref{lem:local.lifts}].  
  Thus the adjoint of $f$ defines a cubical function $\BOX\boxobj{n}\ra\cubcatreplacement\,C$ [Proposition \ref{prop:free.cubcat.monad}].  

  It therefore follows that the top horizontal arrow in (\ref{eqn:cubcats.are.sing.algebras}) corestricts to a diagonal dotted cubical function, unique by the right vertical arrow in (\ref{eqn:cubcats.are.sing.algebras}) monic and hence natural in $C$, making all of (\ref{eqn:cubcats.are.sing.algebras}) commute.
\end{proof}

\begin{lem}
  \label{lem:cubcats.are.homotopy.sing.algebras}
	There exist cubical function $\mu_C:\catfont{S}C\ra\cubcatreplacement\,C$ and $\mathfrak{d}_*$-homotopies
	$$(\cubcatreplacement\,C\ira\catfont{S}C)\mu_C\dhomotopic{\mathfrak{d}}\id_{\catfont{S}C},\quad\nu_C(\cubcatreplacement\,C\ira\catfont{S}C)\dhomotopic{\mathfrak{d}}\id_{\cubcatreplacement\,C}$$
  natural in cubical sets $C$.
\end{lem}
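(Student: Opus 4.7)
The plan is to define $\mu_C$ so that its post-composition with $\iota_C:\cubcatreplacement\,C\ira\catfont{S}C$ factors the natural homotopy equivalence $\direalize{\epsilon^2_C}\dihomeo_{C;9}^{-1}\simeq\id_{\direalize{C}}$ through $\cubcatreplacement\,C$. Concretely, take
\begin{equation*}
  \mu_C \;=\; \rho_C\circ\sing\dihomeo_{C;9}^{-1}\;:\;\catfont{S}C\xra{\sing\dihomeo_{C;9}^{-1}}\catfont{S}(\sd_9C)\xra{\rho_C}\cubcatreplacement\,C,
\end{equation*}
where $\rho_C:\catfont{S}(\sd_9C)\ra\cubcatreplacement\,C$ is the natural cubical function supplied by Lemma \ref{lem:cubcats.are.sing.algebras}. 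Both factors are natural in $C$, hence so is $\mu_C$.

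For the first homotopy $\iota_C\mu_C\dhomotopic{\mathfrak{d}}\id_{\catfont{S}C}$, the commutative diagram of Lemma \ref{lem:cubcats.are.sing.algebras} gives $\iota_C\rho_C=\catfont{S}\epsilon^2_C$, so $\iota_C\mu_C=\sing(\direalize{\epsilon^2_C}\dihomeo_{C;9}^{-1})$. Applying Lemma \ref{lem:natural.approximations} to $C$ and to $\sd_3C$ and composing yields a $\direalize{\mathfrak{d}_*}$-homotopy $\direalize{\epsilon^2_C}\dhomotopic{}\dihomeo_{C;9}$ natural in $C$, hence a natural $\direalize{\mathfrak{d}_*}$-homotopy $\direalize{\epsilon^2_C}\dihomeo_{C;9}^{-1}\dhomotopic{}\id_{\direalize{C}}$. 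Transporting this continuous homotopy through the adjunction $\direalize{-}\dashv\sing$ (using $\sing(\direalize{C}\times\direalize{\mathfrak{d}_*[1]})=\catfont{S}C\times\catfont{S}\mathfrak{d}_*[1]$ together with the units $\mathfrak{d}_*[1]\ra\catfont{S}\mathfrak{d}_*[1]$ and $\CUBICALSETS$-inclusion $\otimes\ira\times$) produces a cubical function $H_C:\catfont{S}C\otimes\mathfrak{d}_*[1]\ra\catfont{S}C$ realizing the desired $\mathfrak{d}_*$-homotopy, natural in $C$.

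For the second homotopy $\mu_C\iota_C\dhomotopic{\mathfrak{d}}\id_{\cubcatreplacement\,C}$, the strategy is to show that the natural homotopy $H_C$ above, precomposed with $\iota_C\otimes\id:\cubcatreplacement\,C\otimes\mathfrak{d}_*[1]\ra\catfont{S}C\otimes\mathfrak{d}_*[1]$, already factors through the subpresheaf $\cubcatreplacement\,C\ira\catfont{S}C$. Since $\mu_C\iota_C$ equals the endpoint $H_C\circ(\iota_C\otimes\id)|_{\delta_-}$ of the restricted homotopy (and the identity is the other endpoint), such a factorization automatically produces the required natural homotopy in $\cubcatreplacement\,C$. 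To establish the factorization, note that an $n$-cube of $\cubcatreplacement\,C\otimes\mathfrak{d}_*[1]$ is (by Day-convolution of $\otimes$) determined by a cube $\theta:\direalize{\BOX\boxobj{m}}\ra\direalize{C}$ lying in $\cubcatreplacement\,C$ tensored with a cube of $\mathfrak{d}_*[1]$; its image stream map $\direalize{\BOX\boxobj{n}}\ra\direalize{C}$ factors through the directed realization of the cubes of $\theta$ followed by the explicit continuous homotopy produced from Lemma \ref{lem:natural.approximations}, which is in turn assembled from the cell-wise linear homotopies of Lemma \ref{lem:hypercube.convexity}. After sufficient iterated subdivision of $\BOX\boxobj{n}$, each closed cell of the subdivision maps into the open star of a vertex of $\sd_9^k C$ by uniform continuity, so Lemma \ref{lem:local.lifts} provides the required lift through a representable, witnessing membership in $\cubcatreplacement\,C$.

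The main obstacle will be the last verification: making precise that the restricted homotopy satisfies the local-lifting property defining $\cubcatreplacement\,C$. This reduces, via the explicit construction of $H_C$ from Lemmas \ref{lem:natural.approximations} and \ref{lem:hypercube.convexity}, to a uniform-continuity argument paired with the natural local factorization of $\epsilon^2_C$ through representables (Lemma \ref{lem:local.lifts}), analogous to but one dimension higher than the argument in Lemma \ref{lem:cubcats.are.sing.algebras}; the naturality of all constructions in $C$ (and in the cube $\theta\otimes h$ under consideration) ensures that the resulting homotopy is natural in $C$.
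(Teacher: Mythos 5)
Your argument for $\mu_C$ and the first homotopy matches the paper's: both define $\mu_C$ (the paper's $\nu_C$) as the unique corestriction of $\sing\,e_C$ to the subpresheaf $\cubcatreplacement\,C$ --- you build it as $\rho_C\circ\sing\,\dihomeo_{C;9}^{-1}$ while the paper appeals to monicity of the inclusion, but these are the same map --- and both obtain $\id_{\catfont{S}C}\dhomotopic{\mathfrak{d}}\sing\,e_C$ from Lemma \ref{lem:natural.approximations}.

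For the second homotopy you take a genuinely different route from the paper. The paper never tries to corestrict the homotopy $H_C$; instead it constructs a homotopy directly on $\cubcatreplacement\,C$ by assembling level-wise homotopies $\id_{\ex_2^iC^\sharp}\dhomotopic{\mathfrak{d}}\ex_2^ie^\sharp_C$ along the defining colimit sequence $C\ra C^\sharp\ra\ex_2C^\sharp\ra\cdots$, using two structural facts: $\sing\,e_C$ restricts and corestricts to each filtration piece $\ex_2^iC^\sharp$, and $(-)^\sharp$ is a cocontinuous left Kan extension along $\BOX[-]$ so the homotopy on representables immediately induces one on $C^\sharp$ and (via the right adjoint $\ex_2^i$, which commutes with cotensors) on each $\ex_2^iC^\sharp$. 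Your route instead needs the claim that $H_C\circ(\iota_C\otimes\id)$ corestricts to $\cubcatreplacement\,C$.

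That corestriction claim is in fact true, but the justification you sketch does not establish it. Lemma \ref{lem:local.lifts} factors $\epsilon^2_C:\sd_9C\ra C$ through representables locally; it applies in the proof of Lemma \ref{lem:cubcats.are.sing.algebras} precisely because the cubes there land in $\direalize{\sd_9C}$ before being pushed down along $\direalize{\epsilon^2_C}$. Here, $H_C(\theta\otimes g_0)$ is a stream map whose target is $\direalize{C}$ directly, with no such factorization in sight, so invoking Lemma \ref{lem:local.lifts} together with uniform continuity as ``the same argument one dimension higher'' is not a valid step. The fix is a naturality argument you do not invoke. The underlying continuous homotopy $h_C:\id_{\direalize{C}}\dhomotopic{\direalize{\mathfrak{d}}}e_C$ is natural in $C$ (Lemma \ref{lem:natural.approximations}). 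For a Boolean interval $\gamma_1\otimes\gamma_2$ in $\sd_2^k\boxobj{p+q}$ with $k$ large enough for $\theta$, write $\theta\,\dihomeo_{\BOX\boxobj{p};2^k}\direalize{\gamma_1}=\direalize{\theta(\gamma_1)}\,\alpha$ for some cube $\theta(\gamma_1):\BOX\boxobj{m}\ra C$; naturality of $h$ at $\theta(\gamma_1)$ gives $h_C\circ(\direalize{\theta(\gamma_1)}\times\id)=\direalize{\theta(\gamma_1)}\circ h_{\BOX\boxobj{m}}$, so the restricted subcube of $H_C(\theta\otimes g_0)$ factors through $\direalize{\theta(\gamma_1)}$ followed by $h_{\BOX\boxobj{m}}\circ(\alpha\times\direalize{g_0}\,\dihomeo\,\direalize{\gamma_2})$, which is exactly the lift required by the defining property of $\cubcatreplacement\,C$. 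With that replacement your route works; the paper's route avoids the corestriction check altogether at the cost of carrying along the explicit filtration of $\cubcatreplacement\,C$.
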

\begin{proof}
  Let $C$ denote a cubical set.
  Let 
  $$e_C=\direalize{\epsilon^2_C}\dihomeo^{-1}_{C;9}:\direalize{C}\ra\direalize{C}.$$

  There exists a dotted cubical function $\nu_C$, unique by the diagonal inclusion monic and hence natural in $C$, making the following diagram commute [Lemma \ref{lem:cubcats.are.sing.algebras}]:
  \begin{equation}
    \label{eqn:cubcats.are.homotopy.sing.algebras}
    \begin{tikzcd}
	    \catfont{S}C\ar{rr}[above]{\sing\,e_C}\ar[dotted]{dr}[description]{\nu_C} & & \catfont{S}C \\
	& \cubcatreplacement C\ar[ur,hookrightarrow]
    \end{tikzcd}
  \end{equation}
   
  There exist $\mathfrak{d}_*$-homotopies, natural in $C$, of the following forms [Lemma \ref{lem:natural.approximations}]:
  \begin{equation}
    \label{eqn:homotopies.making.cubcats.homotopy.sing.algebras}
	  \id_{\catfont{S}C}=\catfont{S}(\id_{C})\dhomotopic{\mathfrak{d}}\sing\,e_C.
  \end{equation}
  
  Let $e^\sharp_C$ denote the cubical function $C^\sharp\ra C^\sharp$ induced by $\sing\,e_{\BOX\boxobj{n}}$.  
  The restriction of $\sing\,e_C$ to $\ex_2^iC^\sharp$ corestricts to a cubical function 
  $$\ex_2^ie^\sharp_{C}:\ex_2^iC^\sharp\ra\ex_2^iC^\sharp.$$
  The $\mathfrak{d}_*$-homotopies (\ref{eqn:homotopies.making.cubcats.homotopy.sing.algebras}) for $C$ representable induce $\mathfrak{d}_*$-homotopies 
  $$\id_{C^\sharp}\dhomotopic{\mathfrak{d}}e^\sharp_{C}.$$
  
  Application of $\ex^i$ to these $\mathfrak{d}_*$-homotopies defines $\mathfrak{d}_*$-homotopies 
  $$\id_{\ex^iC^{\sharp}}\dhomotopic{\mathfrak{d}}\ex_2^ie^\sharp_{C}$$
  natural in $C$. 
  These $\mathfrak{d}_*$-homotopies in turn induce $\mathfrak{d}_*$-homotopies
  $$\id_{\cubcatreplacement\,C}\dhomotopic{\mathfrak{d}}\nu_C(\cubcatreplacement C\ira\catfont{S}C).$$
  natural in $C$.
\end{proof}

The following lemma asserts that $\catfont{S}$ is homotopy idempotent when restricted to $\REGULARCUBICALSETS$, the full subcategory of $\CUBICALSETS$ consisting of those cubical sets whose atomic subpresheaves are all isomorphic to representables.  

\begin{lem}
  \label{lem:restricted.homotopy.idempotent.comonad}
  There exist $\direalize{\mathfrak{d}}_*$-homotopies
  $$\id_{\direalize{\;\catfont{S}C\;}}\dhomotopic{\direalize{\;\mathfrak{d}\;}}\direalize{\eta_C}\epsilon_{\direalize{\;C\;}}$$
  natural in $\REGULARCUBICALSETS$-objects $C$, where $\eta,\epsilon$ denote the unit and counit of $\direalize{-}\dashv\sing$.  
\end{lem}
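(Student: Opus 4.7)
Plan. The plan is to build the natural $\direalize{\mathfrak{d}}_*$-homotopy directly by exploiting the topological lattice structure that each closed cell of $\direalize{C}$ inherits in $\POTOP$ when $C \in \REGULARCUBICALSETS$: since every atomic subpresheaf of $C$ is then a representable $\BOX\boxobj{n}$, each closed cell of $\direalize{C}$ is a directed hypercube $\vec{\I}^n$ equipped with jointly continuous meet and join. First I would unpack the composite $e_C := \direalize{\eta_C}\epsilon_{\direalize{C}}$ on representatives $(f, t)$, with $f : \vec{\I}^m \to \direalize{C}$ a stream map and $t \in \I^m$. Regularity of $C$ gives a unique minimal atomic carrier $\psi_x : \BOX\boxobj{n_x} \ira C$ through each $x \in \direalize{C}$ with $x = \direalize{\psi_x}(u_x)$ for $u_x$ in the relative interior of $\vec{\I}^{n_x}$, so $e_C[(f, t)] = [(\direalize{\psi_{f(t)}}, u_{f(t)})]$. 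Both representatives project under $\epsilon_{\direalize{C}}$ to the same point $f(t) \in \direalize{C}$, and $e_C$ is idempotent by the zig-zag identity $\epsilon_{\direalize{C}}\direalize{\eta_C} = \id_{\direalize{C}}$.

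I would next invoke Lemma \ref{lem:hypercube.convexity}: the meet $\wedge : \vec{\I}^n \times \vec{\I}^n \to \vec{\I}^n$ yields a natural $\direalize{\mathfrak{d}_1}$-homotopy between the two projections, naturally in $\boxobj{n} \in \BOX$. Precomposing with a pair of stream maps $g, h : Y \to \vec{\I}^n$ produces a zig-zag $g \dhomotopic{\direalize{\mathfrak{d}_1}} g \wedge h \dhomotopic{\direalize{\mathfrak{d}_1}} h$. When $f$ lands in a single closed cell, say $f = \direalize{\psi_x}\tilde{f}$ for some $\tilde{f} : \vec{\I}^m \to \vec{\I}^{n_x}$, one applies this construction inside the cell to build an $(m+1)$-cube $\direalize{\psi_x} \circ H \in (\catfont{S}C)_{m+1}$ whose directed realization supplies a path $[(f, t)] \dhomotopic{\direalize{\mathfrak{d}_1}} [(\mathrm{const}_{f(t)}, t)]$ in $\direalize{\catfont{S}C}$; a further lattice interpolation inside the minimal carrier cell $\direalize{\psi_{f(t)}}$, furnished by the stream map $(y, s) \mapsto \direalize{\psi_{f(t)}}(s \cdot y)$, connects the 0-cell $[(\mathrm{const}_{f(t)}, t)]$ to $e_C[(f, t)] = [(\direalize{\psi_{f(t)}}, u_{f(t)})]$. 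Naturality of Lemma \ref{lem:hypercube.convexity} in $\BOX\boxobj{n}$ together with the characterization of $\BOX$-morphisms as interval-preserving lattice homomorphisms (Theorem \ref{thm:box.characterization}) ensures that these local homotopies agree on shared boundary faces of adjacent cells and hence assemble naturally in $C$.

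The main obstacle will be handling singular cubes $f$ whose image traverses several closed cells of $\direalize{C}$, since for such $f$ there is no single hypercube in which to perform the lattice interpolation. I would deal with this by subdividing the source via the edgewise subdivision $\sd_{k+1}$ of Proposition \ref{prop:cubical.subdivision}, using Lemma \ref{lem:local.lifts} to arrange that for $k$ large enough each piece of the subdivided cube lands in the closed star of a single vertex, and then pasting the cell-by-cell homotopies above via naturality; Lemma \ref{lem:natural.approximations} controls the discrepancy between $f$ and its subdivided approximation up to a further $\direalize{\mathfrak{d}_1}$-homotopy. Regularity of $C$ is essential throughout: it is precisely for $C \in \REGULARCUBICALSETS$ that the cells incident to each point organize into a distributive $\DISLATS$-object of Boolean intervals, which is the combinatorial input that makes the local meets and joins paste consistently into a global natural zig-zag homotopy on $\direalize{\catfont{S}C}$.
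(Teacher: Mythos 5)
Your plan has the right ingredients — subdivision, the meet-based convexity of Lemma \ref{lem:hypercube.convexity}, and the role of $\REGULARCUBICALSETS$ in making closed cells representable — but there are two genuine gaps, one conceptual and one technical, and both land exactly where the paper's proof invests most of its machinery.

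First, the technical gap. Your construction proposes a zig-zag $[(f,t)] \dhomotopic{\direalize{\;\mathfrak{d}\;}} [(\mathrm{const}_{f(t)},t)] \dhomotopic{\direalize{\;\mathfrak{d}\;}} e_C[(f,t)]$, with the second leg furnished by $(y,s)\mapsto\direalize{\psi_{f(t)}}(s\cdot y)$. That scaling contracts to the minimum vertex of the carrier cell, so at $s=0$ the path sits at $\bigl[\mathrm{const}_{\direalize{\;\psi_{f(t)}\;}(0)},\ast\bigr]$, not at $[(\mathrm{const}_{f(t)},t)]$; the two legs do not concatenate. Replacing the linear scaling by scaling toward the interior point $u_{f(t)}$ would fix the endpoint but ruin the stream-map property, since scaling toward a non-extremal point of $\vec{\I}^n$ is neither monotone nor anti-monotone in the homotopy coordinate, so it is not a $\direalize{\mathfrak{d}}_*$-homotopy. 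More seriously, your first leg is described as a single $(m+1)$-cube $\direalize{\psi_x}\circ H$ of $\catfont{S}C$ whose restriction at each $t$ ends at $[(\mathrm{const}_{f(t)},t)]$; since $[(\mathrm{const}_{f(t)},t)]$ equals the $0$-cell at $f(t)$, forcing a single cubical face to hit a $t$-dependent $0$-cell at every $t$ would force $f$ itself to be constant. In short, the intermediate target you interpolate toward depends on $(f,t)$ in a way that blocks the construction of a single homotopy.

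Second, and more fundamentally, the sentence ``pasting the cell-by-cell homotopies above via naturality'' is asserting exactly what needs to be proved; this is the point the paper's introduction flags as the central obstruction in directed homotopy theory, and the paper does not attempt to paste. Instead the argument: (i) reduces along Lemma \ref{lem:cubcats.are.homotopy.sing.algebras} to the restriction of the identity and $\direalize{\eta_C}\epsilon_{\direalize{\;C\;}}$ to $\direalize{\cubcatreplacement C}\subset\direalize{\catfont{S}C}$, exploiting that cubes of $\cubcatreplacement C$ carry the subdivision-lift property by construction; (ii) for each such cube $\theta$ and each $k\gg 0$ builds an auxiliary $\REGULARCUBICALSETS$-object $C^*_{\theta;k}=\colim_I(C_{\theta;I}\otimes\BOX[I])$ from supports over Boolean intervals of $\sd_2^k\boxobj{n_\theta}$, together with a lift $\theta_k$ into $\cubcatreplacement C^*_{\theta;k}$; (iii) constructs a single retraction $s_{\theta;k}:\direalize{C^*_{\theta;k}}\ra\direalize{C^*_{\theta;k}}$ so that $s_{\theta;k}(x)$ and $x$ always lie in a common closed cell, which is what makes Lemma \ref{lem:hypercube.convexity} apply \emph{globally} (by naturality in $\BOX$, not by pasting) to produce a $\direalize{\mathfrak{d}_3}$-homotopy between the two projections of $\direalize{C^*_{\theta;k}}^2$; (iv) pushes that homotopy down along $\direalize{\rho_{\theta;k}}$; and (v) proves independence of $k$, which is needed because the admissible $k$ depends on $\theta$. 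Steps (i), (ii), and (v) have no analogue in your proposal, and step (iii) is what actually replaces your pasting. Incidentally, the intuition that regularity gives a ``$\DISLATS$-object of cells incident to a point'' is not how regularity enters; what regularity buys is that each atomic subpresheaf is representable, so $C_{\theta;I}$, $C_{\theta;I}\otimes\BOX[I]$, and the iterated pushout $C^*_{\theta;k}$ all stay in $\REGULARCUBICALSETS$ and Lemma \ref{lem:hypercube.convexity} can be invoked cell-by-cell in a way that is natural in $\BOX$.
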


The idea of the proof is to lift a stream map $\theta^*:\direalize{\BOX\boxobj{n_\theta}}\ra\direalize{C_\theta}$ along a stream map $\direalize{C^*_{\theta;k}\ra C}$ up to natural $\direalize{\mathfrak{d}}_*$-homotopy based on a $2^k$-fold subdivision of $\BOX\boxobj{n_\theta}$ to a stream admitting a $\direalize{\mathfrak{d}}$-convexity structure natural in $\theta$ and, in a suitable sense, $k$.  

\begin{proof}
  Let $\iota$ denote inclusion $\REGULARCUBICALSETS\ira\CUBICALSETS$. 
  Let $\theta$ denote a $(\BOX/(\cubcatreplacement\iota))$-object
  $$\theta:\BOX\boxobj{n_\theta}\ra\cubcatreplacement\,C_\theta,$$
	a choice of $\REGULARCUBICALSETS$-object $C_\theta$ and cube $\BOX\boxobj{n_\theta}\ra\cubcatreplacement\,C_\theta$.
	Let $\theta^*$ denote the adjoint
	$$\theta^*:\direalize{\BOX\boxobj{n_\theta}}\ra\direalize{C_\theta}$$ 
	to $((\cubcatreplacement\,C_\theta)\ira\catfont{S}C_\theta)\theta$. 

  \vspace{.1in}
  \textit{goal}:
  It suffices to construct $\direalize{\mathfrak{d}}_*$-homotopies
  $$(\direalize{\cubcatreplacement\,C\ira\catfont{S}C})\dhomotopic{\direalize{\;\mathfrak{d}\;}}\direalize{\eta_C}\epsilon_{\direalize{\;C\;}}(\direalize{\cubcatreplacement\,C\ira\catfont{S}C})$$
  natural $\REGULARCUBICALSETS$-objects $C$ [Lemma \ref{lem:cubcats.are.homotopy.sing.algebras}].

	  \vspace{.1in}
	  \textit{constructing lift $\theta_k$ of $\theta^*$}:
	  Let $k\gg 0$.  
	  Let $\mathcal{I}_{\theta;k}$ denote the poset, natural in $\theta$, of all Boolean intervals in $\sd_2^k\boxobj{n_\theta}$ ordered under inclusion. 
	  Let $I$ denote a $\mathcal{I}_{\theta;k}$-object.
	  Let $C_{\theta;I}=\support_{|-|}\left(\theta^*|\BOX[I]|,C_\theta\right)$.
	  The stream map $\theta^*\dihomeo_{\BOX\boxobj{n_\theta};2^k}$ restricts and corestricts to a stream map $\theta_I:\direalize{\BOX[I]}\ra\direalize{C_{\theta;I}}$ [Proposition \ref{prop:realization.preserves.embeddings}].  
          Define $C^*_{\theta;k}$ as the $\CUBICALSETS$-colimit
	  $$C^*_{\theta;k}=\colim_{I}\left(C_{\theta;I}\otimes\BOX[I]\right).$$
	  Then $C^*_{\theta;k}$ is an iterated pushout of inclusions of $\REGULARCUBICALSETS$-objects and hence inductively is itself an $\REGULARCUBICALSETS$-object because $C_{\theta;I}\otimes\BOX[I]$'s are $\REGULARCUBICALSETS$-objects and $\mathcal{I}_{\theta;k}$ is a poset of the form $(\bullet\ra\bullet\la\bullet\ra\cdots\bullet)^n$.  
	  
	  Define $\REGULARCUBICALSETS$-morphisms $\rho_{\theta;k}$ and $\nu_{\theta;k}$, natural in $\theta$, by commutative diagrams
	  \begin{equation*}
	    \begin{tikzcd}
		    C_{\theta;I}\otimes\BOX[I]\ar{rr}[above]{C_{\theta;I}\otimes(\BOX[I\ra\star])}\ar[d,hookrightarrow]
		    & & C_{\theta;I}\ar[d,hookrightarrow]
	\\
	      C^*_{\theta;k}\ar{rr}[below]{\rho_{\theta;k}} & & C
	    \end{tikzcd}
	    \quad
	\begin{tikzcd}
	        C_{\theta;I}\otimes\BOX[I]\ar{rr}[above]{(C_{\theta;I}\ra\star)\otimes\BOX[I]}\ar[d,hookrightarrow]
		& & \BOX[I]\ar[d,hookrightarrow]
	\\
		C^*_{\theta;k}\ar{rr}[below]{\nu_{\theta;k}} & & \sd_2^k\boxobj{n_\theta}
	    \end{tikzcd}
	  \end{equation*}
	  The left of the solid commutative diagrams 
	  \begin{equation}
		  \label{eqn:lift}
	     \begin{tikzcd}
		\direalize{\BOX[I]}\ar{r}[above]{\theta_I\times\id_{\direalize{\;\BOX[I]\;}}}\ar{d}[description]{\direalize{\;\BOX[I\ira\sd_2^k\boxobj{n_\theta}]\;}} & \direalize{C_{\theta;I}}\times\direalize{\BOX[I]}\ar[r,hookrightarrow] & \direalize{C^*_{\theta;k}}\ar{d}[description]{\direalize{\;\rho_{\theta;k}\;}}\\
		\direalize{\sd_2^k\BOX\boxobj{n_\theta}}\ar{r}[below]{\dihomeo_{\BOX\boxobj{n_\theta};2^k}}\ar[dotted]{urr}[description]{\theta^*_k} & \direalize{\BOX\boxobj{n_\theta}}\ar{r}[below]{\theta^*} & \direalize{C_\theta},
	    \end{tikzcd}\quad
	    \begin{tikzcd}
		    \cubcatreplacement C^*_{\theta;k}\ar[r,hookrightarrow] & \catfont{S}C^*_{\theta;k}\ar{d}[right]{\cubcatreplacement \rho_{\theta;k}}
		    \\
	    \BOX\boxobj{n}\ar[dotted]{u}[left]{\theta_k}\ar[ur,dotted]\ar{r}[below]{\theta} & \catfont{S}C_\theta
	    \end{tikzcd}
	  \end{equation}
	  natural in $\mathcal{I}_{\theta;k}$-objects $I$, induces a dotted stream map $\theta_k$, natural in $\theta$, making the entire left diagram commute.  
	  The adjoint of $\theta^*_k\dihomeo^{-k}_{\BOX\boxobj{n_\theta};2}$ defines a dotted diagonal arrow making the lower triangle commute in the right diagram above.  
	  There exists a dotted cubical function $\theta_k$, unique by the top horizontal arrow monic, making the entire right diagram commute by an application of Proposition \ref{prop:free.cubcat.monad}.  
	  
	  \vspace{.1in}
	  \textit{convexity structure on $\direalize{C^*_{\theta;k}}$}:
	  Let $\pi_{\theta;k;1}$ and $\pi_{\theta;k;2}$ denote respective projections
	  $$\pi_{\theta;k;1},\pi_{\theta;k;2}:\direalize{C^*_{\theta;k}}^2\ra\direalize{C^*_{\theta;k}}$$
	  onto first and second factors.  
	  Define $s_{\theta;k}$ by the commutative diagram
	  \begin{equation}
	    \label{eqn:convexities}
	    \begin{tikzcd}
		    \direalize{C^*_{\theta;k}}\ar{rr}[above]{s_{\theta;k}}\ar{dr}[description]{\nu_{\theta;k}} & & \direalize{C^*_{\theta;k}}\\
		    & \direalize{\sd_2^k\BOX\boxobj{n_\theta}}\ar{ur}[description]{\theta_k}
	    \end{tikzcd}
	  \end{equation}
	  natural in $\theta$.
	  For each $x\in|C^*_{\theta;k}|$, $s_{\theta;k}(x)$ and $x$ both lie in the same closed cell in $|C^*_{\theta;k}|$, the directed realization of an atomic subpresheaf of $C^*_{\theta;k}$ and hence the directed realization of a representable up to isomorphism by our assumption on $C$.  
	  Thus there exists a $\direalize{\mathfrak{d}_1}$-homotopy $s_{\theta;k}\topologicaldhomotopic{}\id_{\direalize{\;C^*_{\theta;k}\;}}$ natural in $\theta$ [Lemma \ref{lem:hypercube.convexity}].  
	  The stream maps $s_{\theta;k}\pi_{\theta;k;1},s_{\theta;k}\pi_{\theta;k;2}$ both naturally factor through $\direalize{\BOX\boxobj{n_\theta}}$.
	  Thus there exists a $\direalize{\mathfrak{d}_1}$-homotopy $s_{\theta;k}\pi_{\theta;k;1}\topologicaldhomotopic{}s_{\theta;k}\pi_{\theta;k;2}$ natural in $\theta$ [Lemma \ref{lem:hypercube.convexity}].   
	  Concatenating the $\direalize{\mathfrak{d}_1}$-homotopies
	  $$\pi_{\theta;k;1}\topologicaldhomotopic{}s_{\theta;k}\pi_{\theta;k;1}\topologicaldhomotopic{}s_{\theta;k}\pi_{\theta;k;2}\topologicaldhomotopic{}\pi_{\theta;k;2}$$
	  yields a $\mathfrak{d}_3$-homotopy $h^*_{\theta;k}:\pi_{\theta;k;1}\topologicaldhomotopic{}\pi_{\theta;k;2}$ natural in $\theta$. 

          \vspace{.1in}
	  \textit{constructing the requisite directed homotopy}:
  Consider the solid arrows in the diagram
  \begin{equation}
	    \label{eqn:deformation}
    \begin{tikzcd}
	    \direalize{\cubcatreplacement\,{C_{\theta;k}^*}}
	    \ar[drrr] 
	    \ar{rr}[above]{\direalize{\;\cubcatreplacement{C_{\theta;k}^*}\ira\catfont{S}C_{\theta;k}^*\;}}
	    \ar{dd}[description]{\catfont{S}\direalize{\;\rho_{\theta;k}\;}}
	    & & \direalize{\catfont{S}C_{\theta;k}^*}
	    \ar{dr}[description]{\direalize{\;\sing\,h^*_{\theta;k}\;}}
	      \ar{rr}[above]{\id_{\direalize{\;\catfont{S}C^*_{\theta;k}\;}}\times\epsilon_{\catfont{S}C^*_{\theta;k}}\direalize{\;\eta_{C^*_{\theta;k}}\;}}
	    & & \direalize{\catfont{S}\direalize{C^*_{\theta;k}}}^2
	    \ar{dd}[description,crossing over]{\catfont{S}\direalize{\;\rho_{\theta;k}^{\otimes 2}\;}}
	      \\
	     & \direalize{\BOX\boxobj{n_\theta}}
	     \ar{dl}[description]{\direalize{\;\theta\;}}
	     \ar{ul}[description]{\theta_k}
	     \ar[rr,dotted]
	     \ar[dotted]{ddrr}[description]{h_{\theta;k}}
	     & & \direalize{\catfont{S}\direalize{C^*_{\theta;k}}}^{\direalize{\;\mathfrak{d}_3[1]\;}}
	     \ar{dd}[description,near start]{{\catfont{S}\direalize{\;\rho_{\theta;k}\;}^{{\direalize{\;\mathfrak{d}_3[1]\;}}}}}
	     \ar[ur]
	    	     \\
		     \direalize{\cubcatreplacement\,C_\theta}
		     \ar{rr}[description,crossing over]{\direalize{\;\cubcatreplacement{C_{\theta}}\ira\catfont{S}C_{\theta}\;}}
	             \ar[drrr,dotted] 
		     & & 
                     \direalize{\catfont{S}C_{\theta}}
	      \ar{rr}[description,near start]{\id_{\direalize{\;\catfont{S}\direalize{\;C_\theta\;}\;}}\times\epsilon_{\catfont{S}\direalize{\;C\;}}\direalize{\;\eta_{C}\;}}
	        & & \direalize{\catfont{S}\direalize{C_\theta}}^2
	    \\
	      & & & \direalize{\catfont{S}\direalize{C_\theta}}^{\direalize{\;\mathfrak{d}_3[1]\;}}
	     \ar[ur]
    \end{tikzcd}
  \end{equation}
  The top triangle commutes by construction of $h^*_{\theta;k}$ and the left triangle commutes by construction of $\theta_k$.
  Therefore there exist remaining dotted stream maps, such as $h_{\theta;k}$, making the entire diagram commute.

  \vspace{.1in}
  \textit{invariance of $h_{\theta;k}$ in $k$}:
  The top horizontal stream maps in the diagram
  \begin{equation*}
    \begin{tikzcd}
	    \direalize{\BOX[C_{\theta;I}]}\times\direalize{\BOX[I]}\ar{rrrrrrr}[above]{\direalize{\;C_{\theta;I}\ira C_{\theta;I'}\;}\otimes\dihomeo_{\BOX[I'];2}\direalize{\;\BOX[I\ira\sd_2\BOX[I']]\;}}\ar[d]
	    & & & & & & & \direalize{\BOX[C_{\theta;I'}]}\times\direalize{\BOX[I']}\ar[d]
\\
\direalize{C^*_{\theta;i}}\ar[dotted]{rrrrrrr}[below]{e_{\theta;i}} & & & & & & & \direalize{C^*_{\theta;i+1}}
    \end{tikzcd}
  \end{equation*}
  where $I$ denotes a $\mathcal{I}_{\theta;i+1}$-object and $I'=\support_{\sd_2}(I,\sd_2^k\boxobj{n_\theta})$, induce dotted stream maps $e_{\theta;k}$ natural in $\theta$.  
  The stream maps $e_{\theta;i}$, $\dihomeo_{\BOX[I'];2}\direalize{\BOX[I]\ira\BOX[\sd_2I']}$, $\direalize{C_{\theta;I}\ira C_{\theta;I'}}$, and $\dihomeo_{\sd_2^i\BOX\boxobj{n_\theta};2}$ define the $k$-indexed components of natural transformations between diagrams in (\ref{eqn:lift}), (\ref{eqn:convexities}), and (\ref{eqn:deformation}) from the case $k=i+1\gg 0$ to the case $k=i\gg 0$ natural in $f$.  
  In particular, $h_{\theta;k}$ is independent of $k$ and therefore defines the desired $\direalize{\mathfrak{d}}_*$-homotopy.  
\end{proof}

The comonad of the adjunction $\direalize{-}^{\indexcat{1}}\dashv\sing^{\indexcat{1}}$ is directed homotopy idempotent by applying the following lemma for the case $C=\sing\,X$ for a $\indexcat{1}$-stream $X$.  
A proof boostraps the case $C$ a $\REGULARCUBICALSETS$-object [Lemma \ref{lem:restricted.homotopy.idempotent.comonad}] by using the fact that $C$ can be essentially replaced by a colimit of $\REGULARCUBICALSETS$-objects.  
This proof, which requires the language of pro-objects, is relegated to the end of Appendix \S\ref{sec:diagram.replacement}.

\begin{lem}
  \label{lem:homotopy.idempotent.comonad}
  Let $\eta,\epsilon$ denote the unit and counit of $\direalize{-}\dashv\sing$.  
  There exist $\direalize{\mathfrak{d}}_*$-homotopies
  \begin{equation}
    \label{eqn:cubcat.homotopy.idempotency}
	  \id_{\direalize{\;\catfont{S}C\;}}\dhomotopic{\direalize{\;\mathfrak{d}\;}}\direalize{\eta_C}\epsilon_{\direalize{\;C\;}}
  \end{equation}
  natural in cubical sets $C$.
\end{lem}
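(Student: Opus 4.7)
The plan is to bootstrap from the regular case already handled in Lemma \ref{lem:restricted.homotopy.idempotent.comonad} by replacing each singular cube $\theta$ on $\direalize{C}$ by one that factors naturally through the directed realization of a $\REGULARCUBICALSETS$-object. The replacement will be supplied by the pro-completion diagram-replacement machinery developed in Appendix \S\ref{sec:diagram.replacement} (referenced as Lemma \ref{lem:bamfl}), which is precisely what prevents an on-the-nose cell-by-cell construction.

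Concretely, I would proceed as follows. For each cubical set $C$ and each $\theta: \BOX\boxobj{n} \ra \catfont{S}C$, consider its support diagram in $(\BOX/C)$, i.e.\ the poset of $(\BOX/C)$-objects whose directed realizations cover the adjoint stream map $\theta^*:\direalize{\BOX\boxobj{n}} \ra \direalize{C}$. Apply the $\PROOBJECTS{\BOX}$ diagram-replacement result (together with \THMProDiagrams{}) to obtain, naturally in $\theta$ and in $C$, a $\REGULARCUBICALSETS$-object $C_\theta$, a cubical function $\rho_\theta:C_\theta \ra C$, and a lift $\tilde\theta^*:\direalize{\BOX\boxobj{n}} \ra \direalize{C_\theta}$ of $\theta^*$, with $\direalize{\rho_\theta}\,\tilde\theta^* \dhomotopic{\direalize{\mathfrak{d}}} \theta^*$ via a natural $\direalize{\mathfrak{d}}_*$-homotopy. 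Then Lemma \ref{lem:restricted.homotopy.idempotent.comonad} applied to $C_\theta$ gives a natural $\direalize{\mathfrak{d}}_*$-homotopy $\id_{\direalize{\catfont{S}C_\theta}} \dhomotopic{\direalize{\mathfrak{d}}} \direalize{\eta_{C_\theta}}\epsilon_{\direalize{C_\theta}}$; pushing this forward along $\direalize{\rho_\theta}$, precomposing with $\tilde\theta^*$, and concatenating with the factorization homotopy yields the desired $\direalize{\mathfrak{d}}_*$-homotopy cube-by-cube. Naturality of $\eta$ and $\epsilon$ together with naturality of the replacement in $\theta$ ensures that the per-cube data actually specify an element of the cotensor $\direalize{\catfont{S}C}^{\direalize{\mathfrak{d}_k[1]}}$ for some $k$.

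The main obstacle is precisely the coherent naturality: the support diagrams of different cubes $\theta$ must be replaced in a jointly compatible way so that the resulting homotopies agree on faces, degeneracies, and symmetries, and further so that the whole construction is natural in cubical functions $C \ra C'$. This is exactly the role of the pro-completion: in $\BOX$ itself one cannot run a small-object argument, but in $\PROOBJECTS{\BOX}$ all lifts of interest can be encoded as a single natural lift (via Lemma \ref{lem:star.rlp}), and \THMProDiagrams{} then guarantees that the resulting diagram replacements, the lifts $\tilde\theta^*$, and the factorization homotopies assemble functorially. Once that coherence is in hand, the concatenation of the two $\direalize{\mathfrak{d}}_*$-homotopies described above is plainly natural in $C$, completing the argument.
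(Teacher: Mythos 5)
Your proposal takes essentially the same approach as the paper's proof: both bootstrap from the regular case handled in Lemma~\ref{lem:restricted.homotopy.idempotent.comonad} via the pro-completion diagram replacement supplied by Lemma~\ref{lem:bamfl}, then assemble the per-cube homotopies into a natural one. The only reorganizational difference is that the paper first precomposes $\theta$ with the natural subdivision-retraction $r_C = \direalize{\epsilon^3_C}\dihomeo^{-1}_{C;27}$ (itself naturally $\direalize{\mathfrak{d}}_*$-homotopic to the identity by Lemma~\ref{lem:natural.approximations}), so that the factorization through the $\PROOBJECTS{(\REGULARCUBICALSETS/\CUBICALSETS)}$-object $\Lambda^*_\theta:B_\theta\ra C_\theta$ holds exactly rather than only up to $\direalize{\mathfrak{d}}_*$-homotopy as in your sketch; correspondingly note that the replacement lands in the pro-completion of $\REGULARCUBICALSETS/\CUBICALSETS$ rather than in $\REGULARCUBICALSETS$ itself, with the restricted lemma carried across by the functoriality of the cofiltered-limit-preserving extensions.
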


\begin{lem}
  \label{lem:topological.we}
  The following are equivalent for a $\indexcat{1}$-cubical function $\psi:A\ra B$.
  \begin{enumerate}
    \item $\direalize{\psi}$ is a $\direalize{\mathfrak{d}}_*$-equivalence.
    \item $\catfont{S}\psi$ is a $\mathfrak{d}_*$-equivalence
  \end{enumerate}
\end{lem}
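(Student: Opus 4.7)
The plan is to reduce both implications to the homotopy idempotency of the comonad (Lemma \ref{lem:homotopy.idempotent.comonad}), combined with the observation that both $\direalize{-}$ and $\sing$ transport homotopies in the expected directions.

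First, for (1) $\Rightarrow$ (2), I would observe that $\sing$ preserves cotensors by any stream of the form $\direalize{K}$: since $\direalize{-}\dashv\sing$ is an enriched adjunction and $\direalize{-}$ is strong monoidal, one has natural isomorphisms $\sing(Y^{\direalize{K}})\cong(\sing Y)^K$ for cubical sets $K$ and streams $Y$. Taking $K=\mathfrak{d}_n([1])$ shows $\sing$ sends right $\direalize{\mathfrak{d}_n}$-homotopies to right $\mathfrak{d}_n$-homotopies, hence $\direalize{\mathfrak{d}}_*$-equivalences to $\mathfrak{d}_*$-equivalences. Applying this pointwise to the $\indexcat{1}$-indexed setting (the enrichments and interval objects are both defined levelwise) shows $\catfont{S}\psi=\sing\direalize{\psi}$ is a $\mathfrak{d}_*$-equivalence whenever $\direalize{\psi}$ is a $\direalize{\mathfrak{d}}_*$-equivalence.

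For (2) $\Rightarrow$ (1), I would dually note that $\direalize{-}$ is cocontinuous and strong monoidal, so $\direalize{A\otimes\mathfrak{d}_n([1])}\cong\direalize{A}\otimes\direalize{\mathfrak{d}_n([1])}$, and therefore $\direalize{-}$ sends $\mathfrak{d}_*$-equivalences to $\direalize{\mathfrak{d}}_*$-equivalences. Hence $\direalize{\catfont{S}\psi}$ is a $\direalize{\mathfrak{d}}_*$-equivalence. The key step is to show that the counit component $\epsilon_{\direalize{X}}:\direalize{\catfont{S}X}\ra\direalize{X}$ is itself a $\direalize{\mathfrak{d}}_*$-equivalence for every $\indexcat{1}$-cubical set $X$: the triangle identity gives $\epsilon_{\direalize{X}}\circ\direalize{\eta_X}=\id_{\direalize{X}}$, and Lemma \ref{lem:homotopy.idempotent.comonad} supplies a natural $\direalize{\mathfrak{d}}_*$-homotopy $\direalize{\eta_X}\circ\epsilon_{\direalize{X}}\dhomotopic{\direalize{\mathfrak{d}}}\id_{\direalize{\catfont{S}X}}$, so $\direalize{\eta_X}$ is a $\direalize{\mathfrak{d}}_*$-inverse to $\epsilon_{\direalize{X}}$.

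Naturality of $\epsilon$ in the stream map $\direalize{\psi}$ produces the commutative square with top edge $\direalize{\catfont{S}\psi}$, bottom edge $\direalize{\psi}$, and vertical edges $\epsilon_{\direalize{A}}$ and $\epsilon_{\direalize{B}}$. Since $\direalize{\mathfrak{d}}_*$-equivalences are precisely the morphisms becoming isomorphisms in the quotient $\DITOP^{\indexcat{1}}/\dhomotopic{\direalize{\mathfrak{d}}}$, and three of the four edges are such equivalences, $\direalize{\psi}$ must also become an isomorphism in the quotient, hence is a $\direalize{\mathfrak{d}}_*$-equivalence. The main obstacle is really packaged into Lemma \ref{lem:homotopy.idempotent.comonad}, on which (2) $\Rightarrow$ (1) rests; once that lemma is in hand, this proof is essentially formal, amounting to the standard argument that an adjunction whose counit is a weak equivalence at every object induces a homotopy equivalence between (appropriately defined) homotopy categories.
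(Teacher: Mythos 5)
Your proof is correct and takes essentially the same approach as the paper: both arguments rest on the naturality square for the counit $\epsilon$, on Lemma~\ref{lem:homotopy.idempotent.comonad} identifying $\epsilon_{\direalize{\;C\;}}$ as a $\direalize{\mathfrak{d}}_*$-equivalence, and on the (implicit in the paper, spelled out by you) preservation of the relevant homotopies by $\direalize{-}$ and $\sing$. The only difference is that you make explicit the cotensor/tensor compatibility arguments that the paper leaves tacit.
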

\begin{proof}
  Let $\eta$ denote the unit of $\direalize{-}^{\indexcat{1}}\dashv\sing^{\indexcat{1}}$.
  Consider the commutative diagram
  \begin{equation*}
    \begin{tikzcd}
	    \direalize{\catfont{S}A}\ar{rr}[above]{\direalize{\;\catfont{S}\psi\;}}\ar{d}[left]{\epsilon_{\direalize{\;A\;}}} & & \direalize{\catfont{S}B}\ar{d}[right]{\epsilon_{\direalize{\;B\;}}}\\
	    \direalize{A}\ar{rr}[below]{\direalize{\;\psi\;}} & & \direalize{B}
    \end{tikzcd}
  \end{equation*}

  The vertical arrows are $\direalize{\mathfrak{d}}_*$-equivalences [Lemma \ref{lem:homotopy.idempotent.comonad}]. 
  If (1), then $\catfont{S}\psi$ is the image of a $\direalize{\mathfrak{d}}_*$-equivalence under $\sing$ and hence (2).  
  If (2), the top horizontal arrow is the image of a $\mathfrak{d}_*$-equivalence under $\direalize{-}$ and hence (1).
\end{proof}

The monad of the adjunction $\direalize{-}^{\indexcat{1}}\dashv\sing^{\indexcat{1}}$ is directed homotopy idempotent by applying the following lemma for the case $X=\direalize{C}$ for a $\indexcat{1}$-cubical set $C$. 

\begin{lem}
  \label{lem:homotopy.idempotent.monad}
  Let $\eta,\epsilon$ denote the unit, counit of $\direalize{-}^{\indexcat{1}}\dashv\sing^{\indexcat{1}}$.  
  There exist $\mathfrak{d}_*$-homotopies
  \begin{equation}
    \label{eqn:cubcat.homotopy.idempotency}
	  \id_{\catfont{S}(\sing\,X)}\dhomotopic{\mathfrak{d}}(\eta_{\sing\,X})(\sing\,\epsilon_{X})
  \end{equation}
  natural in $\indexcat{1}$-streams $X$.
\end{lem}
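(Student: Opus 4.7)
The idea is to obtain the desired cubical homotopy as the adjoint of the directed homotopy supplied by Lemma \ref{lem:homotopy.idempotent.comonad}, then postcompose with $\catfont{S}\sing\,\epsilon_X$ so that the triangle identity converts one endpoint into the identity. First I would apply Lemma \ref{lem:homotopy.idempotent.comonad} to the $\indexcat{1}$-cubical set $\sing X$ to obtain a $\direalize{\mathfrak{d}}_*$-homotopy
$$h:\id_{\direalize{\catfont{S}\sing X}}\dhomotopic{\direalize{\;\mathfrak{d}\;}}\direalize{\eta_{\sing X}}\epsilon_{\direalize{\sing X}}$$
natural in $\indexcat{1}$-streams $X$; the equivariant version follows from the non-equivariant statement because $\direalize{-}^{\indexcat{1}}\dashv\sing^{\indexcat{1}}$ is the pointwise extension of $\direalize{-}\dashv\sing$.

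Second, I would transpose $h$ across this adjunction. Since $\direalize{-}^{\indexcat{1}}$ is strong monoidal and $\direalize{\mathfrak{d}_n}=\direalize{\mathfrak{d}}_n$ by cocontinuity, the adjoint of a $\direalize{\mathfrak{d}}_n$-homotopy between stream maps $\direalize{A}\ra\direalize{B}$ is precisely a $\mathfrak{d}_n$-homotopy between cubical functions $A\ra\catfont{S}B$. The adjoint of $\id_{\direalize{\catfont{S}\sing X}}$ is $\eta_{\catfont{S}\sing X}$, while the adjoint of $\direalize{\eta_{\sing X}}\epsilon_{\direalize{\sing X}}$ expands as $\sing\,\direalize{\eta_{\sing X}}\circ\sing\,\epsilon_{\direalize{\sing X}}\circ\eta_{\sing\direalize{\sing X}}$ and collapses to $\catfont{S}\eta_{\sing X}$ via the triangle identity $\sing\,\epsilon_{\direalize{\sing X}}\circ\eta_{\sing\direalize{\sing X}}=\id_{\catfont{S}\sing X}$. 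This yields a natural $\mathfrak{d}_*$-homotopy
$$\tilde h:\eta_{\catfont{S}\sing X}\dhomotopic{\mathfrak{d}}\catfont{S}\eta_{\sing X}.$$

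Finally, I would postcompose $\tilde h$ with $\catfont{S}\sing\,\epsilon_X:\catfont{S}\catfont{S}\sing X\ra\catfont{S}\sing X$. Naturality of $\eta$ at $\sing\,\epsilon_X$ identifies the first endpoint as $\catfont{S}\sing\,\epsilon_X\circ\eta_{\catfont{S}\sing X}=\eta_{\sing X}\circ\sing\,\epsilon_X$, while functoriality of $\catfont{S}$ combined with the triangle identity $\sing\,\epsilon_X\circ\eta_{\sing X}=\id_{\sing X}$ identifies the second endpoint as $\catfont{S}\sing\,\epsilon_X\circ\catfont{S}\eta_{\sing X}=\id_{\catfont{S}\sing X}$. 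Symmetry of $\dhomotopic{\mathfrak{d}}$ then yields the required natural $\mathfrak{d}_*$-homotopy $\id_{\catfont{S}\sing X}\dhomotopic{\mathfrak{d}}\eta_{\sing X}\sing\,\epsilon_X$. No single step is genuinely hard; the one point to verify carefully is that adjoint transposition really does send $\direalize{\mathfrak{d}}_*$-homotopies to $\mathfrak{d}_*$-homotopies, which follows from the isomorphism $\direalize{A\otimes\mathfrak{d}_n[1]}\cong\direalize{A}\times\direalize{\mathfrak{d}}_n[1]$ and the pointwise nature of $\direalize{-}^{\indexcat{1}}\dashv\sing^{\indexcat{1}}$.
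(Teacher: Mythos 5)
Your proof is correct and is essentially the argument the paper gives: both deduce the desired homotopy from Lemma \ref{lem:homotopy.idempotent.comonad} via adjoint transposition and the triangle identities. The paper packages the same computation as the observation that the two zig-zag retractions $\catfont{Q}\epsilon_X$ and $\epsilon_{\catfont{Q}X}$ of the comultiplication $\direalize{\eta_{\sing\,X}}$ agree up to $\direalize{\mathfrak{d}}_*$-homotopy and then transposes, whereas you transpose first and then postcompose with $\catfont{S}\sing\,\epsilon_X$ --- the same manipulation carried out in a different order.
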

\begin{proof}
  Let $\catfont{Q}$ be the comonad of $\direalize{-}^{\indexcat{1}}\dashv\sing^{\indexcat{1}}$ with counit $\epsilon$.   
  The comultiplication
  $$\catfont{Q}X\ra\catfont{Q}^2X$$
  admits two retractions $\catfont{Q}\epsilon_X$ and $\epsilon_{\catfont{Q}X}$ by the zig-zag identities and defines a $\direalize{\mathfrak{d}}_*$-equivalence [Lemma \ref{lem:homotopy.idempotent.comonad}].  
  Therefore there exist $\direalize{\mathfrak{d}}_*$-homotopies $\catfont{Q}\epsilon_X\dhomotopic{\direalize{\;\mathfrak{d}\;}}\epsilon_{\catfont{Q}X}:\catfont{Q}^2X\ra\catfont{Q}X$.
  Taking adjoints gives the desired $\mathfrak{d}_*$-homotopies.
\end{proof}

\begin{lem}
  \label{lem:cubical.we}
  The following are equivalent for a $\indexcat{1}$-stream map $f:X\ra Y$.
  \begin{enumerate}
	  \item $\direalize{\sing\,f}$ is a $\direalize{\mathfrak{d}}_*$-equivalence.
	  \item $\sing\,f$ is a $\mathfrak{d}_*$-equivalence
  \end{enumerate}
\end{lem}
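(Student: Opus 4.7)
The plan is to bootstrap this lemma from the already-stated Lemma \ref{lem:topological.we} by applying it to the cubical function $\psi = \sing f$. Applied in this way, Lemma \ref{lem:topological.we} yields that (1) holds if and only if $\catfont{S}(\sing f) = \sing\,\direalize{\sing f}$ is a $\mathfrak{d}_*$-equivalence. Thus the task reduces to showing that $\sing f$ is a $\mathfrak{d}_*$-equivalence if and only if $\sing\,\direalize{\sing f}$ is.

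To compare these two morphisms, I would use naturality of the unit $\eta$ of the adjunction $\direalize{-}^{\indexcat{1}}\dashv\sing^{\indexcat{1}}$ to obtain the commutative square
\begin{equation*}
	\begin{tikzcd}
		\sing X \ar{rr}[above]{\eta_{\sing X}}\ar{d}[left]{\sing f} & & \catfont{S}(\sing X)\ar{d}[right]{\catfont{S}(\sing f)}\\
		\sing Y \ar{rr}[below]{\eta_{\sing Y}} & & \catfont{S}(\sing Y)
	\end{tikzcd}
\end{equation*}
and then argue that the horizontal arrows are $\mathfrak{d}_*$-equivalences. For this, Lemma \ref{lem:homotopy.idempotent.monad} supplies the natural $\mathfrak{d}_*$-homotopy $\id_{\catfont{S}(\sing X)}\dhomotopic{\mathfrak{d}}\eta_{\sing X}\circ\sing\,\epsilon_X$, exhibiting $\sing\,\epsilon_X$ as a one-sided $\mathfrak{d}_*$-homotopy inverse of $\eta_{\sing X}$; the other composite $\sing\,\epsilon_X\circ\eta_{\sing X}=\id_{\sing X}$ holds on the nose by a zig-zag identity. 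Hence $\eta_{\sing X}$ is a $\mathfrak{d}_*$-equivalence, and the same argument applied to $Y$ gives the same conclusion for $\eta_{\sing Y}$.

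Finally, since $\mathfrak{d}_*$-equivalences are the isomorphisms in a localization [Lemma \ref{lem:localization}] and hence satisfy the 2-out-of-3 property, commutativity of the square above forces $\sing f$ to be a $\mathfrak{d}_*$-equivalence precisely when $\catfont{S}(\sing f)$ is. Chaining this with the reduction from Lemma \ref{lem:topological.we} yields the desired equivalence of (1) and (2).

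There is no real obstacle here: once Lemmas \ref{lem:topological.we} and \ref{lem:homotopy.idempotent.monad} are in hand, the argument is purely formal and dual in spirit to the proof of Lemma \ref{lem:topological.we}, with the comonad identity $\epsilon_{\direalize{C}}\direalize{\eta_C}\simeq\id$ traded for the monad identity $\eta_{\sing X}\sing\,\epsilon_X\simeq\id$.
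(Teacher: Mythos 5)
Your proof is correct and uses the same key ingredients as the paper's: the naturality square for the unit of $\direalize{-}^{\indexcat{1}}\dashv\sing^{\indexcat{1}}$ applied to $\sing f$, Lemma~\ref{lem:homotopy.idempotent.monad} (plus a zig-zag identity) to exhibit the unit components as $\mathfrak{d}_*$-equivalences, and 2-out-of-3. The only difference is cosmetic: you route through Lemma~\ref{lem:topological.we} to pass between statement~(1) and ``$\catfont{S}(\sing f)$ is a $\mathfrak{d}_*$-equivalence,'' whereas the paper simply notes directly that $\sing$ sends $\direalize{\mathfrak{d}}_*$-equivalences to $\mathfrak{d}_*$-equivalences for (1)$\Rightarrow$(2), and that $\direalize{-}$ sends $\mathfrak{d}_*$-equivalences to $\direalize{\mathfrak{d}}_*$-equivalences for (2)$\Rightarrow$(1) (the latter not even needing the square); both formulations encode the same preservation facts and yield the same conclusion.
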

\begin{proof}
  Let $\eta$ denote the unit of $\direalize{-}^{\indexcat{1}}\dashv\sing^{\indexcat{1}}$.
  Consider the commutative diagram
  \begin{equation*}
    \begin{tikzcd}
	    \sing\,X\ar{rr}[above]{\sing\,f}\ar{d}[left]{\eta_{\sing\,X}} & & \sing\,Y\ar{d}[right]{\eta_{\sing\,X}}\\
	    \catfont{S}(\sing\,X)\ar{rr}[below]{\catfont{S}(\sing\,f)} & & \catfont{S}(\sing\,Y).
    \end{tikzcd}
  \end{equation*}

  The vertical arrows are $\mathfrak{d}_*$-equivalences [Lemma \ref{lem:homotopy.idempotent.monad}]. 
  If (1), the bottom horizontal arrow is the image of a $\direalize{\mathfrak{d}}_*$-equivalence under $\sing$ and hence (2).   
  If (2), then $\direalize{\sing\,f}$ is the image of a $\mathfrak{d}_*$-equivalence under $\direalize{-}$ and hence (1).
\end{proof}

\begin{lem}
  \label{lem:cubcat.multiplication}
  Let $\eta$ be the unit of $\cubcatreplacement$.  
  Fix a $\indexcat{1}$-cubical set $C$.  
  If the corestriction
  $$\eta_C:C\ra\cubcatreplacement\,C$$
  of the cubical function $C\ra\catfont{S}C$ defined by the unit of $\catfont{S}$ admits a retraction $\mu$, then $\id_{\cubcatreplacement\,C}\dhomotopic{\mathfrak{d}}\eta_C\mu$.
\end{lem}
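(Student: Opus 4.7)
The plan is to exploit the adjunction $\direalize{-}\dashv\sing$ together with the homotopy equivalences from Lemma~\ref{lem:cubcats.are.homotopy.sing.algebras} and Lemma~\ref{lem:homotopy.idempotent.comonad} to show that $\direalize{\eta_C}$ is a $\direalize{\mathfrak{d}}_*$-equivalence, and then to apply the formal fact that any two strict retractions of a homotopy equivalence are mutually homotopic through the homotopy inverse. Write $\iota\colon\cubcatreplacement\,C\ira\catfont{S}C$ for the inclusion, so that $\iota\eta_C=\eta^{\catfont{S}}_C$ is the unit of the monad $\catfont{S}$ at $C$.

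First I would reduce to a statement inside $\catfont{S}C$. Lemma~\ref{lem:cubcats.are.homotopy.sing.algebras} supplies a $\indexcat{1}$-cubical function $\nu_C\colon\catfont{S}C\ra\cubcatreplacement\,C$ with $\nu_C\iota\dhomotopic{\mathfrak{d}}\id_{\cubcatreplacement\,C}$. Thus it suffices to verify $\iota\eta_C\mu\dhomotopic{\mathfrak{d}}\iota$ as $\indexcat{1}$-cubical functions $\cubcatreplacement\,C\ra\catfont{S}C$; precomposing with $\nu_C$ then yields the required $\eta_C\mu\dhomotopic{\mathfrak{d}}\id_{\cubcatreplacement\,C}$. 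Next I would take adjoints under $\direalize{-}^{\indexcat{1}}\dashv\sing^{\indexcat{1}}$. Because $\direalize{-}$ is strong monoidal and preserves the interval object, $\mathfrak{d}_*$-homotopies of $\indexcat{1}$-cubical functions into $\catfont{S}(-)=\sing\direalize{(-)}$ correspond to $\direalize{\mathfrak{d}}_*$-homotopies of their adjoint $\indexcat{1}$-stream maps into $\direalize{(-)}$. The adjoint of $\iota$ is $\epsilon_{\direalize{C}}\direalize{\iota}\colon\direalize{\cubcatreplacement\,C}\ra\direalize{C}$, while the adjoint of $\iota\eta_C\mu=\eta^{\catfont{S}}_C\mu$ simplifies via the zig-zag identity $\epsilon_{\direalize{C}}\direalize{\eta^{\catfont{S}}_C}=\id_{\direalize{C}}$ to $\direalize{\mu}$. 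So it remains to establish $\direalize{\mu}\dhomotopic{\direalize{\mathfrak{d}}}\epsilon_{\direalize{C}}\direalize{\iota}$.

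The key observation is that both $\direalize{\mu}$ and $\epsilon_{\direalize{C}}\direalize{\iota}$ are strict retractions of $\direalize{\eta_C}$: the former because $\mu\eta_C=\id_C$ by hypothesis, the latter because the zig-zag identity gives $\epsilon_{\direalize{C}}\direalize{\iota}\circ\direalize{\eta_C}=\epsilon_{\direalize{C}}\direalize{\eta^{\catfont{S}}_C}=\id_{\direalize{C}}$. I would then verify that $\direalize{\eta_C}$ is itself a $\direalize{\mathfrak{d}}_*$-equivalence, by two-out-of-three applied to the factorization $\direalize{\eta^{\catfont{S}}_C}=\direalize{\iota}\direalize{\eta_C}$: the factor $\direalize{\iota}$ is a $\direalize{\mathfrak{d}}_*$-equivalence because $\iota$ is a $\mathfrak{d}_*$-equivalence by Lemma~\ref{lem:cubcats.are.homotopy.sing.algebras} and $\direalize{-}^{\indexcat{1}}$ carries the interval object to the interval object, while the composite $\direalize{\eta^{\catfont{S}}_C}$ is a $\direalize{\mathfrak{d}}_*$-equivalence by Lemma~\ref{lem:homotopy.idempotent.comonad} combined with the zig-zag identity. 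Two-out-of-three holds automatically because $\direalize{\mathfrak{d}}_*$-equivalences are precisely the morphisms becoming isomorphisms in the localization.

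Finally, letting $s'$ be a $\direalize{\mathfrak{d}}_*$-homotopy inverse to $\direalize{\eta_C}$, each strict retraction $r$ of $\direalize{\eta_C}$ satisfies $r\dhomotopic{\direalize{\mathfrak{d}}}r(\direalize{\eta_C}s')=(r\direalize{\eta_C})s'=s'$, so $\direalize{\mu}$ and $\epsilon_{\direalize{C}}\direalize{\iota}$ are both $\direalize{\mathfrak{d}}_*$-homotopic to $s'$ and hence to each other. The main technical wrinkle I foresee is keeping the adjunction correspondence fully $\indexcat{1}$-equivariant and $\mathfrak{d}_*$-coherent so that the adjoint transposition and the cancellation of $\iota$ via $\nu_C$ take place inside the $\indexcat{1}$-enriched categories; this is forced by $\direalize{-}^{\indexcat{1}}$ being strong monoidal and preserving the interval object, but must be spelled out with care.
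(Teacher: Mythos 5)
Your argument is correct, but it takes a genuinely different route from the paper's. The paper stays entirely inside $\CUBICALSETS^{\indexcat{1}}$ and exploits the naturality square for the (co)restriction $\eta$: from $\eta_C\mu=\cubcatreplacement\mu\circ\eta_{\cubcatreplacement C}$, it suffices to show $\eta_{\cubcatreplacement C}\dhomotopic{\mathfrak{d}}\cubcatreplacement\eta_C$, since then $\eta_C\mu\dhomotopic{\mathfrak{d}}\cubcatreplacement\mu\circ\cubcatreplacement\eta_C=\cubcatreplacement(\mu\eta_C)=\id_{\cubcatreplacement C}$. That comparison $\eta_{\cubcatreplacement C}\dhomotopic{\mathfrak{d}}\cubcatreplacement\eta_C$ is obtained by observing that both arrows agree after postcomposing with the inclusion $\cubcatreplacement C\ira\catfont{S}C$ up to homotopy (via Lemma \ref{lem:cubcats.are.homotopy.sing.algebras} and the triangle identities) and then cancelling that inclusion, which is a $\mathfrak{d}_*$-equivalence. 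Your proof instead transposes the problem to $\DITOP^{\indexcat{1}}$ via the enriched adjunction $\direalize{-}^{\indexcat{1}}\dashv\sing^{\indexcat{1}}$, reduces to comparing the two strict retractions $\direalize{\mu}$ and $\epsilon_{\direalize{C}}\direalize{\iota}$ of $\direalize{\eta_C}$, shows $\direalize{\eta_C}$ is a $\direalize{\mathfrak{d}}_*$-equivalence by two-out-of-three applied to the factorization $\direalize{\eta^{\catfont{S}}_C}=\direalize{\iota}\direalize{\eta_C}$ (using Lemma \ref{lem:cubcats.are.homotopy.sing.algebras} for the first factor and Lemma \ref{lem:homotopy.idempotent.comonad} for the composite), and concludes with the formal fact that any two strict sections/retractions of a homotopy equivalence are homotopic via whiskering by a homotopy inverse. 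Your route is more formal and makes the role of two-out-of-three explicit; the paper's route avoids any transposition to streams, using only the monad's naturality and Lemma \ref{lem:cubcats.are.homotopy.sing.algebras}, and in particular does not invoke the heavier Lemma \ref{lem:homotopy.idempotent.comonad}. One small slip of phrasing: where you say ``precomposing with $\nu_C$'' you mean \emph{post}composing $\iota\eta_C\mu$ and $\iota$ with $\nu_C$ so that the codomain becomes $\cubcatreplacement C$; the operation you actually perform is $\nu_C\circ(-)$, and with $\nu_C\iota\dhomotopic{\mathfrak{d}}\id_{\cubcatreplacement C}$ the cancellation then goes through. The adjoint transposition of $\mathfrak{d}_*$-homotopies across $\direalize{-}^{\indexcat{1}}\dashv\sing^{\indexcat{1}}$, which you flag as the main wrinkle, is indeed correct: a left $\mathfrak{d}_n$-homotopy is a map out of $(-)\otimes\mathfrak{d}_n[1]$, and since $\direalize{-}$ is strong monoidal and cocontinuous there is a natural isomorphism $\direalize{A\otimes\mathfrak{d}_n[1]}\cong\direalize{A}\times\direalize{\mathfrak{d}_n[1]}$, so left $\mathfrak{d}_n$-homotopies into $\catfont{S}C$ biject with left $\direalize{\mathfrak{d}_n}$-homotopies into $\direalize{C}$.
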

\begin{proof}
  Let $\eta^{\catfont{S}}$ be the unit of $\catfont{S}$.  

  Consider a retraction $\mu$ to $\eta_C$.  
  Consider the naturality square
  \begin{equation*}
    \begin{tikzcd}
	    C\ar{r}[above]{\eta_C} & \cubcatreplacement C\\
	    \cubcatreplacement C\ar{u}[left]{\mu}\ar{r}[below]{\eta_{\cubcatreplacement C}} & \cubcatreplacement^2C\ar{u}[right]{\cubcatreplacement\,\mu}
    \end{tikzcd}
  \end{equation*}
  The cubical functions $\eta_{\catfont{S}C}$ and $\sing\eta^{\catfont{S}}_C$ are both sections to a common $\mathfrak{d}_*$-equivalence by an application of Lemma \ref{lem:cubcats.are.homotopy.sing.algebras} and the zig-zag identities and hence are $\mathfrak{d}_*$-homotopic.  
  Therefore the bottom horizontal arrow is $\mathfrak{d}_*$-homotopic to $\cubcatreplacement\,\eta_{C}$ because inclusion $\cubcatreplacement C\ira\catfont{S}C$ is a $\mathfrak{d}_*$-equivalence [Lemma \ref{lem:homotopy.idempotent.monad}].
  Hence the composite diagonal arrow is $\mathfrak{d}_*$-homotopic to the identity $\id_{\cubcatreplacement C}$. 
  Hence the desired $\mathfrak{d}_*$-homotopy follows.
\end{proof}

\begin{thm}
  \label{thm:derived.formula}
  For $\indexcat{1}$-cubcats $C$, natural $\indexcat{1}$-stream maps of the form
  $$\direalize{\CUBICALSETS^{\indexcat{1}}(B,C)}\ra\DITOP^{\indexcat{1}}(\direalize{B},\direalize{C})$$
  represent $d(\DITOP^{\indexcat{1}})$-isomorphisms. 
\end{thm}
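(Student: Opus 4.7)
The plan is to factor the natural map
$$\phi_{B,C}\;:\;\direalize{\CUBICALSETS^{\indexcat{1}}(B,C)}\ra\DITOP^{\indexcat{1}}(\direalize{B},\direalize{C})$$
through the adjunction isomorphism associated to $\direalize{-}\dashv\sing$, then identify both resulting maps as $\direalize{\mathfrak{d}}_*$-equivalences, which is stronger than being a $d(\DITOP^{\indexcat{1}})$-isomorphism thanks to the characterization of the localizing class in Theorem \ref{thm:equivalence} together with Lemma \ref{lem:cubical.we}. First I would promote the adjunction $\direalize{-}\dashv\sing$ to the enrichment of these diagram categories, using the identification $\direalize{B\otimes\BOX\boxobj{n}}\cong\direalize{B}\times\vec{\I}^n$ from \S\ref{sec:space.comparisons}, to obtain a natural cubical isomorphism
$$\CUBICALSETS^{\indexcat{1}}(B,\catfont{S}C)\;\cong\;\sing\,\DITOP^{\indexcat{1}}(\direalize{B},\direalize{C})$$
under which $\phi_{B,C}$ factors as
\begin{equation*}
	\direalize{\CUBICALSETS^{\indexcat{1}}(B,C)}\xra{\direalize{\CUBICALSETS^{\indexcat{1}}(B,\eta_C)}}\direalize{\CUBICALSETS^{\indexcat{1}}(B,\catfont{S}C)}\cong\direalize{\sing\DITOP^{\indexcat{1}}(\direalize{B},\direalize{C})}\xra{\epsilon}\DITOP^{\indexcat{1}}(\direalize{B},\direalize{C}),
\end{equation*}
where $\eta,\epsilon$ denote the unit and counit of $\direalize{-}\dashv\sing$.

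The right-hand counit is a $\direalize{\mathfrak{d}}_*$-equivalence by a direct application of Lemma \ref{lem:homotopy.idempotent.comonad} to the stream $\DITOP^{\indexcat{1}}(\direalize{B},\direalize{C})$. For the left-hand map, the cubcat hypothesis on $C$ gives, via Proposition \ref{prop:free.cubcat.monad}, a retraction $\mu$ to the corestricted unit $\eta_C:C\ra\cubcatreplacement\,C$, and Lemma \ref{lem:cubcat.multiplication} provides a $\mathfrak{d}_*$-homotopy $\eta_C\mu\dhomotopic{\mathfrak{d}}\id_{\cubcatreplacement\,C}$, so $\eta_C:C\ra\cubcatreplacement\,C$ is a $\mathfrak{d}_*$-equivalence in $\CUBICALSETS^{\indexcat{1}}$. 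Composing with the $\mathfrak{d}_*$-equivalence $\cubcatreplacement\,C\ira\catfont{S}C$ supplied by Lemma \ref{lem:cubcats.are.homotopy.sing.algebras} yields that $\eta_C:C\ra\catfont{S}C$ itself is a $\mathfrak{d}_*$-equivalence in $\CUBICALSETS^{\indexcat{1}}$.

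Next I would promote this to a $\mathfrak{d}_*$-equivalence after applying $\CUBICALSETS^{\indexcat{1}}(B,-)$: since $\CUBICALSETS^{\indexcat{1}}$ is closed monoidal with terminal unit and interval object $\mathfrak{d}$, the internal hom functor $\CUBICALSETS^{\indexcat{1}}(B,-)$ sends any $\mathfrak{d}_n$-homotopy to a $\mathfrak{d}_n$-homotopy by functoriality, hence preserves $\mathfrak{d}_*$-equivalences. Applying the (monoidal, hence $\mathfrak{d}\mapsto\direalize{\mathfrak{d}}$-preserving) realization functor $\direalize{-}$ then converts this into a $\direalize{\mathfrak{d}}_*$-equivalence of streams, completing the argument.

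The main obstacle is bookkeeping for the enriched adjunction isomorphism in the equivariant setting: one must verify that the pointwise adjunction $\direalize{-}\dashv\sing$ applied $\indexcat{1}$-object by $\indexcat{1}$-object is compatible with the enrichments over $\DITOP$ and $\CUBICALSETS$ fixed at the ends of \S\ref{sec:streams} and \S\ref{sec:cubical.sets}, so that the natural bijection
$$(\CUBICALSETS^{\indexcat{1}})_0(B\otimes\BOX\boxobj{n},\sing\,X)\;\cong\;(\DITOP^{\indexcat{1}})_0(\direalize{B}\times\vec{\I}^n,X)$$
is natural in $\BOX\boxobj{n}$ and assembles into the claimed isomorphism of cubical sets. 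Once this is in place, preservation of $\mathfrak{d}_*$-homotopies by the internal hom and by $\direalize{-}$ is essentially formal, and the three factors above are glued together by the naturality of $\eta,\epsilon$ and of the adjunction isomorphism.
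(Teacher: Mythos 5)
Your overall strategy matches the paper's: both proofs reduce to showing that the cubical comparison map $\CUBICALSETS^{\indexcat{1}}(B,C)\ra\sing\,\DITOP^{\indexcat{1}}(\direalize{B},\direalize{C})$ is a $\mathfrak{d}_*$-equivalence by chaining the unit $C\ra\cubcatreplacement\,C$ (a $\mathfrak{d}_*$-equivalence for cubcats, via Proposition~\ref{prop:free.cubcat.monad} and Lemma~\ref{lem:cubcat.multiplication}) with the inclusion $\cubcatreplacement\,C\ira\catfont{S}C$ (a $\mathfrak{d}_*$-equivalence via Lemma~\ref{lem:cubcats.are.homotopy.sing.algebras}), applying the internal hom, realizing, and passing through the counit. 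Your reorganization of that chain is cosmetic.

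There is, however, one concrete gap. You claim the counit $\epsilon_X$ at $X=\DITOP^{\indexcat{1}}(\direalize{B},\direalize{C})$ is a $\direalize{\mathfrak{d}}_*$-equivalence by ``a direct application of Lemma~\ref{lem:homotopy.idempotent.comonad}.'' But that lemma is stated only for the counit component $\epsilon_{\direalize{D}}$ at streams that \emph{are} directed realizations of cubical sets $D$; the homotopy $\id_{\direalize{\catfont{S}D}}\dhomotopic{\direalize{\mathfrak{d}}}\direalize{\eta_D}\epsilon_{\direalize{D}}$ is constructed cell-by-cell out of the combinatorics of $D$ and does not by itself say anything about $\epsilon_X$ for an arbitrary stream $X$. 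The hom-stream $\DITOP^{\indexcat{1}}(\direalize{B},\direalize{C})$ is not presented as such a realization, so the lemma does not apply to it. The correct move at that step is to weaken the claim to what the theorem actually needs: appeal to Lemma~\ref{lem:homotopy.idempotent.monad}, which holds for \emph{all} $\indexcat{1}$-streams $X$ and shows $\sing\,\epsilon_X$ is a $\mathfrak{d}_*$-equivalence. Together with the saturated-class description of the localizing class in Theorem~\ref{thm:equivalence} (via Lemma~\ref{lem:cubical.we}), this gives that $\epsilon_X$ represents a $d(\DITOP^{\indexcat{1}})$-isomorphism --- not a $\direalize{\mathfrak{d}}_*$-equivalence --- which is exactly the conclusion the statement asks for. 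With that substitution, the rest of your argument goes through.
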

\begin{proof}
  There exist $\mathfrak{d}_*$-equivalences
  $$\CUBICALSETS^{\indexcat{1}}(B,C)\simeq\CUBICALSETS^{\indexcat{1}}(B,\cubcatreplacement\,C)\simeq\CUBICALSETS^{\indexcat{1}}(B,\catfont{S}C)=\sing\,\DITOP^{\indexcat{1}}(\direalize{B},\direalize{C}),$$
	the first by Lemma \ref{lem:cubcat.multiplication} because there exists a $\mathfrak{d}_*$-equivalence $C\ra\catfont{R}C$ [Lemma \ref{lem:cubcat.multiplication}] and the second by Lemma \ref{lem:cubcats.are.homotopy.sing.algebras}.  
  Taking the adjoint of the composite cubical homotopy equivalence yields the desired $\direalize{\mathfrak{d}}_*$-equivalence.
\end{proof}

We now state and prove a cubical approximation theorem for directed topology. 

\begin{cor}
  \label{cor:formula}
  For each $\indexcat{1}$-cubcat $C$, $\direalize{-}$ induces a natural bijection
  $$\pi_0\CUBICALSETS^{\indexcat{1}}(-,C)\cong[\direalize{-},\direalize{C}]_{\direalize{\;\mathfrak{d}\;}}:\OP{\left(\CUBICALSETS^{\indexcat{1}}\right)}\ra\SETS.$$
\end{cor}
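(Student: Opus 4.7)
The plan is to apply the representable functor $[\direalize{\star},-]_{\direalize{\mathfrak{d}}}\colon d(\DITOP^{\indexcat{1}})\ra\SETS$ to the $d(\DITOP^{\indexcat{1}})$-isomorphism
$$\direalize{\CUBICALSETS^{\indexcat{1}}(B,C)}\ra\DITOP^{\indexcat{1}}(\direalize{B},\direalize{C})$$
supplied by Theorem \ref{thm:derived.formula} and then rewrite each resulting set in the claimed form. Representable functors automatically send isomorphisms to bijections, so the essential work is the identification of each side.

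To identify the left-hand side I would invoke the adjoint equivalence $d(\CUBICALSETS^{\indexcat{1}})\simeq d(\DITOP^{\indexcat{1}})$ of Theorem \ref{thm:equivalence}: for any cubical set $D$ this converts $[\direalize{\star},\direalize{D}]_{\direalize{\mathfrak{d}}}$ into $[\star,D]_{\mathfrak{d}}$, which by the coequalizer definition of $[-,-]_{\mathfrak{d}}$ from \S\ref{sec:abstract.homotopy} coincides with $\pi_0 D$. Applied to $D=\CUBICALSETS^{\indexcat{1}}(B,C)$, this recovers $\pi_0\CUBICALSETS^{\indexcat{1}}(B,C)$ on the left.

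To identify the right-hand side I would unpack the $\DITOP$-tensor--cotensor adjunction on $\DITOP^{\indexcat{1}}$ described at the end of \S\ref{sec:streams}: a stream map $\direalize{\star}\ra\DITOP^{\indexcat{1}}(\direalize{B},\direalize{C})$ is a point of the hom-stream, i.e. a $\indexcat{1}$-stream map $\direalize{B}\ra\direalize{C}$, while a stream map from $\direalize{\mathfrak{d}([1])}$ into the hom-stream corresponds by cotensor adjunction to a $\indexcat{1}$-stream map $\direalize{B}\otimes\direalize{\mathfrak{d}([1])}\ra\direalize{C}$, i.e. a $\direalize{\mathfrak{d}}$-homotopy. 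Iterating this for the $\direalize{\mathfrak{d}}_n$ and passing to the coequalizer formula for $[-,-]_{\direalize{\mathfrak{d}}}$ yields $[\direalize{\star},\DITOP^{\indexcat{1}}(\direalize{B},\direalize{C})]_{\direalize{\mathfrak{d}}}=[\direalize{B},\direalize{C}]_{\direalize{\mathfrak{d}}}$, using also that $\direalize{\star}$ is a monoidal unit.

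The main (mild) obstacle is this last identification: one must check that the tensor-cotensor adjunction on the $\DITOP$-enriched category $\DITOP^{\indexcat{1}}$ commutes with the coequalizer defining $[-,-]_{\direalize{\mathfrak{d}}}$, so that the bijection at the level of representing objects descends to a bijection on equivalence classes. Everything else is formal. Naturality in $B$ of the final bijection is inherited from naturality in $B$ of the stream map furnished by Theorem \ref{thm:derived.formula}.
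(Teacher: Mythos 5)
There is a genuine gap in the framing. You write $[\direalize{\star},-]_{\direalize{\mathfrak{d}}}$ as a functor on $d(\DITOP^{\indexcat{1}})$, but it is not: that functor is by construction the corepresentable $\hat{d}(\DITOP^{\indexcat{1}})(\direalize{\star},-)$ on the \emph{quotient} category $\hat{d}(\DITOP^{\indexcat{1}})=\DITOP^{\indexcat{1}}/\!\dhomotopic{\direalize{\mathfrak{d}}}$, whereas $d(\DITOP^{\indexcat{1}})$ is the subsequent Bousfield-style localization by the saturated class $\mathscr{W}$ of $\indexcat{1}$-stream maps $f$ with $\sing^{\indexcat{1}}f$ a $\mathfrak{d}$-equivalence (see the proof of Theorem \ref{thm:equivalence}). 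Those are different categories, and the statement of Theorem \ref{thm:derived.formula} only tells you the comparison map is a $d(\DITOP^{\indexcat{1}})$-isomorphism, not a $\direalize{\mathfrak{d}}_*$-equivalence; ``representable functors send isomorphisms to bijections'' therefore does not yet apply. (The proof of Theorem \ref{thm:derived.formula} does exhibit the comparison map as the adjoint of a chain of $\mathfrak{d}_*$-equivalences of cubical sets, which \emph{would} license this step, but you would need to reopen that proof rather than cite the statement.) The same slippage infects your left-hand identification: the adjoint equivalence $d(\CUBICALSETS^{\indexcat{1}})\simeq d(\DITOP^{\indexcat{1}})$ from Theorem \ref{thm:equivalence} compares \emph{localized} hom-sets, not $[\direalize{\star},\direalize{D}]_{\direalize{\mathfrak{d}}}$ with $[\star,D]_{\mathfrak{d}}$, which are hom-sets of quotient categories; Theorem \ref{thm:equivalence} does not convert one to the other.

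The fix is to stay at the quotient-category level throughout, which is exactly how the paper argues. One applies $\pi_0\CUBICALSETS^{\indexcat{1}}(B,-)$ to the chain of cubical $\mathfrak{d}_*$-equivalences $C\simeq\cubcatreplacement\,C\simeq\catfont{S}C$ produced by Lemmas \ref{lem:cubcat.multiplication} and \ref{lem:cubcats.are.homotopy.sing.algebras}, getting $\pi_0\CUBICALSETS^{\indexcat{1}}(B,C)\cong\pi_0\CUBICALSETS^{\indexcat{1}}(B,\catfont{S}C)$; then the adjunction $\direalize{-}\dashv\sing$ together with the $\DITOP$-enrichment identifies the last term with $[\direalize{B},\direalize{C}]_{\direalize{\mathfrak{d}}}$ (your unpacking of the tensor--cotensor structure for the right-hand side is correct and is precisely this final step). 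Nothing ever has to pass through the localizations $d(-)$.
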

\begin{proof}
  There exist natural bijections
  $$\pi_0\CUBICALSETS^{\indexcat{1}}(-,C)\cong\pi_0\CUBICALSETS^{\indexcat{1}}(-,\cubcatreplacement\,C)\cong\pi_0\CUBICALSETS^{\indexcat{1}}(-,\catfont{S}C)\cong[\direalize{-},\direalize{C}]_{\direalize{\;\mathfrak{d}\;}}$$
  [Lemmas \ref{lem:cubcat.multiplication} and \ref{lem:cubcats.are.homotopy.sing.algebras}].
\end{proof}

The calculation $\tau_n(\direalize{\nerve\,M},\direalize{\star})=\tau_n(\nerve\,M,\star)$ [Proposition  \ref{prop:homotopy.calculations}] generalizes. 

\begin{cor}
  \label{cor:directed.milnor}
  For each cubcat $C$ and $v\in C_0$, the unit of $\direalize{-}\dashv\sing$ induces a bijection
  $$\tau_n\left(C,v\right)\cong\tau_n\left(\catfont{S}C,v\right)\quad n=0,1,2,\ldots$$
\end{cor}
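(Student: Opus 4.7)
The plan is to reinterpret $\tau_n$ as $\pi_0$ of a $[1]$-equivariant mapping space and then apply Corollary~\ref{cor:formula} with $\indexcat{1}=[1]$ to both $(C,v)$ and its singular replacement. Regarding a based cubical set $(D,d)$ as the $[1]$-cubical set sending $0\to 1$ to $d:\star\to D$, unpacking the pullback~(\ref{eqn:n-fold.loops}) and the enrichment on $\CUBICALSETS^{[1]}$ yields a natural identification
\[
\tau_n(D,d)\;=\;\pi_0\CUBICALSETS^{[1]}\bigl((\mathbb{S}(n),*),(D,d)\bigr),
\]
where $\mathbb{S}(n)=\BOX\boxobj{n}/\partial\BOX\boxobj{n}$; along this identification, the map on $\tau_n$ induced by $\eta_C$ is postcomposition with the $[1]$-cubical function $(C,v)\to(\catfont{S}C,v)$.

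First I would check that $(C,v)$ and $(\catfont{S}C,v)$ are $[1]$-cubcats. Since $\star^\sharp=\star$ by cocontinuity of $(-)^\sharp$, the retraction $\rho$ and composition $\mu$ witnessing that $C$ is a cubcat lift, with a unital choice of $\mu$ at the basepoint, to $[1]$-cubical functions whose $0$-components are the identity on $\star$, while the $[1]$-equivariant case of Proposition~\ref{prop:singular.cubcats} supplies the $[1]$-cubcat structure on $(\catfont{S}C,v)=\sing(\direalize{C},v)$. Applying Corollary~\ref{cor:formula} with $\indexcat{1}=[1]$ to each then yields natural bijections
\[
\tau_n(C,v)\;\cong\;\tau_n(\direalize{C},v), \qquad \tau_n(\catfont{S}C,v)\;\cong\;\tau_n(\direalize{\catfont{S}C},v),
\]
where the right sides are the directed topological homotopy monoids of \S\ref{sec:continuous.directed.homotopy}.

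Next I would invoke Lemma~\ref{lem:homotopy.idempotent.comonad}: together with the zig-zag identity $\epsilon_{\direalize{C}}\direalize{\eta_C}=\id$, it shows the counit $\epsilon_{\direalize{C}}:\direalize{\catfont{S}C}\to\direalize{C}$ is a $\direalize{\mathfrak{d}}_*$-equivalence; since it preserves $v$ on the nose and the homotopy of that lemma is natural in $C$, it is a based, hence $[1]$-equivariant, $\direalize{\mathfrak{d}}_*$-equivalence of $[1]$-streams. It therefore induces a bijection $\tau_n(\direalize{\catfont{S}C},v)\cong\tau_n(\direalize{C},v)$. Splicing the three bijections and chasing the naturality square from Corollary~\ref{cor:formula} applied to $\eta_C$ identifies the composite with the map induced by $\eta_C$, giving the result.

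The hard part will be the $[1]$-equivariant bookkeeping: verifying that $(C,v)$ really admits a $[1]$-cubcat structure compatible along $v$ with the trivial one at $0$, and that the identification of $\tau_n(D,d)$ as a $\pi_0$ of a $[1]$-equivariant mapping space is compatible with the natural bijection delivered by Corollary~\ref{cor:formula}. Beyond these checks, the proof is a double application of cubical approximation, glued by the $\direalize{\mathfrak{d}}_*$-homotopy idempotency of the comonad.
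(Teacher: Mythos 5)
Your proof is correct in its broad strokes, but it takes a longer route than the paper in two places, and one of its details glosses over a point that the paper's chosen formulation handles automatically.

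For showing $(C,v)$ is a $[1]$-cubcat, you work directly from Definition~\ref{defn:cubcats} and invoke ``a unital choice of $\mu$ at the basepoint.'' It is not a priori clear that the composition $\mu:\ex_2C\to C$ supplied by the cubcat structure can be arranged to restrict to the identity at $v$; you would have to argue this from the commutativity of diagram II, or make a special choice. The paper sidesteps this entirely by invoking Proposition~\ref{prop:free.cubcat.monad}: being a $[1]$-cubcat is equivalent to the corestriction $(C,v)\to\cubcatreplacement(C,v)=(\cubcatreplacement C,v)$ admitting a retraction, and any retraction $\cubcatreplacement C\to C$ to the unit automatically sends the basepoint $\eta_C(v)$ back to $v$, so the based structure comes for free.

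For the second half, you apply Corollary~\ref{cor:formula} twice --- once to $(C,v)$ and once to $(\catfont{S}C,v)$ --- and then glue the two topological $\tau_n$'s with Lemma~\ref{lem:homotopy.idempotent.comonad}. That works, but the extra half is dispensable. The paper applies Corollary~\ref{cor:formula} only to $(C,v)$, yielding $\tau_n(C,v)\cong\pi_0\sing\DITOP^{[1]}\bigl(\direalize{(\mathbb{S}(n),*)},\direalize{(C,v)}\bigr)$; the right-hand side is already $\pi_0\CUBICALSETS^{[1]}\bigl((\mathbb{S}(n),*),(\catfont{S}C,v)\bigr)=\tau_n(\catfont{S}C,v)$ purely by the enriched adjunction $\direalize{-}\dashv\sing$ and the monoidality of $\direalize{-}$, with no need for the homotopy idempotency of the comonad or for verifying that $(\catfont{S}C,v)$ is a $[1]$-cubcat. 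What your detour buys you is making the homotopy-theoretic content of the comparison visible; what it costs you is extra dependence (Proposition~\ref{prop:singular.cubcats} in its $[1]$-equivariant form, plus Lemma~\ref{lem:homotopy.idempotent.comonad}, plus the basedness of the natural homotopy in that lemma --- which does follow from naturality against $\star\to C$, but is a point you assert rather than check).
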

\begin{proof}
  Let $\eta$ denote the unit to $\cubcatreplacement$.  
  The based cubical set $(C,v)$, regarded as a $[1]$-cubical set $\star\ra C$ whose image has vertex $v$, is a $[1]$-cubcat because $C$ is a cubcat and $\cubcatreplacement(C,v)=(\cubcatreplacement C,v)$.    
  The identification then follows [Corollary \ref{cor:formula} in the case $\indexcat{1}=[1]$].
\end{proof}

\begin{proof}[proof of Proposition \ref{prop:fibrant.cubcats}]
  Let $\eta$ denote the unit of $\direalize{-}\dashv\csing$.
  Then $\direalize{\eta_C}$ is a $\direalize{\mathfrak{d}}_*$-equivalence [Lemma \ref{lem:homotopy.idempotent.comonad}], hence $|\eta_C|$ is a $|\mathfrak{d}|$-equivalence, hence $\eta_C$ is an acyclic cofibration from a fibrant object in a model structure on $\CUBICALSETS$ [Proposition \ref{prop:quillen.equivalence}], and hence $\eta_C$ admits a retraction. 
  The result follows [Proposition \ref{prop:free.cubcat.monad}].
\end{proof}

\begin{proof}[proof of Theorem \ref{thm:equivalence}]
  Let $\hat{d}(\hat\BOX^{\indexcat{1}})$ and $\hat{d}(\DITOP^{\indexcat{1}})$ denote the quotient categories
  $$\hat{d}(\hat\BOX^{\indexcat{1}})=\quotient{\hat\BOX^{\indexcat{1}}}{\dhomotopic{\mathfrak{d}}}\quad\hat{d}(\DITOP^{\indexcat{1}})=\quotient{\DITOP^{\indexcat{1}}}{\topologicaldhomotopic{}}$$

  Let $\unit,\epsilon$ denote the unit, counit of the adjunction $\direalize{-}^{\indexcat{1}}\dashv\sing^{\indexcat{1}}$.  

  Let $\catfont{Q}$ denote the comonad of $\direalize{-}^{\indexcat{1}}\dashv\sing^{\indexcat{1}}$.
  Then $\catfont{Q}\epsilon_X,\epsilon_{\catfont{Q}X}$, both retractions to $\direalize{\eta_{\sing\,X}}$, represent the same $\hat{d}(\DITOP^{\indexcat{1}})$-isomorphism because $\direalize{\eta_{\sing\,X}}$ represents a $\hat{d}(\DITOP^{\indexcat{1}})$-isomorphism [Lemma \ref{lem:homotopy.idempotent.comonad}].  
  Thus $\catfont{Q}$ induces an idempotent comonad on $\hat{d}(\DITOP^{\indexcat{1}})$.  

  Thus we can define $d(\DITOP^{\indexcat{1}})$ to be the localization $\cat{1}[\mathscr{W}^{-1}]$ of $\cat{1}$ by a saturated class $\mathscr{W}$ where: $\cat{1}=\DITOP^{\indexcat{1}}$ and $\mathscr{W}$ is the class of all $\indexcat{1}$-stream maps $f$ for which $\catfont{Q}f$ represents a $\hat{d}(\DITOP^{\indexcat{1}})$-isomorphism; or equivalently $\cat{1}=\DITOP^{\indexcat{1}}$ and  $\mathscr{W}$ is the class of all $\indexcat{1}$-stream maps $f$ for which $\sing\,f$ are $\mathfrak{d}_*$-equivalences [Lemma \ref{lem:topological.we}].
  
  Let $\catfont{S}$ denote the monad of $\direalize{-}^{\indexcat{1}}\dashv\sing^{\indexcat{1}}$.
  Then $\catfont{S}\eta_C,\eta_{\catfont{S}C}$, both sections to $\sing\,\epsilon_{\direalize{\;C\;}}$, represent the same $\hat{d}(\CUBICALSETS^{\indexcat{1}})$-isomorphism because $\sing\,\epsilon_{\direalize{\;C\;}}$ represents a $\hat{d}(\DITOP^{\indexcat{1}})$-isomorphism [Lemma \ref{lem:homotopy.idempotent.monad}]. 
  Thus $\catfont{S}$ induces an idempotent monad on $\hat{d}(\CUBICALSETS^{\indexcat{1}})$.  

  Thus we can take $d(\CUBICALSETS^{\indexcat{1}})=\cat{1}[\mathscr{W}^{-1}]$ where $\cat{1}=\CUBICALSETS^{\indexcat{1}}$ and $\mathscr{W}$ consists of the $\indexcat{1}$-cubical functions $\psi$ for which $\catfont{S}\psi$ represents a $\hat{d}(\CUBICALSETS^{\indexcat{1}})$-isomorphism, or equivalently $\cat{1}=\CUBICALSETS^{\indexcat{1}}$ and $\mathscr{W}$ consists of the $\indexcat{1}$-cubical functions $\psi$ for which $\direalize{\psi}$ are $\direalize{\mathfrak{d}}_*$-equivalences [Lemma \ref{lem:cubical.we}]. 
  
  The adjunction $d(\CUBICALSETS^{\indexcat{1}})\lras d(\DITOP^{\indexcat{1}})$ induced by $\direalize{-}^{\indexcat{1}}\dashv\sing^{\indexcat{1}}$ is therefore a categorical equivalence because the unit and counit are natural isomorphisms.
\end{proof}

\begin{proof}[proof of Theorem \ref{thm:1-ditypes}]
  Let $d(\CATS^{\indexcat{1}})$ denote the quotient category
  $$d\left(\CATS^{\indexcat{1}}\right)=\quotient{\CATS}{\dhomotopic{\Tau_1\mathfrak{d}}}.$$

  The $\BOX$-morphisms $\Tau_1\mathfrak{d}(\delta_{\mins})=\delta_{\mins}$ and $\Tau_1\mathfrak{d}(\delta_{\pls})=\delta_{\pls}:[0]\ra[1]$ are adjunctions in $\CATS$ and therefore $\Tau_1\mathfrak{d}_1$-equivalences.  
  Therefore the quotient functor $\CATS^{\indexcat{1}}\ra d(\CATS^{\indexcat{1}})$ is localization by the $\Tau_1\mathfrak{d}_*$-equivalences [Lemma \ref{lem:localization}].  
  Additionally, the fully faithful embedding $\nerve^{\indexcat{1}}:\CATS^{\indexcat{1}}\ra\CUBICALSETS^{\indexcat{1}}$ passes to a fully faithful embedding $d(\CATS^{\indexcat{1}})\ra d(\CUBICALSETS^{\indexcat{1}})$ because $\indexcat{1}$-cubical sets of the form $\nerve\,\smallcat{1}$ for $\indexcat{1}$-categories $\smallcat{1}$ are $\indexcat{1}$-cubcats [Proposition \ref{prop:nerves}] and consequently $d\CATS^{\indexcat{1}}(\smallcat{1},\smallcat{2})=d(\CUBICALSETS^{\indexcat{1}})(\nerve\,\smallcat{1},\nerve\,\smallcat{1})$ [Corollary \ref{cor:formula}].
\end{proof}

\section{Conclusion}\label{sec:conclusion}


Recent years have seen computations modelled abstractly by homotopy types.
A (dependently) typed higher order programming language for reversible computations has been shown to admit semantics in $\infty$-groupoids (fibered over other $\infty$-groupoids) \cite{awodey2009homotopy}.
Objects represent states, $1$-morphisms represent reversible executions, and higher order morphisms represent reversible transformations of those executions, or equivalently, concurrent executions of sequential computations. 
Since $\infty$-equivalences between $\infty$-groupoids ignore differences like subdivisions, state space reduction is built into the very syntax of the language.  
This language, higher order, can thus be used to reason efficiently about computations expressed in the same language.  
The recent literature has seen extensions of (dependent) type theory to synthetic theories of (fibered) higher categories \cite{licata2011directed,riehl2017type}.
These more expressive languages model irreversible computations \cite{licata2011directed} because the morphisms in higher categories need not be invertible.
Ideally, (dependent) type theory can be alternatively extended so that edges in (fibered) cubcats represent computations, higher cubes in (fibered) cubcats represent higher order transformations, and directed homotopy invariance is built into the syntax of the language (c.f. \cite{north2018type}).  
Such a language ought to share both some of the efficiency of automated reasoning within dependent type theory as well as some of the expressiveness of synthetic higher category theory.  

\section{Acknowledgements}
This work was supported by AFOSR grant FA9550-16-1-0212.
The author is grateful to Robert Ghrist for conceiving of and producing the visualizations of conal manifolds behind Figure \ref{fig:compact.timelike.surfaces}.  
The author would like to thank Emily Rudman for pointing out some simplifications in earlier proofs of Lemmas \ref{lem:box.automorphisms} and \ref{lem:surjective.box.morphisms}.  
The author would like to thank Christian Sattler for pointing out a problem with the test model structure on cubical sets, used in earlier drafts of the paper.    
The author would also like to acknowledge Drs. David Balduzzi and Vikram Duvvuri.  

\appendix
\addcontentsline{toc}{section}{Appendix}
\addtocontents{toc}{\protect\setcounter{tocdepth}{0}}

\section{Modular lattices}\label{sec:modular.lattices}
We recall and extend some observations about \textit{modular} lattices, especially distributive lattices, in this section.
We first establish some notation and terminology.  
For an ordered pair $x\leqslant_Py$ in a poset $P$, write $[x,y]_P$ for the interval in $P$ containing $x$ as its minimum and $y$ as its maximum.  
Call $y$ an \textit{immediate successor to $x$} and $x$ an \textit{immediate predecessor to $y$} in a poset $P$ if $x\leqslant_Py$ and $[x,y]_P\cong[1]$. 
A lattice $L$ is \textit{modular} if for all $x,y,z\in L$,
$$(x\wedge_Ly)\vee_L(x\wedge_Lz)=((x\wedge_Ly)\vee_L z)\wedge_Lx.$$

\begin{eg}
  Distributive lattices are modular.
\end{eg}

The following \textit{Diamond Isomorphism Theorem} characterizes modular lattices. 

\begin{thm:diamond.isomorphism}
  The following are equivalent for a lattice $L$.
  \begin{enumerate}
	  \item $L$ is modular.
	  \item For each $x,y\in L$, the rules $x\vee_L-$ and $y\wedge_L-$ define respective bijections 
		  $$[x\wedge_Ly,y]_L\cong[x,x\vee_Ly]_L,\quad[x,x\vee_Ly]_L\cong[x\wedge_Ly,y]_L.$$
          forming an adjoint equivalence of posets.  
  \end{enumerate}
\end{thm:diamond.isomorphism}

\begin{thm:intervalic.distributivity.characterization}
  The following are equivalent for a finite lattice $L$.
  \begin{enumerate}
    \item $L$ is distributive
    \item For all $x,y,z\in L$ with $y,z$ either both immediate successors to $x$ or both immediate predecessors to $x$ in $L$, $\{y\wedge_Lz,y\vee_Lz,y,z\}$ is a Boolean interval in $L$.
    \item The smallest interval in $L$ containing Boolean intervals $I,J$ in $L$ with $\max\,I=\max\,J$ or $\min\,I=\min\,J$ is also Boolean.
  \end{enumerate}
\end{thm:intervalic.distributivity.characterization}

\begin{lem}
  \label{lem:interval.lattices}
  For Boolean intervals $I,J$ in a finite distributive lattice $L$, the images
  $$I\vee_LJ,I\wedge_LJ$$
  of $I\times J$ under $\vee_L,\wedge_L$ are Boolean intervals in $L$.   
\end{lem}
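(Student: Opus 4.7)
The plan is to prove the statement for $I\vee_L J$ and conclude the case of $I\wedge_L J$ by order-theoretic duality. Write $I=[a_0,a_1]_L$ and $J=[b_0,b_1]_L$, both Boolean. The target is the identification
\begin{equation*}
  I\vee_L J\;=\;[a_0\vee_L b_0,\;a_1\vee_L b_1]_L,
\end{equation*}
together with the fact that this interval is Boolean.

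The first step is the set-theoretic identification. The inclusion $I\vee_L J\subseteq[a_0\vee_L b_0,a_1\vee_L b_1]_L$ is immediate from monotonicity of $\vee_L$. For the reverse inclusion, given $c$ in this interval I would set $x=c\wedge_L a_1$ and $y=c\wedge_L b_1$; the bounds $a_0\leqslant_L x\leqslant_L a_1$ and $b_0\leqslant_L y\leqslant_L b_1$ follow because $c\geqslant_L a_0\vee_L b_0$, and distributivity of $L$ gives
\begin{equation*}
  x\vee_L y\;=\;(c\wedge_L a_1)\vee_L(c\wedge_L b_1)\;=\;c\wedge_L(a_1\vee_L b_1)\;=\;c,
\end{equation*}
so $c\in I\vee_L J$. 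This step is where distributivity (as opposed to mere modularity) is genuinely used; the identification can fail in modular non-distributive lattices, so it would be the main subtlety of the argument.

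The second step is to show $[a_0\vee_L b_0,a_1\vee_L b_1]_L$ is Boolean, and here I would apply the two previous tools in tandem. Applying the \thmref{thm:diamond.isomorphism} in the modular lattice $L$ to the pair $a_0\vee_L b_0,\,a_1$ exhibits an isomorphism
\begin{equation*}
  [a_0\vee_L b_0,\,a_1\vee_L b_0]_L\;\cong\;[(a_0\vee_L b_0)\wedge_L a_1,\,a_1]_L,
\end{equation*}
and the right-hand side is a subinterval of the Boolean interval $I$, hence Boolean (every interval in a Boolean lattice is Boolean). The symmetric argument applied to $a_0\vee_L b_0,\,b_1$ shows $[a_0\vee_L b_0,\,a_0\vee_L b_1]_L$ is Boolean. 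These two Boolean intervals share the minimum $a_0\vee_L b_0$, and the join of their maxima is $(a_1\vee_L b_0)\vee_L(a_0\vee_L b_1)=a_1\vee_L b_1$; thus $[a_0\vee_L b_0,a_1\vee_L b_1]_L$ is the smallest interval in $L$ containing them. The characterization \THMIntervalicDistributivityCharacterization{}, applied in the distributive lattice $L$, then forces $[a_0\vee_L b_0,a_1\vee_L b_1]_L$ itself to be Boolean.

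Finally, the case of $I\wedge_L J$ is order-dual: one shows $I\wedge_L J=[a_0\wedge_L b_0,a_1\wedge_L b_1]_L$ using $x=c\vee_L a_0,\,y=c\vee_L b_0$ and distributivity, and one verifies Booleanness by presenting this interval as the smallest interval containing the Boolean subintervals $[a_1\wedge_L b_0,a_1\wedge_L b_1]_L$ and $[a_0\wedge_L b_1,a_1\wedge_L b_1]_L$, which now share the common maximum $a_1\wedge_L b_1$ and again invoke \THMIntervalicDistributivityCharacterization{}.
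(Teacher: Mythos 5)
Your proof is correct and takes essentially the same route as the paper's: apply the Diamond Isomorphism Theorem to exhibit two Boolean subintervals sharing a common minimum, then invoke the intervalic characterization of distributivity, and dualize for $I\wedge_L J$. You additionally spell out the set-theoretic identification $I\vee_L J=[a_0\vee_L b_0,a_1\vee_L b_1]_L$ (so that $I\vee_L J$ really is the smallest interval to which the characterization applies), a step the paper's terse proof leaves implicit.
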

\begin{proof}
  The intervals $I\vee_L\min\,J,J\vee_L\min\,I$ are Boolean by the Diamond Isomorphism for Modular Lattices and hence $I\vee_LJ$ is Boolean [\THMIntervalicDistributivityCharacterization{}].  
  Thus $I\wedge_LJ$ is also Boolean by duality.
\end{proof}

While every finite poset, including every finite lattice, is a colimit of its ($1$-dimensional) Boolean intervals in the category of posets and monotone functions, not every finite lattice is such a colimit \textit{in the category $\CATS$}. 
\begin{lem}
  \label{lem:distributive.lattices}
  Every finite distributive lattice is a $\CATS$-colimit of its Boolean intervals.
\end{lem}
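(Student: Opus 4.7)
The plan is to show that the canonical comparison functor
\[F\colon\colim_{I}\,I\longrightarrow L,\]
taken in $\CATS$ over the diagram of Boolean intervals of $L$ with inclusions as morphisms, is an isomorphism of categories. Bijectivity on objects is immediate: the singletons $\{x\}$ for $x\in L$ are $0$-dimensional Boolean intervals and the evident inclusions identify them in the colimit with the elements of $L$. Fullness follows because any relation $x\leqslant_L y$ refines to a maximal chain $x=z_0\lessdot z_1\lessdot\cdots\lessdot z_n=y$, each $\{z_i,z_{i+1}\}$ being a $1$-dimensional Boolean interval, so $x\to y$ is represented in the colimit.

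The content lies in faithfulness. First observe that every morphism in the colimit from $x$ to $y$ can be represented by a \emph{basic chain} $x=z_0\lessdot z_1\lessdot\cdots\lessdot z_n=y$ of immediate-successor steps, each living in the $1$-dimensional Boolean interval $\{z_i,z_{i+1}\}$: any morphism inside a higher-dimensional Boolean interval $I$ decomposes inside $I$ as such a chain, and this decomposition is itself a colimit relation. The congruence defining equality in the colimit is then generated by replacing, inside any common Boolean interval of $L$, one composite sub-path by another with the same source and target.

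I will prove by strong induction on $|[x,y]_L|$ that any two basic chains $c,c'$ from $x$ to $y$ are congruent. By the Jordan--H\"older property for the modular lattice $[x,y]_L$ they have the same length. If $c_1=c'_1$, apply the inductive hypothesis to the strictly smaller interval $[c_1,y]_L$, which is itself distributive and whose Boolean sub-intervals are Boolean intervals of $L$. Otherwise $c_1\neq c'_1$ are distinct immediate successors of $x$, and by the intervalic characterization of distributivity the four-element set $\{x,c_1,c'_1,d\}$ with $d=c_1\vee_L c'_1$ is a $2\times 2$ Boolean interval of $L$. Apply the inductive hypothesis to $[c_1,y]_L$ and $[c'_1,y]_L$: the tails of $c$ and $c'$ past $c_1$ and $c'_1$ are congruent to basic chains from $c_1$ to $y$ and from $c'_1$ to $y$ respectively that pass through $d$ and share a chosen common tail from $d$ to $y$. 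After these replacements, $c$ and $c'$ become $x\to c_1\to d\to\cdots\to y$ and $x\to c'_1\to d\to\cdots\to y$; their initial three-step segments both lie inside $\{x,c_1,c'_1,d\}$ and compose there to the unique morphism $x\to d$, so they are congruent. Concatenation with the common tail gives $c\equiv c'$.

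The main obstacle is organising this diamond-switching induction coherently: distributivity is used precisely to close every first divergence of two basic chains into a $2\times 2$ Boolean square sitting inside $L$, providing the local relation without which parallel morphisms in the colimit could fail to be identified---exactly the phenomenon that obstructs the analogous statement for general finite lattices such as $M_3$ or $N_5$. Once faithfulness is established, $F$ is an isomorphism of categories, proving the lemma.
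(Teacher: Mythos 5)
Your proof is correct and takes essentially the same approach as the paper's: both reduce to showing parallel morphisms in the colimit coincide by induction on the size of $[x,y]_L$, represent morphisms by chains of immediate-successor steps, and resolve the first divergence of two such chains by closing the pair of distinct immediate successors $a,b$ of $x$ into the $2\times 2$ Boolean interval $\{x,a,b,a\vee_L b\}$ via the intervalic characterization of distributivity. (The appeal to Jordan--H\"older is harmless but unnecessary, as the length-matching is never actually used once the diamond-switching induction is set up.)
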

\begin{proof}
  Consider a finite distributive lattice $L$.  
  Let $\smallcat{1}$ be the $\CATS$-colimit 
  $$\smallcat{1}=\colim_{I\ra L}I$$
  over the Boolean intervals $I$ in $L$.  
  The object sets of $\smallcat{1}$ and $L$ coincide.  
  There exists a relation $x\leqslant_Ly$ if and only if there exists a $\smallcat{1}$-morphism $x\ra y$ because $\smallcat{1}$ admits as generators relations of the form $x\leqslant_Ly$ with $y$ an immediate successor to $x$ in $L$.
  Consider parallel $\smallcat{1}$-morphisms $\alpha,\beta:x\ra y$.
  It thus suffices to show 
  $$\alpha=\beta.$$
  
  We induct on the length $k$ of a maximal chain in $L$ having minimum $x$ and maximum $y$.
  In the base case $k=1$, $\alpha,\beta$ are identities and hence $\alpha=\beta$.  
  Inductively assume that $\alpha=\beta$ when there exists a maximal chain in $L$ having minimum $x$ and maximum $y$ with length less than $k$.  

  Consider the case $k>1$. 
  Then $\alpha,\beta$ factor as respective composites $x\ra a\ra y$ and $x\ra b\ra y$ with $a$ and $b$ both immediate successors to $x$ in $L$.
  Consider the diagram
  \begin{equation*}
    \begin{tikzcd}
		a\ar[rr]\ar[dr] & & a\vee_{L}b\ar[dl,dotted]
		\\
		& y
		\\
		x\ar[ur,dotted]\ar[rr]\ar[uu] & & b\ar[uu]\ar[ul],
    \end{tikzcd}
  \end{equation*}
  in $\smallcat{1}$, where $a\ra y$ and $b\ra y$ are unique $\smallcat{1}$-morphisms with the given domains and codomains by the inductive hypothesis.  
  There exist choices of dotted monotone function making the top and right triangles respectively commute.  
  These choices coincide by the inductive hypothesis.  
  The outer square, whose elements form a Boolean interval in $L$ [\THMIntervalicDistributivityCharacterization], commutes in $\smallcat{1}$.
  There exists a dotted morphism making the top and right triangles commute by the inductive hypothesis. 
  It therefore follows that $\alpha$, the choice of dotted monotone function making the left triangle commute, coincides with $\beta$, the choice of dotted monotone function making the bottom triangle commute.
\end{proof}

We can now give a proof of Lemma \ref{lem:cubical.pasting.schemes}.

\begin{proof}[proof of Lemma \ref{lem:cubical.pasting.schemes}]
  Suppose (1). 
  Let $I$ be a Boolean interval in $L$.  
  The restriction of $\phi$ to $I$ corestricts to a surjection $\phi_{I}:I\ra J_I$ with $J_I$ a Boolean interval in $M$ because $\phi$ preserves Boolean intervals.  
  The function $\phi_I:I\ra J_I$, surjective by construction, is a lattice homomorphism by $I\ira L$ and $J\ira M$ both inclusions of sublattices into lattices.  
  Thus (2).  

  Suppose (2).
  Consider $x,y\in L$.  
  It suffices to show that 
  \begin{equation}
    \label{eqn:cubical.pasting.scheme}
    \phi(x\vee_Ly)=\phi(x)\vee_M\phi(y). 
  \end{equation}
  by double induction on the minimal lengths $m,n$ of maximal chains in $L$ having as their extrema $x\wedge_Ly$ and, respectively, $x$ and $y$.  
  For then $\phi$ preserves binary suprema, hence also binary infima by duality, and hence $\phi$ is a lattice homomorphism, mapping Boolean intervals onto Boolean intervals by (2).

  In the case $m=1$, $x\wedge_Ly=x$, hence $x\vee_Ly=y$, hence $\phi(x)\leqslant_M\phi(x\vee_Ly)=\phi(y)$, and consequently (\ref{eqn:cubical.pasting.scheme}).
  The case $n=1$ follows by symmetry.  

  Consider the case $m=n=2$.  
  Then $x,y,x\wedge_Ly,x\vee_Ly$ form the elements of a Boolean interval $I$ in $L$ [\THMIntervalicDistributivityCharacterization].  
  Then the restriction of $\phi$ to $I$ corestricts to a Boolean interval $J_I$ in $M$.  
  It therefore follows from (2) and the preservation of finite non-empty suprema and infima by $I\ira L$ and $J_I\ira M$ that 
  $$\phi(x\vee_Ly)=\phi(x\vee_Iy)=\phi(x)\vee_{J_I}\phi(y)=\phi(x)\vee_M\phi(y).$$

  Consider the case $m\leqslant 2$.  
  The subcase $n\leqslant 2$ having just been established, take $k>2$, inductively assume the lemma for the subcase $2\leqslant n<k$, and now suppose $n=k$.  
  Then there exists an immediate successor $y'\neq y$ to $x\wedge_Ly$ such that $y'\leqslant_Ly$.
  Then $y\wedge_L(x\vee_Ly')=(x\wedge_Ly)\vee_Ly'=y'$ by $L$ distributive and hence the length of a maximal chain in $L$ having as its extrema $y\wedge_L(x\vee_Ly')$ and $y$ is strictly less than $n$.  
  And $x\wedge_Ly'=x\wedge_Ly$ and hence the length of a maximal chain in $L$ having as its extrema $x\wedge_Ly'$ and $y'$ is $2$.    
  It therefore follows from the inductive hypothesis that 
  \begin{align*}
    \phi(x\vee_Ly)
    &=\phi(x\vee_Ly'\vee_Ly)\\
    &=\phi(x\vee_Ly')\vee_M\phi(y)\\
    &=\phi(x)\vee_M\phi(y')\vee_M\phi(y)\\
    &=\phi(x)\vee_M\phi(y).
  \end{align*}

  The cases $m=1,2$ having been established, take $k>2$, inductively assume the lemma for $1\leqslant m<k$, and now suppose $m=k$. 
  Then there exists an immediate successor $x'\neq x$ to $x\wedge_Ly$ such that $x'\leqslant_Lx$.
  Then $x\wedge_L(x'\vee_Ly)=(x\wedge_Ly)\vee_Lx'=x'$ by $L$ distributive and hence the length of a maximal chain in $L$ having as its extrema $x\wedge_L(x'\vee_Ly)$ and $x$ is strictly less than $m$. 
  And $x'\wedge_Ly=x\wedge_Ly$ and hence the length of a maximal chain from $x'\wedge_Ly$ to $x$ is $2$. 
  Then 
  \begin{align*}
    \phi(x\vee_Ly)
    &=\phi(x\vee_Lx'\vee_Ly)\\
    &=\phi(x)\vee_M\phi(x'\vee_Ly')\\
    &=\phi(x)\vee_M\phi(x')\vee_M\phi(y)\\
    &=\phi(x)\vee_M\phi(y).
  \end{align*}
\end{proof}

The preservation of fully faithful embeddings by $\CATS$-pushouts \cite{trnkova1965sum} is used in the following proof of Proposition  \ref{prop:lattice.subdivision}.

\begin{proof}[proof of Proposition \ref{prop:lattice.subdivision}]
  Uniqueness follows by the right vertical inclusion monic.
  Let $F_k$ denote the left Kan extension of the composite $\BOX\ra\CATS$ of the top horizontal arrows along $\BOX\ira\DISLATS$.  
  To show existence, it suffices to show $F_k$ preserves $\DISLATS$-objects, $\DISLATS$-morphisms, and $\DISLATS$-tensors.    

  \vspace{.1in}
  \textit{$F_k$ preserves $\DISLATS$-objects:}
  Let $I$ denote a Boolean interval in $L$.  
  Inclusions of the forms $(I\ira L)^{[k]}:I^{[k]}\ra L^{[k]}$ are fully faithful embeddings.  
  It follows that the natural functor $F_kL\ra L^{[k]}$, an iterated pushout of inclusions of the form $(I\ra L)^{[k]}$ [Lemma \ref{lem:distributive.lattices}], is a full and faithful embedding and can henceforth be regarded as an inclusion of posets.
  In other words, we can identify $F_{k}L$ with the poset of all monotone functions $[k]\ra L$ which corestrict to Boolean intervals in $L$, with partial order $\leqslant_{F_{k}L}$ defined by $\alpha\leqslant_{F_{k}L}\beta$ if and only if $\alpha(i)\leqslant_L\beta(i)$ for each $0\leqslant i\leqslant k$.  
  
  Consider $\alpha,\beta\in\sd_{k+1}L$.
  The monotone functions $\alpha\vee_L\beta$ and $\alpha\wedge_L\beta$ corestrict to Boolean intervals in $L$ [Lemma \ref{lem:interval.lattices}].
  Thus $F_kL$ is a sublattice of the finite distributive lattice $L^{[k]}$ and hence is both finite and distributive. 

  \vspace{.1in}
  \textit{$F_k$ preserves $\DISLATS$-morphisms:}
  Consider a general $\DISLATS$-morphism $\phi:L\ra M$.  
  To show that $F_k\phi$ is a $\DISLATS$-morphism, it suffices to take the case $\phi$ a $\BOX$-morphism [Lemma \ref{lem:cubical.pasting.schemes}].
  Then $\phi$ is an iterated Cartesian monoidal product in $\CATS$ of $\delta_{\pm},\sigma$.
  Then $\phi^{[k]}=F_k\phi$ is an iterated Cartesian monoidal product in $\CATS$ of $\delta_{\pm}^{[k]}$ and $\sigma^{[k]}$ up to $\BOX$-isomorphisms by $(-)^{[k]}:\CATS\ra\CATS$ a right adjoint and hence product-preserving.  
  The functions $\delta_{\pm}^{[k]}$ and $\sigma^{[k]}$ are monotone functions to or from a terminal object and hence $\DISLATS$-morphisms.
  Hence $\phi$ is a $\DISLATS$-morphism.

  \vspace{.1in}
  \textit{$F_k$ is monoidal:}
  The right adjoint $(-)^{[k]}$ on $\CATS$, a right adjoint, is Cartesian monoidal.  
  Thus $F_k(\BOX\ira\DISLATS)$ sends tensor products to Cartesian monoidal products.
  Every tensor product in $\DISLATS$ is a $\CATS$-colimit of tensor products of finite Boolean lattices because every finite distributive lattice is a $\CATS$-colimit of its Boolean intervals [Lemma \ref{lem:distributive.lattices}] and every $\DISLATS$-tensor product commutes with colimits by $\CATS$ Cartesian closed.
  Therefore $F_k$ sends tensor products to Cartesian monoidal products.  

  \vspace{.1in}
  \textit{last claim:}
  In order to show that the natural transformation $F_m\ra F_n$ component-wise induced from $\phi$ defines the desired natural transformation $\sd_{m+1}\ra\sd_{n+1}$, it suffices to show that $J^\phi$ is a $\DISLATS$-morphism for each $\BOX$-object $J$ [Lemma \ref{lem:cubical.pasting.schemes}]. 
  It therefore suffices to take the case $J=[1]$ because $(-)^{\phi}$ is a Cartesian monoidal natural transformation $F_m\ra F_n$.
  In that case, non-singleton Boolean intervals in $J^{[m]}=[m+1]$ and $J^{[n]}=[n+1]$ are intervals between elements and their immediate successors.  
  Consider a non-singleton Boolean interval $I$ in $J^{[m]}=[1]^{[m]}$. 
  Let $\zeta_-=\min\,I$ and $\zeta_+=\max\,I$.  
  Then there exists $0\leqslant j\leqslant m$ such that $\zeta_-(i)=\zeta_{-}(i+1)=\zeta_+(i)=0$ for all $i<j$, $\zeta_+(i)=1$ for all $i\geqslant j$, and $\zeta_-(i)=1$ for all $i>j$.  
  The preimage of $j$ under $\phi$ is either a singleton or empty by $\phi$ injective.

  In the case that the preimage is empty, then $\zeta_-\phi=\zeta_+\phi$. 
  
  In the case that the preimage contains the unique element $j^*$, then $\phi(i)<j$ for all $i<j^*$, $\phi(i)\geqslant j$ for all $i\geqslant j^*$, and consequently $\zeta_-\phi(i)=\zeta_{-}\phi(i+1)=\zeta_+(i)=0$ for all $i<j^*$, $\zeta_+\phi(i)=1$ for all $i\geqslant j^*$, $\zeta_-\phi(i)=1$ for all $i>j^*$, and consequently $\zeta_+\phi$ is an immediate successor to $\zeta_-\phi$ in $[1]^{[n]}$.  
 
  In either case, $\{\phi\zeta_-,\phi\zeta_+\}$ is a Boolean interval in $[1]^{[n]}$.

  Thus $J^{\phi}$ is a $\DISLATS$-morphism.
\end{proof}

\section{Triangulations}\label{sec:triangulations}

Write $\DEL$ for the simplex category, whose objects are the non-empty finite ordinals 
$$[0],[1],\ldots$$
and whose morphisms are the monotone functions.
Write $\nerve_{\shape{1}}$ for the functor
$$\nerve_{\shape{1}}:\CATS\ra\hat{\shape{1}}$$
naturally defined on small categories $\smallcat{1}$ by the rule
$$\nerve_{\shape{1}}\smallcat{1}=\CATS(-,\smallcat{1})\OP{(\shape{1}\ira\CATS)}:\OP{\shape{1}}\ra\SETS,$$
for each small subcategory $\shape{1}$ of $\CATS$.

\begin{eg}
  The cubical nerve functor $\nerve$ is just $\nerve_{\BOX}:\CATS\ra\hat\BOX$.  
\end{eg}

\begin{eg}
  The usual simplicial nerve functor is just $\nerve_{\DEL}:\CATS\ra\hat\DEL$.  
\end{eg}

\textit{Triangulation} $\tri:\CUBICALSETS\ra\SIMPLICIALSETS$ is the dotted left Kan extension of the composite of the horizontal functors along the vertical Yoneda embedding in
\begin{equation*}
  \begin{tikzcd}
	  \BOX\ar[r,hookrightarrow]\ar{d}[left]{\BOX[-]} & \CATS\ar{r}[above]{\snerve} & \SIMPLICIALSETS
	  \\
	  \CUBICALSETS\ar[dotted]{urr}[description]{\tri}
  \end{tikzcd}
\end{equation*}

Write $\qua$ for the right adjoint to $\tri$.  

\begin{lem}
  \label{lem:qt.cocontinuous}
  The composite $\qua\,\tri:\CUBICALSETS\ra\CUBICALSETS$ is cocontinuous.  
\end{lem}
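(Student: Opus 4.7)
The plan is to prove cocontinuity of $\qua\,\tri$ not by invoking adjointness (since $\qua$ is only a right adjoint and is not in general cocontinuous) but by exhibiting $\qua\,\tri$ as a left Kan extension of its restriction to the dense subcategory $\BOX\ira\CUBICALSETS$. The starting observation is that $\tri$ is cocontinuous because it is itself defined as a left Kan extension; moreover, since $\snerve:\CATS\ra\SIMPLICIALSETS$ preserves products and $\boxobj{n}=[1]^n$ as a category, one has the explicit identification $\tri\BOX\boxobj{n}\cong\snerve\boxobj{n}\cong\DEL[1]^n$. Hence
\begin{equation*}
	(\qua\,\tri\,\BOX\boxobj{m})_n=\SIMPLICIALSETS(\DEL[1]^n,\DEL[1]^m)=\CATS(\boxobj{n},\boxobj{m})=(\cnerve\boxobj{m})_n,
\end{equation*}
so $\qua\,\tri$ sends a cubical representable $\BOX\boxobj{m}$ to the cubical nerve $\cnerve\boxobj{m}$.

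Next, let $F:\CUBICALSETS\ra\CUBICALSETS$ denote the left Kan extension along the Yoneda embedding of the composite $\BOX\ira\CATS\xra{\cnerve}\CUBICALSETS$. By construction $F$ is cocontinuous and agrees with $\qua\,\tri$ on representables, so the universal property of left Kan extensions furnishes a canonical natural transformation $\theta:F\Rightarrow\qua\,\tri$. The goal is to verify that $\theta$ is a natural isomorphism; this will imply $\qua\,\tri\cong F$ is cocontinuous.

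To check that $\theta_C$ is an isomorphism for an arbitrary cubical set $C$, one unwinds both sides degree by degree. An $n$-cube of $F(C)$, via the coend formula, is an equivalence class of pairs $(c,\zeta)$ with $c\in C_m$ a cube of $C$ and $\zeta:\boxobj{n}\ra\boxobj{m}$ a monotone (i.e.\ $\CATS$-)map, subject to the identifications generated by $\BOX$-morphisms $\boxobj{m}\ra\boxobj{m'}$. An $n$-cube of $\qua\,\tri\,C$ is a simplicial map $\DEL[1]^n\ra\tri\,C$. Since $\tri\,C\cong\colim_{\BOX\boxobj{m}\ra C}\DEL[1]^m$ in $\SIMPLICIALSETS$ (by $\tri$ cocontinuous) and since the non-degenerate simplices of $\DEL[1]^n$ are in bijection with the maximal chains in $\boxobj{n}$, the decisive step is to verify that every simplicial map $\DEL[1]^n\ra\tri\,C$ factors (up to the relations above) through some $\tri\BOX\boxobj{m}\ra\tri\,C$ coming from a cube $c:\BOX\boxobj{m}\ra C$.

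The main obstacle is this factorization/lifting step. Because $\DEL[1]^n$ is not projective in $\SIMPLICIALSETS$, $\SIMPLICIALSETS(\DEL[1]^n,-)$ does not preserve general colimits; the argument has to use the particular form of the colimit presenting $\tri\,C$, namely that its indexing diagram is the (filtered-like) category $\BOX/C$ and its vertices are the nerves of distributive lattices. A finite simplicial map from $\DEL[1]^n$ picks out, chain by chain, non-degenerate simplices of $\tri\,C$, and a chain-by-chain compatibility argument — essentially showing that the assignment of a monotone map $\boxobj{n}\ra\boxobj{m}$ to a simplicial map $\DEL[1]^n\ra\DEL[1]^m$ is exhaustive and the identifications correspond exactly to the coend identifications defining $F(C)$ — gives the required factorization and shows $\theta_C$ is bijective on $n$-cubes. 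Naturality in $C$ is automatic. With $\theta$ an isomorphism, $\qua\,\tri\cong F$ is cocontinuous.
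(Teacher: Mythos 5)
Your reduction — present $\qua\,\tri$ as the cocontinuous left Kan extension $F$ of $\cnerve\circ(\BOX\ira\CATS)$ along the Yoneda embedding and reduce the lemma to showing the canonical comparison $\theta:F\Rightarrow\qua\,\tri$ is an isomorphism — is sound, and your computation $(\qua\,\tri\,\BOX\boxobj{m})_n\cong\CATS(\boxobj{n},\boxobj{m})=(\cnerve\boxobj{m})_n$ on representables is correct (it is full faithfulness and product-preservation of $\snerve$). You also correctly locate the content of the lemma: the claim is that $\SIMPLICIALSETS(\tri\,\BOX\boxobj{n},-)$ carries $\tri\,C\cong\colim_{\BOX\boxobj{m}\ra C}\tri\,\BOX\boxobj{m}$ to a colimit of sets, and this does not follow from any finiteness or projectivity of $\tri\,\BOX\boxobj{n}$.

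At exactly that point, however, the argument becomes a promissory note. You announce a ``chain-by-chain compatibility argument'' but never say which cube $c:\BOX\boxobj{m}\ra C$ a given simplicial map $\theta:\tri\,\BOX\boxobj{n}\ra\tri\,C$ should factor through, nor why all maximal chains of $\boxobj{n}$ are forced to land compatibly in a single such cube; the two observations you do offer ($\snerve$ being full, and the coend relations matching) bear on injectivity of $\theta_C$, not on surjectivity, which is where the work is. The missing ingredient is the one the paper names: the support of $\theta$ in $C$ is completely determined by where $\theta$ sends the \emph{diagonal $1$-simplex} of $\tri\,\BOX\boxobj{n}$, the unique $1$-simplex whose vertices are $\min\boxobj{n}$ and $\max\boxobj{n}$. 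Because every $\BOX$-morphism preserves extrema, the image of that edge pins down a unique minimal cube of $C$, and every maximal chain of $\boxobj{n}$ is then forced to map into it. Absent this observation, there is no engine behind ``chain-by-chain compatibility'': a priori distinct maximal chains of $\boxobj{n}$ could be supported by distinct cubes of $C$ sharing only a proper face, with no common lift, and your sketch does not exclude this. You should either supply that argument or cite the analogous result for the minimal cube category, as the paper does.
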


The idea behind the proof is that a simplicial function of the form 
$$\theta:\tri\,\BOX\boxobj{m}\ra\tri\,C,$$
an $m$-cube in $\qua\,\tri\,C$, has support in a cube in $C$ that is completely determined by where $\theta$ sends the diagonal $1$-simplex, the unique $1$-simplex connecting the extrema in $\boxobj{n}$, because $\BOX$-morphisms preserve extrema.  
The formal proof, identical to a proof in the setting where $\BOX$ is replaced with its minimal variant \cite[7.2]{krishnan2015cubical}, is omitted.

\vspace{.1in}
The usual Kan-Quillen model structure on $\SIMPLICIALSETS$ is left induced along topological realization $\SIMPLICIALSETS\ra\TOP$.  
There also exists a model structure on $\CUBICALSETS$ left induced along topological realization $\CUBICALSETS\ra\TOP$, whose proof we furnish at the end of this section. 
We will refer to both model structures as \textit{classical model structures}.  
Triangulation is the left map of a Quillen equivalence between such classical model structures.

\begin{prop}
  \label{prop:tri.equivalence}
  The adjunction $\tri\dashv\qua$ defines a Quillen equivalence
  $$\CUBICALSETS\simeq\SIMPLICIALSETS$$
  between presheaf categories equipped with their classical model structures.
  Moreover, the counit is a component-wise weak equivalence.  
\end{prop}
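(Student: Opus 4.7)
The plan is to prove $\tri\dashv\qua$ is a Quillen equivalence by establishing first that it is a Quillen adjunction in which $\tri$ creates weak equivalences, then verifying the counit is a weak equivalence on fibrant objects.

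For the Quillen adjunction, I would first check $\tri$ preserves cofibrations: both classical model structures have monomorphisms as cofibrations, and triangulation sends the generating monos $\partial\BOX\boxobj{n}\ira\BOX\boxobj{n}$ to inclusions of boundary triangulations of hypercubes into their standard full triangulations, which are monic; since $\tri$ is cocontinuous, it preserves all monos. Next, $\tri$ creates weak equivalences: cubical realization factors as $|-|_{\BOX}=|-|_{\DEL}\circ\tri$, because both sides are the unique cocontinuous extensions along the Yoneda embedding of the common composite $\BOX\ira\CATS\xrightarrow{\snerve}\SIMPLICIALSETS\xrightarrow{|-|_{\DEL}}\TOP$. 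Since classical weak equivalences in $\CUBICALSETS$ and $\SIMPLICIALSETS$ are both detected by their images in $\TOP$, a cubical function $\psi$ is a classical weak equivalence if and only if $\tri\psi$ is a Kan--Quillen weak equivalence.

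For the Quillen equivalence proper: since $\tri$ creates weak equivalences and every object of $\CUBICALSETS$ is cofibrant, the standard criterion reduces to showing the counit $\tri\qua X\to X$ is a simplicial weak equivalence for every Kan complex $X$. Applying simplicial realization, this becomes $|\qua X|_{\BOX}\to|X|_{\DEL}$, which must be a weak equivalence in $\TOP$. Cocontinuity of $\qua\tri$ (Lemma \ref{lem:qt.cocontinuous}) permits a reduction via cellular induction on the simplices of $X$ to the case of representables, where the assertion is a direct geometric comparison of the standard cubical triangulation of a simplex with its own identity triangulation.

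The main obstacle will be the counit step. Justifying that the cubical singular complex $\qua X$ of a Kan complex $X$ models the same homotopy type as $X$ itself is exactly the assertion that $\BOX$ is a test category in the sense of Grothendieck--Cisinski. This can be invoked directly via the Buchholtz--Morehouse criterion [\CORMostCubeCategoriesAreTest], which certifies our symmetric monoidal variant of the cube category; alternatively one can carry out a direct comparison, using Lemma \ref{lem:qt.cocontinuous} to reduce the counit check to representables combined with the explicit polyhedral decomposition of the $n$-cube underlying the top row of the defining diagram for $\tri$.
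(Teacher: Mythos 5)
Your proposal takes a genuinely different route from the paper, and the route you choose runs into a real obstruction.

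You propose to verify the Quillen equivalence by showing $\tri$ reflects weak equivalences (correct, by the realization factorization $|-|=|-|_{\DEL}\circ\tri$) and then checking the \emph{counit} $\tri\qua\,X\to X$ on fibrant $X$. The paper instead checks the \emph{unit} $\eta_C\colon C\to\qua\tri\,C$. This asymmetry is not cosmetic: the argument hinges on which composite endofunctor is cocontinuous. Lemma \ref{lem:qt.cocontinuous} gives cocontinuity of $\qua\tri$ (on $\CUBICALSETS$), which is what lets the unit computation reduce to representables $\BOX\boxobj{n}$. There is no corresponding cocontinuity of $\tri\qua$ (on $\SIMPLICIALSETS$): $\qua$ is a right adjoint and does not commute with colimits, so your ``cellular induction on the simplices of $X$'' does not reduce the counit check to representables. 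That step, as stated, does not go through. Your fallback of invoking the test-category machinery via [\CORMostCubeCategoriesAreTest] is precisely what the paper deliberately avoids; the acknowledgements record that an earlier draft relied on the test model structure and this was found to be problematic, which is why the classical model structure on $\CUBICALSETS$ is instead constructed by left transfer along $\tri$ [Proposition \ref{prop:cubical.model.structure}]. Incidentally, this also makes your Quillen-adjunction verification redundant: $\tri$ is left Quillen by definition of a left-transferred model structure, and your claim that cocontinuity of $\tri$ lets it preserve all monos is not sound (cocontinuous functors need not preserve monomorphisms; what the transferred structure actually guarantees is that $\tri$ creates cofibrations).

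What you are missing is the paper's key device on the unit side. For each $\BOX$-object $\boxobj{n}$, the meet operation $\wedge_{\boxobj{n}}\colon(\boxobj{n})^2\to\boxobj{n}$ gives natural transformations from $\wedge_{\boxobj{n}}$ to both projections, and these are natural in $\boxobj{n}$ precisely because $\BOX$-morphisms are lattice homomorphisms [Theorem \ref{thm:box.characterization}]. Concatenating and applying $\tri$ produces a natural simplicial homotopy between the two projections $(\tri\BOX\boxobj{n})^2\to\tri\BOX\boxobj{n}$, so any two parallel simplicial maps into $\tri\BOX\boxobj{n}$ are homotopic. This convexity, together with the triangle identity giving $\epsilon_{\tri C}$ as a retraction of $\tri(\eta_C)$, shows $\tri(\eta_C)$ is a simplicial homotopy equivalence for representable $C$; cocontinuity of $\qua\tri$ extends this to all $C$, so $\eta_C$ is a classical weak equivalence. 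Combined with essential surjectivity of the derived functor, this gives the equivalence. The lattice-theoretic convexity argument is the step you would need to find some analogue of on the counit side, and there is no obvious one.
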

\begin{proof}
  Let $\epsilon$ denote the counit of the adjunction.
  It suffices to show the last line of the proposition.  For then the rest of the proposition would formally follow because triangulation creates the weak equivalences.  Let $\epsilon$ denote the counit of the $\tri\dashv\qua$. 
	Every simplicial set has the same geometric realization as a cubical set. 
            It therefore follows that $|\epsilon_S|$ induces a surjection on homotopy groups and path-components.  
	    Moreover, these surjections are all injective by an application of cubical approximation for directed topology \cite[Corollary 8.15]{krishnan2015cubical}.
\end{proof}

Consider a left adjoint functor $L:\mathscr{A}\ra\mathscr{B}$ where $\mathscr{B}$ is equipped with a model structure.  
Recall that a model structure on $\mathscr{A}$ is \textit{left induced} along $L$ if its weak equivalences and cofibrations are characterized as those $\mathscr{A}$-morphisms $\zeta$ whose images under $L$ are respectively weak equivalences and cofibrations in $\mathscr{B}$. 
A proof of Proposition \ref{prop:cubical.model.structure} uses the fact that $\mathscr{A}$ admits a model structure left transferred along $L$ if $\mathscr{A}$ and $\mathscr{B}$ are both locally presentable, the model structure on $\mathscr{B}$ is additionally cofibrantly generated, and for each $\mathscr{A}$-object $a$, the $\mathscr{A}$-morphism $\id_o\amalg\id_o:o\amalg o\ra o$ factors into a composite $\rho_o\iota_o$ with $L\iota_o$ a cofibration in $\mathscr{B}$ and $L\rho_o$ a weak equivalence in $\mathscr{B}$ \cite[special case of Theorem 2.2.1]{hess2017necessary}.  

\begin{proof}[proof of Proposition \ref{prop:cubical.model.structure}]
  Each cubical function of the form $\id_C\amalg\id_C:C\amalg C\ra C$ factors into a composite of the mono $C\otimes(\BOX[\delta_{\mins}]\amalg\BOX[\delta_{\pls}]):C\amalg C\ra C\otimes\BOX[1]$, whose image under triangulation is a mono, followed by the cubical function $C\otimes\BOX[\sigma]:C\otimes\BOX[1]\ra C$, whose image $(\tri\,C)\times\DEL[\sigma]:(\tri\,C)\times\DEL[1]\ra(\tri\,C)$ under triangulation is a weak equivalence.  
  Therefore $\CUBICALSETS$ admits a model structure left transferred along $\tri$ \cite[special case of Theorem 2.2.1]{hess2017necessary}.  
  This model structure is the desired model structure because topological realization $|-|:\CUBICALSETS\ra\TOP$ factors as the composite of triangulation followed by topological realization $|-|:\SIMPLICIALSETS\ra\TOP$ and the classical model structure on $\hat\DEL$ is left transferred from the q-model structure along topological realization $\SIMPLICIALSETS\ra\TOP$.
\end{proof}

A proof of Proposition \ref{prop:quillen.equivalence} uses the fact that topological realization $\SIMPLICIALSETS\ra\TOP$ defines the left map of a Quillen equivalence, where $\SIMPLICIALSETS$ is equipped its classical model structure and $\TOP$ is equipped with its q-model structure \cite{quillen1967homotopical}.

\begin{proof}[proof of Proposition \ref{prop:quillen.equivalence}]
  Topological realization $|-|:\CUBICALSETS\ra\TOP$ factors as a composite of triangulation $\tri:\CUBICALSETS\ra\SIMPLICIALSETS$ and topological realization $|-|:\SIMPLICIALSETS\ra\TOP$.
  The former is the left map of a Quillen equivalence [Proposition \ref{prop:tri.equivalence}].  
  The latter is the left map of a Quillen equivalence \cite{quillen1967homotopical}.  
\end{proof}

\begin{proof}[proof of Corollary \ref{cor:cubical.coherent.nerve}]
  Consider the following diagram
  \begin{equation*}
    \begin{tikzcd}
	    \CUBICALSETS\ar{rr}[above]{\tri} & & \SIMPLICIALSETS\\
	    & \CATS\ar{ur}[description]{\nerve_\DEL}\ar{ul}[description]{\nerve_\BOX}
    \end{tikzcd}
  \end{equation*}
	This triangle commutes up to natural classical weak equivalence in $\SIMPLICIALSETS$ because $\tri\circ\nerve_\BOX=\tri\circ\qua\circ\nerve_\DEL$ and the counit $\tri\circ\qua\ra\id_{\SIMPLICIALSETS}$ defines a component-wise classical weak equivalence [Proposition \ref{prop:tri.equivalence}].
	Therefore the claim follows because $\tri$ induces a categorical equivalence $h(\CUBICALSETS)\simeq h(\SIMPLICIALSETS)$ [Proposition \ref{prop:tri.equivalence}] and $\nerve_\DEL$ induces a categorical equivalence $h(\CATS)\simeq h(\SIMPLICIALSETS)$.
\end{proof}

\section{Pro-completions}\label{sec:pro-completions}
A category $\cat{1}$ is \textit{cofiltered} if every non-empty finite diagram in $\cat{1}$ has a cone.
A \textit{cofiltered limit} is the limit of a diagram shaped like a cofiltered small category. 
Fix a category $\cat{1}$.  
Intuitively, the \textit{pro-completion} (or \textit{pro-category}) $\PROOBJECTS{\cat{1}}$ of $\cat{1}$ is a formal completion of $\cat{1}$ with respect to cofiltered limits.
The pro-completion $\PROOBJECTS{\cat{1}}$ of $\cat{1}$ always exists and is characterized up to categorical equivalence by the following universal property.  
Take $\PROOBJECTS{\cat{1}}$ to be the category containing $\cat{1}$ as a full subcategory, unique up to all such categories up to categorical equivalence under $\cat{1}$, closed under all cofiltered limits such that for each functor $F:\cat{1}\ra\modelcat{1}$, there exists a dotted functor to a category $\mathscr{M}$ closed under all cofiltered limits, unique up to natural isomorphism, preserving cofiltered limits and making the following commute:
\begin{equation*}
  \begin{tikzcd}
		\cat{1}\ar{r}[above]{F}\ar[d,hookrightarrow] & \modelcat{1}\\
		\PROOBJECTS{\cat{1}}\ar[ur,dotted]
	\end{tikzcd}
\end{equation*}

We can model a cube of infinite dimension as the $(\PROOBJECTS{\BOX})$-object
\begin{equation}
  \label{eqn:infinite.cube}
  \boxobj{\infty}=\lim\left(\cdots\xra{\sigma_{4;4}}\boxobj{3}\xra{\sigma_{3;3}}\boxobj{2}\xra{\sigma_{2;2}}\boxobj{1}\ra[0]\right).
\end{equation}

Concretely, $\PROOBJECTS{\cat{1}}$ can be taken to be the category: whose objects are formal expressions of the form $\lim\,F$ for cofiltered diagrams $F$ in $\cat{1}$; whose hom-sets are defined by
$$\PROOBJECTS{\cat{1}}(\lim\,X,\lim\,Y)=\lim_{y}\mathop{\mathrm{colim}}_{x}\cat{1}(X(x),Y(y)),$$
where the colimit is taken over all objects in the domain of the cofiltered diagram $X$ in $\cat{1}$, the limit is taken over all objects in the domain of the cofiltered diagram $Y$ in $\cat{1}$; and whose composition is induced by composition on $\cat{1}$. 
Unravelling the construction, $(\PROOBJECTS{\cat{1}})$-morphisms $\lim\,X\ra\lim\,Y$ between limits of cofiltered diagrams $X$ and $Y$ in $\cat{1}$ determine and are determined by the following data.  
Consider a choice of $\mathcal{X}$-object $x_y$ and $\cat{1}$-morphism $\zeta_y:X(x_y)\ra Y(y)$ for each choice $y$ of $\mathcal{Y}$-object such that for each $\mathcal{Y}$-morphism $\upsilon:y_1\ra y_2$, there exist $\mathcal{X}$-morphisms $\chi_1:x\ra x_{y_1}$ and $\chi_2:x\ra x_{y_2}$ making the left of the diagrams
\begin{equation*}
  \begin{tikzcd}
	  X(x)\ar{r}[above]{X(\chi_1)}\ar{d}[left]{X(\chi_2)} & X(x_{y_1})\ar{r}[above]{\zeta_{y_1}} & Y(y_1)\ar{d}[right]{Y(\upsilon)}\\
	  X(x_{y_2})\ar{r}[below]{\zeta_{y_2}} & Y(y_2)\ar{r}[below]{\id_{Y(y_2)}} & Y(y_2)
  \end{tikzcd}
  \quad
  \begin{tikzcd}
	  \lim\,X\ar{r}[above]{\zeta}\ar[d] & \lim\,Y\ar[d]\\
	  X(x_{y})\ar{r}[below]{\zeta_{y}} & Y(y),
  \end{tikzcd}
\end{equation*}
commute.
Then there exists a unique $(\PROOBJECTS{\cat{1}})$-morphism $\zeta:X\ra Y$ making the right of the diagrams commute for each $\mathcal{Y}$-object $y$.
We call the $\PROOBJECTS{\cat{1}}$-morphism $\zeta:\lim\,X\ra\lim\,Y$ \textit{component-wise monic} if all the $\zeta_y$'s can be chosen to be monos in $\cat{1}$.  
The reader is referred elsewhere \cite{isaksen2002calculating} for details.

\begin{eg}
  Fix a cubical set $C$.  
  A $(\PROOBJECTS{\CUBICALSETS})$-morphism of the form
  $$C\ra\BOX\boxobj{\infty},$$
  from $C$ to the image of $\boxobj{\infty}$ (\ref{eqn:infinite.cube}) under the extension of the Yoneda embedding to a functor $\PROOBJECTS{\BOX[-]}:\PROOBJECTS{\BOX}\ra\PROOBJECTS{\CUBICALSETS}$, informally, is the data of a cubical function from $C$ to a representable up to cube projections.  
\end{eg}

For a pair of categories $\mathscr{A}$ and $\mathscr{B}$, there exists a natural categorical equivalence
$$\PROOBJECTS{(\mathscr{A\times B})}\simeq\PROOBJECTS{A}\times\PROOBJECTS{B}.$$

For each functor $\zeta:\smallcat{1}\ra\smallcat{2}$ of small categories, we also write $\zeta$ for the extension $\PROOBJECTS{\cat{1}}\ra\PROOBJECTS{\cat{2}}$, unique up to natural isomorphism, making the diagram above commute when $\modelcat{1}=\PROOBJECTS{\cat{2}}$ and $F=(\cat{2}\ira\PROOBJECTS{\cat{2}})\zeta$.  
Thus for each bifunctor $\otimes:\modelcat{1}^2\ra\modelcat{1}$ we also write $\otimes$ for the dotted bifunctor, unique up to natural isomorphism, making 
\begin{equation*}
  \begin{tikzcd}
	  \modelcat{1}^2\ar[d,hookrightarrow]\ar{r}[above]{\otimes} & \modelcat{1}\ar[d,hookrightarrow]\\
	  \PROOBJECTS{\modelcat{1}}\times\PROOBJECTS{\modelcat{1}}\ar[dotted]{r}[below]{\otimes} & \PROOBJECTS{\modelcat{1}}
  \end{tikzcd}
\end{equation*}
commute and preserving cofiltered limits.  
Uniqueness implies that the bottom bifunctor is associative and unital if the top bifunctor is associative and unital.  
Thus for each monoidal category $\modelcat{1}$, we will regard $\PROOBJECTS{\modelcat{1}}$ as a monoidal category whose tensor product is the extension of the tensor product on $\modelcat{1}$, unique up to natural isomorphism, that preserves cofiltered limits.  

\begin{eg}
  There exists a $(\PROOBJECTS{\BOX})$-isomorphism $\boxobj{\infty}\otimes\boxobj{\infty}\cong\boxobj{\infty}$.  
\end{eg}

\begin{thm:pro-diagrams}
  Fix categories $\indexcat{1}$ and $\cat{1}$ with $\indexcat{1}$ finite.  
  The quotient functor
  \begin{equation}
    \label{eqn:pro-diagram.lift}
    \PROOBJECTS{\left(\cat{1}^{\indexcat{1}}\right)}\ra\left(\PROOBJECTS{\cat{1}}\right)^{\indexcat{1}}
  \end{equation}
  is a categorical equivalence if $\indexcat{1}$ has only identities for its endomorphisms.
\end{thm:pro-diagrams}

\section{Diagram replacement}\label{sec:diagram.replacement}
Fix a category $\modelcat{1}$.
A typical construction in homotopy theory, for a given diagram $D$ in $\modelcat{1}$, is a diagram $D'$ whose colimit has some desired regularity condition and natural transformation $q_D:D'\ra D$ satisfying some desired right lifting property.
An example of such a construction $q_D$ is cofibrant replacement in a diagram model structure associated to a model structure on the base category $\modelcat{1}$.  
We are interested in the case where, intuitively, $D$ is a diagram of supports for restrictions of a stream map $f:\vec{\I}^{n_f}\ra\direalize{C_f}$ to small enough isothetic subcubes and the lifts need only hold up to natural directed homotopy.
The desired construction $q_D$ and the dotted lifts in the diagram
\begin{equation*}
  \xymatrix{
	  & **[r]\direalize{\colim\,QD}\ar[d]|{{\direalize{\;\colim\,q_D\;}}}\\
	  **[l]\direalize{\BOX\boxobj{n_f}}\ar[r]_{f}\ar@{.>}[ur] & **[r]\direalize{\colim\,D}=\direalize{C_f}
  }
\end{equation*}
should be natural in stream maps $f:\direalize{\BOX\boxobj{n_f}}\ra\direalize{C_f}$ and atomic subpresheaves of $\colim\,QD$ should be isomorphic to representables.
The starting point is the observation that $\epsilon^2_C:\sd_9C\ra C$ not only locally factors through representables [Lemma \ref{lem:local.lifts}], but in a manner where the representables vary monically at the cost of only being defined up to cube projections.  
Recall definitions of $\STARS_{k+1}$ [\S\ref{sec:cubical.sets}] and $\boxobj{\infty}$ [\S\ref{sec:pro-completions}].
Also recall our idiosyncratic notation $(C,S)$ for $\STARS_{k+1}$-objects $S\ira\sd_{k+1}C$ [\S\ref{sec:cubical.sets}].  

\begin{lem}
  \label{lem:star.rlp}
  There exist $(\PROOBJECTS{\BOX})$-object $Q(C,S)$ and dotted $(\PROOBJECTS{\CUBICALSETS})$-morphisms in
  \begin{equation*}
    \begin{tikzcd}
	    S\otimes\BOX\boxobj{\infty}\ar[dotted]{r}[above]{\alpha_{(C,S)}}\ar{d}[description]{(S\ira\sd_{27}C)\otimes\BOX[\boxobj{\infty}\ra[0]]} & \BOX[Q(C,S)]\ar[dotted]{d}[right]{\beta_{(C,S)}}\\
	    \sd_{27}C\ar{r}[below]{\epsilon^3_{C}} & C,
	\end{tikzcd}
  \end{equation*}
  all natural in $\STARS_{27}$ -objects $(C,S)$ such that the functor $Q:\STARS_{27}\ra\PROOBJECTS{\BOX}$ sends each object to a cofiltered $(\PROOBJECTS{\BOX})$-limit of $\BOX$-epis and each $\STARS_{27}$-mono to a component-wise monic $(\PROOBJECTS{\BOX})$-morphism.
\end{lem}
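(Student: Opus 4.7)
Plan: The plan is to iterate the local lift of Lemma \ref{lem:local.lifts} (stated for $\sd_9$) in order to promote its conclusion from $\sd_9$ to $\sd_{27}=\sd_3^3$, and to assemble the resulting representables into a pro-object $Q(C,S)$ whose cofiltered indexing encodes the combinatorial choices made at each stage. The $\BOX\boxobj{\infty}$ factor in the source of $\alpha_{(C,S)}$ is what lets us absorb those choices into a single pro-morphism while still meeting the monicity requirement.

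First I would use the identification $\sd_{27}=\sd_9\sd_3$ to regard $(\sd_3C,S)$ as a $\STARS_9$-object and apply Lemma \ref{lem:local.lifts}, obtaining a natural factorization
\begin{equation*}
S\xrightarrow{\phi_0}\BOX\boxobj{n_0(C,S)}\xrightarrow{\psi_0}\sd_3C
\end{equation*}
of the restriction of $\epsilon^2_{\sd_3C}$ to $S$. To push $\psi_0$ further through $\epsilon_C:\sd_3C\ra C$, I would work cube-by-cube in $\BOX\boxobj{n_0(C,S)}$: each cube $\theta:\BOX\boxobj{k}\ra\BOX\boxobj{n_0(C,S)}$ maps, via $\epsilon_C\circ\psi_0\circ\theta$, into the closed star of a vertex of $C$, so the star-flower machinery of Lemmas \ref{lem:collapse.star}, \ref{lem:star.flower}, \ref{lem:natural.retractions} provides a natural representable carrier $B_\theta$ (isomorphic to a $\BOX$-object) together with a natural retraction onto it. The pro-object $Q(C,S)$ is then defined as the cofiltered $(\PROOBJECTS{\BOX})$-limit of the resulting system of carriers indexed by cubes $\theta$ and their inclusions; the connecting maps are the codegeneracies and coordinate permutations arising from cube projections, and cofilteredness holds because the indexing poset of cubes in $\BOX\boxobj{n_0(C,S)}$ (together with their refinements) admits common upper bounds. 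The maps $\alpha_{(C,S)}$ and $\beta_{(C,S)}$ are assembled from the per-cube lifts; the $\BOX\boxobj{\infty}$ factor in the source of $\alpha_{(C,S)}$ parametrizes the choice of cube $\theta$ (each finite $\boxobj{n}\subset\boxobj{\infty}$ picks out a finite family of cubes whose representables form a finite truncation of $Q(C,S)$), and commutativity of the square follows from the per-cube commutative factorizations together with the identity $\epsilon^3_C=\epsilon_C\circ\epsilon_{\sd_3C}\circ\epsilon_{\sd_9C}$.

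For naturality in $\STARS_{27}$, I would invoke the naturality of the star-flower carrier (Lemma \ref{lem:natural.retractions}) at each stage, so that $Q$ extends to a functor $\STARS_{27}\ra\PROOBJECTS{\BOX}$ in a canonical way. For the component-wise monicity: given a $\STARS_{27}$-mono $(C',S')\ira(C'',S'')$, each cube in $\BOX\boxobj{n_0(C',S')}$ maps to a cube in $\BOX\boxobj{n_0(C'',S'')}$ and the associated atomic carrier in $C'$ embeds monically into its counterpart in $C''$, using that flower carriers take monos to monos (visible from Lemma \ref{lem:star.flower} combined with Lemma \ref{lem:natural.retractions}). These per-cube monos are precisely the components of a component-wise monic $(\PROOBJECTS{\BOX})$-morphism $Q(C',S')\ra Q(C'',S'')$; the $\BOX\boxobj{\infty}$ factor supplies padding that absorbs any dimensional mismatch by projection $\BOX[\boxobj{\infty}\ra[0]]$, so that inclusions need not collapse any cube coordinates.

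The main obstacle will be the combinatorial coherence of the per-cube lifts: verifying that they glue into a well-defined functor $Q$ and that the square in the lemma commutes strictly in $\PROOBJECTS{\CUBICALSETS}$ rather than only up to choices. Because the carrier $B_\theta$ is only canonical up to the choice of retraction $\pi_{(C,\cdot)}$ of Lemma \ref{lem:star.flower}, and because $\STARS_{27}$-monos can interact nontrivially with the carriers at different cube dimensions, the $\BOX\boxobj{\infty}$-twist in the source is essential: it lets us replace the ill-defined ``strictly commutative'' diagram by a cofiltered limit that is coherent on the nose, at the cost of enlarging the domain by projection-absorbed dummy coordinates. Once this coherence is set up, commutativity of the square reduces to a cube-by-cube commutativity which is supplied directly by Lemma \ref{lem:local.lifts} and the construction of $\epsilon$.
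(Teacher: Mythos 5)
Your plan misreads what the pro-completion is being asked to do here, and as stated your construction of $Q(C,S)$ does not produce a nontrivial pro-object. The pro-object is not there to encode a refinement system inside a single application of Lemma \ref{lem:local.lifts}; it is there to resolve the fact that the representable carrier $\sigma_{(C,A)}:\BOX\boxobj{n_{(C,A)}}\ra C$ produced by the star-flower machinery is only determined up to $(\BOX/C)$-isomorphism, so no on-the-nose choice of a single $\BOX$-object can be made strictly functorial in $\STARS_{27}$-objects. You flag this as the ``main obstacle'' in your last paragraph, but your construction doesn't touch it: you index the pro-system by cubes $\theta$ inside the fixed representable $\BOX\boxobj{n_0(C,S)}$, whose face poset has a terminal object (the top cube), so the cofiltered limit degenerates to a single $\BOX$-object and you are back to the non-canonical carrier you started from. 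On top of that, the connecting maps between carriers $B_\theta$ and $B_{\theta'}$ for a face inclusion $\theta\subset\theta'$ are $\BOX$-monos, not the $\BOX$-epis the lemma requires, so the ``cofiltered limit of $\BOX$-epis'' clause fails even before the collapse becomes visible.

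The paper's proof sets up a genuinely different index: for each $\ATOMICS_9$-object $(C,A)$ it forms the category $\mathcal{I}_{(C,A)}$ of all larger ambient pairs $(D,B)$ with $C\subset D$, $A\subset B$, and sends $(D,B)$ to a representable $\boxobj{n_{(D,B,A)}}$ whose dimension grows without bound as $(D,B)$ does; $Q(C,S)$ is the cofiltered limit of this system. The connecting maps are codegeneracies $\sigma_{(D'',B'',B',A)}$, and the fact --- announced right before the proof --- that parallel epis in $\BOX$ are all isomorphic to one another in the arrow category $\BOX^{[1]}$ is precisely what lets those codegeneracies be fixed canonically, making $Q$ a strict functor on $\STARS_{27}$. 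Your proposal never invokes this fact and structurally cannot, because your index does not range over ambient contexts $(D,B)$ at all. Finally, your reading of $\BOX\boxobj{\infty}$ as ``parametrizing the choice of cube $\theta$'' is off: in the paper it supplies an unbounded pool of dummy coordinates so that $S\otimes\BOX\boxobj{\infty}$ can project, level by level, onto each arbitrarily high-dimensional $\boxobj{n_{(D,B,A)}}$ occurring in the pro-system.
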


The proof uses the fact that parallel epis in $\BOX$ are always isomorphic to one another in the arrow category $\BOX^{[1]}$.
For this reason, the proof does not work for larger variants of $\BOX$ that include coconnections of one or both kinds.  

\begin{proof}
  Let $\ATOMICS_{k+1}$ denote the full subcategory of $\STARS_{k+1}$ consisting of those $\STARS_{k+1}$-objects $(C,A)$ with $A$ atomic.
  For each $\ATOMICS_{9}$-object $(C,A)$ with $C$ atomic, let $\mathcal{I}_{(C,A)}$ be the essentially small category whose objects are all $\ATOMICS_{9}$-objects $(D,B)$ with $D$ atomic, $C\subset D$, and $A\subset B$ and whose morphisms are all $\ATOMICS_{k+1}$-morphisms defined by inclusions of subpresheaves.
  For each $\ATOMICS_9$-object $(C,A)$, let 
  $$A_{C}=\epsilon_{\sd_3C}(\Star_{\sd_9C}(A)).$$

  \vspace{.1in}
  \textit{$\OP{\mathcal{I}_{(C,A)}}$ is cofiltered}: 
  Fix an $\ATOMICS_{9}$-object $(C,A)$.  
  Consider the solid diagram 
  \begin{equation*}
    \begin{tikzcd}
	    (C,A)\ar[r,hookrightarrow]\ar[d,hookrightarrow] & (C',A')\ar[d,hookrightarrow,dotted]\\
	    (C'',A'')\ar[r,dotted,hookrightarrow] & (D,B)
    \end{tikzcd}
  \end{equation*}
  in $\mathcal{I}_{(C,A)}$.
  To show $\OP{\mathcal{I}_{(C,A)}}$ is cofiltered, it suffices to show that there exists an $\mathcal{I}_{(C,A)}$-object $(D,B)$ and dotted $\mathcal{I}_{(C,A)}$-morphisms, necessarily making the diagram commute by $\mathcal{I}_{(C,A)}$ equivalent to a poset.
  In the case that $A,A',A'',C,C',C''$ are isomorphic to representables of dimension at most $1$, the desired $(D,B)$ exists by inspection.  
  In the more general case where $A,A',A'',C,C',C''$ are isomorphic to representables, the desired $(D,B)$ exists from the previous case, the fact that $\sd_3$ is monoidal, and the fact that tensor products of cubical sets preserve monos.  
  In the general case, $A,A',A'',C,C',C''$ are atomic, the above solid diagram lifts to a diagram between representables and hence the desired $B$ and $D$ can be obtained by quotienting the $B$ and $D$ obtained from the previous case.

  \vspace{.1in}
  \textit{construction of $\alpha_{(C,A)},\beta_{(C,A)}$}: 
  Let $(C,A)$ denote an $\ATOMICS_9$-object.
  There exists a unique minimal subpresheaf $C_A\subset C$, atomic by minimality, with $A_C\cap\sd_3C_A\neq\varnothing$ [Lemmas \ref{lem:collapse.star} and \ref{lem:star.flower}].  
  Let $n_{(C,A)}=\dim\,C_A$.  
  There exists a choice $\sigma_{(C,A)}$ of cubical function, unique up to $(\BOX/C)$-isomorphism, having image $C_A$ and of the following form by $C_A$ atomic:
  $$\sigma_{(C,A)}:\BOX\boxobj{n_{(C,A)}}\ra C.$$

  For each $\ATOMICS_9$-morphism $\psi:(D',B')\ra(D'',B'')$, 
  $$(\sd_9\psi)(D')\cap\Star_{\sd_9D''}B''\subset(\sd_9\psi)(D')\cap\Star_{\sd_9D''}\psi(B')=(\sd_9\psi)(\Star_{\sd_9D'}B'),$$
  hence $(\sd_3\psi)(D')\cap B''_{D''}\subset(\sd_3\psi)(B'_{D'})$, and hence $\psi$ restricts and corestricts to a cubical function $D'_{B'}\ra D''_{B''}$ by minimality.  

  Consider the special case where $\psi$ is a $\mathcal{I}_{(C,A)}$-morphism.  
  Then we can conclude $C_A\subset D'_{B'}\subset D''_{B''}$.   
  The cubical set $B'_{D'}\cap\sd_3D''_{B''}$ is an atomic subpresheaf of $\sd_3D''_{B''}$ by $B'_{D'}$ and $D''_{B''}$ both atomic.
  The top dimensional cube in the atomic cubical set $B'_{D'}\cap\sd_3D''_{B''}$ is not a cube in $\sd_3\partial D''_{B''}$ because $B'_{D'}\cap\sd_3D''_{B''}$ contains an atomic subpresheaf, $B''_{D''}\cap\sd_3D''_{B''}$, which does not intersect $\sd_3\partial D''_{B''}$ by minimality of $D''_{B''}$.
  Therefore $B'_{D'}\cap\sd_3D''_{B''}$ has a unique atomic preimage under $\sd_3\sigma_{(D'',B'')}$.  
	Therefore there exists a minimal and hence atomic subpresheaf $I(D'',B'',B')\subset\BOX\boxobj{n_{(D'',B'')}}$ with $\sd_3I(D'',B'',B')$ intersecting the preimage of $B'_{D'}\cap\sd_3D''_{B''}$ under $\sd_3\sigma_{(D'',B'')}$ [Lemma \ref{lem:star.flower}].  
  The image of $I(D'',B'',B')$ under $\sigma_{(D'',B'')}$ is $D'_{B'}$ by minimality. 
  Thus there exist $\BOX$-epi $\sigma_{(D'',B'',B')}$ and $\BOX$-mono $\delta_{(D'',B'',B')}$, unique up to isomorphism, making II in the diagram
  \begin{equation*}
	\begin{tikzcd}
	  \BOX\boxobj{n_{(D'',B'',A)}}\ar[rr,above,"\sigma_{(D'',B'',B',A)}"]\ar{d}[description]{\delta_{(D'',B'',B',A)}} 
          \ar[drr,phantom,near start,"III"]
	  & & \BOX\boxobj{n_{(D',B',A)}}\ar{rr}[above]{\sigma_{(D',B',A)}}
	  \ar{d}[description]{\delta_{(D',B',A)}} & & \BOX\boxobj{n_{(C,A)}}
	  \ar[d,phantom,near start,"I"]
		\ar{r}[above]{\sigma_{(C,A)}} & C\ar[d,hookrightarrow]
	    \\
		\BOX\boxobj{n_{(D'',B'',B')}}\ar{rr}[description,twoheadrightarrow]{\sigma_{(D'',B'',B')}}\ar{d}[description]{\delta_{(D'',B'',B')}} 
	    & & \BOX\boxobj{n_{(D',B')}}
	    \ar[d,phantom,near start,"II"]
		\ar{rrr}[description]{\sigma_{(D',B')}} & {} & {} & D'\ar[d,hookrightarrow]
	    \\
		\BOX\boxobj{n_{(D'',B'')}}\ar{rrrrr}[below]{\sigma_{(D'',B'')}} & {} & {} & {} & {} & D''
	\end{tikzcd}
  \end{equation*}
  commute. 
  Moreover, we can take $\sigma_{(D'',B'',B')}$ to be the $\BOX$-epi
  $$\sigma_{(D'',B'',B')}=\boxobj{n_{(D',B')}}\otimes(\boxobj{n_{(D'',B'',B')}-n_{(D'',B'')}}\ra[0])$$
  without loss of generality, because all parallel $\BOX$-epis are isomorphic to one another in the arrow category $\BOX^{[1]}$.  
  An application of the previous observations to the case $A=B'$ gives that there exists a $\BOX$-mono $\delta_{(D',B',A)}$ making I in the diagram above commute.  
  The cubical set $I(D'',B'',A)$ is a subpresheaf of $I(D'',B'',B')$ by minimality.  
  The image of $I(D'',B'',A)$ under the composite of the arrows in the middle row is $D''_{B''}$ by minimality.  
  Thus there exist $\BOX$-epi $\sigma_{(D'',B'',B',A)}$ and $\BOX$-mono $\delta_{(D'',B'',B',A)}$, unique up to isomorphism, making III above commute.
  We can take $\sigma_{(D'',B'',B',A)}$ to be the $\BOX$-epi
  $$\sigma_{(D'',B'',B',A)}=\boxobj{n_{(D'',B'',B'}}\otimes(\boxobj{n_{(D'',B'',A)}-n_{(D'',B'',B')}}\ra[0])$$
  without loss of generality, again because parallel $\BOX$-epis are all isomorphic to one another in the arrow category $\BOX^{[1]}$.  
  
  We can let $P(C,A)$ be the $(\PROOBJECTS{\BOX})$-limit of the diagram 
  $$\OP{\mathcal{I}_{(C,A)}}\ra\BOX$$
  sending each object $(D,B)$ to $\boxobj{n_{(D,B,A)}}$ and each morphism $(D',B')\ra(D'',B'')$ to the $\BOX$-epi $\sigma_{(D'',B'',B',A)}$ for each $\ATOMICS_{9}$-object $(C,A)$, by the domain of the diagram small and cofiltered.

  To regard $P$ as functorial in $(C,A)$, it suffices to take $C$ atomic because $P(C,A)=P(\support_{\sd_9}(C,A),A)$ by minimality.
  To that end, consider an $\ATOMICS_9$-morphism $\psi:(C',A')\ra(C'',A'')$ defined by a cubical function $\psi:C'\ra C''$ between atomic cubical sets.
  We claim there exists a choice of ${\mathcal{I}_{(C',A')}}$-object $(D',B')$ and $\ATOMICS_9$-morphism, natural in ${\mathcal{I}_{(C',A')}}$-objects $(D'',B'')$, extending $\psi:(C',A')\ra(C'',A'')$ and mapping $D'$ onto $D''$ and $B'$ onto $B''$ such that $D'=D''$ and $B'=B''$ in the case $\psi$ is an inclusion.  
  The desired extension can be constructed for: $A',A'',B'',C',C'',D''$ representables of dimension at most $1$ by inspection; for $A',A'',B'',C',C'',D''$ representables by taking tensor products; then for $C,C',D''$ atomic by taking quotients of the extension obtained from the previous case. 
  This extension restricts and corestricts to a cubical function $D'_{B'}\ra D''_{B''}$, whose composite with $\sigma_{(D',B')}$ uniquely lifts to a cubical function of the form $\BOX[\boxobj{n_{(D',B')}}\ra\boxobj{n_{(D'',B'')}}]$.
  Composition of this latter cubical function with inclusion $I(D',B',A')\ira\BOX\boxobj{n_{(D',B')}}$ corestricts to a cubical function $I(D',B',A')\ra I(D'',B'',A'')$ by a minimality argument.  
  In the case $\psi$ is an inclusion/identity, then we can take $D'=D''$ and $A'=A''$ so that the latter cubical function is an inclusion.
  Thus there exist $\BOX$-morphisms $\boxobj{n_{(D',B',A')}}\ra\boxobj{n_{(D'',B'',A'')}}$, $\delta_{(D'',B'',B',A')}$ in the special case $\psi$ is inclusion, defining a component-wise monic $(\PROOBJECTS{\BOX})$-morphism $P\psi:P(C',A')\ra P(C'',A'')$.
  The construction $P$ can be shown to preserve identities and composites of $\ATOMICS_{9}$-morphisms by the aforementioned uniqueness properties.

  Let $(C,S)$ denote an $\STARS_{27}$-object.
  Let $E_S=\epsilon_{\sd_9C}(S)$. 
  Then the rule 
  $$Q(C,S)=P(C,E_S)$$
  naturally defines a functor, of the form
  $$Q:\STARS_{27}\ra(\PROOBJECTS{\BOX}),$$
  sending monos to component-wise monic morphisms.
  Consider the solid diagram
  \begin{equation*}
    \begin{tikzcd}
	    S\otimes\BOX\boxobj{i_{(D,B,E_S)}}\ar[dotted]{rrr}[above]{\epsilon_{(C,S)}\otimes\BOX\boxobj{i_{(D,B,E_S)}}}\ar[dotted]{d}[description]{S\otimes(\BOX\boxobj{i_{(D,B,E_S)}}\ra\star)}
		    & & & E_S\otimes\BOX\boxobj{i_{(D,B,E_S)}}\ar[dotted]{rrr}[above]{\lambda_{(C,E_S)}\otimes\BOX\boxobj{i_{(D,B,E_S)}}}\ar[dotted]{d}[description]{E_S\otimes(\BOX\boxobj{i_{(D,B,E_S)}}\ra\star)}
		    & & & \BOX\boxobj{n_{(D,B,E_S)}}\ar[dotted]{d}[description,twoheadrightarrow]{\sigma_{(D,B,E_S)}}
	\\
	S\ar[dotted]{rrr}[description]{\epsilon_{(C,S)}}\ar[d,hookrightarrow]
	& & & E_S\ar[dotted]{rrr}[description]{\lambda_{(C,E_S)}}\ar[d,hookrightarrow]
	& & & \BOX\boxobj{n_{(C,E_S)}}\ar{d}[description]{\sigma_{(C,E_S)}}
	\\
	\sd_{27}C\ar{rrr}[below]{\epsilon_{\sd_9C}}
	& & & \sd_9C\ar{rrr}[below]{\epsilon^2_{C}}
	& & & C,
    \end{tikzcd}
  \end{equation*}
  There exist dotted epi $\epsilon_{(C,S)}$, unique by the lower vertical inclusion in the middle column monic and hence natural in $(C,S)$, making the lower left rectangle above commute because restrictions of cubical functions corestrict onto their images.  
  There exists a dotted cubical function $\lambda_{(C,S)}$, natural in $(C,S)$, making the lower right rectangle above commute [Lemma \ref{lem:local.lifts}].
  The desired $\alpha_{(C,S)},\beta_{(C,S)}$ can be represented by composites of the respective top horizontal and right vertical arrows in the commutative diagram above, where $(D,B)$ denote an $\mathcal{I}_{(C,A)}$-object and $i_{(D,B,E_S)}=n_{(D,B,E_S)}-n_{(C,E_S)}$.  
\end{proof}

\begin{lem}
  \label{lem:bamfl}
  Let $f$ denote a $(\direalize{\BOX[-]}/\direalize{\sd_{27}-})$-object as in the diagram
  \begin{equation*}
    \begin{tikzcd}
	    \direalize{\BOX\boxobj{n_f}}\ar{d}[left]{f}\ar[dotted]{r}[above]{f^*} & \direalize{C^*_{f}}\ar[dotted]{d}[right]{\direalize{\;\Lambda^*_{f}\;}}\\
	    \direalize{\sd_{27}C_f}\ar{r}[below]{\direalize{\;\epsilon^3_{C_f}\;}} & \direalize{C_f}
    \end{tikzcd}
  \end{equation*}
  There exist dotted $\PROOBJECTS{(\REGULARCUBICALSETS/C_f)}$-object $\Lambda^*_{f}$ and $\PROOBJECTS{(\DITOP)}$-morphism $f^*$, both natural in $f$, making the diagram above commute.  
\end{lem}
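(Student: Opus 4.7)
The plan is to imitate the construction of $C^*_{\theta;k}$ from Lemma \ref{lem:restricted.homotopy.idempotent.comonad}, but to replace the local cube-supports $C_{\theta;I}$ with the pro-representables $\BOX[Q(C_f,S)]$ supplied by Lemma \ref{lem:star.rlp}. The desired $\Lambda^*_f$ will then be an iterated pushout over Boolean intervals of a suitably fine subdivision of $\boxobj{n_f}$, and $f^*$ will be the assembly of the corresponding local lifts $\alpha_{(C_f,S)}$.

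First I would use that $|\BOX\boxobj{n_f}|$ is compact and that $f$ is uniformly continuous for the $\ell_\infty$-metric on $\I^{n_f}$ to choose $k\gg 0$, depending on $f$, so that $f\dihomeo_{\BOX\boxobj{n_f};2^k}$ restricts on each $\direalize{\BOX[I]}$, for every Boolean interval $I$ in $\sd_2^k\boxobj{n_f}$, to a stream map landing in the open star of a single vertex of $\sd_{27}C_f$. Restriction and corestriction then determines a $\STARS_{27}$-object $(C_f,S_{f,I})$, where $S_{f,I}$ is the closed star of that vertex, and Lemma \ref{lem:star.rlp} supplies a pro-representable $\BOX[Q(C_f,S_{f,I})]$ together with pro-cubical-functions $\alpha_{(C_f,S_{f,I})}$ and $\beta_{(C_f,S_{f,I})}$, all natural in $(C_f,S_{f,I})$.

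Next I would set
\[
\Lambda^*_f \;=\; \colim_I\,\bigl(\BOX[Q(C_f,S_{f,I})]\otimes\BOX[I]\bigr),
\]
where the colimit is taken over the finite poset of Boolean intervals $I$ in $\sd_2^k\boxobj{n_f}$ under inclusion, with structure morphism to $C_f$ induced by the pro-cubical-functions $\beta_{(C_f,S_{f,I})}\otimes\BOX[I\ra\star]$. Since the indexing poset is finite and has only identity endomorphisms, Theorem \ref{thm:pro-diagrams} lets me compute this colimit as a genuine $\PROOBJECTS{(\REGULARCUBICALSETS/C_f)}$-object; regularity follows because each $\BOX[Q(C_f,S_{f,I})]\otimes\BOX[I]$ has representable atomic subpresheaves, and the overall construction is an iterated pushout of such inclusions in the shape of a poset $(\bullet\ra\bullet\la\cdots\la\bullet)^{n_f}$, exactly as in the proof of Lemma \ref{lem:restricted.homotopy.idempotent.comonad}. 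The stream map $f^*$ I would assemble from the local lifts $\direalize{\alpha_{(C_f,S_{f,I})}}\times\id_{\direalize{\BOX[I]}}$ along the cofiltered diagonal structure of $\BOX\boxobj{\infty}$, precomposed with $\dihomeo_{\BOX\boxobj{n_f};2^k}$; commutativity of the target square then reduces directly to the commutative square supplied by Lemma \ref{lem:star.rlp}.

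The hard part will be naturality in $f$. Since $k$ depends on $f$, for a morphism $f_1\ra f_2$ in $(\direalize{\BOX[-]}/\direalize{\sd_{27}-})$ the two constructions use a priori different subdivisions. I would deal with this by reindexing the entire construction over all sufficiently large $k$ simultaneously via a cofiltered pro-limit, using the natural refinement maps between $\sd_2^{k+1}$ and $\sd_2^k$, and by exploiting the property from Lemma \ref{lem:star.rlp} that $Q$ sends $\STARS_{27}$-monos to component-wise monic pro-morphisms. This last property is what allows the varying $S_{f,I}$'s to fit coherently when $f$ moves, producing a well-defined pro-morphism $\Lambda^*_{f_1}\ra\Lambda^*_{f_2}$ over $C_{f_1}\ra C_{f_2}$; Theorem \ref{thm:pro-diagrams} is again what yields the needed natural transformations at the pro-level from component-wise data.
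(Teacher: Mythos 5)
Your proposal correctly identifies the two key tools — Lemma \ref{lem:star.rlp} for pro-representable local replacements and \THMProDiagrams{} (Theorem on pro-diagrams) to lift a diagram of pro-objects over a finite acyclic poset to a pro-object of diagrams — and the broad architecture of gluing local pro-replacements is the right one. But the paper takes a meaningfully different route precisely where your proposal has a genuine gap, namely naturality in $f$.

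The paper does not fix a subdivision scale $k$ at all. It indexes over $\mathcal{I}_f$, the poset of \emph{non-empty preimages under $f$ of open stars} in $\direalize{\sd_{27}C_f}$, and builds a diagram $\mathcal{I}_f\ra\STARS_{27}$ by sending $X\in\mathcal{I}_f$ to $\catfont{S}_fX=\left(C_f,\support_{|-|}(f(X),\sd_{27}C_f)\right)$. This indexing is intrinsic to $f$: a morphism $f_1\ra f_2$ in $(\direalize{\BOX[-]}/\direalize{\sd_{27}-})$ directly induces a morphism of diagrams over the obvious map $\mathcal{I}_{f_1}\ra\mathcal{I}_{f_2}$, so naturality of $C^*_f=\colim\,\BOX[D^*_f]$ is essentially automatic once the $\catfont{Q}$, $\alpha$, $\beta$ of Lemma \ref{lem:star.rlp} are taken to be natural. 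By contrast, your $k$ depends on $f$, and the poset of Boolean intervals in $\sd_2^k\boxobj{n_f}$ depends on $k$, so a morphism $f_1\ra f_2$ does not induce a morphism between your colimits as written. You acknowledge this and propose a cofiltered pro-limit over all large $k$, but this patch is not shown to be well-posed: the indexing poset changes with $k$, the minimal admissible $k$ is not monotone in $f$, and a doubly-indexed limit (over $k$ and over Boolean intervals) needs a coherence argument you do not supply. The paper's choice of $\mathcal{I}_f$ is designed exactly to make this extra argument unnecessary.

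Two secondary mismatches worth flagging. First, the paper does not tensor with $\BOX[I]$: its pro-object in $\REGULARCUBICALSETS$ is simply the levelwise colimit of $\BOX[\catfont{Q}\catfont{S}_f(-)]$ over $\mathcal{I}_f$, because the pro-representables from Lemma \ref{lem:star.rlp} are already the right thing to glue; your $\BOX[Q(\cdot)]\otimes\BOX[I]$ construction is a gloss on Lemma \ref{lem:restricted.homotopy.idempotent.comonad} that is not what is used here. Second, $\mathcal{I}_f$ is not of the product-of-chains shape $(\bullet\ra\bullet\la\cdots\la\bullet)^{n_f}$ you invoke for regularity; the paper instead uses the more general fact that a colimit of representables and monos indexed over any poset lands in $\REGULARCUBICALSETS$, and the component-wise monicity supplied by Lemma \ref{lem:star.rlp} is used precisely for this regularity claim, not for the naturality repair you put it toward.
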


Like Reedy cofibrant replacement, the diagram replacement made in the proof relies on the acyclicity of the diagram shape.
Again recall our idiosyncratic notation $(C,A)$ for $\STARS_{k+1}$-objects $A\ira\sd_{k+1}C$ [\S\ref{sec:cubical.sets}].  

\begin{proof} 
  Let $\mathcal{I}_f$ be the poset of non-empty preimages under $f$ of open stars.  
  Then 
  $$\catfont{S}_fX=(C_f,\support_{|-|}(f(X),\sd_{27}C_f))$$
  is a $\STARS_{27}$-object, for each $\mathcal{I}_f$-object $X$.  
  Thus $\catfont{S}_f$ defines a functor $\mathcal{I}_f\ra\STARS_{27}$ natural in $f$.

  There exists a $(\PROOBJECTS{\BOX})$-morphism $\delta_\infty:\star\ra\boxobj{\infty}$ induced by minima-preserving functions of the form $\star\ra\boxobj{n}$ because $\BOX$-epis preserve minima. 
  The restriction of the stream map $f$ to each $\mathcal{I}_f$-object $X$ corestricts to a stream map $f_X:X\ra\direalize{\catfont{S}_fX}$ [Proposition \ref{prop:realization.preserves.embeddings}].
  There exist functor $\catfont{Q}:\STARS_{27}\ra\PROOBJECTS{\BOX}$ sending each $\STARS_{27}$-morphism defining an inclusion of cubical sets to a component-wise monic $(\PROOBJECTS{\BOX})$-morphism and $(\PROOBJECTS{\CUBICALSETS})$-morphisms $\alpha_{X},\beta_{X}$, natural in $\mathcal{I}_f$-objects $X$, making the outer rectangle below commute [Lemma \ref{lem:star.rlp}].  
  \begin{equation*}
    \begin{tikzcd}
	    X\ar[dd,hookrightarrow]\ar[drr]\ar{rr}[above]{\direalize{\;\delta_\infty\;}(X\ra\star)\times f_X} & & \direalize{\BOX\boxobj{\infty}\otimes \catfont{S}_fX}\ar{rr}[above]{\direalize{\;\alpha_{X}\;}} & & \direalize{\BOX[\catfont{Q}(C_f,\catfont{S}_fX)]}\ar[dll]\ar{dd}[right]{\direalize{\;\beta_{X}\;}}
	    \\
	    & & \direalize{C_f^*}\ar{drr}[description]{\direalize{\;\Lambda_f^*\;}}
	    \\
	    \direalize{\BOX\boxobj{n_f}}\ar{rr}[below]{f}\ar[dotted]{urr}[description]{f^*} & & \direalize{\sd_{27}C_f}\ar{rr}[below]{\direalize{\;\epsilon^3_{C_f}\;}} & & \direalize{C_f}
    \end{tikzcd}
  \end{equation*}

  The composite $\catfont{Q}\catfont{S}_f$ lifts under the natural quotient functor $\PROOBJECTS{(\BOX^{\mathcal{I}_f})}\ra(\PROOBJECTS{\BOX})^{\mathcal{I}_f}$ to a $(\PROOBJECTS{\BOX^{\mathcal{I}_f}})$-object $D^*_f$ by $\mathcal{I}_f$ finite and acyclic. 
  Observe that $D^*_f$ is a cofiltered limit of diagrams $\mathcal{I}_f\ra\BOX$ sending each morphism to a mono by our choice of $\catfont{Q}$.  
	It therefore follows that $C_f^*=\colim\,\BOX[D^*_f]$ is a cofiltered limit in $\PROOBJECTS{(\REGULARCUBICALSETS)}$ because a colimit of representables and inclusions between them indexed over a poset is a $\REGULARCUBICALSETS$-object.
  Let $\Lambda_f^*$ denote the cubical function $C^*_f\ra C_f$ induced from the natural cocone $\BOX[D^*_f]\ra C_f$.  
  Commutative rectangles above natural in $\mathcal{I}_f$-objects $X$, in which the unlabelled solid arrows are cannonically defined, induce the desired dotted lift.  
\end{proof}

\begin{proof}[proof of Lemma \ref{lem:homotopy.idempotent.comonad}]
  Let $r_C$ be the stream map, natural in cubical sets $C$, defined by
  $$r_C=\direalize{\epsilon^3_{C}}\dihomeo^{-1}_{C;27}:\direalize{C}\ra\direalize{C}.$$ 

	Let $\theta$ denote a $(\BOX[-]/\catfont{S})$-object and define $n_\theta$ and $C_\theta$ so that
	$$\theta:\BOX\boxobj{n_\theta}\ra\catfont{S}C_\theta.$$

	There exist dotted $(\PROOBJECTS{(\REGULARCUBICALSETS/\CUBICALSETS)})$-object $\Lambda^*_{\theta}:B_\theta\ra C_\theta$ and $(\PROOBJECTS{\CUBICALSETS})$-morphism 
	$$\theta^*:\BOX\boxobj{n_\theta}\ra\catfont{S}B_\theta,$$
	both natural in $\theta$, such that the following holds [Lemma \ref{lem:bamfl}]:
	\begin{equation}
	  \label{eqn:idempotent.lift}
	  \sing(r_{C_\theta})\theta=\catfont{S}(\Lambda^*_\theta)\theta^*. 
	\end{equation}
	    
        There exist $\direalize{\mathfrak{d}}_*$-homotopies of the following forms
	\begin{align*}
		\direalize{\theta}\dhomotopic{\direalize{\;\mathfrak{d}\;}}&\direalize{\sing(r_{C_\theta})\theta}
		=\direalize{\catfont{S}(\Lambda_\theta^*)\theta^*}\\
		\dhomotopic{\direalize{\;\mathfrak{d}\;}}&\direalize{\catfont{S}(\Lambda_\theta^*)\eta_{B_\theta}}\epsilon_{\direalize{\;B_\theta\;}}\direalize{\theta^*}
		=\direalize{\sing(r_{C_\theta})}\direalize{\eta_{C_\theta}}\epsilon_{\direalize{\;C_\theta\;}}\direalize{\theta}\\
		\dhomotopic{\direalize{\;\mathfrak{d}\;}}&\direalize{\eta_{C_\theta}}\epsilon_{\direalize{\;C_\theta\;}}\direalize{\theta}
	\end{align*}
        for the following reasons.
	The first and last $\direalize{\mathfrak{d}}_*$-homotopies exist by the existence of $\direalize{\mathfrak{d}}_*$-homotopies $r_{C_\theta}\dihomotopic{\id_{\direalize{\;C_\theta\;}}}$ natural in $\theta$ [Lemma \ref{lem:natural.approximations}].
	The first and last equalities exist by (\ref{eqn:idempotent.lift}).
	The middle $\direalize{\mathfrak{d}}_*$-homotopy exists by Lemma \ref{lem:restricted.homotopy.idempotent.comonad}.
	These $\direalize{\mathfrak{d}}_*$-homotopies, natural in $(\BOX/\catfont{S}C)$-objects $\theta$, induce the desired $\direalize{\mathfrak{d}}_*$-homotopies natural in cubical sets $C$.  
\end{proof}

\bibliography{gv}{}

@article {benini2016optimal,
    AUTHOR = {Benini, Marco},
     TITLE = {Optimal space of linear classical observables for {M}axwell
              {$k$}-forms via spacelike and timelike compact de {R}ham
              cohomologies},
   JOURNAL = {J. Math. Phys.},
  FJOURNAL = {Journal of Mathematical Physics},
    VOLUME = {57},
      YEAR = {2016},
    NUMBER = {5},
     PAGES = {053502, 21},
      ISSN = {0022-2488,1089-7658},
   MRCLASS = {83C50 (58A12)},
  MRNUMBER = {3493300},
MRREVIEWER = {David\ H.\ Delphenich},
       DOI = {10.1063/1.4947563},
       URL = {https://doi.org/10.1063/1.4947563},
}

@article {khavkine2016cohomology,
    AUTHOR = {Khavkine, Igor},
     TITLE = {Cohomology with causally restricted supports},
   JOURNAL = {Ann. Henri Poincar\'{e}},
  FJOURNAL = {Annales Henri Poincar\'{e}. A Journal of Theoretical and
              Mathematical Physics},
    VOLUME = {17},
      YEAR = {2016},
    NUMBER = {12},
     PAGES = {3577--3603},
      ISSN = {1424-0637,1424-0661},
   MRCLASS = {58A12 (83C47)},
  MRNUMBER = {3568026},
MRREVIEWER = {Christian\ Gross},
       DOI = {10.1007/s00023-016-0481-x},
       URL = {https://doi.org/10.1007/s00023-016-0481-x},
}

@incollection {gromov1987hyperbolic,
    AUTHOR = {Gromov, M.},
     TITLE = {Hyperbolic groups},
 BOOKTITLE = {Essays in group theory},
    SERIES = {Math. Sci. Res. Inst. Publ.},
    VOLUME = {8},
     PAGES = {75--263},
 PUBLISHER = {Springer, New York},
      YEAR = {1987},
   MRCLASS = {20F32 (20F06 20F10 22E40 53C20 57R75 58F17)},
  MRNUMBER = {919829},
MRREVIEWER = {Christopher W. Stark},
       DOI = {10.1007/978-1-4613-9586-7\_3},
       URL = {https://doi-org.proxy.lib.ohio-state.edu/10.1007/978-1-4613-9586-7_3},
}

@article {isaksen2001model,
    AUTHOR = {Isaksen, Daniel C.},
     TITLE = {A model structure on the category of pro-simplicial sets},
   JOURNAL = {Trans. Amer. Math. Soc.},
  FJOURNAL = {Transactions of the American Mathematical Society},
    VOLUME = {353},
      YEAR = {2001},
    NUMBER = {7},
     PAGES = {2805--2841},
      ISSN = {0002-9947,1088-6850},
   MRCLASS = {18G55 (55P55 55U35)},
  MRNUMBER = {1828474},
MRREVIEWER = {Donald\ M.\ Davis},
       DOI = {10.1090/S0002-9947-01-02722-2},
       URL = {https://doi.org/10.1090/S0002-9947-01-02722-2},
}

@article{im1986universal,
  title={A universal property of the convolution monoidal structure},
  author={Im, Geun Bin and Kelly, G Max},
  journal={Journal of Pure and Applied Algebra},
  volume={43},
  number={1},
  pages={75--88},
  year={1986},
  publisher={North-Holland}
}

@article {meyer1980approximation,
    AUTHOR = {Meyer, Carol V.},
     TITLE = {Approximation filtrante de diagrammes finis par {P}ro-{$C$}},
   JOURNAL = {Ann. Sci. Math. Qu\'{e}bec},
  FJOURNAL = {Annales des Sciences Math\'{e}matiques du Qu\'{e}bec},
    VOLUME = {4},
      YEAR = {1980},
    NUMBER = {1},
     PAGES = {35--57},
      ISSN = {0707-9109},
   MRCLASS = {18A35},
  MRNUMBER = {574833},
MRREVIEWER = {Timothy Porter},
}

@article {isaksen2002calculating,
    AUTHOR = {Isaksen, Daniel C.},
     TITLE = {Calculating limits and colimits in pro-categories},
   JOURNAL = {Fund. Math.},
  FJOURNAL = {Fundamenta Mathematicae},
    VOLUME = {175},
      YEAR = {2002},
    NUMBER = {2},
     PAGES = {175--194},
      ISSN = {0016-2736},
   MRCLASS = {18A30 (55U35)},
  MRNUMBER = {1969635},
       DOI = {10.4064/fm175-2-7},
       URL = {https://doi-org.proxy.lib.ohio-state.edu/10.4064/fm175-2-7},
}

@book {borceux1994handbook,
    AUTHOR = {Borceux, Francis},
     TITLE = {Handbook of categorical algebra. 2},
    SERIES = {Encyclopedia of Mathematics and its Applications},
    VOLUME = {51},
      NOTE = {Categories and structures},
 PUBLISHER = {Cambridge University Press, Cambridge},
      YEAR = {1994},
     PAGES = {xviii+443},
      ISBN = {0-521-44179-X},
   MRCLASS = {18-02 (18Exx)},
  MRNUMBER = {1313497},
MRREVIEWER = {Martin Hyland},
}

@article {trnkova1965sum,
    AUTHOR = {Trnkov\'{a}, V\v{e}ra},
     TITLE = {Sum of categories with amalgamated subcategory},
   JOURNAL = {Comment. Math. Univ. Carolinae},
  FJOURNAL = {Commentationes Mathematicae Universitatis Carolinae},
    VOLUME = {6},
      YEAR = {1965},
     PAGES = {449--474},
      ISSN = {0010-2628},
   MRCLASS = {18.10},
  MRNUMBER = {190208},
MRREVIEWER = {M. Barr},
}

@article {agl2002multiple,
    AUTHOR = {Al-Agl, Fahd Ali and Brown, Ronald and Steiner, Richard},
     TITLE = {Multiple categories: the equivalence of a globular and a
              cubical approach},
   JOURNAL = {Adv. Math.},
  FJOURNAL = {Advances in Mathematics},
    VOLUME = {170},
      YEAR = {2002},
    NUMBER = {1},
     PAGES = {71--118},
      ISSN = {0001-8708,1090-2082},
   MRCLASS = {18D05 (55U99)},
  MRNUMBER = {1929304},
MRREVIEWER = {Sjoerd\ E.\ Crans},
       DOI = {10.1006/aima.2001.2069},
       URL = {https://doi.org/10.1006/aima.2001.2069},
}

@article{fajstrup2004components,
  title={Components of the {F}undamental {C}ategory},
  author={Fajstrup, Lisbeth and Rau{\ss}en, Martin and Goubault, Eric and Haucourt, Emmanuel},
  journal={Applied Categorical Structures},
  volume={12},
  number={1},
  pages={81--108},
  year={2004},
  publisher={Springer}
}

@incollection {dubut2016directed,
    AUTHOR = {Dubut, J\'{e}r\'{e}my and Goubault, \'{E}ric and Goubault-Larrecq, Jean},
     TITLE = {The directed homotopy hypothesis},
 BOOKTITLE = {Computer science logic 2016},
    SERIES = {LIPIcs. Leibniz Int. Proc. Inform.},
    VOLUME = {62},
     PAGES = {Art. No. 9, 16},
 PUBLISHER = {Schloss Dagstuhl. Leibniz-Zent. Inform., Wadern},
      YEAR = {2016},
   MRCLASS = {18D20 (55U10 68Q85)},
  MRNUMBER = {3566698},
}

@article {raussen2010simplicial,
    AUTHOR = {Raussen, Martin},
     TITLE = {Simplicial models of trace spaces},
   JOURNAL = {Algebr. Geom. Topol.},
  FJOURNAL = {Algebraic \& Geometric Topology},
    VOLUME = {10},
      YEAR = {2010},
    NUMBER = {3},
     PAGES = {1683--1714},
      ISSN = {1472-2747},
   MRCLASS = {55P10 (55P15 55U10 68Q85)},
  MRNUMBER = {2683749},
       DOI = {10.2140/agt.2010.10.1683},
       URL = {https://doi-org.proxy.lib.ohio-state.edu/10.2140/agt.2010.10.1683},
}

@article {goubault2007components,
    AUTHOR = {Goubault, E. and Haucourt, E.},
     TITLE = {Components of the fundamental category. {II}},
   JOURNAL = {Appl. Categ. Structures},
  FJOURNAL = {Applied Categorical Structures. A Journal Devoted to
              Applications of Categorical Methods in Algebra, Analysis,
              Order, Topology and Computer Science},
    VOLUME = {15},
      YEAR = {2007},
    NUMBER = {4},
     PAGES = {387--414},
      ISSN = {0927-2852},
   MRCLASS = {18A20 (18A22 18A32 18A40 55Q05)},
  MRNUMBER = {2350213},
       DOI = {10.1007/s10485-007-9082-7},
       URL = {https://doi-org.proxy.lib.ohio-state.edu/10.1007/s10485-007-9082-7},
}

@article {goubault2020directed,
    AUTHOR = {Goubault, \'{E}ric and Mimram, Samuel},
     TITLE = {Directed homotopy in non-positively curved spaces},
   JOURNAL = {Log. Methods Comput. Sci.},
  FJOURNAL = {Logical Methods in Computer Science},
    VOLUME = {16},
      YEAR = {2020},
    NUMBER = {3},
     PAGES = {Paper No. 4, 55},
   MRCLASS = {68Q55 (18F99 55U35 68Q85)},
  MRNUMBER = {4123481},
}

@book {gierz2003continuous,
    AUTHOR = {Gierz, G. and Hofmann, K. H. and Keimel, K. and Lawson, J. D.
              and Mislove, M. and Scott, D. S.},
     TITLE = {Continuous lattices and domains},
    SERIES = {Encyclopedia of Mathematics and its Applications},
    VOLUME = {93},
 PUBLISHER = {Cambridge University Press, Cambridge},
      YEAR = {2003},
     PAGES = {xxxvi+591},
      ISBN = {0-521-80338-1},
   MRCLASS = {06-00 (06B35 54H12 68Q55)},
  MRNUMBER = {1975381},
MRREVIEWER = {James W. Lea, Jr.},
       DOI = {10.1017/CBO9780511542725},
       URL = {https://doi-org.proxy.lib.ohio-state.edu/10.1017/CBO9780511542725},
}

@book {nachbin1976topology,
    AUTHOR = {Nachbin, Leopoldo},
     TITLE = {Topology and order},
      NOTE = {Translated from the Portuguese by Lulu Bechtolsheim,
              Reprint of the 1965 edition},
 PUBLISHER = {Robert E. Krieger Publishing Co., Huntington, N. Y.},
      YEAR = {1976},
     PAGES = {vi+122},
   MRCLASS = {54F05 (46A40 06A45)},
  MRNUMBER = {0415582},
}

@article {fajstrup2006algebraic,
    AUTHOR = {Fajstrup, Lisbeth and Rau\ss en, Martin and Goubault, Eric},
     TITLE = {Algebraic topology and concurrency},
   JOURNAL = {Theoret. Comput. Sci.},
  FJOURNAL = {Theoretical Computer Science},
    VOLUME = {357},
      YEAR = {2006},
    NUMBER = {1-3},
     PAGES = {241--278},
      ISSN = {0304-3975},
   MRCLASS = {68Q85 (55P99)},
  MRNUMBER = {2242768},
       DOI = {10.1016/j.tcs.2006.03.022},
       URL = {https://doi-org.proxy.lib.ohio-state.edu/10.1016/j.tcs.2006.03.022},
}

@article {minian2002cat,
    AUTHOR = {Minian, Elias Gabriel},
     TITLE = {{C}at as a {\textLambda}-cofibration category},
   JOURNAL = {J. Pure Appl. Algebra},
  FJOURNAL = {Journal of Pure and Applied Algebra},
    VOLUME = {167},
      YEAR = {2002},
    NUMBER = {2-3},
     PAGES = {301--314},
      ISSN = {0022-4049},
   MRCLASS = {55U35 (18G55 55P05 55P10)},
  MRNUMBER = {1874546},
MRREVIEWER = {David A. Blanc},
       DOI = {10.1016/S0022-4049(01)00034-2},
       URL = {https://doi-org.proxy.lib.ohio-state.edu/10.1016/S0022-4049(01)00034-2},
}

@article {grandis2003directed,
    AUTHOR = {Grandis, Marco},
     TITLE = {Directed homotopy theory. {I}},
   JOURNAL = {Cah. Topol. G\'{e}om. Diff\'{e}r. Cat\'{e}g.},
  FJOURNAL = {Cahiers de Topologie et G\'{e}om\'{e}trie Diff\'{e}rentielle Cat\'{e}goriques},
    VOLUME = {44},
      YEAR = {2003},
    NUMBER = {4},
     PAGES = {281--316},
      ISSN = {1245-530X},
   MRCLASS = {55U35 (55P99 68Q85)},
  MRNUMBER = {2030049},
MRREVIEWER = {\'{E}ric Goubault},
}

@article {krishnan2015cubical,
    AUTHOR = {Krishnan, Sanjeevi},
     TITLE = {Cubical approximation for directed topology {I}},
   JOURNAL = {Appl. Categ. Structures},
  FJOURNAL = {Applied Categorical Structures. A Journal Devoted to
              Applications of Categorical Methods in Algebra, Analysis,
              Order, Topology and Computer Science},
    VOLUME = {23},
      YEAR = {2015},
    NUMBER = {2},
     PAGES = {177--214},
      ISSN = {0927-2852},
   MRCLASS = {54B30},
  MRNUMBER = {3318285},
       DOI = {10.1007/s10485-013-9330-y},
       URL = {https://doi-org.proxy.lib.ohio-state.edu/10.1007/s10485-013-9330-y},
}

@article {hoff1974categories,
    AUTHOR = {Hoff, Georges},
     TITLE = {Cat\'{e}gories fibr\'{e}es et homotopie},
   JOURNAL = {C. R. Acad. Sci. Paris S\'{e}r. A},
  FJOURNAL = {Comptes Rendus Hebdomadaires des S\'{e}ances de l'Acad\'{e}mie des
              Sciences. S\'{e}rie A. Sciences Math\'{e}matiques},
    VOLUME = {278},
      YEAR = {1974},
     PAGES = {223--225},
      ISSN = {0302-8429},
   MRCLASS = {18D30},
  MRNUMBER = {357544},
MRREVIEWER = {H. Inassaridze},
}

@article {grandis2005shape,
    AUTHOR = {Grandis, Marco},
     TITLE = {The shape of a category up to directed homotopy},
   JOURNAL = {Theory Appl. Categ.},
  FJOURNAL = {Theory and Applications of Categories},
    VOLUME = {15},
      YEAR = {2005/06},
     PAGES = {No. 4, 95--146},
   MRCLASS = {55U35 (18A40 55P10 68Q85)},
  MRNUMBER = {2210577},
}

@article {fajstrup2005dipaths,
    AUTHOR = {Fajstrup, Lisbeth},
     TITLE = {Dipaths and dihomotopies in a cubical complex},
   JOURNAL = {Adv. in Appl. Math.},
  FJOURNAL = {Advances in Applied Mathematics},
    VOLUME = {35},
      YEAR = {2005},
    NUMBER = {2},
     PAGES = {188--206},
      ISSN = {0196-8858},
   MRCLASS = {52B99 (05A99 55M99)},
  MRNUMBER = {2152887},
       DOI = {10.1016/j.aam.2005.02.003},
       URL = {https://doi-org.proxy.lib.ohio-state.edu/10.1016/j.aam.2005.02.003},
}

@inproceedings {goubault2010future,
    AUTHOR = {Goubault, Eric and Haucourt, Emmanuel and Krishnan, Sanjeevi},
     TITLE = {Future path-components in directed topology},
 BOOKTITLE = {Proceedings of the 26th {C}onference on the {M}athematical
              {F}oundations of {P}rogramming {S}emantics ({MFPS} 2010)},
    SERIES = {Electron. Notes Theor. Comput. Sci.},
    VOLUME = {265},
     PAGES = {325--335},
 PUBLISHER = {Elsevier Sci. B. V., Amsterdam},
      YEAR = {2010},
   MRCLASS = {Expansion},
  MRNUMBER = {2909661},
       DOI = {10.1016/j.entcs.2010.08.019},
       URL = {https://doi-org.proxy.lib.ohio-state.edu/10.1016/j.entcs.2010.08.019},
}

@article {krishnan2019hurewicz,
    AUTHOR = {Krishnan, Sanjeevi and North, Paige Randall},
     TITLE = {A {H}urewicz model structure for directed topology},
   JOURNAL = {Theory Appl. Categ.},
  FJOURNAL = {Theory and Applications of Categories},
    VOLUME = {37},
      YEAR = {2021},
     PAGES = {Paper No. 20, 613--634},
   MRCLASS = {18N40 (55P05)},
  MRNUMBER = {4270265},
}

@article {krishnan2009convenient,
    AUTHOR = {Krishnan, Sanjeevi},
     TITLE = {A convenient category of locally preordered spaces},
   JOURNAL = {Appl. Categ. Structures},
  FJOURNAL = {Applied Categorical Structures. A Journal Devoted to
              Applications of Categorical Methods in Algebra, Analysis,
              Order, Topology and Computer Science},
    VOLUME = {17},
      YEAR = {2009},
    NUMBER = {5},
     PAGES = {445--466},
      ISSN = {0927-2852},
   MRCLASS = {54F05 (18D15 54E99 68Q85)},
  MRNUMBER = {2545830},
       DOI = {10.1007/s10485-008-9140-9},
       URL = {https://doi-org.proxy.lib.ohio-state.edu/10.1007/s10485-008-9140-9},
}

@article{krishnan2019triangulations,
  title={Triangulations of Conal Manifolds},
  author={Krishnan, Sanjeevi},
  journal={arXiv preprint arXiv:1909.01971},
  year={2019}
}

@article {winkowski2007towards,
    AUTHOR = {Winkowski, J\'{o}zef},
     TITLE = {Towards a framework for modelling behaviours of hybrid
              systems},
   JOURNAL = {Fund. Inform.},
  FJOURNAL = {Fundamenta Informaticae},
    VOLUME = {80},
      YEAR = {2007},
    NUMBER = {1-3},
     PAGES = {311--332},
      ISSN = {0169-2968},
   MRCLASS = {68Q85 (68Q55 68Q60)},
  MRNUMBER = {2361957},
MRREVIEWER = {Gerald L\"{u}ttgen},
}

@article {jardine2013homotopy,
    AUTHOR = {Jardine, J. F.},
     TITLE = {Homotopy theories of diagrams},
   JOURNAL = {Theory Appl. Categ.},
  FJOURNAL = {Theory and Applications of Categories},
    VOLUME = {28},
      YEAR = {2013},
     PAGES = {No. 11, 269--303},
   MRCLASS = {18F20 (18G30 55U35)},
  MRNUMBER = {3065949},
MRREVIEWER = {David A. Blanc},
}

@article {chorny2012class,
    AUTHOR = {Chorny, Boris and Rosick\'{y}, Ji\v{r}\'{\i}},
     TITLE = {Class-combinatorial model categories},
   JOURNAL = {Homology Homotopy Appl.},
  FJOURNAL = {Homology, Homotopy and Applications},
    VOLUME = {14},
      YEAR = {2012},
    NUMBER = {1},
     PAGES = {263--280},
      ISSN = {1532-0073},
   MRCLASS = {18G55 (55P60)},
  MRNUMBER = {2954676},
MRREVIEWER = {Philippe Gaucher},
       DOI = {10.4310/HHA.2012.v14.n1.a13},
       URL = {https://doi.org/10.4310/HHA.2012.v14.n1.a13},
}

@article{hess2017necessary,
  title={A necessary and sufficient condition for induced model structures},
  author={Hess, Kathryn and Kedziorek, Magdalena and Riehl, Emily and Shipley, Brooke},
  journal={Journal of Topology},
  volume={10},
  number={2},
  pages={324--369},
  year={2017},
  publisher={Wiley Online Library}
}

@article {cole2006mixing,
    AUTHOR = {Cole, Michael},
     TITLE = {Mixing model structures},
   JOURNAL = {Topology Appl.},
  FJOURNAL = {Topology and its Applications},
    VOLUME = {153},
      YEAR = {2006},
    NUMBER = {7},
     PAGES = {1016--1032},
      ISSN = {0166-8641},
   MRCLASS = {55P42 (55U35)},
  MRNUMBER = {2203016},
MRREVIEWER = {Birgit Richter},
       DOI = {10.1016/j.topol.2005.02.004},
       URL = {https://doi-org.proxy.lib.ohio-state.edu/10.1016/j.topol.2005.02.004},
}

@article {hoyo2008subdivision,
    AUTHOR = {del Hoyo, Matias L.},
     TITLE = {On the subdivision of small categories},
   JOURNAL = {Topology Appl.},
  FJOURNAL = {Topology and its Applications},
    VOLUME = {155},
      YEAR = {2008},
    NUMBER = {11},
     PAGES = {1189--1200},
      ISSN = {0166-8641},
   MRCLASS = {18G55 (55U35)},
  MRNUMBER = {2421828},
MRREVIEWER = {Agust\'{\i} Roig},
       DOI = {10.1016/j.topol.2008.02.006},
       URL = {https://doi-org.proxy.lib.ohio-state.edu/10.1016/j.topol.2008.02.006},
}

@article {mccord1969classifying,
    AUTHOR = {McCord, M. C.},
     TITLE = {Classifying spaces and infinite symmetric products},
   JOURNAL = {Trans. Amer. Math. Soc.},
  FJOURNAL = {Transactions of the American Mathematical Society},
    VOLUME = {146},
      YEAR = {1969},
     PAGES = {273--298},
      ISSN = {0002-9947},
   MRCLASS = {55.30},
  MRNUMBER = {251719},
MRREVIEWER = {A. Dold},
       DOI = {10.2307/1995173},
       URL = {https://doi-org.proxy.lib.ohio-state.edu/10.2307/1995173},
}

@book {may1999concise,
    AUTHOR = {May, J. P.},
     TITLE = {A concise course in algebraic topology},
    SERIES = {Chicago Lectures in Mathematics},
 PUBLISHER = {University of Chicago Press, Chicago, IL},
      YEAR = {1999},
     PAGES = {x+243},
      ISBN = {0-226-51182-0; 0-226-51183-9},
   MRCLASS = {55-02 (18-02 57-02)},
  MRNUMBER = {1702278},
MRREVIEWER = {R. M. Vogt},
}

@article {thomason1980cat,
    AUTHOR = {Thomason, R. W.},
     TITLE = {Cat as a closed model category},
   JOURNAL = {Cahiers Topologie G\'{e}om. Diff\'{e}rentielle},
  FJOURNAL = {Cahiers de Topologie et G\'{e}om\'{e}trie Diff\'{e}rentielle},
    VOLUME = {21},
      YEAR = {1980},
    NUMBER = {3},
     PAGES = {305--324},
      ISSN = {0008-0004},
   MRCLASS = {18A99 (55U35)},
  MRNUMBER = {591388},
MRREVIEWER = {Harold Hastings},
}

@article {cisinskiprefaisceaux,
    AUTHOR = {Cisinski, Denis-Charles},
     TITLE = {Les pr\'{e}faisceaux comme mod\`eles des types d'homotopie},
   JOURNAL = {Ast\'{e}risque},
  FJOURNAL = {Ast\'{e}risque},
    NUMBER = {308},
      YEAR = {2006},
     PAGES = {xxiv+390},
      ISSN = {0303-1179},
      ISBN = {978-2-85629-225-9},
   MRCLASS = {55-02 (18F20 18G50 55P60 55U35)},
  MRNUMBER = {2294028},
MRREVIEWER = {Philippe Gaucher},
}

@article{grothendieck1983pursuing,
  title={Pursuing stacks},
  author={Grothendieck, Alexander},
  journal={manuscript},
  year={1983}
}

@article {curtis1971simplicial,
    AUTHOR = {Curtis, Edward B.},
     TITLE = {Simplicial homotopy theory},
   JOURNAL = {Advances in Math.},
  FJOURNAL = {Advances in Mathematics},
    VOLUME = {6},
      YEAR = {1971},
     PAGES = {107--209 (1971)},
      ISSN = {0001-8708},
   MRCLASS = {55.40},
  MRNUMBER = {279808},
MRREVIEWER = {A. L. Bousfield},
       DOI = {10.1016/0001-8708(71)90015-6},
       URL = {https://doi-org.proxy.lib.ohio-state.edu/10.1016/0001-8708(71)90015-6},
}

@incollection {buchholtz2017varieties,
    AUTHOR = {Buchholtz, Ulrik and Morehouse, Edward},
     TITLE = {Varieties of cubical sets},
 BOOKTITLE = {Relational and algebraic methods in computer science},
    SERIES = {Lecture Notes in Comput. Sci.},
    VOLUME = {10226},
     PAGES = {77--92},
 PUBLISHER = {Springer, Cham},
      YEAR = {2017},
   MRCLASS = {18C10 (03B15 55U35)},
  MRNUMBER = {3668732},
       DOI = {10.1007/978-3-319-57418-9\_5},
       URL = {https://doi-org.proxy.lib.ohio-state.edu/10.1007/978-3-319-57418-9_5},
}

@article {grandis2003cubical,
    AUTHOR = {Grandis, Marco and Mauri, Luca},
     TITLE = {Cubical sets and their site},
   JOURNAL = {Theory Appl. Categ.},
  FJOURNAL = {Theory and Applications of Categories},
    VOLUME = {11},
      YEAR = {2003},
     PAGES = {No. 8, 185--211},
   MRCLASS = {18G30 (18C10 18D10 55U10)},
  MRNUMBER = {1988396},
}

@article{krishnan2022uniform,
  title={The Uniform Homotopy Category},
  author={Krishnan, Sanjeevi and Ogle, Crichton},
  journal={Journal of Pure and Applied Algebra},
  year={2023}
}

@article{raptis2021bounded,
  title={Bounded cohomology and homotopy colimits},
  author={Raptis, George},
  journal={arXiv preprint arXiv:2103.15614},
  year={2021}
}

@article {cisinski2004localisateur,
    AUTHOR = {Cisinski, Denis-Charles},
     TITLE = {Le localisateur fondamental minimal},
   JOURNAL = {Cah. Topol. G\'{e}om. Diff\'{e}r. Cat\'{e}g.},
  FJOURNAL = {Cahiers de Topologie et G\'{e}om\'{e}trie Diff\'{e}rentielle Cat\'{e}goriques},
    VOLUME = {45},
      YEAR = {2004},
    NUMBER = {2},
     PAGES = {109--140},
      ISSN = {1245-530X},
   MRCLASS = {18E35 (18E15 55U35)},
  MRNUMBER = {2072934},
MRREVIEWER = {Manuel Bullejos Lorenzo},
}

@book {quillen1967homotopical,
    AUTHOR = {Quillen, Daniel G.},
     TITLE = {Homotopical algebra},
    SERIES = {Lecture Notes in Mathematics, No. 43},
 PUBLISHER = {Springer-Verlag, Berlin-New York},
      YEAR = {1967},
     PAGES = {iv+156 pp. (not consecutively paged)},
   MRCLASS = {18.20 (55.00)},
  MRNUMBER = {0223432},
MRREVIEWER = {A. K. Bousfield},
}

@article {bourke2016algebraic,
    AUTHOR = {Bourke, John and Garner, Richard},
     TITLE = {Algebraic weak factorisation systems {I}: {A}ccessible {AWFS}},
   JOURNAL = {J. Pure Appl. Algebra},
  FJOURNAL = {Journal of Pure and Applied Algebra},
    VOLUME = {220},
      YEAR = {2016},
    NUMBER = {1},
     PAGES = {108--147},
      ISSN = {0022-4049},
   MRCLASS = {18A32 (55U35)},
  MRNUMBER = {3393453},
MRREVIEWER = {Simion Sorin P. Breaz},
       DOI = {10.1016/j.jpaa.2015.06.002},
       URL = {https://doi-org.proxy.lib.ohio-state.edu/10.1016/j.jpaa.2015.06.002},
}

@article {grandis2006natural,
    AUTHOR = {Grandis, Marco and Tholen, Walter},
     TITLE = {Natural weak factorization systems},
   JOURNAL = {Arch. Math. (Brno)},
  FJOURNAL = {Universitatis Masarykianae Brunensis. Facultas Scientiarum
              Naturalium. Archivum Mathematicum},
    VOLUME = {42},
      YEAR = {2006},
    NUMBER = {4},
     PAGES = {397--408},
      ISSN = {0044-8753},
   MRCLASS = {18C15},
  MRNUMBER = {2283020},
MRREVIEWER = {Thomas Tradler},
}

@article {riehl2011algebraic,
    AUTHOR = {Riehl, Emily},
     TITLE = {Algebraic model structures},
   JOURNAL = {New York J. Math.},
  FJOURNAL = {New York Journal of Mathematics},
    VOLUME = {17},
      YEAR = {2011},
     PAGES = {173--231},
   MRCLASS = {55U35 (18A32)},
  MRNUMBER = {2781914},
MRREVIEWER = {Philippe Gaucher},
       URL = {http://p8000-nyjm.albany.edu.proxy.lib.ohio-state.edu/j/2011/17_173.html},
}

@article {awodey2009homotopy,
    AUTHOR = {Awodey, Steve and Warren, Michael A.},
     TITLE = {Homotopy theoretic models of identity types},
   JOURNAL = {Math. Proc. Cambridge Philos. Soc.},
  FJOURNAL = {Mathematical Proceedings of the Cambridge Philosophical
              Society},
    VOLUME = {146},
      YEAR = {2009},
    NUMBER = {1},
     PAGES = {45--55},
      ISSN = {0305-0041},
   MRCLASS = {03B15 (03B35 18G99 55U35)},
  MRNUMBER = {2461866},
MRREVIEWER = {Richard F. Blute},
       DOI = {10.1017/S0305004108001783},
       URL = {https://doi-org.proxy.lib.ohio-state.edu/10.1017/S0305004108001783},
}

@article {riehl2017type,
    AUTHOR = {Riehl, Emily and Shulman, Michael},
     TITLE = {A type theory for synthetic {$\infty$}-categories},
   JOURNAL = {High. Struct.},
  FJOURNAL = {Higher Structures},
    VOLUME = {1},
      YEAR = {2017},
    NUMBER = {1},
     PAGES = {147--224},
   MRCLASS = {18D05 (03G30 18G35 18G55)},
  MRNUMBER = {3912054},
MRREVIEWER = {Thomas Streicher},
       DOI = {10.1007/s42001-017-0005-6},
       URL = {https://doi-org.proxy.lib.ohio-state.edu/10.1007/s42001-017-0005-6}}

@article{north2018type,
  title={A type theory for directed homotopy theory},
  author={North, Paige},
  journal={TYPES 2018},
  year={2018}
}

@incollection {licata2011directed,
    AUTHOR = {Licata, Daniel R. and Harper, Robert},
     TITLE = {2-dimensional directed type theory},
 BOOKTITLE = {Twenty-{S}eventh {C}onference on the {M}athematical
              {F}oundations of {P}rogramming {S}emantics ({MFPS} {XXVII})},
    SERIES = {Electron. Notes Theor. Comput. Sci.},
    VOLUME = {276},
     PAGES = {263--289},
 PUBLISHER = {Elsevier Sci. B. V., Amsterdam},
      YEAR = {2011},
   MRCLASS = {Expansion},
  MRNUMBER = {2917360},
       DOI = {10.1016/j.entcs.2011.09.026},
       URL = {https://doi-org.proxy.lib.ohio-state.edu/10.1016/j.entcs.2011.09.026},
}

@article {lawson1973intrinsic,
    AUTHOR = {Lawson, Jimmie D.},
     TITLE = {Intrinsic topologies in topological lattices and semilattices},
   JOURNAL = {Pacific J. Math.},
  FJOURNAL = {Pacific Journal of Mathematics},
    VOLUME = {44},
      YEAR = {1973},
     PAGES = {593--602},
      ISSN = {0030-8730},
   MRCLASS = {06A45 (54F05)},
  MRNUMBER = {318031},
MRREVIEWER = {K. N. Sigmon},
       URL = {http://projecteuclid.org.proxy.lib.ohio-state.edu/euclid.pjm/1102947956},
}

@article {lazaraz2015note,
    AUTHOR = {Lazarz, Marcin and Siemie\'{n}czuk, Krzysztof},
     TITLE = {A note on some characterization of distributive lattices of
              finite length},
   JOURNAL = {Bull. Sect. Logic Univ. \L \'{o}d\'{z}},
  FJOURNAL = {University of \L \'{o}d\'{z}. Department of Logic. Bulletin of the
              Section of Logic},
    VOLUME = {44},
      YEAR = {2015},
    NUMBER = {1-2},
     PAGES = {15--17},
      ISSN = {0138-0680},
   MRCLASS = {06D05},
  MRNUMBER = {3378767},
}

@article{dedekind1900drei,
  title={{\"U}ber die von drei Moduln erzeugte Dualgruppe},
  author={Dedekind, Richard},
  journal={Mathematische Annalen},
  volume={53},
  number={3},
  pages={371--403},
  year={1900},
  publisher={Springer}
}

@article {gonzalez2021finite,
    AUTHOR = {Gonz\'{a}lez, Luciano J. and Calomino, Ismael},
     TITLE = {Finite distributive nearlattices},
   JOURNAL = {Discrete Math.},
  FJOURNAL = {Discrete Mathematics},
    VOLUME = {344},
      YEAR = {2021},
    NUMBER = {9},
     PAGES = {Paper No. 112511, 8},
      ISSN = {0012-365X},
   MRCLASS = {06D05},
  MRNUMBER = {4274211},
MRREVIEWER = {Marcin \L azarz},
       DOI = {10.1016/j.disc.2021.112511},
       URL = {https://doi-org.proxy.lib.ohio-state.edu/10.1016/j.disc.2021.112511},
}

@book {coxeter1980generators,
    AUTHOR = {Coxeter, H. S. M. and Moser, W. O. J.},
     TITLE = {Generators and relations for discrete groups},
    SERIES = {Ergebnisse der Mathematik und ihrer Grenzgebiete [Results in
              Mathematics and Related Areas]},
    VOLUME = {14},
   EDITION = {Fourth},
 PUBLISHER = {Springer-Verlag, Berlin-New York},
      YEAR = {1980},
     PAGES = {ix+169},
      ISBN = {3-540-09212-9},
   MRCLASS = {20-01},
  MRNUMBER = {562913},
}

@book {bridson99metric,
    AUTHOR = {Bridson, Martin R. and Haefliger, Andr\'{e}},
     TITLE = {Metric spaces of non-positive curvature},
    SERIES = {Grundlehren der mathematischen Wissenschaften [Fundamental
              Principles of Mathematical Sciences]},
    VOLUME = {319},
 PUBLISHER = {Springer-Verlag, Berlin},
      YEAR = {1999},
     PAGES = {xxii+643},
      ISBN = {3-540-64324-9},
   MRCLASS = {53C23 (20F65 53C70 57M07)},
  MRNUMBER = {1744486},
MRREVIEWER = {Athanase Papadopoulos},
       DOI = {10.1007/978-3-662-12494-9},
       URL = {https://doi-org.proxy.lib.ohio-state.edu/10.1007/978-3-662-12494-9},
}

@article {ardila2012geodesics,
    AUTHOR = {Ardila, Federico and Owen, Megan and Sullivant, Seth},
     TITLE = {Geodesics in {$\rm CAT(0)$} cubical complexes},
   JOURNAL = {Adv. in Appl. Math.},
  FJOURNAL = {Advances in Applied Mathematics},
    VOLUME = {48},
      YEAR = {2012},
    NUMBER = {1},
     PAGES = {142--163},
      ISSN = {0196-8858},
   MRCLASS = {05C85 (05C12 06A07 52B70 68R05 68U05)},
  MRNUMBER = {2845512},
MRREVIEWER = {Nadia Benakli},
       DOI = {10.1016/j.aam.2011.06.004},
       URL = {https://doi.org/10.1016/j.aam.2011.06.004},
}
\bibliographystyle{plain}

\end{document}